\newtheorem{theorem}{Theorem}[section]
\newtheorem{lemma}[theorem]{Lemma}
\newtheorem{remark}[theorem]{Remark}
\newtheorem{definition}[theorem]{Definition}
\newtheorem{conjecture}[theorem]{Conjecture}
\newtheorem{corollary}[theorem]{Corollary}
\newtheorem{proposition}[theorem]{Proposition}
\def\eps{\varepsilon}
\def\GL{\mathrm{GL}}
\def\SO{\mathrm{SO}}
\def\SU{\mathrm{SU}}
\def\C{\mathbf{C}}
\def\Z{\mathbf{Z}}
\def\F{\mathbf{F}}
\def\Q{\mathbf{Q}}
\def\C{\mathbf{C}}
\def\R{\mathbf{R}}
\def\End{\mathrm{End}}
\def\Ker{\mathrm{Ker}}
\def\Hom{\mathrm{Hom}}
\def\into{\hookrightarrow}
\def\onto{\twoheadrightarrow}
\def\AAA{\ensuremath{\hat{A}^{(2 \frac{1}{2})}}}
\def\ii{\mathrm{i}}
\def\la{\lambda}
\date{October 23, 2012.}
\begin{document}
\centerline{}

\title[Freeness conjecture and the Hessian group]{The freeness conjecture for Hecke algebras of complex reflection groups, and the case of the Hessian group  $G_{26}$}
\author[I.~Marin]{Ivan Marin}
\address{LAMFA, Universit\'e de Picardie-Jules Verne, Amiens, France}
\email{ivan.marin@u-picardie.fr}
\subjclass[2010]{}
\medskip

\begin{abstract} 
We review the state-of-the-art concerning the freeness conjecture stated in the 1990's by Brou\'e, Malle and Rouquier for generic Hecke algebras
associated to complex reflection groups, and in particular we expose in detail one of the main differences
with the ordinary case, namely the lack of $0$-Hecke algebras. We end the paper by proving a new case of this conjecture,
the 
exceptional group called $G_{26}$ in Shephard-Todd classification, namely the largest linear group of automorphisms
of the Hessian configuration.

\end{abstract}

\maketitle

\tableofcontents

\section{Introduction}

Between 1994 and 1998, M. Brou\'e, G. Malle and R. Rouquier introduced a natural generalization of the
generic Iwahori-Hecke algebras, associated not only to a Weyl or Coxeter group, but
to an arbitrary (finite) complex reflection group $W$ (see \cite{BMR0,BMR}). Extending earlier work by Brou\'e and Malle (see \cite{BM}),
they found an adequate definition involving the generalized braid group $B$ associated to $W$.
They stated a number of conjectures, some of them involving the braid group $B$,
some others involving the Hecke algebra $H$. All the conjectures concerning the braid group $B$
have apparently been solved now (see \cite{BESSIS,BESSISMICHEL,DMM}).
The ones concerning the Hecke algebras, on the other hand, are not solved yet for
the finite but rather large number of exceptional groups involved in the Shephard-Todd classification
of irreducible reflection groups. Arguably,
the most basic one is the so-called \emph{freeness conjecture}, which states
that $H$ is free of rank $|W|$ as a module over its ring of definition $R$.

The many other existing conjectures about these generalized Hecke algebras
originate in a program about representation theory of finite groups of Lie type, and involve
notably the existence of a canonical trace; this program also suggests a number of
other properties, including that the center of $H$ should also be a free module,
that it behaves well under base changes, and so on. It is also very important to
be able to compute matrix models for the irreducible representations of $H$. However,
the reason why the freeness conjecture is more basic than the other ones is that, once it
is proved, we can rest on our better knowledge of the world of the associative
algebras which have finite type as modules. This better knowledge includes the possibility of
putting structure constants for the multiplication into a computer and apply various algorithms
in order to improve our understanding of what happens in each case (see \cite{MALLEMICHEL}
for an explanation of how the determination of a canonical trace can be made effective
in this way). Also, it is proved in \cite{GGOR} that, provided that the freeness
conjecture is true, the category of representations of $H$ (actually defined over a larger ring)
is equivalent to a category of representations of a `Cherednik algebra', and this
provides other tools in order to possibly deal with the other conjectures.

The primary goal of this paper was to prove this freeness conjecture, that we call here \emph{the} BMR conjecture
in order to emphasize its central role,
for the case of the exceptional group $G_{26}$, which has rank 3 and is the largest of the two
complex reflection groups groups that appear as symmetry groups of the classical Hessian configuration (theorem \ref{theoG26}).
In addition, in the first part of the paper, we provide some more scholarly work, that we felt were
missing in the literature. This includes the comparison of various versions of the BMR conjecture, and
the algebraization of the powerful argument of Etingof and Rains, which proves a weak version
of the conjecture for all groups of rank 2. We also explore in detail why it does
not seem possible to define an analogue of the 0-Hecke algebra for complex reflection
groups, which is a big difference with the usual (Weyl/Coxeter) case.

In this part, we have tried to be as precise and detailed as possible, at the risk of being pedantic
or boring. One reason for this is that we felt that previous work on this conjecture, whose difficulty
has for a long time been underestimated, has sometimes been sloppy on details. For instance,
the proofs given in \cite{BM} for the groups $G_4$, $G_5$, $G_{12}$ and $G_{25}$ are very
sketchy. In addition,  one caveat
that has repeatedly been overlooked for years is the possibility that $H$ might have \emph{torsion}, a phenomenon
that should not happen in view of the conjecture, but which is hard to rule out {\it a priori} --
and this should not be too surprising in view of the example of some torsion elements inside
the `0-Hecke algebras' that we describe below (see figure \ref{figtorsion} and proposition \ref{propG4}). Because of this, one cannot
use embeddings of $H$ into $H \otimes F$, for
$F = Frac(R)$ the field of fractions of $R$.
This mistake appears in \cite{MALLEMICHEL}:
proposition 2.10 is not correct because of this, and this appears to ruin the strategy explained there of
first proving that $H \otimes F$ has finite dimension and deducing from
it the freeness conjecture (in the notations of \cite{MALLEMICHEL}, deducing conjecture 2.2 from conjecture 2.1). It had already appeared in \cite{BM} \S 4B, in the few
details that are given concerning the proof of the BMR conjecture for $G_4$, $G_5$ and $G_{12}$ : the expressions described there have coefficients
which are not specified, but are claimed to belong to $F$,
 which means that
the authors are dealing with $H \otimes F$ instead of $H$. It also appears in \cite{SPETSES},
proposition 2.2 where the uniqueness of the trace is not actually proved over $R$, as it should be in view of  \cite{SPETSES} 2.1 (2),
but over some $R_{k}$, $k \supset \Z$  (see below for notations).

Concerning 
the known cases of the conjecture among the exceptional groups,
the situation thus heavily depends on the standard of rigorousness and checkability you
are willing to accept: depending on this, you can say that either \emph{almost all} or
\emph{almost none} of the exceptional cases have been proved
(with the exception of the weak version proved by Etingof and Rains for groups of rank 2).
On the 
lax
side, one may say that Brou\'e and Malle proved it for the 4 groups above (Berceanu and Funar independently did the case
of $G_4$ in \cite{FUNAR}, appendix A), that I proved it for $G_{32}$ (rank 4) in \cite{conjcub}, and J. Müller
has announced results 10 years ago involving Linton's algorithm
called `vector enumeration', claiming the result for all groups of rank 2 but $G_{17}$, $G_{18}$, $G_{19}$,
as well as all the groups of rank 3 (including $G_{26}$). The missing cases in large rank would then be $G_{29}$, $G_{31}$, $G_{33}$ and $G_{34}$. On the uncompromising side however, we already mentioned possible mistakes in
\cite{BM}, and Müller's program has not been made publicly available and checkable (so far, none of
the usual software in computer algebra implements vector enumeration \emph{over $R$}).
This is problematic in view, not only of the possible mistakes mentioned above, not only because
the need to trust the scientist's word  is something modern science has been trying to avoid for centuries, but also because of the
very nature of the vector enumeration algorithm. This algorithm is indeed a (clever) variation of the Todd-Coxeter algorithm, and
as such provides no control \emph{a priori} on when it stops if it does. Moreover, the moment it ends depends
a lot on a number of heuristic choices that need to be made inside the specific
implementation of the algorithm.
According to J. Müller (private conversation, Aachen, 2010), it is moreover unclear that the program he wrote would run now on modern computers.

In the current situation, running the welcome risk of being checked and judged with the same severity, we thus stick to the
hardline position, 
 of somewhat provocatively claiming that only the cases of $G_4$, $G_{25}$, $G_{32}$ and now $G_{26}$ have been fully
checked so far. Our hope is to 
encourage people to treat the other cases `by hand', which is a way that usually provides
more information about the algebras under consideration, and also to encourage authors and editors to provide and publish full details for these computations. This would enable people to check and
improve these, or use them in possibly unpredicted ways.

\section{General considerations on the generic Hecke algebras}

Let $W$ be a (pseudo-)complex reflection group, always assumed to be finite, and let $R =  
\Z[ a_{s,i}, a_{s,0}^{-1}] $ where $s$ runs
among a (finite) representative system $\mathcal{P}$ of conjugacy classes
of distinguished reflections
in $W$ and $0 \leq i \leq o(s)-1$, where $o(s)$ is the order of $s$ in $W$.
We let $B$ denote the braid group of $W$, as defined in \cite{BMR}, and recall that
a reflection $s$ is called distinguished if its only nontrivial eigenvalue is $\exp(2 \ii \pi/o(s))$, where
$\ii \in \C$ is the chosen square root of $-1$.

\begin{definition} The generic Hecke algebra is the quotient of the group algebra $R B$ by the relations
$\sigma^{o(s)} - a_{s,o(s)-1}\sigma^{o(s)-1} - \dots - a_{s,0} = 0$ for each braided reflection $\sigma$ associated
to $s$.
\end{definition}

Actually,
it is enough to choose one such relation per conjugacy class of distinguished reflection, as all the
corresponding braided reflections are conjugated one to the other.
In \cite{BMR} was stated the following conjecture.

\begin{conjecture} (BMR conjecture) The generic Hecke algebra $H$ is a free $R$-module of rank $|W|$.
\end{conjecture}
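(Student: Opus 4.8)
We outline a proof of the conjecture in the case $W = G_{26}$ (Theorem~\ref{theoG26}), where $|W| = 1296$. Write $H = H(G_{26})$ and $H' = H(G_{25})$. The starting point is the observation that the $24$ order-$3$ reflections of $G_{26}$ are exactly the reflections of $G_{25}$, and they form a single $G_{26}$-orbit; hence $G_{25}$ is a normal reflection subgroup of $G_{26}$ of index $2$, and the ring of definition $R$ of $H$ is obtained from that of $H'$ by adjoining the two variables attached to the class of distinguished reflections of order $2$. Since $G_{25}$ is among the fully checked cases, $H' \otimes R$ is $R$-free of rank $648$. The plan is the standard two-step one, carried out so as to be checkable by hand or with ordinary computer algebra: first exhibit an explicit $R$-spanning set of $H$ of cardinality $1296$, then certify that it is a basis — and in particular that $H$ is torsion-free — by making it act on a concrete free $R$-module of rank $1296$.

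\emph{Step 1 (a spanning set of size $1296$).} Let $\mathbf{s} \in H$ be the image of a braided reflection of order $2$ and let $A \subseteq H$ be the image of $H' \otimes R$ under the morphism induced by the inclusion of braid groups $B(G_{25}) \into B(G_{26})$; so $A$ is generated by the images of the braided reflections of $G_{25}$, and $A$ is spanned over $R$ by a family of at most $648$ explicit elements coming from a basis of $H'$. In $H$ the element $\mathbf{s}$ is invertible, because $\mathbf{s}^2 = a_{\mathbf{s},1}\mathbf{s} + a_{\mathbf{s},0}$ with $a_{\mathbf{s},0} \in R^{\times}$. Using that $G_{25}$ is normal in $G_{26}$, and the known description of how $B(G_{25})$ sits inside $B(G_{26})$, one gets $\mathbf{s}\,A\,\mathbf{s}^{-1} = A$, hence $\mathbf{s}A = A\mathbf{s}$ and $H = A\langle \mathbf{s}\rangle = A + A\mathbf{s}$, the higher powers of $\mathbf{s}$ folding back via the quadratic relation. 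Multiplying the $648$ spanning elements of $A$ by $1$ and by $\mathbf{s}$ yields an explicit $R$-spanning family $b_1,\dots,b_{1296}$ of $H$.

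\emph{Step 2 (it is a basis).} Let $V = H' \oplus H'$, a free $R$-module of rank $1296$ after the base change. The conjugation action of $\mathbf{s}$ swaps the cosets $G_{25}$ and $G_{25}\mathbf{s}$ and induces an order-$2$ automorphism $\phi$ of $G_{25}$; lifting $\phi$ to $H'$, one defines $R$-linear operators on $V$: the images of the order-$3$ braided reflections act on the first summand by left multiplication in $H'$ and on the second summand by left multiplication twisted by $\phi$, while $\mathbf{s}$ acts by a $2\times 2$ block matrix over $H'$ interchanging the summands, of a shape designed so that its square equals $a_{\mathbf{s},1}\mathbf{s} + a_{\mathbf{s},0}$ and so that the mixed braid relations of $B(G_{26})$ are respected. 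One checks directly that these operators satisfy all the braid relations together with the quadratic and cubic Hecke relations, which defines an algebra morphism $\rho\colon H \to \End_R(V)$. With $v_0 = (1,0) \in V$, Step 1 gives $\rho(H)v_0 = \rho(A)v_0 + \rho(A)\rho(\mathbf{s})v_0$; the first term is the first summand $H' \oplus 0$ (the action of $A$ on the first copy being the regular one) and the second term is the second summand, so the $1296$ vectors $\rho(b_i)v_0$ span $V$. A spanning set of cardinality $n$ of a free module of rank $n$ over a commutative ring is a basis (a surjective endomorphism of $R^{\,n}$ is an isomorphism), so the $\rho(b_i)v_0$ are $R$-independent, hence so are the $b_i$, and the surjection $R^{1296} \onto H$ sending the standard basis to $(b_i)$ is an isomorphism. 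Thus $H$ is $R$-free of rank $1296 = |G_{26}|$.

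\emph{Where the difficulty lies.} Two points carry the real work. The first is the completeness of Step 1: in the absence of a normal-form theorem for $H$, one must verify that $\mathbf{s}$ genuinely normalizes $A$ and that every product of braided reflections reduces into $A + A\mathbf{s}$, which comes down to a finite confluence check against the braid relations of $B(G_{26})$ and the quadratic/cubic relations, and is where the particular shape of the $G_{26}$ diagram enters. The second, and the main obstacle, is pinning down the operators in Step 2 — in particular choosing the correct lift of $\phi$ and the block matrix for $\mathbf{s}$ so that \emph{all} defining relations hold identically over the polynomial ring $R$; once the operators are right, the verification is a bounded symbolic computation that can be carried out and independently checked, which is exactly the point of treating $G_{26}$ explicitly rather than by an uncontrolled enumeration procedure over $R$.
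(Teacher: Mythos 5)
Your proposal takes a genuinely different route from the paper, and while the high-level idea is appealing, there is a serious gap in Step 2 and an incomplete justification in Step 1.

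The paper does not use the normal reflection subgroup $G_{25}\lhd G_{26}$. Instead it filters $\hat A = H$ by the number of occurrences of the quadratic generator $t$, introduces the auxiliary $A_3$-bimodules $\hat A^{(1)}\subset\hat A^{(2)}\subset \hat A^{(2\frac12)}\subset\hat A^{(3)}$ and $\hat B$, where $A_3=\langle s_1,s_2\rangle$ is the image of the $G_4$-Hecke subalgebra (rank $24$ over $R$), and proves through a long chain of rewriting lemmas that $\hat A^{(4)}=\hat A^{(5)}=\hat A$ and that $\hat A$ is generated by $54$ elements over $A_3$, hence by $54\times 24=1296$ elements over $R$. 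The freeness then follows immediately from the paper's earlier Proposition~(1): if $H$ is generated over $R$ by $|W|$ elements, it is automatically free of rank $|W|$. That proposition is proved once and for all using the Cherednik monodromy morphism $H\otimes\mathcal O\to\mathcal O W$ and the fact that a surjective endomorphism of a finite free module is an isomorphism. Your idea of cutting through the computation by going through $H(G_{25})$ and writing $H=A+A\mathbf{s}$ is attractive; if it worked, it would give a shorter spanning set argument using a known case rather than the parabolic $A_3$.

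Where your proof does not close:

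\emph{Step 2 is not a proof.} You posit a representation $\rho\colon H\to\End_R(V)$ with $V=H'\oplus H'$, where the operators are ``of a shape designed so that its square equals $a_{\mathbf s,1}\mathbf s + a_{\mathbf s,0}$ and so that the mixed braid relations \dots\ are respected,'' and you say ``one checks directly.'' But the operators are never written down, and you yourself describe finding them as ``the main obstacle.'' Exhibiting such a $\rho$ with $\rho(H)v_0=V$ is essentially \emph{equivalent} to the freeness statement (the correct block matrices encode the structure constants of $H$ in the basis $A+A\mathbf s$, which you cannot read off without already knowing that $A+A\mathbf s$ is a basis). As written this step assumes what is to be proved, and there is no a priori reason that the natural guesses (e.g.\ left multiplication twisted by a lift of the conjugation automorphism) will satisfy the relation $ts_2ts_2=s_2ts_2t$ on $V$. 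Moreover, this whole step is unnecessary: the paper's general proposition shows that once you have a spanning set of cardinality $|W|$, freeness is automatic; you should invoke it rather than attempt an independent certification.

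\emph{Step 1 is plausible but unsupported at the crucial point.} The normality of $G_{25}$ in $G_{26}$ does not, on its own, imply that $\mathbf s\,A\,\mathbf s^{-1}=A$ inside $H$: normality does not lift for free to braid groups, and even if one takes $B(G_{25})$ generated by $s_1,s_2$ and (say) $ts_2t^{-1}$, conjugating the third generator by $t$ produces $t^2 s_2 t^{-2}$, which is not obviously in $A$; one must use the quadratic relation on $t$ and a nontrivial computation inside $A$. So the claim $H=A+A\mathbf s$ rests on a verification you have not carried out, and this is where the bulk of the computational content must go. (You should also make precise which braid group inclusion $B(G_{25})\hookrightarrow B(G_{26})$ you are using and why it induces a well-defined $R$-algebra morphism $H'\otimes R\to H$.) If you establish $H=A+A\mathbf s$ explicitly, you then have a spanning set of size at most $2\cdot 648=1296$, and you can conclude directly from the paper's Proposition~(1) without any Step 2.
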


\subsection{Root parameters vs. coefficient parameters}

Usually, the Hecke algebra associated to a complex reflection group
is defined over the ring $\tilde{R} = \Z[u_{s,i},u_{s,i}^{-1}]$.
More precisely,
this Hecke algebra $\tilde{H}$ is
defined as the quotient of the group algebra $\tilde{R} B$ of the braid group by
the ideal generated by the relations $\prod_{i=0}^{o(s)-1} (\sigma - u_{s,i}) = 0$
where the $\sigma$ are the braided reflections associated to $s$.
Note that $R$ is the subring $\tilde{R}^{\mathfrak{S}}$ of invariants of $\tilde{R}$ under the natural action
of $\mathfrak{S} = \prod_{s \in \mathcal{P}} \mathfrak{S}_{o(s)}$.

The Broué-Malle-Rouquier conjecture for $H$, namely that $H$ is a free $R$-module of rank $|W|$,
clearly implies that $\tilde{H}$ is a free $\tilde{R}$-module of rank $|W|$. The converse being less obvious, and since
most authors
including \cite{BM} use the Hecke algebra over $\tilde{R}$ instead of $R$, we
prove it here. Note that J. M\"uller in \cite{JMUL} uses the definition over $R$ for his computer calculations.

\begin{lemma} $\tilde{H}$ is a free $\tilde{R}$-module of rank $N$ if and only if $H$ is a free $R$-module of rank $N$.
\end{lemma}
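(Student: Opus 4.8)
The plan is to exploit the fact that $\tilde R$ is a finite \emph{free} extension of $R$, so that $\tilde H$ is obtained from $H$ by a base change that is simultaneously faithfully flat and split. First I would pin down the precise relation between the two algebras: comparing the relation $\prod_{i=0}^{o(s)-1}(\sigma-u_{s,i})=0$ defining $\tilde H$ with the relation $\sigma^{o(s)}-a_{s,o(s)-1}\sigma^{o(s)-1}-\dots-a_{s,0}=0$ defining $H$ shows that, under the inclusion $R\subset\tilde R$, each $a_{s,i}$ is, up to sign, an elementary symmetric function of the $u_{s,i}$, so that the two displayed elements are literally equal in $\tilde R B$. Since $\tilde R$ is flat over $R$, it follows formally that $\tilde H\cong H\otimes_R\tilde R$. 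This already yields the ``only if'' direction (also noted in the excerpt): a base change of a free module of rank $N$ is free of rank $N$.

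For the ``if'' direction I would use Artin's theorem on symmetric functions in its Laurent version: $\Z[u_0^{\pm1},\dots,u_{n-1}^{\pm1}]$ is free over its ring of $\mathfrak{S}_n$-invariants $\Z[e_1,\dots,e_{n-1},e_n^{\pm1}]$, with a basis of staircase monomials, one of which is $1$. Tensoring these bases together over $s\in\mathcal{P}$ shows that $\tilde R$ is a free $R$-module of finite rank $m=\prod_{s}o(s)!$ admitting an $R$-basis that contains $1$; hence $R$ is a direct summand of $\tilde R$ as an $R$-module, and so $H=H\otimes_R R$ is an $R$-module direct summand of $H\otimes_R\tilde R=\tilde H$. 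If now $\tilde H$ is free of rank $N$ over $\tilde R$, then it is free of rank $mN$ over $R$, so $H$ is a direct summand of a finitely generated free $R$-module, hence a finitely generated projective $R$-module; and since $\tilde R$ is faithfully flat over $R$, comparing the fibre of $H$ at a prime of $R$ with the fibre of $\tilde H$ at a prime of $\tilde R$ above it shows that $H$ has constant rank $N$.

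The only step left, and the single genuinely external input, is that a finitely generated projective $R$-module of rank $N$ is free of rank $N$. Since $R=\Z[a_{s,0}^{\pm1}]\big[a_{s,i}:1\le i\le o(s)-1\big]$ is a polynomial ring over a Laurent polynomial ring over $\Z$, the Quillen--Suslin theorem together with Swan's extension to Laurent variables gives that every finitely generated projective $R$-module is free, and as $R$ is a domain (so $\mathrm{Spec}\,R$ is connected) such a module of rank $N$ is free of rank $N$. The main obstacle is really just being careful with this last step --- verifying that $R$ has the stated shape and that the rank is indeed constant equal to $N$ --- since everything else is formal. I would also point out that the torsion pathology emphasized in the introduction does not interfere here: the map $H\to\tilde H$ is \emph{split injective} rather than a localization, so no information is lost in passing from $H$ to $\tilde H$.
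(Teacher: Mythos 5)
Your proposal is correct, and for the crucial ``if'' direction it takes a genuinely different route from the paper.  The two proofs agree up to the point of establishing $\tilde H\cong H\otimes_R\tilde R$ (the paper gets this from right-exactness of $-\otimes_R\tilde R$ alone, without invoking flatness, which is the cleaner hypothesis, though yours is fine too) and that $H$ is a direct summand of the finitely generated free $R$-module $\tilde H$ (the paper observes $\tilde H\simeq H^{|\mathfrak S|}$ as $R$-modules from $\tilde R\simeq R^{|\mathfrak S|}$; you observe $R$ is split off $\tilde R$ via a basis containing $1$ --- same conclusion).  From there the paths diverge.  The paper exploits the $\mathfrak S$-module structure: writing $R=\tilde R^{\mathfrak S}=\Hom_{R\mathfrak S}(R,\tilde R)$, noting $R$ is finitely presented over the noetherian ring $R\mathfrak S$, and using the Bourbaki base-change isomorphism $\Hom_{R\mathfrak S}(R,\tilde R)\otimes_R H\simeq\Hom_{R\mathfrak S}(R,\tilde R\otimes_R H)$ for $H$ flat, to identify $H$ directly with $\Hom_{R\mathfrak S}(R,\tilde R^N)=R^N$ --- an invariant-theoretic descent argument that stays entirely within elementary commutative algebra and does \emph{not} need any Quillen--Suslin-type input at this stage.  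You instead stop at ``$H$ is finitely generated projective of constant rank $N$'' and then appeal to the Quillen--Suslin/Swan package (plus $K_0(R)\cong\Z$, or the fact that $R$ is a domain) to conclude freeness.  That is a heavier external theorem, but it is perfectly valid, and in fact the paper itself invokes exactly this package two paragraphs later (in the proposition ``$H$ finitely generated $\Rightarrow$ ($H$ free $\Leftrightarrow$ $H$ flat)''), so you are not introducing any dependency the paper doesn't already carry.  A small simplification available to you: once Quillen--Suslin/Swan says $H\cong R^M$ for some $M$, the identity $\tilde R^M\cong H\otimes_R\tilde R\cong\tilde R^N$ forces $M=N$ immediately, which is shorter than the residue-field comparison you sketch (though that works too).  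Your closing remark about the torsion pathology not interfering because $H\to\tilde H$ is split injective rather than a localization is exactly the right point to make, and is implicit in the paper's insistence on working over $R$ rather than $\tilde R$ or a fraction field.
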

\begin{proof}
Let $I$ denote the two-sided ideal of $R B$ generated by the
$\sigma^{o(s)} - a_{s,o(s)-1}\sigma^{o(s)-1} - \dots - a_{s,0}$.
By definition $\tilde{H}$ is the quotient of $\tilde{R} B = R B \otimes_{R} \tilde{R}$
by the image of $I \otimes_{R} \tilde{R}$ in $R B \otimes_{R} \tilde{R}$
which, by right-exactness of the tensor product, is $H \otimes_{R} \tilde{R}$.
Thus $\tilde{H} = H \otimes_{R} \tilde{R}$. If $H = R^N$ then
clearly $\tilde{H} = \tilde{R}^N \otimes_{R} \tilde{R} = \tilde{R}^N$ is free of rank $N$.

Conversely, we assume that $\tilde{H} = \tilde{R}^N$.
First note that $\tilde{R}$
is a free $R$-module of rank $|\mathfrak{S}|$
(see e.g. \cite{BOURBALG}
chapitre 4, \S 6, n$^o$ 1, th\'eor\`eme 1), hence
$\tilde{H} = R^{N |\mathfrak{S}|}$ as a $R$-module.
Moreover, $\tilde{H} = H \otimes_{R} \tilde{R} \simeq  H \otimes_{R} R^{|\mathfrak{S}|}
 \simeq H^{|\mathfrak{S}|}$ as an $R$-module. In particular $H$ is a direct factor of the
 free $R$-module $\tilde{H}$ and is thus projective, hence flat, as an $R$-module.
 
 Now note that $H = H \otimes_{R} R = H \otimes_{R} \tilde{R}^{\mathfrak{S}} = H \otimes_{R} 
  \Hom_{R \mathfrak{S}}(R,\tilde{R})$
 where $R$ is considered as a trivial $R \mathfrak{S}$-module.
 Since $R$ is a noetherian ring,
 $R \mathfrak{S}$ is noetherian as an $R$-module and thus left-noetherian as a ring,
 hence $R$ admits a finite presentation as an $R \mathfrak{S}$-module.
 Since $R$ is flat, it follows from general arguments (see \cite{BOURBAC1} ch. 1, \S 2, num\'ero 9, prop. 10)
 that 
 $$
 \Hom_{R \mathfrak{S}}(R, H \otimes_{R}\tilde{R}) \simeq H \otimes_{R} \Hom_{R \mathfrak{S}}(R,\tilde{R})
= H \otimes_{R} \tilde{R}^{\mathfrak{S}} = H. $$
But the LHS is $ \Hom_{R \mathfrak{S}}(R, \tilde{H}) =  \Hom_{R \mathfrak{S}}(R, \tilde{R}^N) = 
\Hom_{R \mathfrak{S}}(R, \tilde{R})^N = R^N$ and this proves the claim.
\end{proof}

\subsection{The BMR conjecture and Tits' deformation theorem}

Let $F = \Q(a_{s,i})$ denote the field of fractions of $R$, and $\overline{F}$ an
algebraic closure. 
For $k$ a unital ring, we let $R_{k} = R \otimes_{\Z} k$, $H_{k} = H \otimes_{R} R_{k}$. We let
$\C$ denote the field of complex numbers.
Part (1) of the next proposition is in \cite{BMR} (see the proof of theorem 4.24 there).

\begin{proposition} {\ }

\begin{enumerate}
\item If $H$ is generated as a module over $R$ by $|W|$ elements, then $H$ is a free $R$-module of rank $|W|$.
\item If $H$ is finitely generated as a module over $R$, then $H \otimes_{R} \overline{F} \simeq \overline{F} W$.
\item If $H_{\C}$ is finitely generated as a module over $R_{\C}$, then $H \otimes_{R} \overline{F} \simeq \overline{F} W$.
\end{enumerate}

\end{proposition}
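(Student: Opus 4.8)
The plan is to take part (1) as granted and to reduce parts (2) and (3) to a single statement over the characteristic‑zero part $R_{\mathbf Q}=R\otimes_{\mathbf Z}\mathbf Q$ of the base ring, which is then settled by Tits' deformation theorem. Part (1) is \cite{BMR} (proof of theorem 4.24); I only record the ingredient I shall reuse. Let $\theta_0\colon R\to\mathbf Q$ be the trivial specialisation, $a_{s,0}\mapsto 1$ and $a_{s,i}\mapsto 0$ for $1\le i\le o(s)-1$. Under $\theta_0$ the defining relation of $H$ becomes $\sigma^{o(s)}=1$, so $H\otimes_{R,\theta_0}\mathbf Q$ is the quotient of $\mathbf Q B$ by the relations $\sigma^{o(s)}=1$, that is $\mathbf Q[B/N]$ with $N$ the normal subgroup of $B$ generated by the $\sigma^{o(s)}$; since $B/N\simeq W$ (by \cite{BMR} and the now‑settled conjectures on $B$), one gets $H\otimes_{R,\theta_0}\mathbf Q\simeq\mathbf Q W$, of dimension $|W|$. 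Granting in addition that $\dim_F(H\otimes_R F)=|W|$, any $R$‑linear surjection $R^{|W|}\onto H$ has kernel killed by a nonzero element of $R$, hence zero, so $H\simeq R^{|W|}$; the force of \cite{BMR} lies precisely in establishing that dimension equality.

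\emph{Parts (2) and (3).} It suffices to prove: \emph{if $H_{\mathbf Q}:=H\otimes_R R_{\mathbf Q}$ is finitely generated over $R_{\mathbf Q}$, then $H\otimes_R\overline F\simeq\overline F W$.} Indeed, $H$ finitely generated over $R$ forces $H_{\mathbf Q}$ finitely generated over $R_{\mathbf Q}$; and if $H_{\mathbf C}$ is finitely generated over $R_{\mathbf C}=R_{\mathbf Q}\otimes_{\mathbf Q}\mathbf C$, then, $R_{\mathbf C}$ being faithfully flat over $R_{\mathbf Q}$, faithfully flat descent of finite generation gives the same for $H_{\mathbf Q}$; moreover the fraction field of $R_{\mathbf Q}$ is $F$, and $H\otimes_R\overline F=H_{\mathbf Q}\otimes_{R_{\mathbf Q}}\overline F$ because $\overline F$ has characteristic $0$, so the conclusion is identical. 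Now $\theta_0$ factors through $R_{\mathbf Q}$; let $\mathfrak p\subset R_{\mathbf Q}$ be its kernel, a prime with residue field $\mathbf Q$. Localising, $(H_{\mathbf Q})_{\mathfrak p}$ is finitely generated over the noetherian regular local ring $(R_{\mathbf Q})_{\mathfrak p}$ with $(H_{\mathbf Q})_{\mathfrak p}\otimes_{(R_{\mathbf Q})_{\mathfrak p}}\mathbf Q\simeq\mathbf Q W$ of dimension $|W|$, so by Nakayama's lemma it is generated by $|W|$ elements; tensoring by $F$ gives $\dim_F(H\otimes_R F)\le|W|$.

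For the opposite inequality one imports enough irreducible $\overline F$‑representations of $H$; the quickest route is the rational Cherednik algebra of $W$ and the $\mathrm{KZ}$ functor of \cite{GGOR}, both defined unconditionally: for generic value of the Cherednik parameter $\mathrm{KZ}$ sends the $|\Irr(W)|$ standard modules to pairwise non‑isomorphic simple modules of dimensions $\dim\lambda$ over the Hecke algebra with parameters $\exp(2\pi\ii c_{s,j})$, whence that algebra has dimension $\ge\sum_{\lambda}(\dim\lambda)^2=|W|$; such parameters being Zariski‑dense and the fibre dimension of $H_{\mathbf C}$ upper semicontinuous, the generic fibre has dimension $\ge|W|$, i.e. $\dim_F(H\otimes_R F)\ge|W|$. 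Hence $\dim_F(H\otimes_R F)=|W|$, so the kernel of $(R_{\mathbf Q})_{\mathfrak p}^{\,|W|}\onto(H_{\mathbf Q})_{\mathfrak p}$ vanishes after tensoring by $F$, hence vanishes, so $(H_{\mathbf Q})_{\mathfrak p}$ is free of rank $|W|$ over the integrally closed noetherian domain $(R_{\mathbf Q})_{\mathfrak p}$ with semisimple special fibre $\mathbf Q W$. Tits' deformation theorem then identifies $H\otimes_R F$ as split semisimple over $\overline F$ with the same irreducibles and dimensions as $\overline{\mathbf Q}W$, i.e. $H\otimes_R\overline F\simeq\overline F W$.

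Everything here except one point is soft commutative algebra; the real input, and the step I expect to be the main obstacle, is the lower bound $\dim_{\overline F}(H\otimes_R\overline F)\ge|W|$. Invoking the Cherednik algebra as a black box is unsatisfactory by the standards adopted in this paper, which is why for $G_{26}$ the lower bound will instead be obtained by an explicit construction of the representations.
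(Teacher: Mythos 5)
Your proof is correct in outline but takes a genuinely different route. The paper constructs the Cherednik/KZ monodromy morphism $\varphi\colon H\otimes\C[[h]]\to\C[[h]]W$, shows it is surjective by Nakayama (it reduces to the identity modulo $h$, using that the trivial specialisation of $H$ is $\C W$), quotients $H\otimes\C[[h]]$ by its torsion to obtain a free module over the complete DVR $\mathcal{O}=\C[[h]]$, and notes that the composite $\C W \simeq H_{\mathcal{O}}\otimes\C \twoheadrightarrow \hat{H}_{\mathcal{O}}\otimes\C \twoheadrightarrow \C W$ is the identity, forcing $\hat{H}_{\mathcal{O}}\otimes\C\simeq\C W$; Tits over $\mathcal{O}$ then finishes. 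The existence of the surjection $\varphi$ \emph{over all of $\mathcal{O}$} encodes both the upper bound (generated by $|W|$ elements after killing torsion) and the lower bound (surjection onto a free module of rank $|W|$) in one stroke, so no separate dimension count is needed. You instead work over $(R_{\Q})_{\mathfrak p}$, get the upper bound from Nakayama at the trivial prime, and establish the lower bound $\dim_F(H\otimes F)\ge|W|$ separately via the KZ functor of \cite{GGOR} plus upper semicontinuity of fiber dimension. Both routes lean on the same deep input (the Cherednik/KZ construction), but the paper's version — a module map into $\mathcal{O}W$ over a complete DVR rather than a count of simples at generic complex parameters — is more economical and bypasses the semicontinuity step. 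One small gap: at the end you invoke Tits' deformation over $(R_{\Q})_{\mathfrak p}$, which is a regular local ring of dimension $>1$, not a DVR; the statement you need requires either completeness or a DVR to lift idempotents. This is easily repaired — pick a discrete valuation ring $\mathcal{V}\subset F$ dominating $(R_{\Q})_{\mathfrak p}$ (Chevalley), base-change to $\hat{\mathcal V}$, and apply the complete-DVR form of Tits there — but it should be made explicit.
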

\begin{proof} Let $\mathcal{O} = \C[[h]]$, and $K = \C((h))$ its field of fractions. By the Cherednik
monodromy construction, one can build  a morphism $\varphi : H \otimes \mathcal{O} \to \mathcal{O} W \simeq \mathcal{O}^{|W|}$,
where the morphism $R \to \mathcal{O}$ defining the tensor product $H \otimes \mathcal{O}$ depends on the collection
of parameters involved in the monodromy construction. We take these parameters to be linearly independent
over $\Q$, so that the morphism $R \to \mathcal{O}$ is injective. 
Modulo $h$, the morphism $\varphi$ is the identity of $\C W$, hence the original morphism is surjective by Nakayama's lemma.

Let $N = |W|$.
If $H$ is generated by $N$ elements, then there exists a surjective morphism of $R$-modules
$\pi : R^N \onto H$, which induces $\pi \otimes \mathcal{O} : \mathcal{O}^N \onto H \otimes \mathcal{O}$.
Then $\varphi \circ (\pi \otimes \mathcal{O}) :  \mathcal{O}^N  \onto \mathcal{O} W \simeq \mathcal{O}^N$. Such a surjective morphism between two free modules of the same finite rank is necessarily an isomorphism, hence $\pi \circ \mathcal{O}$
is injective, and in particular $\pi$ is injective. This means $H \simeq R^N$, that is (1). Clearly (3) implies (2), so we
prove (3).

For this purpose we consider $H_{\mathcal{O}} = H_{\C} \otimes \mathcal{O}$, and $\pi_{\mathcal{O}} : \mathcal{O}^{N'} \onto
H_{\mathcal{O}}$. Let $H_{\mathcal{O}}^0$ the torsion submodule of $H_{\mathcal{O}}$. Since
$H_{\mathcal{O}}$ is finitely generated over the principal ring $\mathcal{O}$,
we have that $\hat{H_{\mathcal{O}}} = H_{\mathcal{O}}/H_{\mathcal{O}}^0$
is a finitely generated \emph{free} $\mathcal{O}$-module.
$$
\xymatrix{
H_{\mathcal{O}} \ar@{->>}[r]^{\varphi} & \mathcal{O} W \ar[d]^{h=0} \\
H_{\mathcal{O}}^0 \ar@{^{(}->}[u] \ar[ur]^{0} \ar[r] & \C W 
}
$$
By the above commutative diagram, the specialization morphism $H_{\mathcal{O}} \onto \C W$ factors through $\hat{H_{\mathcal{O}}}$,
hence we can apply Tits' deformation theorem to the free $\mathcal{O}$-module $\hat{H}_{\mathcal{O}}$ and get $\hat{H_{\mathcal{O}}}  \otimes \overline{K} \simeq \overline{K} W$.
Since $\hat{H_{\mathcal{O}}}  \otimes \overline{K} \simeq H_{\mathcal{O}} \otimes \overline{K}$ this
means $H_{\mathcal{O}} \otimes \overline{K} \simeq \overline{K} W$, 
hence $H \otimes \overline{F}$ is a semisimple algebra with the same numerical invariants (because
$H_{\mathcal{O}} \otimes \overline{K} = H \otimes \overline{K}$ is an extension of scalars, since $\overline{F} \into
\overline{K}$), and this implies
$H \otimes \overline{F} \simeq \overline{F} W$.



\end{proof}

Recall that every finitely generated flat $R$-module is projective ; moreover, as a consequence of Swan's big rank
theorem (see \cite{SWAN}, and \cite{LAM}, ch. 5) every finitely
generated projective $R$-module of rank at least $2$ is actually free and, because $\Z$
is a regular ring, $K_0 (R) \simeq K_0(\Z) = \Z$ which implies that also the rank 1 projective
modules are free (see e.g. \cite{LAM}, ch. 5 lemma 4.4 and ch.1 cor. 6.7). We summarize this as follows.

\begin{proposition} If $H$ is finitely generated as an $R$-module, then
$H$ is free of rank $|W|$ if and only if it is flat.
\end{proposition}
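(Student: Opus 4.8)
The plan is to combine the preceding proposition with the structure theory of finitely generated modules over the regular ring $R$ that has just been recalled.

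First I would recall the standing hypothesis: $H$ is finitely generated as an $R$-module. One direction is immediate: if $H$ is a free $R$-module of rank $|W|$, then $H$ is in particular flat, since every free module is flat. So the content is in the converse.

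For the converse, assume $H$ is flat. Since $H$ is finitely generated and flat over $R$, it is projective over $R$ (this is the standard fact that finitely presented flat modules are projective, and over the noetherian ring $R$ finitely generated modules are finitely presented). Now $R = \Z[a_{s,i}, a_{s,0}^{-1}]$ is a regular ring, so by Swan's big rank theorem together with the computation $K_0(R) \simeq K_0(\Z) \simeq \Z$ recalled just above, every finitely generated projective $R$-module is free; hence $H$ is a free $R$-module, say of rank $r$. It remains to identify $r$. For this I would pass to the field of fractions: by part (2) of the preceding proposition, since $H$ is finitely generated over $R$ we have $H \otimes_R \overline{F} \simeq \overline{F} W$, which has dimension $|W|$ over $\overline{F}$. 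On the other hand $H \otimes_R \overline{F} \simeq \overline{F}^{\,r}$ because $H$ is free of rank $r$. Comparing dimensions gives $r = |W|$, so $H$ is free of rank $|W|$, as claimed.

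There is no real obstacle here; the proposition is essentially a bookkeeping corollary of the two ingredients already in place, namely the projectivity-to-freeness results for $R$ (Swan plus $K_0(R) = \Z$) and part (2) of the preceding proposition which pins down the rank via Tits' deformation theorem. The only point requiring a word of care is the implication "finitely generated flat $\Rightarrow$ projective", which uses that $R$ is noetherian so that finite generation yields finite presentation.
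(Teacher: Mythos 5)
Your argument is exactly what the paper has in mind: the proposition is stated as a summary of the preceding remarks (finitely generated flat $\Rightarrow$ projective over the noetherian ring $R$, then projective $\Rightarrow$ free via Swan's theorem and $K_0(R)\simeq\Z$), with the rank pinned down by part (2) of the earlier proposition via base change to $\overline{F}$. Your write-up makes the rank computation explicit, but the route is the same as the paper's.
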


 If it is known that $H$ is finitely generated, the BMR conjecture thus becomes a local condition. For
 a prime ideal $\mathfrak{p}$ of $R$, and $M$ an $R$-module, let $R_{\mathfrak{p}}$ denote
 the localization of $R$ at $\mathfrak{p}$ and $M_{\mathfrak{p}} = M \otimes_{R} R_{\mathfrak{p}}$.

In the specific neighborhoods of $\mathrm{Spec}\, R$ corresponding to the specializations $H  \to k W$,
the following can be proved.

\begin{proposition} Let $k$ be a field. 
Let $\mathfrak{m} = \Ker(R \to k)$ be the maximal ideal defined by $a_{s,i} \mapsto  0$ for $i >0$ and $a_{s,0} \mapsto 1$.
 If $H_{\mathfrak{m}}$ is finitely generated as a $R_{\mathfrak{m}}$-module (for instance if $H$ is finitely
 generated as a $R$-module), then it
 is free of rank $|W|$.
 \end{proposition}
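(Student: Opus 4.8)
The plan is to deduce the statement from part (1) of the preceding proposition, carried out locally at $\mathfrak m$. First I would identify the fibre of $H$ at $\mathfrak m$. Writing $\kappa = R_\mathfrak m/\mathfrak m R_\mathfrak m$ for the residue field, the substitution $a_{s,i}\mapsto 0$ ($i>0$) and $a_{s,0}\mapsto 1$ turns each relation $\sigma^{o(s)}=a_{s,o(s)-1}\sigma^{o(s)-1}+\dots+a_{s,0}$ into $\sigma^{o(s)}=1$, so that $H_\mathfrak m\otimes_{R_\mathfrak m}\kappa = H\otimes_R\kappa\simeq\kappa W$, a $\kappa$-vector space of dimension $|W|$. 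Since $H_\mathfrak m$ is, by hypothesis, a finitely generated module over the local ring $R_\mathfrak m$, Nakayama's lemma lets us lift a $\kappa$-basis of this fibre to a generating family of $H_\mathfrak m$ of cardinality $N:=|W|$; denote by $\pi : R_\mathfrak m^N\onto H_\mathfrak m$ the corresponding surjection.

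The heart of the argument is to transport the Cherednik monodromy morphism $\varphi : H\otimes\mathcal O\onto\mathcal O W\simeq\mathcal O^N$ (with $\mathcal O=\C[[h]]$) from the proof of the previous proposition to the local ring $R_\mathfrak m$. The key point is that the structure morphism $R\to\mathcal O$ factors through $R_\mathfrak m$. Indeed, reduced modulo $h$ the morphism $\varphi$ is the identity of $\C W$, so in $\C W$ the relation above becomes $\overline{a_{s,0}}+\overline{a_{s,1}}\,s+\dots=1$; since $1,s,\dots,s^{o(s)-1}$ are linearly independent in $\C W$, this forces the composite $R\to\mathcal O\to\mathcal O/h\mathcal O=\C$ to send $a_{s,i}\mapsto 0$ for $i>0$ and $a_{s,0}\mapsto 1$. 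Its kernel is therefore the ideal generated by the $a_{s,i}$ ($i>0$) and by $a_{s,0}-1$, which is contained in $\mathfrak m$ regardless of the characteristic of $k$; hence $R\setminus\mathfrak m$ is mapped into $\mathcal O^\times$ and $R\to\mathcal O$ factors through $R_\mathfrak m$, as claimed. As this map is moreover injective (recall $R\to\mathcal O$ is injective, the monodromy parameters being chosen $\Q$-linearly independent), so is $R_\mathfrak m\to\mathcal O$, and $H\otimes_R\mathcal O = H_\mathfrak m\otimes_{R_\mathfrak m}\mathcal O$.

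With this in place the argument of part (1) applies essentially unchanged. Base-changing $\pi$ along $R_\mathfrak m\to\mathcal O$ yields a surjection $\pi_\mathcal O : \mathcal O^N\onto H\otimes_R\mathcal O$, and $\varphi\circ\pi_\mathcal O : \mathcal O^N\onto\mathcal O W\simeq\mathcal O^N$ is a surjective endomorphism of a finitely generated $\mathcal O$-module, hence an isomorphism. Thus $\pi_\mathcal O$ has a left inverse and is surjective, so it is bijective; since the natural map $R_\mathfrak m^N\into\mathcal O^N$ is injective and the evident square commutes, $\pi$ is injective as well. Being also surjective, $\pi$ is an isomorphism, so $H_\mathfrak m$ is free of rank $|W|$.

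Everything is then formal; the one step that requires an actual argument — and hence the main obstacle — is the second paragraph, namely checking that $\mathfrak m$ lies over the maximal ideal of $\mathcal O$ so that the monodromy construction can be localized at $\mathfrak m$. The finite generation hypothesis is used exactly once, to make Nakayama's lemma applicable in the first step.
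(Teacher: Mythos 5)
Your proof is correct, but it takes a genuinely different route from the paper's. The paper obtains this proposition as a corollary of the \emph{next} proposition (the one on $\mathfrak p$-adic completions --- the ``formal flatness'' statement), proved by a density argument showing $R\check W$ is dense in $\widehat H$ together with the KZ construction, and then descends from $\widehat R$ to $R_\mathfrak m$ by citing Bourbaki. You instead run the argument of part (1) of the Tits-deformation proposition directly over the local ring $R_\mathfrak m$: Nakayama supplied by the finite-generation hypothesis gives a surjection $R_\mathfrak m^{|W|}\onto H_\mathfrak m$, and the key new observation --- that the Cherednik monodromy morphism $R\to\C[[h]]$ has kernel $\mathfrak p$ on reduction mod $h$, hence maps $R\setminus\mathfrak m$ into $\C[[h]]^\times$ and so factors through $R_\mathfrak m$ --- lets you base-change the surjection to $\C[[h]]$, compose with $\varphi$, and conclude injectivity as in (1). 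What the paper's approach buys is the completion statement as an intermediate result of independent interest (it holds without any finite-generation hypothesis on $H$); what yours buys is a more direct, self-contained deduction that avoids the passage to completions and the accompanying commutative-algebra citation. Both arguments lean on the same Cherednik monodromy input, and your identification of where the finite-generation hypothesis enters (exactly once, for Nakayama) is accurate.

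One small point worth flagging, though it does not affect correctness: the proposition implicitly requires $k$ to have positive characteristic, since $R/\mathfrak m$ is the prime ring of $k$ and must be a field for $\mathfrak m$ to be maximal. Your argument is insensitive to this (you only use $\mathfrak p\subset\mathfrak m$, which holds in all characteristics), so no repair is needed.
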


 The proposition above is a consequence
(see e.g. \cite{BOURBAC1} ch. III \S 3 no. 5, cor. 2 prop. 9) of the next one,
basically deduced from \cite{BMR} by Etingof and Rouquier (unpublished).
This property is essentially what P. Etingof calls `formal flatness'.

\begin{proposition}
Let $\mathfrak{p} = \Ker( R \to \Z)$ be the prime ideal defined by $a_{s,i} \mapsto  0$ for $i >0$ and $a_{s,0} \mapsto 1$, and let $\widehat{R}$, $\widehat{H}$, etc., 
denote the completions w.r.t. the $\mathfrak{p}$-adic topology.
Then $\widehat{H}$ is a free $\widehat{R}$-module of rank $|W|$.
\end{proposition}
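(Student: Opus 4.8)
The plan is to determine $\widehat H$ by a rank comparison: produce a surjection $\widehat R^{|W|}\onto\widehat H$ (so that $\mathrm{rank}_{\widehat R}\widehat H\le|W|$), then use the Cherednik monodromy to bound the \emph{generic} rank from below by $|W|$, and finally deduce that the kernel of the surjection vanishes. Two facts will be used throughout. First, the $\mathfrak p$-specialization of the Hecke relations is $\sigma^{o(s)}=1$, so $H/\mathfrak p H$ is the quotient of $\Z B$ by the order relations, which by the known presentation of $W$ (as the quotient of its braid group by the order relations) is $\Z W$, a free $\Z$-module of rank $|W|$. Second, choosing the $a_{s,i}$ ($i>0$) and the $a_{s,0}-1$ as generators of $\mathfrak p$ identifies $\widehat R$ with a power-series ring $\Z[[y_1,\dots,y_m]]$ over $\Z=R/\mathfrak p$; in particular $\widehat R$ is a Noetherian domain, it is $\widehat{\mathfrak p}$-adically complete, and $\widehat{\mathfrak p}$ lies in its Jacobson radical (every maximal ideal of $\widehat R$ contains it).

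First I would show $\widehat H$ is generated by $|W|$ elements over $\widehat R$. Lift a $\Z$-basis of $H/\mathfrak p H\simeq\Z W$ to elements $h_1,\dots,h_{|W|}\in H$. A short induction on $n$, using $\mathfrak p^{2n}\subseteq\mathfrak p^{n+1}$, shows that the $h_k$ generate $H/\mathfrak p^n H$ as an $R/\mathfrak p^n$-module for every $n$; passing to the inverse limit (a routine argument, since the associated graded of the comparison map $\widehat R^{|W|}\to\widehat H$ is then surjective and the modules are complete and separated) yields a surjection $\pi:\widehat R^{|W|}\onto\widehat H$. In particular $\widehat H$ is a finitely generated $\widehat R$-module and $\mathrm{rank}_{\widehat R}\widehat H\le|W|$.

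For the reverse inequality I would reuse the Cherednik monodromy morphism $\varphi:H\otimes_R\mathcal O\onto\mathcal O W$ built in the proof of the Tits-deformation proposition above, with $\mathcal O=\C[[h]]$ and $R\to\mathcal O$ the formal curve $a_{s,i}\mapsto\lambda_{s,i}h$ ($i>0$), $a_{s,0}\mapsto 1+\lambda_{s,0}h$. This curve passes through the point $\mathfrak p$, so $R\to\mathcal O$ factors as $R\to\widehat R\to\mathcal O$, and for a sufficiently generic choice of the $\lambda$'s (e.g. with algebraically independent coordinate ratios) the induced map $\widehat R\to\mathcal O$ is injective. Since $\mathcal O W$ is $h$-adically complete, $\varphi$ induces, after $h$-adic completion of the source (here one uses that $H/\mathfrak p^n H$ is finitely generated over $R/\mathfrak p^n$, from the previous paragraph), a surjection $\widehat H\otimes_{\widehat R}\mathcal O\onto\mathcal O^{|W|}$; as the source is generated by $|W|$ elements over $\mathcal O$, this forces $\widehat H\otimes_{\widehat R}\mathcal O\simeq\mathcal O^{|W|}$, free of rank $|W|$ over the discrete valuation ring $\mathcal O$. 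Because $\widehat R$ is a domain and $\widehat R\into\mathcal O$, this embedding extends to $\mathrm{Frac}(\widehat R)\into\C((h))$, and identifying $\widehat H\otimes_{\widehat R}\C((h))$ with $(\widehat H\otimes_{\widehat R}\mathrm{Frac}(\widehat R))\otimes_{\mathrm{Frac}(\widehat R)}\C((h))$ gives $\mathrm{rank}_{\widehat R}\widehat H=\dim_{\C((h))}\mathcal O^{|W|}\otimes_{\mathcal O}\C((h))=|W|$.

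Combining the two paragraphs, $\mathrm{rank}_{\widehat R}\widehat H=|W|$; applying $-\otimes_{\widehat R}\mathrm{Frac}(\widehat R)$ to $0\to\ker\pi\to\widehat R^{|W|}\xrightarrow{\pi}\widehat H\to 0$ forces $\ker\pi$ to have rank $0$, and, being a submodule of a free module over a domain, $\ker\pi$ is torsion-free, hence $0$. So $\pi$ is an isomorphism and $\widehat H\simeq\widehat R^{|W|}$. The delicate point, I expect, is not any single computation but the choice made in the third paragraph: the monodromy lives a priori over the transcendental base $\C[[h]]$ and does not directly produce a surjection $\widehat H\onto\widehat R W$ over the integral ring $\widehat R$, so one must settle for extracting the inequality on the \emph{generic} rank, a characteristic-zero invariant visible on any $\C[[h]]$-curve through $\mathfrak p$. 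One cannot replace this by a fibrewise dimension count, since every maximal ideal of $\widehat R$ contains $\widehat{\mathfrak p}$ and so has fibre of dimension $|W|$ automatically, while torsion in $\widehat H$ could still be concealed at the non-maximal primes; ruling it out is precisely what the generic-rank argument does.
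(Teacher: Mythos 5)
Your proof is correct and rests on the same two key ingredients as the paper's: a Nakayama/completeness argument producing a surjection $\widehat R^{|W|}\onto\widehat H$ from the mod-$\mathfrak p$ identification $H/\mathfrak p H\simeq\Z W$, and the Cherednik/KZ monodromy morphism into $\C[[h]]W$ to control the kernel. The only organizational difference is in the last step: the paper builds $\Phi:\widehat R\,W\to\widehat H$ from a set-theoretic section of $B\onto W$ and reads off injectivity from a commutative diagram ending at $\Z W\hookrightarrow\C W$, while you make the genericity requirement on the formal curve explicit ($\widehat R\hookrightarrow\C[[h]]$), compute $\mathrm{rank}_{\widehat R}\widehat H=|W|$, and observe that $\ker\pi$ is a rank-zero torsion-free submodule of $\widehat R^{|W|}$ over the domain $\widehat R$ and hence vanishes — a faithful, slightly more explicit unpacking of the paper's compressed argument.
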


\begin{proof}
Let $\tilde{W} = \{ \tilde{w} \ | \ w \in W \}$ be the image of a set-theoretic section of $B \onto W$, and $\check{W}$ be its image
in $H$.
The map $w \mapsto \tilde{w}$ induces using the natural projection $R B \onto H$ a  continuous morphism of 
$R$-modules
$R W \to \widehat{H}$,
hence $\Phi :\widehat{R} W \simeq  \widehat{R W} \to \widehat{H} $.
We prove that $\Phi$ is surjective. We first need a lemma.

\begin{lemma} 
For all $r \geq 1$, $H = R \check{W} + \mathfrak{p}^r H$.
\end{lemma}
\begin{proof}
Let $P = \Ker(B \to W)$ denote the pure braid group.
We have $R B = R \tilde{W} + \tilde{W} \sum_{g \in P} (g-1)$. 
The image in $H$ of $(g-1)$ for $g \in P$ falls into 
$\mathfrak{p} H$, 
hence $H = R \check{W} + \mathfrak{p} H$ which easily implies
the conclusion.
\end{proof}
As a consequence, we get that $R \check{W}$ is dense inside $\widehat{H}$,
thus proving that $\Phi$ is surjective. 
Now, the KZ-like construction of \cite{BMR} provides a morphism $\psi : H \to \C[[h]] W$
associated to some morphism $R \to \C[[h]]$, which is continuous for the $(\mathfrak{p}, h)$-topologies, hence extends to
a morphism $\widehat{H} \to \C[[h]]W$. We have the following diagram.
$$\xymatrix{
\widehat{R} W \ar@{->>}[r] \ar[dd]^{\mathfrak{p}=0}& \widehat{H} \ar[dr] \\ 
& & \C[[h]] W  \ar[dl]_{h=0} \\
\Z W \ar@{^{(}->}[r] & \C W \\
}
$$
Since $\widehat{R} W$ is a free module of finite rank, the injectivity of
the natural map $\Z W \to \C W$ implies that $\widehat{R} W \to \widehat{H}$ is
injective, whence  $\widehat{R} W \simeq \widehat{H}$ and the conclusion.

\end{proof}

An elementary remark is that, in a number of exceptional cases, the BMR conjecture
can be reduced to a problem in 1 variable. Indeed, the following apply to 
all the exceptional non-Coxeter groups of rank at least $3$, except the
Shephard group $G_{25}$, $G_{26}$ and $G_{32}$, that is to
the
groups $G_{24}$, $G_{27}$, $G_{29}$, $G_{31}$, $G_{33}$, $G_{34}$.  We let $\mathcal{A}$
denote the collection of all the hyperplanes which are sets of fixed points for reflections in $W$.
It is naturally acted upon by $W$.

\begin{proposition} Assume that $| \mathcal{A}/W| = 1$ and that all reflections have order $2$.
Let $k$ be a commutative
unital ring. Then $H_k$ is a free $R_k$-module of rank $|W|$ (respectively, a finitely generated $R_k$-module)
if and only if the quotient of $k[x^{\pm 1}] B$ by the ideal generated by $(\sigma -1)(\sigma-x)$
for $\sigma$ a braided reflection, has the same property.
\end{proposition}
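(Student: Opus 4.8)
The plan is as follows. Since $|\mathcal{A}/W|=1$ and all reflections have order $2$, there is a single class $s$ of distinguished reflections; put $a_0=a_{s,0}$, $a_1=a_{s,1}$, so that $R=\Z[a_0^{\pm 1},a_1]$ and the defining relation of $H$ is $\sigma^2=a_1\sigma+a_0$ for $\sigma$ a braided reflection. Denote by $A_k$ the algebra $k[x^{\pm 1}]B/\langle(\sigma-1)(\sigma-x)\rangle$ of the statement. The ring map $R_k\to k[x^{\pm 1}]$ sending $a_1\mapsto 1+x$ and $a_0\mapsto -x$ (legitimate, as $-x$ is a unit) identifies $A_k$ with $H_k\otimes_{R_k}k[x^{\pm 1}]$; hence if $H_k$ is free of rank $|W|$ (resp. finitely generated) over $R_k$ then so is $A_k$ over $k[x^{\pm 1}]$, by base change. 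The work is the converse.

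First I would move to root parameters. Set $\tilde{R}_k=k[u_1^{\pm 1},u_2^{\pm 1}]$, with $R_k\hookrightarrow\tilde{R}_k$ via $a_1=u_1+u_2$, $a_0=-u_1u_2$, and $\tilde{H}_k=\tilde{R}_kB/\langle(\sigma-u_1)(\sigma-u_2)\rangle$; exactly as in the proof of the lemma comparing $\tilde{H}$ and $H$ above, one has $\tilde{H}_k=H_k\otimes_{R_k}\tilde{R}_k$, and $\tilde{R}_k$ is free of rank $2$ over $R_k$. Because there is a single orbit of reflecting hyperplanes and the reflections have order $2$, the abelianization $B^{\mathrm{ab}}$ is infinite cyclic, generated by the class of any braided reflection; write $\ell\colon B\to\Z$ for the resulting length homomorphism, so $\ell(\sigma)=1$ on braided reflections. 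Then $b\mapsto u_1^{\ell(b)}b$ defines a $\tilde{R}_k$-algebra automorphism of $\tilde{R}_kB$ which carries each generator $(\sigma-u_1)(\sigma-u_2)$ of the defining ideal, up to the unit $u_1^2$, to $(\sigma-1)(\sigma-u_2u_1^{-1})$. Writing $x=u_2u_1^{-1}$, so that $\tilde{R}_k=k[x^{\pm 1}][u_1^{\pm 1}]$, this yields a $\tilde{R}_k$-algebra isomorphism $\tilde{H}_k\simeq A_k\otimes_{k[x^{\pm 1}]}\tilde{R}_k$, the inclusion $k[x^{\pm 1}]\hookrightarrow\tilde{R}_k$ being faithfully flat and split by $u_1\mapsto 1$. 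Consequently $A_k$ is free of rank $|W|$ (resp. finitely generated) over $k[x^{\pm 1}]$ if and only if $\tilde{H}_k$ is so over $\tilde{R}_k$: one direction is base change, the other is specialization at $u_1=1$.

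It then remains to compare $\tilde{H}_k$ over $\tilde{R}_k$ with $H_k$ over $R_k$, i.e. to run the earlier lemma with coefficients in an arbitrary commutative ring $k$. As $\tilde{R}_k$ is finite free over $R_k$ and $\tilde{H}_k\simeq H_k\oplus H_ku_1$ as $R_k$-modules, the ``finitely generated'' half is immediate, and in the ``free of rank $|W|$'' half the forward implication is again base change. The one delicate point, which I expect to be the main obstacle of the whole argument, is the backward implication: deducing the freeness of $H_k$ over $R_k$ from that of $\tilde{H}_k$ over $\tilde{R}_k$. Here $R_k\hookrightarrow\tilde{R}_k$ is a non-split degree-$2$ cover, so faithfully flat descent only gives that $H_k$ is finitely generated projective of constant rank $|W|$, which over a general $k$ need not be free. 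The remedy is to use the compatible $\Z$-grading: grading $B$ by $\ell$ and assigning degrees $1$ and $2$ to $a_1$ and $a_0$ makes $R_k$ a graded ring and $H_k$, $\tilde{R}_k$, $\tilde{H}_k$ graded modules, and since $a_0$ is a homogeneous unit of degree $2$, multiplication by $a_0$ identifies $(H_k)_n$ with $(H_k)_{n+2}$; thus $H_k$ is reconstructed from the pair of finitely generated $(R_k)_0$-modules $(H_k)_0$ and $(H_k)_1$ — with $(R_k)_0=k[y]$, $y=a_1^2a_0^{-1}$ — together with the $k[y]$-linear maps $\alpha\colon(H_k)_0\to(H_k)_1$ and $\beta\colon(H_k)_1\to(H_k)_0$ induced by multiplication by $a_1$ and by $a_0^{-1}a_1$, which satisfy $\beta\alpha=y\cdot\mathrm{id}$ and $\alpha\beta=y\cdot\mathrm{id}$ (a matrix factorisation of $y$). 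The freeness of $\tilde{H}_k$ over $\tilde{R}_k$, via the identification $(\tilde{H}_k)_0\simeq A_k$ and the $\mathfrak{S}_2$-action relating $(\tilde{H}_k)_0$ to $(H_k)_0$, should pin these data down up to isomorphism, after which one writes an explicit homogeneous $R_k$-basis of $H_k$. This last step amounts to an elementary computation with the matrix factorisation over $k[y]$, valid for any $k$; everything before it is formal.
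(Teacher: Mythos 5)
Your proposal follows the same skeleton as the paper's proof for the main body of the argument: pass to the root-parameter ring $\tilde{R}_k$, apply the character-twist automorphism of $\tilde{R}_k B$ sending $b \mapsto u_1^{\ell(b)}b$ (the paper's $\varphi_\beta$ for $\beta = u_1$ a unit, constructed via the coproduct and the abelianization), and use the inclusion $k[x^{\pm 1}] \hookrightarrow \tilde{R}_k$ to transport the problem. You actually handle the $A_k \leftrightarrow \tilde{H}_k$ comparison more cleanly than the paper's bare appeal to faithful flatness, by pointing out that the inclusion is split by $u_1 \mapsto 1$, so that freeness of a given rank descends by specialization.

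However, your treatment stops being a proof at the very step you yourself flag as ``the main obstacle of the whole argument,'' namely the backward implication from freeness of $\tilde{H}_k$ over $\tilde{R}_k$ to freeness of $H_k$ over $R_k$. You observe, correctly, that faithfully flat descent along $R_k \hookrightarrow \tilde{R}_k$ only yields that $H_k$ is finitely generated projective of constant rank, which over an arbitrary base $k$ does not give freeness. But the ``remedy'' you sketch --- grading $H_k$ by the length cocycle, reducing to the matrix factorisation $(\alpha,\beta)$ of $y = a_1^2 a_0^{-1}$ over $k[y]$, and then claiming this data is ``pinned down up to isomorphism'' by the structure of $\tilde{H}_k$ --- is not carried out; the phrase ``should pin these data down'' marks an untested hope, not an argument. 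It is not at all clear that the $\mathfrak{S}_2$-semilinear structure on $\tilde{H}_k$ forces the matrix factorisation to be equivalent to a standard one, nor is an ``explicit homogeneous $R_k$-basis'' exhibited. Note also that $R_k$ is not strongly graded in your grading ($(R_k)_1(R_k)_{-1} = y\,k[y] \subsetneq (R_k)_0$), so the reduction to degree-$0$ data is itself nontrivial. This is a genuine gap. The paper handles this step by invoking its earlier lemma, whose $\Hom_{R\mathfrak{S}}(R,-)$-argument (using flatness of $H$ plus finite presentation of $R$ over $R\mathfrak{S}$, the latter being automatic for $\mathfrak{S}=\mathfrak{S}_2$ with no noetherian hypothesis) is the mechanism you would want to engage with or replace, rather than leaving the step to a plan. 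As it stands, your write-up proves one implication and reduces the other to a conjecture.
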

\begin{proof} Let $H^0$ denote the quotient of $k[x^{\pm 1}] B$ by the ideal generated by $(\sigma -1)(\sigma-x)$
for $\sigma$ one given braided reflection. Since all braided reflections are conjugate in $B$, this
is the algebra involved in the statement. Moreover notice that $B$ is normally generated, as a group, by $\sigma$. 
Let $\tilde{R}_k = k[u_0^{\pm 1}, u_1^{\pm} ]$. We already know that the assumptions of the proposition
on $H_k$ are equivalent to the same assumptions for $\tilde{H}_k = H_k \otimes_{R_k} \tilde{R}_k$.
For every $\beta \in \tilde{R}_k$, there exists an algebra morphism
$\varphi_{\beta} : \tilde{R}_k B \to \tilde{R}_k B$ which maps $\sigma$ to $\beta \sigma$ ; it can be defined, using that $B^{ab} \simeq \Z$
is generated by the image $\overline{\sigma}$ of $\sigma$, as the composite of the natural algebra morphisms
$$
\xymatrix{
\tilde{R}_k B \ar[r]_{\!\!\!\!\!\!\!\!\! \!\!\!\!\!\!\!\!\! \Delta}& (\tilde{R}_k B) \otimes (\tilde{R}_k B) \ar[r]_{\mathrm{Id} \otimes Ab} & (\tilde{R}_kB)  \otimes (\tilde{R}_k B^{ab}) 
\ar[r]_{\overline{\sigma} \mapsto \beta} & (\tilde{R}_k B) \otimes \tilde{R}_k \ar[r]_{\ \ \ \ \simeq} & \tilde{R}_k B
}
$$
where $\Delta$ denotes the coproduct. These morphisms are equivariant under the conjugation action of $B$
on itself, thus $\varphi_{\beta_1} \circ \varphi_{\beta_2} (\sigma) = \varphi_{\beta_1 \beta_2} (\sigma)$
implies that $\varphi_{\beta} \circ \varphi_{\beta^{-1}} = \mathrm{Id}$, hence $\varphi_{\beta}$ is an isomorphism
when $\beta \in \tilde{R}_k^{\times}$. Recall now that $\tilde{H}_k$ is the quotient of $\tilde{R}_k B$ by the
relation $(\sigma - u_0)(\sigma-u_1)$. Taking $\beta = u_0$, we get that the image of $(\sigma-u_0)(\sigma - u_1)$
under $\varphi_{u_0}$ is $u_0^2 (\sigma-1)(\sigma - u_1 u_0^{-1})$, whence a $\tilde{R}_k$-isomorphism
$\tilde{H}_k \simeq H^0 \otimes_{k[x^{\pm 1}]} \tilde{R}_k$, where $k[x^{\pm 1}] \subset \tilde{R}_k$
is defined by $x \mapsto u_1 u_0^{-1}$. We have $R_k = \bigoplus_{a,b} k u_0^a u_1^b = \bigoplus h (u_1 u_0^{-1})^b u_0^a$
is free hence faithfully flat as a $k[x^{\pm 1}]$-module, and this proves the claim.
\end{proof}

Let $z \in Z(B)$, $\overline{B} = B / \langle z \rangle$, and $s : \overline{B} \to B$
a set-theoretic section of the natural projection $b \mapsto \overline{b}$. We let $R_k^+ = R_k[x,x^{-1}]$.
We denote $\sigma_1,\dots,\sigma_r$ a distinguished system of braided reflections
(all corresponding to distinguished reflections, and at least 1 for each conjugacy
class). We let $P_i \in R_k[X]$ denote polynomials defining $H_k$,
that is $H_k$ is the quotient of $R_k B$ by the relations $P_i(\sigma_i) = 0$.

\begin{proposition} {\ } \label{propER1}
\begin{enumerate}
\item $R_k B$ admits an $R_k^+$-module structure defined by $x.b = zb$ for $b \in B$. It
is a free $R_k^+$-module, and we have an isomorphism $R_k B \simeq R_k^+ \overline{B}$.
\item Under this isomorphism, the defining ideal of $H_k$
is mapped to the ideal of $R_k^+ \overline{B}$ generated by the $Q_i(\overline{\sigma}_i)$
for $Q_i (X) = P_i(X x^{a_i}) \in R^+[X]$, the $a_i \in \Z$ being defined
by $\sigma_i = s(\overline{\sigma}_i) z^{a_i}$.
\end{enumerate}
\end{proposition}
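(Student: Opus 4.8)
With the section $s$ from the statement fixed, the plan is to describe $R_kB$ very concretely as a module over its central subring $R_k^+$, and then simply re-express the defining relators.

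\emph{Part (1).} Identify $R_k^+=R_k[x^{\pm 1}]$ with the central subring of $R_kB$ generated over $R_k$ by $z$ (via $x\mapsto z$); this is well defined because $z$ has infinite order in $B$, the center of a braid group being torsion-free (and for $z=1$ there is nothing to prove). First I would observe that every $b\in B$ is written uniquely as $b=s(\overline b)\,z^{n(b)}$ with $n(b)\in\Z$: the images of both sides in $\overline B$ agree, so the two sides differ by a power of $z$, which is unique since $z$ has infinite order. Hence $\{\,s(\overline b)z^{n}\ :\ \overline b\in\overline B,\ n\in\Z\,\}$ is exactly $B$ and thus an $R_k$-basis of $R_kB$; unwinding an $R_k^+$-relation $\sum_{\overline b}\lambda_{\overline b}s(\overline b)=0$ through $\lambda_{\overline b}=\sum_n\mu_{\overline b,n}x^n$ into an $R_k$-relation among the $z^ns(\overline b)$ then shows that $\{\,s(\overline b)\ :\ \overline b\in\overline B\,\}$ is an $R_k^+$-basis of $R_kB$. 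So $R_kB$ is $R_k^+$-free, and the $R_k^+$-linear bijection $s(\overline b)\mapsto\overline b$ identifies it with the free $R_k^+$-module on $\overline B$, i.e. with $R_k^+\overline B$.

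\emph{Part (2).} Under the identification of Part (1), $z$ becomes $x$, so $\sigma_i=s(\overline\sigma_i)z^{a_i}$ becomes $\sigma_i=\overline\sigma_i\,x^{a_i}=x^{a_i}\overline\sigma_i$ (the last equality because $x$ is central). Since $x^{a_i}$ commutes with everything, evaluating $P_i$ at $\sigma_i$ gives, writing $P_i(X)=\sum_j c_{i,j}X^j$,
$$P_i(\sigma_i)=P_i(x^{a_i}\overline\sigma_i)=\sum_j c_{i,j}\,x^{ja_i}\,\overline\sigma_i^{\,j}=Q_i(\overline\sigma_i),\qquad Q_i(X):=P_i(x^{a_i}X)\in R_k^+[X],$$
all powers and products being taken in $R_k^+\overline B$ with the algebra structure inherited from $R_kB$. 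As this identification is an isomorphism of algebras, the defining ideal of $H_k$ — the two-sided ideal of $R_kB$ generated by the $P_i(\sigma_i)$ — corresponds to the two-sided ideal of $R_k^+\overline B$ generated by the $Q_i(\overline\sigma_i)$, which is the claim; in particular $H_k\simeq R_k^+\overline B/(Q_1(\overline\sigma_1),\dots,Q_r(\overline\sigma_r))$.

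\emph{Where the difficulty sits.} No step involves a real computation, and the one point that should not be glossed over is the precise meaning of the identification $R_kB\simeq R_k^+\overline B$: as algebras it is with respect to the multiplication \emph{inherited from $R_kB$}, which is a crossed product $R_k^+\ast_c\overline B$ and is in general \emph{not} the group ring, since the central extension $1\to\langle z\rangle\to B\to\overline B\to 1$ need not split (already for $G_4$, where $B\simeq B_3$ is torsion-free whereas $\overline B\simeq\Z/2\ast\Z/3$ is not). Reading $R_k^+\overline B$ as a genuine group ring would additionally require choosing $s$ multiplicative on each $\langle\overline\sigma_i\rangle$, which is harmless as the $\overline\sigma_i$ have infinite order in $\overline B$; with that caveat the whole statement reduces to the freeness of Part (1) and to the bookkeeping identity $P_i(\sigma_i)=Q_i(\overline\sigma_i)$.
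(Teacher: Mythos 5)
Your proof is correct and follows essentially the same route as the paper: write $b = s(\overline{b})z^{n(b)}$ using the torsion-freeness of $Z(B)$ (the paper packages $n(\cdot)$ as a map $\alpha : B \to \Z$), deduce that the $s(\overline{b})$ form an $R_k^+$-basis of $R_k B$, and observe that $P_i(\sigma_i) = Q_i(s(\overline{\sigma}_i))$ by centrality of $z$, so that the generating relators carry over. Your closing remark about the crossed-product subtlety is the correct reading of what $R_k^+\overline{B}$ must mean for part (2) to hold literally as stated — the paper leaves this implicit (it simply passes from the $s(d)P_i(\sigma_i)s(e)$ to the $d\,Q_i(\overline{\sigma}_i)\,e$ without comment), but it is indeed working with the algebra structure transported from $R_k B$, not the untwisted group algebra of $\overline{B}$.
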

\begin{proof}
Since $Z(B)$ is torsion-free, $\langle z \rangle \simeq \Z$, and there is a uniquely
defined $1$-cocycle $\alpha : B \onto \Z$ such that $\forall b \in B \ b = z^{\alpha(b)} s(\overline{b})$. Therefore,
as $R_k$-modules, $R_k B = \bigoplus_{b \in B} R_k b = \bigoplus_{b \in B} R_k z^{\alpha(b)} s(\overline{b})
= \bigoplus_{d \in \overline{B}}  \bigoplus_{\overline{b} = d} R_k z^{\alpha(b)} d$. Under this
identification, for every $d \in \overline{B}$, $\bigoplus_{\overline{b} = d} R_k z^{\alpha(b)} d$
is a free $R_k^+$-submodule of rank $1$, whence (1). Let $I \subset R_k B$ the defining
ideal for $H_k$. It is the $R_k$-submodule spanned by the $b P_i(\sigma_i) c$
for $i \in \{1, \dots, r \}$ and $b,c \in B$, hence also the $R^+_k$-submodule
spanned by the $s(d) P_i(\sigma_i) s(e)$ for $d,e \in \overline{B}$. We have $P_i(\sigma_i)
= Q(s(\overline{\sigma}_i))$, hence $I$ is identified inside $R_k^+ \overline{B}$ with the
$R_k^+$-submodule generated by the $d Q_i(\overline{\sigma}_i)e$  for $d,e \in \overline{B}$,
that is to the ideal of $R_k^+ \overline{B}$ generated by the $Q_i(\overline{\sigma}_i)$.
\end{proof}

Note that, if the $P_i$ have been chosen to be monic, one can replace the $Q_i$ by
the monic polynomials $x^{-a_i d^{\circ} P_i} P_i(X x^{a_i}) \in R_k^+[X]$, where $d^{\circ} P_i$ denotes
the degree of $P_i$. Note also that,
being the quotient of $R_k B$ by an $R_k^+$-submodule,
$H_k$ inherits a structure of $R_k^+$-module.

The following proposition is based on  one of the arguments of Etingof-Rains \cite{ETINGOFRAINS0}.
\begin{proposition} \label{propER2}
If $H_k$ is finitely generated as a $R_k^+$-module, then it is finitely
generated as a $R_k$-module.
\end{proposition}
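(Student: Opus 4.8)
The key point is that $R_k^+ = R_k[x,x^{-1}]$ differs from $R_k$ by one extra invertible variable, and this variable acts on $R_k B$ — hence on $H_k$ — as multiplication by the central element $z \in Z(B)$. So a finite $R_k^+$-generating set of $H_k$ becomes an $R_k$-generating set as soon as we can bound the powers of $z$ (positive and negative) that occur. The plan is to produce such a bound by exploiting a quadratic (or, in general, a degree-$o(s)$) relation satisfied by $z$ itself modulo lower $z$-degree terms.

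First I would recall, via Proposition \ref{propER1}, that $H_k$ is presented as the quotient of $R_k^+ \overline{B}$ by the ideal generated by the $Q_i(\overline{\sigma}_i)$, and that $H_k$ is in particular an $R_k^+$-module. Next, the central element $z$ is, in all the relevant cases, a product of braided reflections (indeed $z$ can be taken to be $\pi$ or a power of it, where $\pi$ generates the center of the pure braid group, and one has an explicit expression for $z$ in terms of the $\sigma_i$); what matters is only that $z$, viewed in $H_k$, satisfies a monic polynomial equation $z^m = c_{m-1} z^{m-1} + \dots + c_0$ with $c_0 \in R_k^\times$ and all $c_j \in R_k[z]$ of $z$-degree $< m$ — equivalently, $z$ is integral over $R_k$ inside $H_k$ and moreover $z^{-1} \in R_k[z] \subset H_k$. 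This follows because each $\sigma_i$ satisfies its defining monic relation $P_i(\sigma_i)=0$ with invertible constant term (so $\sigma_i$ and $\sigma_i^{-1}$ lie in $R_k[\sigma_i]$), and $z$ is a word in the $\sigma_i$; the cleanest way to see the integrality of $z$ is to use that the subalgebra of $H_k$ generated by $R_k$ and $z$ is a quotient of $R_k[X]/(\text{char.\ poly of } z \text{ acting on the finite set } \tilde W)$ — but since we cannot yet assume $H_k$ is $R_k$-finite, I would instead argue directly: in $H_k \otimes \overline{F}$ the element $z$ is invertible and acts on the $|W|$-dimensional regular-type module as a scalar on each block, and pulling back a monic annihilating polynomial with coefficients in $R_k$ (clearing denominators is unnecessary since $z$ and the $\sigma_i^{\pm1}$ already live over $R_k$), one gets the desired relation. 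The truly robust route, and the one I would ultimately commit to, is simply: $z = \prod \sigma_i^{e_i}$ for suitable $e_i \in \Z$, each $\sigma_i^{\pm 1} \in R_k[\sigma_i]$ has bounded degree, hence $z$ and $z^{-1}$ both lie in a finitely generated $R_k$-submodule $M_0$ of $H_k$; more precisely $z^{-1} \in M_0$ gives a relation $1 = z \cdot (\text{element of } M_0)$, which is what we need.

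Now the main step. Let $e_1, \dots, e_t$ be a finite set of $R_k^+$-generators of $H_k$. Since $H_k$ is spanned over $R_k$ by the $\{\,z^n e_j\,\}_{n \in \Z,\, 1 \le j \le t}$, it suffices to show that each $z^n e_j$, for all $n \in \Z$, lies in the $R_k$-submodule $H' := \sum_{j} \sum_{0 \le n < m} R_k\, z^n e_j$ (finitely generated over $R_k$). For $n \ge 0$ this is an immediate downward induction using the relation $z^m = \sum_{j<m} c_j z^j$ with $c_j \in R_k[z]_{<m}$: multiplying by $z^{n-m} e_i$ and re-expanding repeatedly (noetherian bookkeeping, since each application strictly lowers the top $z$-degree) keeps us inside $H'$. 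For $n < 0$ we use $z^{-1} \in R_k[z]$ of $z$-degree $< m$: writing $z^{-1} = \sum_{0\le j<m} d_j z^j$ with $d_j \in R_k$ and $d_0 z^{?}$ accounting for invertibility of $c_0$, one sees $z^{-1} e_j \in H'$, and then induction downward on $|n|$ again stays inside the finitely generated module $H'$. Hence $H_k = H'$ is finitely generated over $R_k$.

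The step I expect to be the genuine obstacle is establishing the integral dependence of $z$ over $R_k$ inside $H_k$ — i.e.\ that $z^{-1} \in R_k[z]$ as an element of $H_k$ — \emph{without} circularly assuming that $H_k$ is already $R_k$-finite. The resolution is that this is purely a statement inside the braid group algebra before passing to $H_k$: $z$ is explicitly a word $w(\sigma_1,\dots,\sigma_r)$ in the braided reflections, the defining relations $P_i(\sigma_i)=0$ (monic, invertible constant term) let us rewrite $\sigma_i^{\pm 1}$ as $R_k$-combinations of $1,\sigma_i,\dots,\sigma_i^{o(s_i)-1}$, and substituting these back into $w$ expresses both $z$ and $z^{-1}$ as elements of the $R_k$-subalgebra of $H_k$ generated by finitely many monomials in the $\sigma_i$; the Cayley–Hamilton-type relation for $z$ over $R_k$ then follows from the finite-dimensionality of $R_k[\sigma_1]\cdots R_k[\sigma_r]$ as needed, or more simply we just directly record the two membership facts $z \in M_0$, $z^{-1} \in M_0$ for an explicit finite $R_k$-module $M_0$, which is all the induction above actually consumes. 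Once that is in hand, the rest is the elementary noetherian induction sketched above.
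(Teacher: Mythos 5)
The high-level plan you adopt --- reduce to showing that $z$ satisfies a monic polynomial relation over $R_k$ with invertible constant term, then bound the powers of $z$ that can occur --- is indeed the same as the paper's. If such a polynomial $\tilde{Q}$ is found with $\tilde{Q}(z)=0$ in $H_k$, the paper simply observes that $H_k$ becomes an $R_k^+/(\tilde{Q}(x))$-module, and $R_k^+/(\tilde{Q}(x))$ is a finitely generated $R_k$-module; your induction on powers of $z$ amounts to the same thing.

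However, the step you yourself flag as ``the genuine obstacle'' --- producing that relation --- is not actually resolved by any of the three arguments you sketch, and the ``robust route'' you commit to is the weakest of them. Observing that $z$ is a word in the $\sigma_i$ and that each $\sigma_i^{\pm1}\in R_k[\sigma_i]$ only tells you that $z$ and $z^{-1}$ each lie in \emph{some} finitely generated $R_k$-submodule $M_0$ of $H_k$; but any single element trivially does. The identity $1 = z\cdot z^{-1}$ with $z^{-1}\in M_0$ gives no monic dependence of $z$ over $R_k$, because you have no control on $z\cdot M_0$: the $R_k$-subalgebra generated by the $\sigma_i$ is all of $H_k$ (they generate $B$), so $R_k[\sigma_1]\cdots R_k[\sigma_r]$ is not a finitely generated algebra you can invoke Cayley--Hamilton on; that is precisely what is circular, as you note in your first abandoned attempt. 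Your second attempt (pass to $H_k\otimes\overline{F}$, exhibit a $|W|$-dimensional module, ``pull back'' an annihilating polynomial) hides two gaps: the structure of $H_k\otimes\overline{F}$ as $\overline{F}W$ is only established in the paper \emph{under the hypothesis} that $H_k$ is $R_k$-finite (Tits' deformation, Prop.~2.3), and even granting a polynomial over $\overline{F}$ annihilating $z$, nothing forces its coefficients into $R_k$.

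The paper's proof of the relation is where all the work lies, and it is substantially different in character. It fixes a generic $Q\in R_k[x]$, sets $M=QH_k$, and shows $M=0$ by a local--global argument: it suffices that $M\otimes_{R_k^+}\overline{K}=0$ for every residue field $K=R_k^+/\mathfrak{a}$ of a maximal ideal $\mathfrak{a}$. Over each such $\overline{K}$, the algebra $H_k\otimes\overline{K}$ has dimension $\leq N_1$, so any simple module $V$ has dimension $\leq N_1$; since $z$ acts on $V$ as the scalar $\tilde\lambda(x)$ and $z$ is a product of $N_3$ braided reflections whose determinants on $V$ are monomials in the parameters $\tilde\lambda(u_{c,i})$, the scalar $\tilde\lambda(x)$ is forced to be a root of an explicit polynomial $\tilde{Q}=\prod_{r\leq N_1}\prod_m(X^r-m)$, symmetrized so as to lie in $R_k[X]$. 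Taking $Q=\tilde{Q}(x)$ then kills every such $V$, hence $M=0$. This eigenvalue analysis, over \emph{all} specializations simultaneously, is the content that is missing from your argument and cannot be replaced by the purely formal manipulations you propose.
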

\begin{proof} By assumption, $H_k$ is generated as a $R_k^+$-module by a finite
set of $N_1$ elements.
Let us choose some $Q \in R_k[x] \subset R_k^+$, and $M = Q H_k \subset H_k$. By assumption it
is generated by $N_1$ elements as a $R_k^+$-module. We have $M = 0$
as soon as $M_{\mathfrak{a}} = 0$ for all the maximal ideals $\mathfrak{a}$ of $R_k^+$,
where $(R_k^+)_{\mathfrak{a}}$ denotes the localization of $R_k^+$ at $\mathfrak{a}$, 
and $M_{\mathfrak{a}} = M \otimes_{R_k^+}  (R_k^+)_{\mathfrak{a}}$ (see e.g. \cite{EISENBUD} lemma 2.8, p. 67-68).
Since $(R_k^+)_{\mathfrak{a}}$ is a local ring, by Nakayama's lemma we have $M_{\mathfrak{a}} = 0$
iff $M_{\mathfrak{a}} = \mathfrak{a} M_{\mathfrak{a}}$. Since $(R_k^+)_{\mathfrak{a}}$ is a flat $R_k^+$-module,
this is equivalent to $M = \mathfrak{a} M$, that is $M \otimes_{R_k^+} ((R_k^+)/\mathfrak{a})=0$. Let us denote
$K = (R_k^+)/\mathfrak{a}$ and $\overline{K}$ the algebraic closure of $K$, and $\tilde{\la} : R_k^+ \to \overline{K}$
the natural morphism. We have $M \otimes_{R_k^+} \overline{K} = Q (H_k \otimes_{R_k^+} \overline{K})
= (Q H_k) \otimes_{R_k^+} \overline{K} = H_k \otimes_{R_k^+} \tilde{\la}(Q) \overline{K}$.
Let $\tilde{R}_k = \tilde{R} \otimes_{\Z} k$ and 
$\tilde{R}^+_k = \tilde{R}_k[x,x^{-1}]$. Since it is clearly an integral
extension of $R_k^+$ and $\overline{K}$ is algebraically
closed, $\tilde{\la}$ can be extended to $\tilde{\la} : \tilde{R}^+_k \to \overline{K}$.

We know that $H_k \otimes_{R_k^+} \overline{K}$ is a $\overline{K}$-algebra of dimension at most $N_1$.
If $H_k \otimes_{R_k^+} \tilde{\la}(Q) \overline{K} \neq 0$, there exists a simple $H_k \otimes_{R_k^+}  \overline{K}$-module
$V$ of dimension $N_2(V) \leq N_1$ in which $\tilde{\la}(Q)$ acts by a nonzero scalar. It defines an irreducible
representation $\rho : B \to \GL_{N(V_2)}(\overline{K})$ of $B$ over $\overline{K}$. By definition, $z$ acts on $V$ through $\tilde{\la}(x)$,
which has determinant $\tilde{\la}(x)^{N_2(V)}$. On the other hand, $z$ is the product of $N_3$ distinguished
braided reflections $\sigma_1 \dots \sigma_{N_3}$ with $N_3$ independent of the previous choices.
Since each $\rho(\sigma_i)$ is annihilated by a split polynomial with roots inside $\{ \tilde{\la}(u_{c,i}) \ | \ c \in \mathcal{A}/W, 0 \leq i \leq e_c-1 \}$, $\det \rho(\sigma_i)$ is a monomial of degree $N_2$ in these variables. It follows
that $\tilde{\la}(x)^{N_2(V)}$ is a monomial of degree $N_2(V) N_3$ in these variables. Let $\mathcal{M}$
be the set of all such monomials of degree at most $N_1 N_3$. Since $N_2(V) \leq N_1$, we get
that $\tilde{\la}(x)$ is annihilated by the polynomial
$$
\tilde{Q} = \prod_{1 \leq r \leq N_1} \prod_{m \in \mathcal{M}}(X^r - m) \in (\tilde{R}_k[X])^{\mathfrak{S}} = R_k[X]
$$
so we set $Q = \tilde{Q}(x) \in R_k^+$. By construction we have that $\tilde{\la}(Q) = \tilde{Q}(\tilde{\la}(x))$
acts by $0$ on $H_k \otimes_{R_k^+} \overline{K}$, hence $M = 0$ and $Q H_k = H_k$.
It follows that $H_k = H_k \otimes_{R_k^+} (R_k^+/(Q))$ is finitely generated as an $R_k^+/(Q)$-module.
Since $R_k^+/(Q)$ is a finitely generated $R_k$-module, the conclusion follows.
\end{proof}

\subsection{Groups of rank 2}

Assume that $W$ is an irreducible exceptional group, and that it has rank $2$. This part is a rewriting of the
part of \cite{ETINGOFRAINS0,ETINGOFRAINS1,ETINGOFRAINS2}  which is relevant here.
Let $\overline{B} = B/Z(B)$, and $\overline{W} = W/Z(W)$. A consequence of the classification
of the finite subgroups of $\SO_3(\R) \simeq \SU_2/Z(\SU_2)$ is that
$\overline{W}$ is the group of rotations
of a finite Coxeter group $C$ of rank $3$, with Coxeter system $y_1,y_2,y_3$ and Coxeter matrix $(m_{ij})$.
Let $\tilde{\Z} = \Z[ \exp\left( \frac{2 \ii \pi}{m_{ij}} \right) ]$.

Etingof and Rains associate to every Coxeter group $C$ with Coxeter
system $y_1,\dots,y_n$ the following $\tilde{\Z}$-algebra (for simplicity, we assume $m_{ij} < \infty$,
although their construction is more general). Let $a_{ij} = y_i y_j \in \overline{W}$. 
and define $A(C)$ to be the (associative) algebra with generators
$Y_1,\dots,Y_n$, $t_{ij}[k]$ for $i,j \in \{1,2,3\}$, $i \neq j$, $k \in \Z/m_{ij} \Z$, and relations
$t_{ij}[k]^{-1} = t_{ji}[-k]$, $Y_i^2=1$, $\prod_{k=1}^{m_{ij}} (Y_i Y_j - t_{ij}[k]) = 0$,
$y_r t_{ij}[k] = t_{ji}[k] y_r$, $t_{ij}[k] t_{i'j'}[k'] =t_{i'j'}[k'] t_{ij}[k]$. The subalgebra $A_+(C)$
generated by the $A_{ij} = Y_i Y_j$  becomes a $R^C$-algebra, with
$R^C = \tilde{\Z}[t_{ij}[k]^{\pm 1}]$. As a $R^C$-algebra, it admits a presentation by
generators $A_{ij}$ and relations $\prod_{k=1}^{m_{ij}} (A_{ij} - t_{ij}[k]) = 0$, $A_{ij} A_{ji} = 1$, $A_{ij} A_{jk} A_{ki} = 1$
whenever $\# \{i,j, k \} = 3$.

\begin{proposition}(Etingof-Rains) If $C$ is finite, then $A_+(C)$ is a finitely generated $R^C$-module.
\end{proposition}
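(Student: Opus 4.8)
The idea is that $A_+(C)$ is controlled by the geometry of the Coxeter group $C$: the generators $A_{ij}=Y_iY_j$ multiply according to the combinatorics of $\overline{W}$, the rotation subgroup of $C$, which is finite. So I would first try to produce a finite spanning set indexed (roughly) by elements of $\overline{W}$, and then show that the relations suffice to reduce every monomial in the $A_{ij}$ to a combination of this spanning set.

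First I would set up the bookkeeping. Pick a base vertex, say $y_1$, so that every element of $\overline{W}$ of even length can be written as a product $y_1 y_{i_1} y_{i_2}\cdots$ of an even number of Coxeter generators, and group the factors in pairs to express it as a product of the $A_{1j}$ together with the relations $A_{ij}A_{jk}A_{ki}=1$ and $A_{ij}A_{ji}=1$ to move the "base point" around. Concretely, in $A(C)$ one has $Y_iY_j = (Y_iY_1)(Y_1Y_j) = A_{i1}A_{1j}$, and $A_{i1}=A_{1i}^{-1}$ via $A_{1i}A_{i1}=1$; so the subalgebra generated by all $A_{ij}$ is already generated by $A_{12},A_{13}$ (and their inverses $A_{21},A_{31}$). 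Thus it suffices to bound, as an $R^C$-module, the span of words in $A_{12}^{\pm1},A_{13}^{\pm1}$. The point of the quadratic-type relations $\prod_{k=1}^{m_{ij}}(A_{ij}-t_{ij}[k])=0$ is that each $A_{ij}$ satisfies a monic polynomial of degree $m_{ij}$ over $R^C$ after clearing the constant term (the constant term is $\pm\prod_k t_{ij}[k]$, a unit, which is why $A_{ij}$ is invertible and why negative powers are also reduced); hence any power of a single $A_{ij}$ lies in the $R^C$-span of $1,A_{ij},\dots,A_{ij}^{m_{ij}-1}$.

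The main work — and the main obstacle — is the interaction between $A_{12}$ and $A_{13}$: a priori an alternating word $A_{12}A_{13}A_{12}A_{13}\cdots$ could be arbitrarily long and need not reduce using only the two quadratic relations. This is exactly where the finiteness of $C$ (equivalently of $\overline{W}$) must enter, and where the relations $A_{ij}A_{jk}A_{ki}=1$ do the job: projecting $A(C)$ onto $\overline{W}$ (sending $Y_i\mapsto y_i$, $t_{ij}[k]\mapsto 1$, which is well-defined because the $t_{ij}[k]$ map to $1$ and $y_iy_j$ has order $m_{ij}$) identifies $A_+(C)/(\text{augmentation of }R^C)$ with the group algebra $\tilde{\Z}\,\overline{W}$, and I would lift a choice of reduced words for the (finitely many) elements of $\overline{W}$ to monomials $m_w$ in the $A_{ij}$; the claim to prove is that these $m_w$ span $A_+(C)$ over $R^C$. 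One establishes this by a straightening argument: show that $m_w\cdot A_{ij}$, for each generator $A_{ij}$, is an $R^C$-linear combination of the $m_{w'}$, by inducting on the length of $w$ and invoking, at each step, either the quadratic relation (when the word is not reduced, to lower the degree of the last repeated $A_{ij}$) or the triangle relations $A_{ij}A_{jk}A_{ki}=1$ (to rebracket and keep control of the base vertex); the commutation relations among the $t_{ij}[k]$ and between $y_r$ and $t_{ij}[k]$ ensure all coefficients stay in $R^C$. Since $W$, hence $\overline{W}$, hence the relevant $C$, is finite, the set $\{m_w : w\in\overline{W}\}$ is finite, and this completes the argument.

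One should be a little careful that the induction closes — that reducing a non-reduced alternating word via a quadratic relation produces only strictly shorter words (in the length/degree ordering) plus terms already of the desired form — but this is the standard Iwahori-type reduction adapted to the complex-reflection setting, and the finiteness of $\overline{W}$ guarantees there is no infinite descent pathology. So the role of the hypothesis "$C$ finite" is precisely to make the spanning set finite; without it the same argument would only show $A_+(C)$ is spanned by a (possibly infinite) set indexed by $\overline{W}$.
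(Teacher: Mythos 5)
Your approach is essentially the paper's: take reduced-word representatives for the (finitely many) elements of $C$ and prove by a straightening induction that they span $A_+(C)$ over $R^C$. Where your sketch is genuinely imprecise, and where the paper's own sketch puts all the weight, is the mechanism by which a non-reduced word actually shortens. Your phrasing (``the quadratic relation $\dots$ to lower the degree of the last repeated $A_{ij}$'', ``the triangle relations $\dots$ to rebracket'') misassigns the two reduction moves. Non-reducedness of the corresponding word in the $y_i$'s need not be visible as a literal power $A_{ij}^{m_{ij}}$: one must first perform braid moves to expose a square, and the reason braid moves are available inside $A_+(C)$ at all is precisely that the cyclotomic relation $\prod_k(A_{ij}-t_{ij}[k])=0$, combined with $A_{ij}A_{ji}=1$, turns each Coxeter braid relation of $C$ into a rewriting rule that holds modulo strictly shorter terms, namely $A_{ij}^{m_{ij}/2}\leadsto A_{ji}^{m_{ij}/2}+\dots$ for $m_{ij}$ even and $A_{ij}^{(m_{ij}-1)/2}A_{i\ell}\leadsto A_{ji}^{(m_{ij}-1)/2}A_{j\ell}+\dots$ for $m_{ij}$ odd. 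Only then does Matsumoto's theorem apply: a non-reduced $y$-word can be braided into one containing some $y_j^2$, which at the level of the $A$'s yields a factor $A_{ij}A_{jk}$ equal to $1$ or to $A_{ik}$ by the inverse and triangle relations, and it is this step that shortens the word. So the quadratic relations license the braid moves and the inverse/triangle relations absorb the resulting square --- roughly the opposite of the roles you assigned. With this corrected, the rest of your plan (reducing to the generators $A_{12},A_{13}$, indexing a spanning set by $\overline{W}$, using finiteness of $C$ to make that set finite) matches the paper's argument.
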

\begin{proof} (sketch) Every word in the $A_{ij}$'s corresponds to a word (of even length) in the $y_i$'s ; if the length
of this word is greater than the length of the corresponding element of $C$, then there
is a sequence of braid relations that transforms this word into another one containing $y_i^2$ for some $i$.
Moreover, it is easily checked that every braid relation can be translated inside $A_+(C)$ into the
transformations $A_{ij}^{\frac{m_{ij}}{2}} \leadsto
A_{ji}^{\frac{m_{ij}}{2}} + \dots$ or $A_{ij}^{(m_{ij}-1)/2}A_{i\ell} \leadsto A_{ji}^{(m_{ij}-1)/2}A_{j\ell} + \dots$
where the dots represent terms of smaller length. Finally, when the word in the $y_i$'s contains a $y_j^2$,
this means that our original word contains a $A_{ij} A_{ik}$, which is either $1$ or $A_{ik}$, hence the
length gets reduced. This proves that $A_+(C)$ is generated as a $R^C$-module by words
of bounded length in the $A_{ij}$'s, hence that it is finitely generated as a $R^C$-module.
\end{proof}

In order to apply this to our $W$, Etingof and Rains exhibit case-by-case in \cite{ETINGOFRAINS0} explicit lifts $\tilde{a}_{ij} \in \overline{B}$, using
which they prove the following, where we use the notations of proposition \ref{propER1}.

\begin{proposition} There exists a ring morphism $R^C \onto R^+_{\tilde{\Z}}$
inducing $A_+(C) \otimes_{R^C} R^+_{\tilde{\Z}} \onto R^+_{\tilde{\Z}} \overline{B}/Q_i(\overline{\sigma}_i)$ through
$A_{ij} \mapsto \tilde{a}_{ij}$.
\end{proposition}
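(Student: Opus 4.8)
The strategy is entirely constructive and proceeds case by case, built on the explicit lifts $\tilde{a}_{ij} \in \overline{B}$ of the elements $a_{ij} = y_i y_j \in \overline{W}$ exhibited by Etingof-Rains. The point is that these particular lifts --- as opposed to arbitrary set-theoretic lifts --- were engineered so that the three families of relations defining $A_+(C)$ become valid once $A_{ij}$ is sent to $\tilde{a}_{ij}$, after a suitable specialisation of the parameters $t_{ij}[k]$.

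First I would record the purely group-theoretic input. Each $\tilde{a}_{ij}$, written explicitly as a word in the generators $\overline{\sigma}_i$ of $\overline{B}$, maps to $a_{ij}$ under $\overline{B} \onto \overline{W}$, and one checks directly in the group $\overline{B}$ that $\tilde{a}_{ij} \tilde{a}_{ji} = 1$ and $\tilde{a}_{ij} \tilde{a}_{jk} \tilde{a}_{ki} = 1$ whenever $\# \{i,j,k\} = 3$. Being identities of group elements, these remain true in every quotient of the group algebra $R^+_{\tilde{\Z}} \overline{B}$, in particular in $R^+_{\tilde{\Z}} \overline{B}/(Q_i(\overline{\sigma}_i))$, so the two ``rotation relations'' of $A_+(C)$ are automatically respected by $A_{ij} \mapsto \tilde{a}_{ij}$. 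This is a finite verification in each of the finitely many rank-$2$ exceptional groups, grouped according to the three possibilities $A_3$, $B_3$, $H_3$ for $C$.

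Next --- and this is the heart of the matter --- for each pair $i < j$ I would compute the polynomial annihilating the word $\tilde{a}_{ij}$ inside $R^+_{\tilde{\Z}} \overline{B}/(Q_i(\overline{\sigma}_i))$. The claim to establish is that $\tilde{a}_{ij}$ is killed there by a \emph{split} monic polynomial $\prod_{k=1}^{m_{ij}} (X - c_{ij}[k])$ whose roots $c_{ij}[k]$ are units of $R^+_{\tilde{\Z}}$ (Laurent monomials in the coefficient parameters and in $x$), and that, together with $\tilde{\Z}$, these roots generate $R^+_{\tilde{\Z}}$ as a ring. Granting this, one defines $\phi : R^C \to R^+_{\tilde{\Z}}$ by $t_{ij}[k] \mapsto c_{ij}[k]$ and $t_{ji}[-k] \mapsto c_{ij}[k]^{-1}$; since $R^C$ is a Laurent polynomial ring over $\tilde{\Z}$, $\phi$ is automatically a well-defined ring morphism, and it is surjective precisely by the generation statement. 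Base-changing the presentation of $A_+(C)$ along $\phi$, the relations of $A_+(C) \otimes_{R^C} R^+_{\tilde{\Z}}$ become $\prod_k (A_{ij} - c_{ij}[k]) = 0$, $A_{ij} A_{ji} = 1$ and $A_{ij} A_{jk} A_{ki} = 1$; by the previous paragraph and the very choice of the $c_{ij}[k]$, all of them hold after $A_{ij} \mapsto \tilde{a}_{ij}$, so this assignment extends to a homomorphism of $R^+_{\tilde{\Z}}$-algebras $A_+(C) \otimes_{R^C} R^+_{\tilde{\Z}} \to R^+_{\tilde{\Z}} \overline{B}/(Q_i(\overline{\sigma}_i))$. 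Its surjectivity then reduces to the fact that each generator $\overline{\sigma}_i$ of $\overline{B}$ can be recovered from the $\tilde{a}_{ij}$, which is once more part of the explicit data of the Etingof-Rains tables.

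The main obstacle is exactly the ``split, with unit roots, over $R^+_{\tilde{\Z}}$'' claim of the third step, together with the attendant generation statements. There is no formal reason for it: the word $\tilde{a}_{ij}$ is conjugate in $\overline{B}$ to a product of images of braided reflections, whose characteristic polynomials are monic but \emph{not} split over $R^+_{\tilde{\Z}}$, and a priori a split relation only materialises after passing to the generic semisimple quotient $H \otimes \overline{F}$. That a split relation already holds \emph{integrally}, with roots that moreover generate the whole parameter ring, is the genuinely nontrivial, case-by-case input; it is precisely the mechanism by which, combined with the earlier propositions, finite generation of $A_+(C)$ over $R^C$ transfers to finite generation of $H$ over $R$ for every rank-$2$ exceptional $W$.
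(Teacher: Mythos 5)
The paper itself does not prove this proposition: it states it as a reformulation, in the notation of proposition \ref{propER1}, of the case-by-case construction of Etingof and Rains in \cite{ETINGOFRAINS0}, and refers the reader there. So there is no proof in the paper to compare yours against. Your reconstruction of what such a proof must look like is structurally sound: a surjection $\phi:R^C\to R^+_{\tilde{\Z}}$ together with $A_{ij}\mapsto\tilde{a}_{ij}$ defines the claimed algebra map precisely when the base-changed relations of $A_+(C)$ hold for the lifts, and you have correctly broken this into the group-theoretic rotation relations, a split degree-$m_{ij}$ relation for each $\tilde{a}_{ij}$ with unit roots in $R^+_{\tilde{\Z}}$ giving $\phi$, and surjectivity of the induced map from the fact that the $\tilde{a}_{ij}$ generate the target. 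You are also right that the split integral relation with the generation property is the non-formal, case-by-case core; it is not derivable from general principles and is what the Etingof--Rains tables actually supply, grouped according to the three possibilities $A_3$, $B_3$, $H_3$ for $C$. Two small corrections: $\tilde{a}_{ij}\tilde{a}_{ji}=1$ costs nothing (take $\tilde{a}_{ji}:=\tilde{a}_{ij}^{-1}$, since $a_{ji}=a_{ij}^{-1}$ in $\overline{W}$), so only the $3$-cycle relation $\tilde{a}_{ij}\tilde{a}_{jk}\tilde{a}_{ki}=1$ genuinely constrains the choice of lifts; and the $\tilde{a}_{ij}$ \emph{are} specific words in the $\overline{\sigma}_i$, not merely conjugate to such, which is precisely why a concrete split relation can be exhibited. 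Subject to deferring to the same case-by-case tables that the paper itself defers to, your argument is correct.
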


An immediate consequence of this proposition together with propositions \ref{propER1} and \ref{propER2} is
the following, essentially due to Etingof and Rains.
\begin{theorem} 
If $W$ has rank 2, then $H$ is finitely generated over $R$.
\end{theorem}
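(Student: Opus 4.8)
The plan is to reduce to the case $W$ irreducible exceptional and then to chain together the four preceding propositions, taking the coefficient ring to be $k=\tilde{\Z}$. If $W=W_1\times W_2$ is a nontrivial product, then $R\simeq R_1\otimes_{\Z}R_2$ and $H\simeq H_1\otimes_{\Z}H_2$, so finite generation for $W$ follows from that for the factors; and if $W$ is irreducible of rank $2$ but not exceptional it lies in the family $G(de,e,2)$ (which contains the dihedral Coxeter groups), for which finite generation of $H$ is classically known. So I may and will assume $W$ irreducible exceptional of rank $2$, the setting of the rest of this subsection.

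The Coxeter group $C$ of the preceding discussion is finite of rank $3$ (its rotation subgroup being $\overline{W}=W/Z(W)$, which is finite because $W$ is), so the Etingof--Rains proposition applies and $A_+(C)$ is a finitely generated $R^C$-module. Base-changing along the surjection $R^C\onto R^+_{\tilde{\Z}}$ furnished by the previous proposition, $A_+(C)\otimes_{R^C}R^+_{\tilde{\Z}}$ is finitely generated over $R^+_{\tilde{\Z}}$, and it surjects onto $R^+_{\tilde{\Z}}\overline{B}/Q_i(\overline{\sigma}_i)$. By Proposition~\ref{propER1}(2) applied with $k=\tilde{\Z}$, this last quotient is exactly $H_{\tilde{\Z}}$, equipped with the $R^+_{\tilde{\Z}}$-module structure $x\cdot b=zb$ of Proposition~\ref{propER1}(1). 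Hence $H_{\tilde{\Z}}$ is a finitely generated $R^+_{\tilde{\Z}}$-module, and Proposition~\ref{propER2} (again with $k=\tilde{\Z}$) improves this to the statement that $H_{\tilde{\Z}}$ is finitely generated as an $R_{\tilde{\Z}}$-module, where $R_{\tilde{\Z}}=R\otimes_{\Z}\tilde{\Z}$.

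It then remains to descend the coefficients from $\tilde{\Z}$ to $\Z$. Since $\tilde{\Z}=\Z[\exp(2\ii\pi/m_{ij})]$ is a finite free $\Z$-module in which $1$ extends to a $\Z$-basis, $R_{\tilde{\Z}}$ is a finitely generated $R$-module, so $H_{\tilde{\Z}}$ is finitely generated over $R$; moreover $\tilde{\Z}\simeq\Z\oplus(\text{free }\Z\text{-module})$ exhibits $H=H\otimes_{\Z}\Z$ as an $R$-module direct summand of $H_{\tilde{\Z}}=H\otimes_{\Z}\tilde{\Z}$. As $R$ is Noetherian, the submodule $H$ of the finitely generated $R$-module $H_{\tilde{\Z}}$ is finitely generated over $R$, which is the claim. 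The substantive content here is entirely inside the quoted results — the rewriting argument bounding the length of words in the $A_{ij}$, and the case-by-case production of the lifts $\tilde{a}_{ij}\in\overline{B}$ realizing the map $R^C\onto R^+_{\tilde{\Z}}$; the only point of the assembly I would double-check is that the central element $z$ used in Proposition~\ref{propER1} is the one for which the exponents $a_i$, hence the polynomials $Q_i$, are defined, so that the $R^+_{\tilde{\Z}}$-module structures appearing in the two consecutive propositions genuinely coincide — but this compatibility is built into their statements.
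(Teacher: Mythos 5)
Your proof follows the same route as the paper: apply the Etingof–Rains finite-generation result for $A_+(C)$, push it through the surjection onto $R^+_{\tilde{\Z}}\overline{B}/(Q_i(\overline{\sigma}_i))\simeq H_{\tilde{\Z}}$ (Propositions~\ref{propER1} and the preceding one), invoke Proposition~\ref{propER2} to descend from $R^+_{\tilde{\Z}}$- to $R_{\tilde{\Z}}$-finite generation, and finally use that $\tilde{\Z}$ is free of finite rank over $\Z$ together with the noetherianity of $R$ to recover finite generation over $R$. Your preliminary reduction to the irreducible exceptional case is harmless but unnecessary here, since the subsection already fixes that hypothesis; otherwise the argument matches the paper's, just written out in more explicit steps.
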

\begin{proof}
By the above proposition and propositions \ref{propER1} and \ref{propER2} we get that
$H_{\tilde{\Z}}$ is finitely generated over $R_{\tilde{\Z}}$. Since $\tilde{\Z}$ is a free $\Z$-module
of finite rank (being finitely generated and torsion-free)
$R_{\tilde{\Z}}$ is also free of finite rank over $R_{\Z}$. This implies that
$H \subset H_{\tilde{\Z}}$ and that $H_{\tilde{\Z}}$ is finitely generated over $R$.
Since $R$ is noetherian this implies the conclusion.
\end{proof}

\begin{remark}
There are exceptional groups of higher rank which are related to Coxeter groups,
notably $G_{33}/Z(G_{33})$ and $G_{32}/Z(G_{32})$ are isomorphic to the group of rotations of the Coxeter
group of type $E_6$ (which is a simple group of order $25920$). One has $|Z(G_{33})| =2$,
$|Z(G_{32})| = 6$. However, the same method does not readily apply, because of the lack of convenient
lifts.
\end{remark}

\section{Remarks on the 0-Hecke algebras}

In the Coxeter case, there is a notion of a 0-Hecke algebra which, although not being
the quotient of the group algebra of $B$ anymore, nevertheless displays many
pleasant properties. In particular, it is still a free module of rank the order
of the Coxeter group, and it admits an interpretation as an algebra of differential
operators. In this section we expose two different kinds of obstructions for
such a nice behavior to generalize.

\subsection{Demazure operators}

In \cite{LEHRTAY}, problem 5 in appendix C, G. Lehrer and D.E. Taylor ask
whether the `Demazure operators', which provide a description of the $0$-Hecke
algebra in the Coxeter setting, may provide a satisfactory description of the $0$-Hecke
algebra in the complex setting. More precisely, they ask whether these operators
satisfy the homogeneous relations originating from the usual braid diagrams of the braid group.

In this section we give a negative answer to this problem, by computing precisely these
operators in the smallest exceptional case, namely of the exceptional reflection group of type $G_4$. Recall
e.g. from \cite{BMR} that this group admits a Coxeter-like diagram of the form

\def\nnode#1{{\kern -0.6pt\mathop\bigcirc\limits_{#1}\kern -1pt}}
\def\ncnode#1#2{{\kern -0.4pt\mathop\bigcirc\limits_{#2}\kern-8.6pt{\scriptstyle#1}\kern 2.3pt}}
\def\sbar#1pt{{\vrule width#1pt height3pt depth-2pt}}
\def\dbar#1pt{{\rlap{\vrule width#1pt height2pt depth-1pt} 
                 \vrule width#1pt height4pt depth-3pt}}

$$
G_{4}\ \ \  \ncnode3{s_1}\sbar16pt\ncnode3{s_2}\ \ \ \ \ \ \ \ \ \ \ \ 
$$
meaning that its braid group $B$ is generated by two braided reflections $s_1$ and $s_2$
with relations $s_1 s_2 s_1 = s_2 s_1 s_2$ (hence $B$ is isomorphic to the usual braid group
on $3$ strands, or Artin group of type $A_2$), and that the reflection group 
itself is the quotient
of $B$ by the relations $s_1^3 = s_2^3 = 1$.

The defining embedding $W < \GL_2(\C)$ can for instance be described as follows, with $j = \exp(2 \ii \pi/3)$, $\ii = \sqrt{-1}$. 
$$
s_1 = \begin{pmatrix} 1 & 0 \\ 0 & j \end{pmatrix} \ \ 
s_2 = \frac{1}{3} \begin{pmatrix} j-j^2 & 2j+j^2 \\ 4j+2j^2 & -j-2j^2 \end{pmatrix}
$$
Let $\delta^*_i \in \End(S(V^*))$ denote the Demazure operator associated to $s_i$, and $\delta_i \in \End(S(V))$
its dual operator. It is true that $(\delta_1^*)^3 = (\delta_2^*)^3 = 0$ (see e.g. \cite{LEHRTAY}, chapter 9,
exercises), and the general question 
specializes to
whether $\delta_1^* \delta_2^* \delta_1^* = \delta_2^* \delta_1^* \delta_2^*$ holds, or equivalently 
whether $\delta_1 \delta_2  \delta_1  = \delta_2  \delta_1  \delta_2 $ holds, possibly up to a renormalization
of the operators by non-zero scalars.
We now explain the following computation.

\begin{proposition} $\delta_1 \delta_2 \delta_1 \not\in \Q(j)^{\times} \delta_2 \delta_1 \delta_2$, hence the Demazure operators associated
to the braid diagram of $G_4$ do not satisfy the braid relations up to a scalar.
\end{proposition}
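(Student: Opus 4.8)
The statement to prove is that the Demazure operators $\delta_1, \delta_2 \in \End(S(V))$ associated to the two braided reflections of $G_4$ do not satisfy $\delta_1\delta_2\delta_1 = c\,\delta_2\delta_1\delta_2$ for any $c \in \Q(j)^\times$. The plan is a completely explicit computation. First I recall the definition of the Demazure operator $\delta_i^*$ attached to a reflection $s_i$ of order $o=3$ with reflecting hyperplane $\ker \ell_i$ (for a suitable linear form $\ell_i \in V^*$): on $S(V^*)$ it sends $f$ to $(f - s_i\!\cdot\! f)/\ell_i$, or rather the order-$3$ analogue involving $\ell_i$ and the non-trivial eigenvalue $j$; the dual operator $\delta_i \in \End(S(V))$ is then determined by transposition with respect to the natural pairing. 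Since the $\delta_i$ lower polynomial degree by $1$ and are $S(V)_0$-linear in the appropriate graded sense, a composite of three of them is determined by its action on the degree-$3$ part of $S(V)$, in fact by its restriction to the degree-$1$ piece $V$ after accounting for the Leibniz-type behaviour — so everything reduces to a finite linear-algebra computation over $\Q(j)$.

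Concretely, I would fix the basis of $V$ dual to the one in which the matrices for $s_1, s_2$ are written above, compute the matrices of $\delta_1$ and $\delta_2$ as maps $S^{d}(V) \to S^{d-1}(V)$ for $d = 1,2,3$, and then form the two composites $\delta_1\delta_2\delta_1$ and $\delta_2\delta_1\delta_2$ as linear maps $S^3(V) \to S^0(V) = \Q(j)$, i.e. as two linear functionals on the $\binom{3+1}{1} = 4$-dimensional space $S^3(V)$. (One must choose the linear forms $\ell_1, \ell_2 \in V^*$ defining the hyperplanes of $s_1, s_2$; any nonzero choice rescales $\delta_i$ by a scalar in $\Q(j)^\times$, which does not affect the truth of the statement since we are testing proportionality of the two triple composites.) The claim is then simply that these two functionals on $S^3(V)$ are \emph{not} scalar multiples of one another, which one checks by exhibiting two monomials $m, m' \in S^3(V)$ with $\delta_1\delta_2\delta_1(m)\,\delta_2\delta_1\delta_2(m') \ne \delta_1\delta_2\delta_1(m')\,\delta_2\delta_1\delta_2(m)$.

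The only genuine subtlety — and the place where care is needed rather than cleverness — is getting the definition of the order-$3$ Demazure operator and its dual exactly right, including the normalization conventions from \cite{LEHRTAY}, so that the already-known relations $\delta_i^3 = 0$ (equivalently $(\delta_i^*)^3=0$) come out correctly as a sanity check; once the operators are pinned down, the rest is a mechanical $4\times 1$ versus $4\times 1$ comparison. I expect the two resulting row vectors to differ by more than a scalar in a visible way (e.g. their ratio will involve a nontrivial denominator coming from the $\frac13$ in the matrix of $s_2$), which immediately yields the conclusion, and in particular also the displayed consequence that the Demazure operators of the $G_4$ diagram fail the braid relation even up to scalar — hence give a negative answer to Lehrer–Taylor's Problem 5. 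I would present the final comparison as an explicit pair of inequivalent linear forms rather than grinding through every intermediate matrix entry in the text.
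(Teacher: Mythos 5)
Your overall strategy is the same as the paper's: pin down explicit matrices for $s_1,s_2$, write $\delta_1,\delta_2$ in coordinates, and exhibit a low-degree polynomial on which the two triple composites visibly disagree. There is no disagreement about the method. The gap is that your text is a \emph{plan}, not a proof: for a computational statement like this one, the explicit computation \emph{is} the content, and you stop precisely where the work starts (``I expect the two resulting row vectors to differ\dots''). The paper actually carries it out: it writes $\delta_1$ on monomials as $x^a y^b\mapsto (j^b-1)x^a y^{b-1}$, tabulates $\delta_2$ on monomials of degree $\le 4$, and then computes both $\delta_1\delta_2\delta_1\cdot y^4$ and $\delta_2\delta_1\delta_2\cdot y^4$ as vectors in $S^1(V)\simeq\C^2$, which are then seen to be linearly independent. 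This is slightly different from your proposed test: you would compare the two composites as linear functionals on the four-dimensional space $S^3(V)$, whereas the paper feeds in a single element of $S^4(V)$ and compares two output vectors in $V$, so one test element suffices.

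Two smaller remarks. First, you hedge about ``the order-$3$ analogue'' of the Demazure operator, as though some modified divided-difference formula with an explicit $j$-factor were in play; in fact the definition used here (and in Lehrer--Taylor) is the naive one, $s_i\cdot p - p = \ell_i\,\delta_i p$ with $\ell_i$ a linear form cutting out the reflecting hyperplane, and the only thing one must keep track of is the harmless freedom of rescaling $\ell_i$, which you correctly note does not affect the question. Nailing this down is part of the work. Second, your test on $S^3(V)$ would in principle run the risk of both functionals being proportional or both vanishing there, forcing a retreat to higher degree (this is presumably one reason the paper jumps directly to $y^4$, where the output is vector-valued and a single monomial gives two independent linear conditions); as it happens the $S^3$ test does succeed here, but you have not checked it, and a referee could not accept ``one checks'' without the numbers. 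In short: right idea, correct reduction to linear algebra, but the actual verification — which is the substance of the proposition — is missing.
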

\begin{proof}
We let $V = \C^2$ with canonical basis 
$x = e_1$, $y = e_2$, hence  $S(V) = \C[x,y]$, and $s_2$ maps $x$ on $((j-j^2) x + (4j + 2j^2) y)/3$, etc.
The reflecting hyperplane of $s_1$ is spanned by $x$, and its root is a multiple of $y$ ;
the reflecting hyperplane of $s_2$ is spanned by $x-2y$, and its root is a multiple of $x+y$.
Thus the corresponding Demazure operators are defined, up to a scalar of degree $0$,
by 
 $s_1.p - p = y \delta_{1} p$, $s_2.p - p = (x+y) \delta_{2} p$. 
The expression of $\delta_{1}$ is simple, as it maps a monomial of the form
$ x^a y^b$ to $(j^b -1) x^a y^{b-1}$, as shown by a simple induction.
The computation of $\delta_2$ is more intricate. One gets easily
$$\left\lbrace
\begin{array}{lcr}
3 \delta_2.y &=& 2j+j^2 \\
3 \delta_2. x &=& 4j+2j^2 \\
\end{array} \right.
\left\lbrace\begin{array}{lcrcr}
3 \delta_2.y^2 &=&  -j x &-& (3 +  j^2) y \\
3 \delta_2. xy &=& j^2 x  &-& 2 y \\
3 \delta_2. x^2 &=&  -4 x &-& 4 y
\end{array} \right.
$$
and

$$
\begin{array}{lcl}
9 \delta_2.y^3 &=&  (j-j^2)x^2 - (7j+2j^2) xy + (10j+8j^2) y^2\\
9 \delta_2 . y^4 &=& j^2 x^3 + (4j - j^2) x^2 y  - (10j + 5j^2) x y^2 - (10 + j^2) y^3
\end{array}
$$
Starting from  $\delta_1.y^4 = (j^4-1) y^3= (j-1) y^3$ one thus
gets 
$$3 \delta_1 \delta_2 \delta_1 . y^4 = (5j^2 - 2j) x + (10j + 8 j^2) y
$$
and 
$$ 9 \delta_2 \delta_1 \delta_2 . y^4 = (4j- 13j^2)x + (2j^2 - 2j) y.
$$
This implies that $ \delta_1 \delta_2 \delta_1 . y^4$ and
$ \delta_2 \delta_1 \delta_2 . y^4$ are linearly independent, which
proves the claim.
\end{proof}

Of course, this obstruction might {\it a priori} vanish by taking another kind of diagrams.
However, we notice that all the 6 pairs of the form $\{ s,t \}$ with $s,t$ among the
4 distinguished pseudo-reflections of the
reflection group $G_4$ are conjugate to each other, whence from the
above we get $\delta_s \delta_t \delta_s \not\in \Q(j)^{\times} \delta_t \delta_s \delta_t$ for each
of them. 

After this example was computed, R. Rouquier told the author that M. Brou\'e had already tried,
some twenty years ago, to use Demazure operators for complex reflection groups, and that he had already noticed
a similar defect.


\subsection{0-Hecke algebras defined by diagrams}

\subsubsection{The case of $G_4$ : finite generation and torsion}

In view of the diagram described above, a natural candidate
for the $0$-Hecke algebra associated to the reflection group
$G_4$ would be the following algebra.

\begin{proposition} Let $k$ be a ring. The unital $k$-algebra defined by generators
$s_1,s_2$ and relations $s_1 s_2 s_1 = s_2 s_1 s_2$, $s_1^3 = s_2^3 = 0$ is not
finitely generated as a $k$-module.
\end{proposition}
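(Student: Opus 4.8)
\textbf{The plan} is to exhibit an explicit infinite family of monomials in $s_1,s_2$ that remains $k$-linearly independent in the quotient algebra, or — more robustly — to produce a single quotient of this algebra which is visibly infinite-dimensional over $k$, thereby forcing the algebra itself to be infinitely generated as a $k$-module. The most reliable route is the second one: find a $k$-algebra $A$ and a surjection from the algebra in question onto $A$, where $A$ is manifestly not finitely generated over $k$. The cleanest candidate for $A$ is a matrix algebra acting on an infinite-dimensional $k$-module, in which the images of $s_1$ and $s_2$ satisfy the braid relation $s_1s_2s_1 = s_2s_1s_2$ and the nilpotency $s_1^3 = s_2^3 = 0$, but which is nevertheless infinite-dimensional.

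\textbf{First} I would work over $k = \mathbf{Q}$ (or any field), since finite generation as a $k$-module is detected after base change to a field: if the algebra were a finitely generated $k$-module for general $k$ it would be so for every field, so it suffices to refute it over one field. (Concretely, if $A \otimes_k \kk$ is infinite-dimensional over a field $\kk$ for $A$ the algebra over a particular $k$ mapping to $\kk$, then $A$ cannot be finitely generated over $k$.) \textbf{Next}, since $B$ here is the Artin group of type $A_2$ (the $3$-strand braid group, generated by $s_1,s_2$ with $s_1s_2s_1 = s_2s_1s_2$), the algebra is the quotient of $kB$ by the two-sided ideal generated by $s_1^3$ and $s_2^3$; I would look for a representation $\rho$ of $B$ on an infinite-dimensional vector space in which $\rho(s_1)^3 = \rho(s_2)^3 = 0$. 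A natural source is the Burau-type or reduced Burau-type representation specialized so that the eigenvalues of each $\rho(s_i)$ become a repeated root of $X^3$, i.e. $\rho(s_i)$ becomes nilpotent of order at most $3$; alternatively one can build the representation by hand on $k[t]$ or on $\bigoplus_{n\ge 0} k$, letting $s_1$ and $s_2$ act by explicit nilpotent ``shift-like'' operators arranged to braid.

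\textbf{The cleanest concrete construction} I would try: let $V = \bigoplus_{n \ge 0} k v_n$ and look for $2\times 2$-block or tridiagonal operators. Actually the slickest approach is to observe that the algebra surjects onto the algebra with the \emph{extra} relation making it a quotient of the Temperley--Lieb-type or Hecke-type algebra of $B$ is \emph{not} what we want (those are finite-dimensional); rather we want to keep the algebra as large as possible, so I would instead directly analyze the structure. Write $e = s_1$, $f = s_2$; the relations are the braid relation and $e^3 = f^3 = 0$. I would then show that the subalgebra generated by $u = ef$ and $v = fe$ (or by $e^2 f$ and such) contains a free-like structure, or more efficiently: find the quotient by setting $s_1^2 = 0$ and $s_2^2 = 0$ as well — wait, that collapses too much. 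The decisive move: construct an explicit $3$-dimensional-per-generator nilpotent representation and tensor/induce it, OR directly check that the words $(s_1 s_2 s_1^2 s_2^2)^n$, or a similarly chosen family, have distinct, $k$-independent images by exhibiting a single finite-dimensional representation separating them for each $n$ and letting the dimension grow — this is essentially showing the algebra maps onto an infinite-dimensional algebra.

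\textbf{The main obstacle} will be the braid relation: it is easy to make $s_1,s_2$ nilpotent of order $3$ and easy to make them generate an infinite-dimensional algebra, but forcing them to also satisfy $s_1 s_2 s_1 = s_2 s_1 s_2$ is the genuine constraint. I expect the resolution is to exploit that in the $3$-strand braid group the element $\Delta = s_1 s_2 s_1$ is central-ish (its square $\Delta^2$ generates the center), and to build the representation starting from the abelianization or from a carefully chosen quotient of $B$ that is infinite — for instance $B$ surjects onto $\mathbf{Z}$ and onto $\mathrm{PSL}_2(\mathbf{Z}) = \mathbf{Z}/2 * \mathbf{Z}/3$ and onto many other infinite groups, and one can try to find such a quotient group $\Gamma$ together with an infinite-dimensional $k\Gamma$-module on which the images of $s_1,s_2$ are nilpotent of order $3$. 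Since $s_1,s_2$ map to infinite-order elements of $B$, asking their images to be order-$3$-nilpotent is the subtle point; I would probably settle it by taking the quotient of $B$ by the relations $s_1^3 = s_2^3 = 1$, which is exactly $G_4$ (finite!), and instead working one step away from that — using a representation of $B$ that does \emph{not} factor through $G_4$, e.g. letting $s_i$ act with a single Jordan block of size $\le 3$ and eigenvalue $0$, on a space built so that the $B$-action does not descend to $G_4$ because $0$ is not a third root of unity. Getting a concrete such action of genuinely infinite dimension, with the braid relation verified, is the crux of the proof.
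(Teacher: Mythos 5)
Your plan is the right one, and it is in fact exactly the strategy the paper uses: exhibit an explicit infinite module over the algebra (equivalently, a surjection onto a manifestly infinite algebra of operators), so that some cyclic submodule already contains infinitely many basis vectors. You also correctly identify the genuine difficulty — it is easy to make two order-$3$ nilpotents generate something infinite, and easy to make them braid, but arranging both simultaneously is the whole content. Unfortunately you stop precisely there: the proposal ends by declaring that producing the concrete action is ``the crux of the proof,'' without producing it. As it stands there is no proof, only a (correct) outline. Side remark: the reduction to a field is unnecessary and slightly awkwardly phrased; the point is not that you can check the claim over one fixed field, but that any nonzero $k$ has a residue field $\kk$ and the algebra base-changes to the corresponding algebra over $\kk$. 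The cleaner move, and the one the paper takes, is to give a construction that works uniformly over every $k$.

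For comparison, here is the missing construction (this is the paper's). Take $\mathcal{E}$ free over $k$ on four families $w_r, w'_r, y_r, y'_r$ ($r\ge 1$) and let
\[
S_1 w_r = 0,\ S_1 y_r = 0,\ S_1 w'_r = y'_{r+1},\ S_1 y'_r = w_r,
\qquad
S_2 w_r = y_{r+1},\ S_2 y_r = w'_r,\ S_2 w'_r = 0,\ S_2 y'_r = 0.
\]
A short check shows $S_1^3 = S_2^3 = 0$ and that both $S_1S_2S_1$ and $S_2S_1S_2$ annihilate every basis vector, hence are equal (to $0$). The point of having the two primed and two unprimed families is precisely to kill all the length-three alternating words while keeping $S_1^2S_2^2$ nonzero: one computes $S_1^2S_2^2\,w_r = w_{r+2}$, so the cyclic submodule generated by $w_1$ contains every $w_{2k+1}$ and is not finitely generated over $k$; hence the algebra is not either. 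This is exactly the kind of ``two nilpotent shift-like operators arranged to braid'' you were groping for — the trick is to let each generator kill two of the four strands outright, so that any alternating word of length $\ge 3$ dies, while the commuting-looking word $s_1^2s_2^2$ survives and shifts. Until you write down such operators (or some equivalent explicit infinite quotient) and verify the three relations, the argument is not complete.
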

\begin{proof}
Let $\mathcal{W},\mathcal{W}',\mathcal{Y},
\mathcal{Y}'$ be free $k$-modules with bases $w_r, w'_r, y_r, y'_r$, $r \geq 1$,
and let $\mathcal{E} = 
 \mathcal{W}\oplus\mathcal{W}'\oplus\mathcal{Y}\oplus
\mathcal{Y}'$. We define $k$-endomorphisms $S_1$ and $S_2$ of $\mathcal{E}$ by
$$
\left\lbrace \begin{array}{lcl} S_1 . w_r &=& 0 \\ S_2 . w_r &=& y_{r+1} \end{array} \right.
\left\lbrace \begin{array}{lcl} S_1 . y_r &=& 0 \\ S_2 . y_r &=& w'_r \end{array} \right.
\left\lbrace \begin{array}{lcl} S_1 . w'_r &=& y'_{r+1} \\ S_2 . w'_r &=& 0 \end{array} \right.
\left\lbrace \begin{array}{lcl} S_1 . y'_r &=& w_r \\ S_2 . y'_r &=& 0 \end{array} \right.
$$
It is immediately checked that
$S_1 S_2 S_1 = S_2 S_1S_2 = S_1^3 = S_2^3 = 0$,
and that $S_1^2 S_2^2. w_r = w_{r+2}$ for all $r \geq 1$. This proves that $S_1,S_2$ defines
on $\mathcal{E}$ a structure of module over the algebra $A$ that we are considering. 
If $A$ were
finitely generated as a $k$-module, $A.w_1 \subset E$ would also be finitely generated as a $k$-module,
contradicting the fact that it contains an infinite subset of a basis for $E$.
\end{proof}
\begin{corollary} Let $m \in \Z \setminus \{ -1,1 \}$. The unital $\Z$-algebra defined by generators
$s_1,s_2$ and relations $s_1 s_2 s_1 = s_2 s_1 s_2$, $s_1^3 = m$, $s_2^3 = m$ is not
finitely generated as a $\Z$-module.
\end{corollary}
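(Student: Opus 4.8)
The plan is to deduce the corollary from the preceding proposition by a base-change argument, treating the relations $s_1^3 = m = s_2^3$ over $\Z$ as a specialization of the relations $s_1^3 = s_2^3 = 0$ that were handled there. Concretely, let $A_m$ denote the unital $\Z$-algebra with generators $s_1,s_2$ and relations $s_1 s_2 s_1 = s_2 s_1 s_2$, $s_1^3 = m$, $s_2^3 = m$, and let $A$ (with $k = \Z$) denote the $0$-Hecke-type algebra of the proposition, so that $A$ is \emph{not} finitely generated as a $\Z$-module. The idea is not to reduce $A_m$ directly to $A$ (the relation $s_i^3 = m \neq 0$ does not literally specialize to $s_i^3 = 0$), but instead to build an explicit infinite-dimensional module for $A_m$ by \emph{adapting} the module $\mathcal{E}$ constructed in the proof of the proposition.

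First I would revisit the module $\mathcal{E} = \mathcal{W}\oplus\mathcal{W}'\oplus\mathcal{Y}\oplus\mathcal{Y}'$ over $\Z$ with the endomorphisms $S_1, S_2$ as defined there, and observe that in that construction $S_1^3 = S_2^3 = 0$ holds because the four "arrows" $S_1$ and $S_2$ move a basis element at most through a short cycle before killing it. To get a module over $A_m$ I would instead look for operators $S_1', S_2'$ on a similar (or slightly enlarged) free $\Z$-module that satisfy $S_1' S_2' S_1' = S_2' S_1' S_2'$ and $(S_1')^3 = m\cdot\mathrm{Id}$, $(S_2')^3 = m\cdot\mathrm{Id}$, while still producing an element whose $A_m$-orbit contains infinitely many independent basis vectors — e.g. arranging that $(S_1')^2 (S_2')^2$ (or some fixed word) acts as an index shift $w_r \mapsto w_{r+2}$ up to a unit, exactly as in the proposition. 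A clean way to engineer this is to take the module to be periodic: replace the single strands $w_r, w_r', y_r, y_r'$ by $\Z$-indexed families on which $S_1'$ acts (within the relevant block) as an invertible shift composed with multiplication by suitable powers, chosen so that the triple composite gives $m$; the braid relation is then a finite identity to be checked on basis elements. Once such a module is in hand, the final paragraph of the argument is identical to the proposition's: if $A_m$ were finitely generated over $\Z$, then $A_m \cdot w_1$ inside the module would be a finitely generated $\Z$-module, contradicting that it contains infinitely many elements of a $\Z$-basis.

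The main obstacle is the constraint $m \neq \pm 1$: this is exactly what prevents the naive fix. If $m = \pm 1$ then $s_1, s_2$ become invertible (with $s_i^{-1} = \pm s_i^2$), the algebra is a genuine quotient of $\Z B$ for the braid group $B$ of $G_4$ — indeed, up to the substitution $s_i \rightsquigarrow \zeta s_i$ absorbing the sign, it is a form of $\Z G_4$ and the BMR/Broué–Malle result makes it finitely generated of rank $|G_4| = 24$. So any construction of an infinite module must genuinely use $m \neq \pm 1$: the point is that when $m$ is not a unit the "shift" operator built from $s_i$ cannot be inverted inside the algebra, and it is this non-invertibility — rather than the nilpotence exploited for $m=0$ — that must be leveraged. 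I would therefore design the module so that the relevant composite word acts as multiplication by a power of $m$ times a shift, making invertibility of that action in the module compatible with non-finite-generation of $A_m$ precisely because $m$ is a non-unit; the verification that the braid relation and the two cubic relations hold on the chosen basis is then routine bookkeeping, and I would not grind through it here.
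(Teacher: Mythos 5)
There is a genuine gap in your approach, and it stems from dismissing the base-change idea too quickly. You write that the relation $s_i^3 = m \neq 0$ ``does not literally specialize to $s_i^3 = 0$,'' but it does, once you choose the right specialization: pick a prime $p$ dividing $m$ (which exists since $m \neq \pm 1$; and if $m = 0$ every prime works). Tensoring $A_m$ with $\F_p$ over $\Z$ sends $m$ to $0$ and produces exactly the $\F_p$-algebra of the preceding proposition, which was shown there to be infinitely generated as a $k$-module for \emph{any} ring $k$, in particular $k = \F_p$. A quotient of a finitely generated $\Z$-module is finitely generated, and a finitely generated $\Z$-module tensored with $\F_p$ is a finite-dimensional $\F_p$-space; since $A_m \otimes_\Z \F_p$ is infinite-dimensional over $\F_p$, the $\Z$-algebra $A_m$ cannot be finitely generated. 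This is the paper's entire proof.

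Your alternative route — constructing by hand a faithful-enough infinite $\Z$-module on which $S_i'^3 = m$ and the braid relation hold, with some word acting as a shift — is not carried out, and it is not routine: satisfying the cubic relation $S_i'^3 = m$ with $m$ a non-unit simultaneously with the braid relation while keeping an unbounded orbit is a genuine combinatorial constraint, not bookkeeping. Your remark about why $m = \pm 1$ must be excluded (the algebra becomes a quotient of $\Z B$ and then finitely generated by BMR) is correct and shows the right intuition about where the hypothesis bites, but the insight you needed is much simpler: $m \neq \pm 1$ is precisely what guarantees the existence of a prime $p \mid m$, which is what makes the reduction mod $p$ land on the $m = 0$ case already treated.
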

\begin{proof}
Choosing a prime $p$ dividing $m$, we get that this $\Z$-algebra admits for quotient the algebra
defined in the proposition for $k = \F_p$.
\end{proof}


An immediate corollary is that we cannot expect the BMR conjecture
to hold without invertibility conditions.
This is a big difference with the
Coxeter case. More precisely we prove the following.
\begin{proposition} \label{propG4}
The algebra defined over $\Z[a,b,c]$ by generators $s_1,s_2$ and relations $s_1 s_2 s_1 = s_2 s_1 s_2$
and $s_i^3 = a s_i^2 + b s_i + c$ for $i \in \{1,2 \}$
is not finitely generated as a $\Z[a,b,c]$-module. 
In the specialization
$a=b=0$, a non-zero torsion element
of the corresponding $\Z[c]$-module is provided by $(s_1^2 s_2^2)^6 - c^8$.
\end{proposition}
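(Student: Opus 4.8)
The plan is to treat the two assertions separately, since they have rather different flavors. For the non-finite-generation over $\Z[a,b,c]$, the strategy is to mimic the construction used in the proof of the proposition for the $0$-Hecke case (the module $\mathcal{E}$ on which $S_1^2 S_2^2$ shifts an index), but now over $\Z[a,b,c]$, engineering a module on which $s_1, s_2$ act, satisfying the braid relation $s_1 s_2 s_1 = s_2 s_1 s_2$ and the cubic relations $s_i^3 = a s_i^2 + b s_i + c$, and on which some fixed cyclic vector generates an infinite-rank free submodule. The cleanest route is probably to take the specialization $a = b = 0$ first: there the algebra surjects onto the $\Z[c]$-algebra with $s_i^3 = c$, and one builds a module as in the earlier proof but with the zero maps replaced by multiplication-by-$c$ maps on the appropriate pieces, arranged so that $S_1^3 = S_2^3 = c\cdot\mathrm{Id}$, $S_1 S_2 S_1 = S_2 S_1 S_2$ still holds, and $S_1^2 S_2^2$ still shifts a basis index. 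Then $A \cdot w_1$ contains infinitely many basis elements of a free $\Z[c]$-module, so $A$ is not finitely generated over $\Z[c]$, a fortiori not over $\Z[a,b,c]$ (which surjects onto it). The first assertion follows.

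For the torsion claim, I would work entirely in the specialization $a = b = 0$, i.e. in the $\Z[c]$-algebra $A_0$ with generators $s_1, s_2$, braid relation $s_1 s_2 s_1 = s_2 s_1 s_2$, and $s_1^3 = s_2^3 = c$. Set $u = s_1^2 s_2^2$ and $\xi = u^6 - c^8$. The goal is to show $\xi \neq 0$ in $A_0$ but $c \cdot \xi = 0$ (or some explicit power of $c$ kills it). The divisibility $c \cdot \xi = 0$, or more precisely that $\xi$ is annihilated by $c$, should be a formal consequence of the relations: one expands $u^6 = (s_1^2 s_2^2)^6$ using $s_i^3 = c$ whenever three consecutive equal generators appear, together with the braid relation to bring words into such a form, and checks that after multiplying by $c$ everything collapses to $c^9$ (equivalently $c\cdot u^6 = c \cdot c^8 = c^9$). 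The nonvanishing $\xi \neq 0$ is the delicate half: one must exhibit a $\Z[c]$-module (or a module over a well-understood quotient, e.g. reduce mod a prime $p \mid$ something, or pass to $k[c]$ for a field $k$, or even set $c \mapsto 0$) on which $A_0$ acts and on which the operator $U^6 - c^8$ is visibly nonzero — for instance the module $\mathcal{E}$ built above, where $U = S_1^2 S_2^2$ acts as the index-shift $w_r \mapsto w_{r+2}$, so $U^6$ sends $w_1 \mapsto w_{13} \neq 0$, while $c^8$ acts as $0$ in the $c \mapsto 0$ specialization; hence $U^6 - c^8$ is nonzero there, so $\xi \neq 0$ in $A_0$.

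The main obstacle I anticipate is the bookkeeping in the two places where words in $s_1, s_2$ must be normalized: first, verifying that the candidate endomorphisms $S_1, S_2$ on the infinite-dimensional module genuinely satisfy $S_1 S_2 S_1 = S_2 S_1 S_2$ alongside $S_i^3 = c$ — the braid relation is the constraint that is easy to break when one modifies the earlier construction — and second, carrying out the expansion of $(s_1^2 s_2^2)^6$ in $A_0$ carefully enough to pin down exactly which power of $c$ annihilates $\xi$ and to be sure the constant term is precisely $c^8$ rather than some other monomial. Neither step is conceptually hard, but both require disciplined rewriting using only the two defining relations; I would organize the word computation by always reducing to a normal form in which no three consecutive $s_i$'s occur, so that the cubic relation is applied in a controlled way. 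Once the module $\mathcal{E}$ is in hand it simultaneously does double duty — it proves non-finite-generation and it witnesses $\xi \neq 0$ — which is the economical way to package the argument.
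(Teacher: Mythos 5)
Your proposal converges to the paper's argument: specialize to $a=b=c=0$ and reuse the module $\mathcal{E}$ from the preceding $0$-Hecke proposition, both to conclude non-finite generation and to see that $(s_1^2s_2^2)^6\neq c^8$ (since $S_1^2S_2^2$ is a shift of infinite order on $\mathcal{E}$ while $c$ acts by $0$), and separately establish $c\bigl((s_1^2s_2^2)^6-c^8\bigr)=0$ by direct rewriting with the braid and cubic relations — which is exactly the content of Figure~\ref{figtorsion}.

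One caution about your primary suggestion for the first assertion: trying to build a $\Z[c]$-module on which $S_i^3=c\cdot\mathrm{Id}$, $S_1S_2S_1=S_2S_1S_2$, and $S_1^2S_2^2$ shifts a basis index is harder than it looks and is not needed. On a free $\Z[c]$-module the relation $S_i^3=c\cdot\mathrm{Id}$ forces each $S_i$ to be injective, which rules out the essential feature of $\mathcal{E}$ (namely that $S_1$ annihilates two of the four families of basis vectors), so a naive ``replace $0$ by $c$'' modification will break either injectivity or the braid relation. The economical route you note at the end — simply pass to $a=b=c=0$ and reuse $\mathcal{E}$ unchanged — is the one to take, and it settles both halves: if the $\Z[a,b,c]$-algebra were module-finite then its quotient at $a=b=c=0$ would be module-finite over $\Z$, contradicting the earlier proposition, and on that same quotient the shift $(S_1^2S_2^2)^6$ is nonzero while $c^8$ vanishes. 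That is the paper's proof.
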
 
\begin{proof}
Infinite generation follows again from the specialisation $a=b=c=0$.
By the computation described in figure \ref{figtorsion}, we prove that $c( (s_1^2 s_2^2)^6 - c^8) = 0$. 
Specializing to $a=b=c = 0$ we get that, on the $\Z$-module already used above, $s_1^2s_2^2$
is mapped to an endomorphism of infinite order, thus proving $(s_1^2 s_2^2)^6 \neq c^8$.

\end{proof}

\begin{figure}
\fbox{
$
\begin{array}{lcl}
c (s_1^2 s_2^2)^6 &=& c s_1^2 s_2^2s_1^2 s_2^2s_1^2 s_2^2s_1^2 s_2^2s_1^2 s_2^2s_1^2 s_2^2 \\
 &=&  s_1c s_1 s_2^2s_1^2 s_2^2s_1^2 s_2^2s_1^2 s_2^2s_1^2 s_2^2s_1^2 s_2^2 \\
 &=&  s_1s_2^3 s_1 s_2^2s_1^2 s_2^2s_1^2 s_2^2s_1^2 s_2^2s_1^2 s_2^2s_1^2 s_2^2 \\
 &=&  s_1s_2^2 (s_2 s_1 s_2) s_2s_1^2 s_2^2s_1^2 s_2^2s_1^2 s_2^2s_1^2 s_2^2s_1^2 s_2^2 \\
 &=&  s_1s_2^2 s_1 (s_2 s_1 s_2) s_1^2 s_2^2s_1^2 s_2^2s_1^2 s_2^2s_1^2 s_2^2s_1^2 s_2^2 \\
 &=&  s_1s_2^2 s_1 s_1 s_2 (s_1 s_1^2) s_2^2s_1^2 s_2^2s_1^2 s_2^2s_1^2 s_2^2s_1^2 s_2^2 \\
 &=&  c s_1s_2^2 s_1 s_1 (s_2  s_2^2)s_1^2 s_2^2s_1^2 s_2^2s_1^2 s_2^2s_1^2 s_2^2 \\
 &=&  c^2 s_1s_2^2 s_1 (s_1 s_1^2) s_2^2s_1^2 s_2^2s_1^2 s_2^2s_1^2 s_2^2 \\
 &=&  c^3 s_1s_2^2 s_1  s_2^2s_1^2 s_2^2s_1^2 s_2^2s_1^2 s_2^2 \\
 &=&  c^3 s_1s_2 (s_2 s_1 s_2) s_2s_1^2 s_2^2s_1^2 s_2^2s_1^2 s_2^2 \\
 &=&  c^3 s_1s_2 s_1 (s_2 s_1 s_2)s_1^2 s_2^2s_1^2 s_2^2s_1^2 s_2^2 \\
 &=&  c^3 s_1s_2 s_1 s_1 s_2 (s_1s_1^2) s_2^2s_1^2 s_2^2s_1^2 s_2^2 \\
 &=&  c^4 s_1s_2 s_1 s_1 (s_2  s_2^2)s_1^2 s_2^2s_1^2 s_2^2 \\
 &=&  c^5 s_1s_2 s_1 (s_1 s_1^2) s_2^2s_1^2 s_2^2 \\
 &=&  c^6 (s_1s_2 s_1)  s_2^2s_1^2 s_2^2 \\
 &=&  c^6 s_2s_1 (s_2  s_2^2)s_1^2 s_2^2 \\
 &=&  c^7 s_2(s_1 s_1^2) s_2^2 \\
 &=&  c^8 s_2 s_2^2 \\
 &=&  c^9  \\
\end{array}
$
}
\caption{Torsion element in type $G_4$ : $c( (s_1^2 s_2^2)^6 - c^8) = 0$}
\label{figtorsion}
\end{figure}

\subsubsection{The case of $G_{12}$}

The example of $G_4$ might suggest that differences with the Coxeter case may happen only when the reflections have order more than $2$.
We prove that this is not the case, by considering the reflection group of type $G_{12}$, whose reflections all
have order $2$.
A suitable monoid for its braid group is given by
the presentation $\langle A,B,C \ | \ ABCA = BCAB = CABC \rangle$. The generators
are braided reflections, and the monoid is known to be Garside (see \cite{PICANTIN}).

\begin{proposition}
 Let $k$ be a ring. The unital $k$-algebra defined by generators
$A,B,C$ and relations $ABCA = BCAB = CABC$, $A^2=B^2=C^2= 0$ is not
finitely generated as a $k$-module. The same holds if the latter relations
are replaced by $A^2 =A$, $B^2 = B$, $C^2 = C$.
\end{proposition}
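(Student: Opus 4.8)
The plan is to mimic, for $G_{12}$, the explicit-module construction used for $G_4$ above. The point is to exhibit a free $k$-module $\mathcal{E}$ with a distinguished basis, and endomorphisms $\mathsf{A},\mathsf{B},\mathsf{C}$ of $\mathcal{E}$ satisfying $\mathsf{A}\mathsf{B}\mathsf{C}\mathsf{A}=\mathsf{B}\mathsf{C}\mathsf{A}\mathsf{B}=\mathsf{C}\mathsf{A}\mathsf{B}\mathsf{C}$ together with $\mathsf{A}^2=\mathsf{B}^2=\mathsf{C}^2=0$ (resp. $\mathsf{A}^2=\mathsf{A}$, etc.), such that some fixed monomial in $\mathsf{A},\mathsf{B},\mathsf{C}$ acts on one basis vector $e_0$ as a ``shift'' sending it to basis vectors arbitrarily far out. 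Then $\mathsf{A},\mathsf{B},\mathsf{C}$ make $\mathcal{E}$ a module over the algebra $A$ in the statement, and if $A$ were a finitely generated $k$-module, so would be the submodule $A.e_0\subseteq\mathcal{E}$, which is absurd since it contains infinitely many basis vectors. This is exactly the pattern of Proposition \ref{propG4} and its corollary; only the combinatorial bookkeeping changes.

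The concrete design: I would take basis vectors indexed along a cyclic or linear ``track'' that records which of the three letters was applied last, so that $\mathsf{A}$, $\mathsf{B}$, $\mathsf{C}$ each act as a partial shift that is zero precisely on the vectors arising from its own application (forcing $\mathsf{A}^2=\mathsf{B}^2=\mathsf{C}^2=0$), and such that applying the word $ABCA$ (equivalently $BCAB$ or $CABC$) to $e_0$ lands on a genuinely new vector while every other length-four word built from the defining relation collapses to the same value or to $0$. Because the Garside relation $ABCA=BCAB=CABC$ is highly symmetric under the cyclic permutation $(A\,B\,C)$, I would build $\mathcal{E}$ with a matching $\Z$-grading and cyclic symmetry so that checking $\mathsf{A}\mathsf{B}\mathsf{C}\mathsf{A}=\mathsf{B}\mathsf{C}\mathsf{A}\mathsf{B}$ reduces to one orbit of computations rather than three. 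For the idempotent variant $A^2=A$, I would instead let the ``diagonal'' basis vectors be fixed by the corresponding generator rather than killed; a clean way to get this simultaneously is to pass to it as a quotient or a minor variant of the nilpotent construction, or simply to note (as in the corollary to the first $G_{12}$-type proposition above, and the $G_4$ corollary) that a suitable specialisation or base change reduces the idempotent case to the nilpotent one.

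The main obstacle is getting the relation $ABCA=BCAB=CABC$ to hold \emph{on the nose} as endomorphisms while still keeping some monomial unboundedly non-nilpotent on $e_0$: the temptation is to make the action so degenerate that \emph{all} sufficiently long words vanish, which would make $A$ finitely generated and defeat the purpose. So the delicate part is choosing the shift pattern so that one specific infinite family of words survives (providing the unbounded orbit $e_0\mapsto e_0\mapsto\cdots$ under powers of a fixed monomial, as $S_1^2S_2^2$ did for $G_4$) while every obstruction coming from $A^2=B^2=C^2=0$ and from the two equalities in the Garside relation is respected. I expect that, as in the $G_4$ case, once the right track is drawn the verification $\mathsf{A}\mathsf{B}\mathsf{C}\mathsf{A}.v=\mathsf{B}\mathsf{C}\mathsf{A}\mathsf{B}.v=\mathsf{C}\mathsf{A}\mathsf{B}\mathsf{C}.v$ on each basis vector $v$ and the identities $\mathsf{A}^2=\mathsf{B}^2=\mathsf{C}^2=0$ become immediate case checks, and the unbounded-orbit statement is a one-line induction. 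I would also record, in the spirit of Proposition \ref{propG4} and Figure \ref{figtorsion}, the corresponding torsion phenomenon over $\Z[a,b,c]$ for $G_{12}$ if the same relation-chasing yields a clean identity such as $c\bigl(M^n-c^m\bigr)=0$ for the chosen monomial $M$.
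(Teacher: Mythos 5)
Your general framework is the right one and matches the paper: build a free $k$-module $\mathcal{E}$ with endomorphisms realising the relations, and show that some monomial acts with an unbounded orbit, so that the cyclic module $A.e_0$ is of infinite rank. But as written this is a plan, not a proof, and the specific design you sketch points in the wrong direction. You want ``applying the word $ABCA$ (equivalently $BCAB$ or $CABC$) to $e_0$ to land on a genuinely new vector'' while worrying that ``the temptation is to make the action so degenerate that all sufficiently long words vanish.'' In fact the clean solution is exactly the degenerate one you are warning yourself against, and the reason it does not defeat the purpose is that you only need \emph{one} non-nilpotent monomial, and that monomial need not touch the Garside relation at all.

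Concretely, the paper's construction makes $C$ act by zero on every basis vector. Then $ABCA = BCAB = CABC = 0$ holds for free (each of these words contains a $C$, and the actions of $A$ and $B$ land back in the span of the basis, so every vector eventually hits $C$ and dies). Once the Garside relation is trivially satisfied, the only content left is that the two-generator algebra $\langle A,B \mid A^2 = B^2 = 0 \rangle$ (respectively $A^2 = A$, $B^2 = B$) has infinite rank; this is achieved by a two-track module $\mathcal{W}^+ \oplus \mathcal{W}^-$ with bases $w_r^{\pm}$, $r \geq 1$, where $A$ and $B$ shift between the two tracks ($A.w_r^- = w_{r+1}^+$, $B.w_r^+ = w_{r+1}^-$, the other actions being $0$, resp. the identity in the idempotent case). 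Then $AB$ (resp. $BA$) shifts $w_r^+ \mapsto w_{r+2}^+$ and one concludes. Your proposal, by insisting that $ABCA$ itself be nontrivial, is trying to solve a harder engineering problem than necessary: nothing in the argument requires the image of the Garside element to be nonzero, only that some word in the surviving generators escapes to infinity. Until you either produce a concrete module with the correct tables, or observe the ``kill $C$'' shortcut and reduce to two generators, the proof is not there.
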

\begin{proof}
We introduce the free modules $\mathcal{W}^+, \mathcal{W}^-$,
with bases $w_r^+, w_r^-$, for $r \geq 1$, and
make $A,B,C$ act through $C . w_r^+ = C. w_r^- = 0$
and 
$$
\left\lbrace \begin{array}{lcl}
A. w_r^+ &=& 0 \\
A. w_r^- &=& w_{r+1}^+ \\
\end{array}
\right.
\left\lbrace \begin{array}{lcl}
B w_r^+ &=& w_{r+1}^- \\
B w_r^- &=& 0
\end{array}
\right.
$$
One easily gets that $A^2$, $B^2$ et $C^2$ act
by $0$, as well as $ABCA$, $BCAB$, $CABC$, thus defining
a module structure for the first algebra. Since one can
check that $AB$ acts by $w_r^+ \mapsto w_{r+2}^+$
one gets the conclusion. For the
second algebra, we make still $C$ act by $0$,
whereas $A. w_r^+ = w_{r+1}^-$, $A.w_r^- = w_r^-$,
$B. w_r^- = w_{r+1}^+$, $B.w_r^+ = w_r^+$. This time
$BA$ maps  $w_r^+ \mapsto w_{r+2}^+$.
\end{proof}

\subsubsection{The case of $G(d,1,2)$}
We finally make a third example, this time inside the infinite series. The usual Hecke algebra
has a presentation with generators $t,s$ and relations $stst = tsts$, $t^d = a_0 + a_1 t + \dots + a_{d-1} t^{d-1}$,
$s^2 = \alpha s + \beta$, defined over $\Z[a_i, a_0^{-1}, \alpha,\beta, \beta^{-1}]$, and the BMR conjecture is known
for them, by work of Ariki and Koike \cite{ARIKIKOIKE} . However, and somewhat surprisingly in view of the previous examples,
it can be proved (see \cite{ARIKIKOIKE}) that it is actually finitely generated over $\Z[a_i, \alpha, \beta, \beta^{-1}]$. This feature
is true for the general case of the $G(d,1,r)$. For $r=2$,
an explicit spanning set of $2d^2 = |G(d,1,2)|$ elements is given by the $t^{m} u^n s^{\eps}$ for $0 \leq m ,n\leq d-1$
and $\eps \in \{ 0,1 \}$, for $u = sts$. The fact that it is a spanning set over $\Z[a_i, \alpha, \beta, \beta^{-1}]$
can be deduced from the easily checked relations  $tu = ut$, $us = \beta st + a u$, $st = \beta^{-1} us - \beta^{-1} a u$ ;
and their consequences $s.  u^{n+1} = \beta tsu^{n} + \alpha u^{n+1}$,
$st^{m+1} u^n = \beta^{-1} u. s t^m u^n - \beta^{-1} \alpha t^m u^{n+1} $. However,
$\beta$ really needs to be invertible, as we illustrate now.

\begin{proposition}
 Let $k$ be a ring. The unital $k$-algebra defined by generators
$t,s$ and relations $stst = tsts$, $t^3 = 0$, $s^2 = s$, is not
finitely generated as a $k$-module.
\end{proposition}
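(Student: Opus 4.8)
The plan is to imitate the explicit module constructions used for $G_4$ and $G_{12}$: exhibit a $k$-module with an action of the three generators satisfying the defining relations, in which some word acts as a shift of infinite order. Concretely, I would introduce two free $k$-modules $\mathcal{W}^+,\mathcal{W}^-$ with bases $w_r^+,w_r^-$ for $r\geq 1$, set $\mathcal{E}=\mathcal{W}^+\oplus\mathcal{W}^-$, and try to make $t$ and $s$ act so that $t^3=0$, $s^2=s$, and $stst=tsts$ all hold as endomorphisms of $\mathcal{E}$, while some monomial in $t,s$ acts by $w_r^+\mapsto w_{r+1}^+$ (hence has infinite order). Then, exactly as in the previous propositions, if the algebra $A$ were finitely generated as a $k$-module, so would be $A.w_1^+\subset\mathcal{E}$, contradicting the fact that this submodule contains infinitely many basis vectors. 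So the whole proof reduces to writing down the right action and checking the relations.

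The design of the action is the main obstacle, and it is a bit more delicate than in the $A^2=B^2=C^2=0$ cases because here $t$ is nilpotent of order $3$ (not $2$) and $s$ is idempotent (not nilpotent), so I cannot just use the clean ``each generator kills half the space'' trick. First I would look for the idempotent $s$ as a projection: split one of the summands, say have $s$ act as the identity on $\mathcal{W}^+$ and by $0$ on $\mathcal{W}^-$ (up to shifts), so $s^2=s$ is automatic. Then I need $t$ with $t^3=0$; the natural thing is to have $t$ carry $\mathcal{W}^+\to\mathcal{W}^-$ and then do something nilpotent, e.g. $t.w_r^+=w_r^-$, $t.w_r^-=0$ would give $t^2=0$, so I would instead want a genuine order-$3$ behaviour and perhaps enlarge to three summands $\mathcal{W}^0,\mathcal{W}^+,\mathcal{W}^-$ with $t$ cycling $\mathcal{W}^0\to\mathcal{W}^-\to(\text{shift of }\mathcal{W}^+)\to 0$ or similar. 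The real constraint to satisfy is the braid relation $stst=tsts$: since $s$ is a projection, $st$ and $ts$ are ``one-sided'' versions of $t$, and I would choose the shifts in the definitions of $t$ (and possibly $s$) precisely so that both sides of $stst=tsts$ collapse to the same explicit map — most likely both to $0$ on the relevant basis vectors, with the shift hidden in a product like $ts\cdot ts$ or $(st)^2$ acting as $w_r^\bullet\mapsto w_{r+2}^\bullet$.

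After fixing the action I would (i) verify $t^3=0$ and $s^2=s$ by direct computation on basis elements, (ii) verify $stst=tsts$ by computing both composites on each of the finitely many ``types'' of basis vector, and (iii) identify an explicit monomial — a candidate is $ts$ or $st$ raised to a small power, in analogy with $AB$ and $BA$ in the $G_{12}$ argument — that acts as the shift $w_r^+\mapsto w_{r+1}^+$ (or $w_{r+2}^+$), hence has infinite order. Granting (i)--(iii), the finiteness contradiction is immediate as above, and this proves that the unital $k$-algebra with generators $t,s$ and relations $stst=tsts$, $t^3=0$, $s^2=s$ is not finitely generated as a $k$-module. I would expect step (ii), getting the braid relation to hold, to be where the bookkeeping of indices has to be done carefully; once a consistent choice of shifts is found the rest is routine, and the statement then also yields, by the same quotient trick as in the corollary to the $G_4$ proposition, that invertibility of $\beta$ is genuinely necessary in the Ariki--Koike presentation of the Hecke algebra of $G(d,1,2)$.
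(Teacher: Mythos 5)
Your high-level strategy is exactly the one the paper uses: build an explicit module on which the three defining relations hold and some word acts as a shift of infinite order, then observe that the cyclic submodule generated by a single vector is a free $k$-module of infinite rank. You also correctly anticipate that two summands will not suffice and that three families of basis vectors will be needed because $t$ should be nilpotent of order exactly $3$, and you correctly identify that $s$ should be realized essentially as a projection so that $s^2=s$ is automatic.

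However, the proposal as written has a genuine gap: it never actually produces the module. Everything hinges on steps (i)--(iii) — ``after fixing the action I would verify\dots'' — but the action is precisely what is not fixed, and your own text flags this as ``where the bookkeeping of indices has to be done carefully.'' This is not routine bookkeeping to be deferred; it is the entire content of the proof, and the candidate monomial you name ($ts$ or $st$ to a small power) does not in fact work. For comparison, the paper takes a free $k$-module with basis $w_r, w'_r, y_r$ ($r\geq 1$), lets $s$ act by $s.w_r=w_r$, $s.w'_r=w_{r+1}$, $s.y_r=0$, and lets $t$ act by $t.w_r=y_r$, $t.y_r=w'_r$, $t.w'_r=0$. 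Then $s^2=s$ is clear, $t^3=0$ because $t$ is the chain $w_r\mapsto y_r\mapsto w'_r\mapsto 0$, and both $stst$ and $tsts$ are checked to act by $0$ on all three families (using that $s$ kills $y_r$). The shift with infinite order is $st^2$, which sends $w_r\mapsto w_{r+1}$, so the module is cyclic on $w_1$ and of infinite rank. Notice the shift is concentrated in the action of $s$ on the $w'$-family, not in $t$, and the infinite-order word $st^2$ is a composite of three letters rather than a power of $st$ or $ts$ — these are exactly the design choices your sketch leaves open. Until they are pinned down and the braid relation is actually verified, the proposal does not constitute a proof.
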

\begin{proof}
Let $E$ be the free $k$-module with basis the
elements $w_r, w'_r,y_r$ for $r \geq 1$. We make $s,t$
act on $E$ through
$$
\left\lbrace \begin{array}{lcl} s . w_r &=& w_r \\ t . w_r &=& y_{r} \end{array} \right.
\left\lbrace \begin{array}{lcl} s . w'_r  &=& w_{r+1} \\ t . w'_r &=& 0 \end{array} \right.
\left\lbrace \begin{array}{lcl} s . y_r &=& 0 \\ t . y_r &=& w'_r \end{array} \right.
$$
One checks easily than $s^2$ acts like $s$ and that both $t^3$ and $stst=tsts$ act by $0$. The
corresponding module is generated by $w_1$. Since it is a free $k$-module of
infinite rank this proves that the algebra of the statement is not finitely generated.
\end{proof}

\begin{corollary} 
The algebra defined by
generators $t,s$ and relations $stst = tsts$, $t^d = a_0 + a_1 t + \dots + a_{d-1} t^{d-1}$,
$s^2 = \alpha s + \beta$, is not finitely generated over $\Z[a_i, \alpha,\beta]$ when $d \geq 3$.
\end{corollary}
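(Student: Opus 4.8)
The plan is to reduce the corollary to the preceding proposition by a specialization argument, so that no fresh computation is needed. First I would record the two elementary stability properties at play: finite generation as a module is preserved under base change, and under passing to algebra quotients. Concretely, if the algebra $A_d$ of the statement — presented over $\Z[a_i,\alpha,\beta]$ by generators $t,s$ and relations $stst=tsts$, $t^d=\sum_{i=0}^{d-1}a_it^i$, $s^2=\alpha s+\beta$ — were finitely generated as a $\Z[a_i,\alpha,\beta]$-module, then for any ring morphism $\Z[a_i,\alpha,\beta]\to\Z$ the specialized $\Z$-algebra $A_d\otimes_{\Z[a_i,\alpha,\beta]}\Z$ would be finitely generated as a $\Z$-module, since the image of a finite generating family is again a generating family.

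Next I would take the specialization sending every $a_i$ to $0$, $\alpha$ to $1$ and $\beta$ to $0$. Then $t^d=\sum_{i=0}^{d-1}a_it^i$ becomes $t^d=0$ and $s^2=\alpha s+\beta$ becomes $s^2=s$, so the specialized $\Z$-algebra $B_d$ is presented by $stst=tsts$, $t^d=0$, $s^2=s$. Since $d\geq 3$ we have $t^d=t^{d-3}t^3$, so the two-sided ideal generated by $t^3$ contains $t^d$, and there is a surjection of $\Z$-algebras $B_d\onto B_3$, where $B_3$ is exactly the algebra of the previous proposition taken with $k=\Z$, namely the one presented by $stst=tsts$, $t^3=0$, $s^2=s$. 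A quotient of a finitely generated module being finitely generated, finite generation of $B_d$ over $\Z$ would force finite generation of $B_3$ over $\Z$, contradicting the proposition. This contradiction proves the corollary.

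Equivalently, and even more directly, one can skip the quotient step and reuse verbatim the module $E$ built in the proof of the proposition: there $t$ acts with $t^3=0$, hence $t^d=0$ for every $d\geq 3$, while $s^2=s$ and $stst=tsts$ act by $0$, so $E$ is a cyclic module over $B_d$ generated by $w_1$ and of infinite rank over $\Z$; finite generation of $B_d$ over $\Z$ would make $B_d.w_1=E$ a finitely generated $\Z$-module, a contradiction. Either way there is no genuine obstacle; the only points to keep straight are that the chosen values $a_i=0$, $\alpha=1$, $\beta=0$ turn the defining relations into precisely $t^d=0$ and $s^2=s$, and that the hypothesis $d\geq 3$ is exactly what guarantees $t^3\mid t^d$, so that the relation used in the proposition is a consequence of the ones here.
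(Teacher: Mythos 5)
Your proof is correct and follows the same route as the paper: specialize at $a_i=0$, $\alpha=1$, $\beta=0$, then pass to the quotient by the ideal $(t^3)$ to land on the $d=3$ algebra of the preceding proposition, which was shown there not to be finitely generated over $\Z$. Your second variant, which skips the quotient and observes directly that the module $E$ from the proposition is already a $B_d$-module for all $d\geq 3$, is a minor streamlining of the same idea.
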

\begin{proof}
The specialization of this algebra at $a_i = 0$, $\alpha = 1, \beta  = 0$ 
admits a quotient (by the ideal generated by $t^3$) which is not a finitely
generated $\Z$-module, whence the conclusion.
\end{proof}

Note that the assumption $d \geq 3$ is necessary, because the case $d = 2$ corresponds to
a Coxeter group, for which $0$-Hecke algebras \emph{are} finitely generated.

Inside the infinite series, R. Rouquier communicated to us the following other
example of the group $G(4,2,2)$, for the presentation $\langle A,B,C \ | \ ABC = BCA = CAB, A^2 = B^2 = C^2 = 0 \rangle$.
Then, it can be checked that the algebra $\langle A,B | A^2 = B^2 = 0 \rangle$ naturally
embeds inside the Hecke algebra $H$, and that it is not finitely generated.

\section{The Hecke algebra of $G_{26}$}

According to \cite{JMUL}, the BMR conjecture has been checked to hold for $G_{26}$ by J. Müller, using Linton's algorithm of
vector enumeration (see \cite{LINTON}) and unpublished software.
For completeness, we also note that the Schur elements of $G_{26}$ have been computed in \cite{MALLE2}, \S 6C,
under the additional assumption of the existence of a suitable trace form.

\begin{theorem} \label{theoG26} The BMR conjecture holds for $G_{26}$.
\end{theorem}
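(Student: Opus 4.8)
The plan is to prove that $H$, the generic Hecke algebra of $G_{26}$, is generated as an $R$-module by at most $|G_{26}| = 1296$ elements; by part~(1) of the proposition on Tits' deformation theorem this automatically upgrades to freeness of rank $|W|$, so no separate flatness or torsion-freeness argument is needed. The group $G_{26}$ has rank $3$, with braid group $B$ admitting a Coxeter-like diagram on three nodes: two nodes $s_1, s_2$ joined by an edge labelled $3$ (as in the $G_{25}$ subdiagram) and a third node $t$ joined to $s_2$, with $t$ a reflection of order $2$ and $s_1, s_2$ of order $3$. Concretely I would fix the presentation of $B$ with braid relations $s_1 s_2 s_1 = s_2 s_1 s_2$, $s_1 t = t s_1$, $s_2 t s_2 t = t s_2 t s_2$, and Hecke relations $s_i^3 = a_{s,2} s_i^2 + a_{s,1} s_i + a_{s,0}$ for $i=1,2$ and $t^2 = a_{t,1} t + a_{t,0}$, all coefficients in $R$ with the constant terms invertible.

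First I would exploit the parabolic subalgebra structure: the sub-diagram on $\{s_1, s_2\}$ generates a copy of the Hecke algebra of $G_{25}$, which is already known to be free of rank $|G_{25}| = 648$ by Broué--Malle (and this is one of the four cases the introduction accepts as fully checked). So I have a distinguished spanning set $\mathcal{B}$ of $648$ elements for the subalgebra $H_{25} \subset H$. The strategy is then to show that $H = \sum_{j} H_{25} \cdot m_j$ for an explicit finite set of $\le 2$ coset representatives $m_j$ in the letter $t$ (since $[G_{26} : G_{25}] = 2$), i.e. that $H$ is spanned over $H_{25}$ by $\{1, t\}$, or perhaps by a slightly larger set like $\{1, t, s_2 t, \ldots\}$ which one then reduces. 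This is a rewriting-system computation: take an arbitrary word in the generators, and repeatedly use the Hecke relations to lower the exponent of any generator, and the braid relations to move occurrences of $t$ past $s_1$ (they commute) and to create cancellations, driving every monomial into the normal form $b \cdot t^\eps$ with $b \in \mathcal{B}$ and $\eps \in \{0,1\}$. The key relations to establish are the analogues of what appears in the $G(d,1,2)$ discussion: formulas expressing $t \cdot b$, for $b$ ranging over the $648$ basis elements of $H_{25}$, back in the span of $\mathcal{B} \cup \mathcal{B}t$.

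The main obstacle is exactly this last point — controlling the left multiplication of $t$ on the $G_{25}$-module basis, which amounts to a finite but genuinely large confluence check (a bounded-length rewriting argument, necessarily done with computer assistance over the ring $R = \Z[a_{s,i}, a_{s,0}^{-1}]$, not over the fraction field, precisely to avoid the torsion pitfall flagged in the introduction). Unlike the rank-$2$ situation, there is no Etingof--Rains-style reduction to a Coxeter group available here, so the argument is essentially a direct computation of a Gröbner-type basis for the defining ideal inside $R B$, or equivalently running vector enumeration over $R$ in a verifiable way. The delicate issue is termination and certification: one must exhibit an explicit spanning set and then verify that each of the finitely many products generator $\times$ (spanning element) re-expands inside the $R$-span of that set, with all coefficients lying in $R$ (invertibility of $a_{s,0}, a_{t,0}$ being used, but nothing more). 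Once a spanning set of size $\le 1296$ is certified in this manner, the earlier proposition closes the argument and yields Theorem~\ref{theoG26}.
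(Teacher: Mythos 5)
Your reduction to exhibiting a spanning set of at most $|W| = 1296$ elements is the right opening move, and the idea of working over a parabolic subalgebra whose freeness is already established is exactly the paper's strategy. However, you have misidentified the parabolic: the sub-diagram on $\{s_1, s_2\}$ (both generators of order $3$, braid relation $s_1 s_2 s_1 = s_2 s_1 s_2$) gives the Hecke algebra of $G_4$, of rank $24$, not that of $G_{25}$. The reflection subgroup $G_{25} \le G_{26}$ of index $2$ exists, but it has rank $3$ and is not the parabolic attached to a proper subset of the braid generators; there is no obvious embedding of $H(G_{25})$ into $H(G_{26})$ of the kind your argument requires. Consequently the coset count you need is $[G_{26}:G_4] = 54$, not $2$, and your target normal form $b \cdot t^{\eps}$, $\eps \in \{0,1\}$, cannot possibly close up: the word problem over the rank-$24$ subalgebra is much larger.

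Independently of that, the proposal stops precisely where the actual proof begins. You describe the strategy (``a bounded-length rewriting argument, necessarily done with computer assistance'') without carrying it out, and you yourself flag the confluence check as ``the main obstacle.'' The paper resolves this by hand: it sets $A_3 = \langle s_1, s_2 \rangle$ (the $G_4$ parabolic, free of rank $24$ by the author's earlier work on cubic Hecke algebras), also exploits the other parabolic $\langle s_2, t \rangle \cong H(G(3,1,2))$, and builds a filtration $\hat{A}^{(1)} \subset \hat{A}^{(2)} \subset \AAA \subset \hat{A}^{(3)}, \hat{B} \subset \hat{A}^{(4)}$ of $A_3$-bimodules. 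A long sequence of explicit braid-relation manipulations (Lemmas~\ref{lemG26L2}--\ref{lemC3E}, tracking the central element $C = (ts_2s_1)^3$ and its powers $C^2$, $C^3$ modulo these bimodules) shows $\hat{A}^{(4)} = \hat{A}^{(5)} = \hat{A}$, and a careful count yields a $54$-element generating set over $A_3$, hence $24 \times 54 = 1296$ over $R$. That computation is the substance of the theorem; your proposal supplies the frame but not the picture.
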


We recall that the group $G_{26}$ is a Shephard group, and a quotient of the braid group of type $B_3$. It is the largest
of the two `linearizations' of the group of automorphisms of the projective `Hessian configuration' (see e.g. \cite{ORLIKTERAO},
example 6.30 p. 226),
the other one being $G_{25}$.

We take for generators of the braid group $B$ of $G_{26}$ the elements $t, s_2, s_1$ satisfying
the braid relations $t s_2 t s_2 = s_2 t s_2 t$, $s_2 s_1 s_2 = s_1 s_2 s_1$ and $t s_1 = s_1 t$.

The generic Hecke algebra $\hat{A}$ of $G_{26}$ is then defined over the
ring  $R = \Z[a,b,c^{-1},d,e^{-1}]$, with generators $s_1, s_2,t$ subject
to the above braid relations, and in addition to the relations $s_i^3 = a s_1^2 + b s_1 + c$  and $t^2 = dt + e$.
The ring $\hat{A}$ admits useful (skew)automorphisms,
 defined by  $s_i \mapsto s_i^{-1}$,
$t \mapsto t^{-1}$, $a \mapsto - b c^{-1}$, $b \mapsto - a c^{-1}$, $c \mapsto c^{-1}$,
$d \mapsto -d e^{-1}$, $e \mapsto e^{-1}$. We let $\phi$ denote the automorphism,
$\psi$ the corresponding skew-automorphism.

Let $A_3 = \langle s_1,s_2 \rangle \subset \hat{A}$ and $\hat{A}^{(n+1)} = \hat{A}^{(n)} v A_3$. For technical
reasons, we also introduce the following intermediate bimodules
$$
\begin{array}{lcl}
\hat{A}^{(2 \frac{1}{2})} &= & \hat{A}^{(2)} + A_3 t s_2 s_1 t s_2 t A_3
+ A_3 t s_2 s_1 t s_2^{-1} t A_3$$ {} $$
+ A_3 t s_2 s_1^{-1} t s_2^{-1} t A_3
+ A_3 t s_2^{-1} s_1^{-1} t s_2^{-1} t A_3 \\
\hat{B} &=&  \AAA + A_3 C^2 + A_3 C^{-2} \\
\end{array}
$$

We have
the following inclusion/equalities, some of them being
obvious from the definitions, the other ones being proved in the sequel.

\xymatrix{
 & & & \hat{A}^{(3)} \ar@{^(->}[dr]& \\
\hat{A}^{(1)} \ar@{^(->}[r] & \hat{A}^{(2)} \ar@{^(->}[r] & \AAA \ar@{^(->}[ur] \ar@{^(->}[dr]  & & \ \ \ \ \ \ \ \ \ \ \ \ \hat{A}^{(4)}= \hat{A}^{(5)} = \hat{A} \\
 & & & \hat{B}\  \ar@{^(->}[ur] & \\
}

Let $C = (t s_2 s_1)^3$. It is central in $\hat{A}$, as it generates the center of the braid group, and its image in $G_{26}$
has order $6$. We let $u_i = R + R s_i + R s_i^{-1}$ denote the subalgebra generated by $s_i$, and $v = R + R t$
the subalgebra generated by $t$. We will need the following results on the `parabolic' subalgebras
$A_3 = \langle s_1, s_2 \rangle$ and $\langle s_2, t \rangle$, which correspond to the rank 2 parabolic subgroups
of Shephard-Todd type $G_4$ and $G(3,1,2)$, respectively.

\begin{proposition} {\ } \label{lemG26L5}
\begin{enumerate}
\item $\langle s_1,s_2 \rangle =  u_1 u_2 u_1 + u_1 s_2 s_1^{-1} s_2 $
\item $\langle s_1,s_2 \rangle =  u_1 + u_1 s_2 u_1 + u_1 s_2^{-1} u_1 + u_1 s_2 s_1^{-1} s_2 $
\item $\langle s_2,t \rangle = \sum_{a \in \{-1,0,1 \}} R s_2^a +\sum_{a,b \in \{-1,0,1 \}} R s_2^a t s_2^b + \sum_{a \in \{-1,0,1 \}} R s_2^a t s_2 t
+ \sum_{a \in \{-1,0,1 \}} R s_2^a t s_2^{-1} t
$
\end{enumerate}
\end{proposition}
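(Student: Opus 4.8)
The plan is, in each of the three items, to establish the non-obvious inclusion ``$\subseteq$'' (the reverse being immediate) by showing that the proposed right-hand side $M$ — manifestly an $R$-submodule of the relevant parabolic subalgebra, and containing $1$ — is stable under left multiplication by each algebra generator. Since $\langle s_1,s_2\rangle$ is generated as an algebra by $R$ together with $s_1^{\pm1},s_2^{\pm1}$ and $\langle s_2,t\rangle$ by $R,s_2^{\pm1},t^{\pm1}$, this gives $M=\langle s_1,s_2\rangle$, resp.\ $M=\langle s_2,t\rangle$. Because $c$ and $e$ are invertible in $R$, the defining relations yield $s_i^{-1}\in R+Rs_i+Rs_i^2$ and $t^{-1}\in R+Rt$, so stability of $M$ under $s_1,s_2$ (resp.\ $s_2,t$) automatically forces stability under $s_1^{-1},s_2^{-1}$ (resp.\ $s_2^{-1},t^{-1}$); it thus suffices to prove $s_1M\subseteq M$, $s_2M\subseteq M$ (resp.\ $s_2M\subseteq M$, $tM\subseteq M$). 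One first records that $M$ is automatically a left module over $u_1$ (resp.\ $u_2$), as $u_iu_i=u_i$; everything beyond that is a finite, if lengthy, computation.

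For (1), since $u_1u_2u_1=u_1+u_1s_2u_1+u_1s_2^{-1}u_1$, statements (1) and (2) are literally the same (the spanning set having cardinality $3+9+9+3=24=|G_4|$, hence necessarily a basis once it is a spanning set). The only summand not already handled by the $u_1$-bimodule structure and by $u_1s_2^{\pm1}u_1$ is the rank-$3$ piece $u_1s_2s_1^{-1}s_2$, so the heart of the argument is to check that $s_2s_1^{-1}s_2$, multiplied on the left by each of $s_2,s_2s_1,s_2s_1^{-1}$ and on the right by each of $1,s_1,s_1^{-1}$, lands back in $M$. The rewrites use the braid relation $s_1s_2s_1=s_2s_1s_2$ and its consequences $s_2s_1s_2^{-1}=s_1^{-1}s_2s_1$, $s_2^{-1}s_1s_2=s_1s_2s_1^{-1}$, $s_2s_1^{-1}s_2^{-1}=s_1^{-1}s_2^{-1}s_1$ (and their transposes), together with $s_2^{\pm2}\in R+Rs_2+Rs_2^{-1}$; for example $s_2s_1^{-1}s_2\cdot s_1=s_2^2s_1s_2^{-1}$ and $s_2s_1\cdot s_2s_1^{-1}s_2=s_1s_2^2$. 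The one place where a naive reduction closes up on itself is $s_2s_1^{-1}s_2\cdot s_1^{-1}$; the way around it is to do the bookkeeping in the right order — first establish $s_2s_1^{-1}s_2\cdot s_1=s_2^2s_1s_2^{-1}\in M$ and that $M$ is also a right $u_1$-module, then obtain $s_2s_1^{-1}s_2\cdot s_1^2\in M$ by reductions that only refer back to the already-settled $s_1\cdot(s_2s_1^{-1}s_2\cdot s_1)$ and to $u_1M\subseteq M$, and finally deduce $s_2s_1^{-1}s_2\cdot s_1^{-1}$ from $s_1^{-1}=c^{-1}(s_1^2-as_1-b)$. After this, $s_2M\subseteq M$ follows term by term.

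For (3) I would proceed in the same way for $\langle s_2,t\rangle$, the Hecke algebra of $G(3,1,2)$, with braid relation $ts_2ts_2=s_2ts_2t$; here $M=u_2+u_2tu_2+u_2ts_2t+u_2ts_2^{-1}t$ (cardinality $3+9+3+3=18=|G(3,1,2)|$), and the extra rank-$3$ pieces are $u_2ts_2t$ and $u_2ts_2^{-1}t$. The key identities are $t\cdot ts_2^{\pm1}t=(dt+e)s_2^{\pm1}t\in u_2+u_2tu_2$, $ts_2t\cdot s_2=s_2ts_2t$ and $ts_2t\cdot s_2^{-1}=s_2^{-1}\cdot ts_2t$, with the analogues for $ts_2^{-1}t$; the stubborn cases, $ts_2^{-1}t\cdot s_2=ts_2^{-1}ts_2$ and $ts_2^{-1}t\cdot s_2^{-1}$, I would break by first deriving $s_2^{-1}ts_2=ts_2ts_2^{-1}t^{-1}$ from the braid relation and then pulling the extra $t$ through to produce a $t^2$ which collapses via $t^2=dt+e$. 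Alternatively, one may invoke that the Ariki--Koike algebra of $G(3,1,2)$ is free of rank $18$ and, by the $G(d,1,r)$ computation recalled in Section~2, is already spanned over $R$ by the monomials $s_2^m(ts_2t)^nt^{\varepsilon}$ with $0\le m,n\le2$ and $\varepsilon\in\{0,1\}$, and then re-express those $18$ monomials in the stated shape using $t^2=dt+e$, the braid relation, and the cubic for $s_2$.

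The main obstacle is not conceptual but the sheer bookkeeping: one must enumerate the finitely many products ``generator $\times$ spanning monomial'', rewrite each using only the relations above, and certify that the process never leaves the listed family. The one real subtlety is sequencing — several of the reductions of the longest words presuppose that the shorter words and the $u_i$-bimodule structure have already been settled — and the handful of cases ($s_2s_1^{-1}s_2\cdot s_1^{-1}$ in the $G_4$ case, $ts_2^{-1}t\cdot s_2^{\pm1}$ in the $G(3,1,2)$ case) where a first attempt at rewriting is circular and one must instead route through a higher power of $s_1$, respectively through a $t^2$, in order to break the loop.
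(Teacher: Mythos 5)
Your overall strategy — write the claimed right-hand side $M$ as a sum of $R$-submodules containing $1$, and show $M$ is stable under left multiplication by the group generators, noting that stability under $s_i$ (resp.\ $t$) already forces stability under $s_i^{-1}$ (resp.\ $t^{-1}$) because $c,e\in R^\times$ — is correct and is the same basic mechanism the paper uses. The details, however, diverge from the paper in two places.

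For items (1) and (2), the paper does not give any argument at all: it states that they are ``easy and proved in \cite{conjcub}.'' You supply a genuine proof, and in particular you correctly notice the one real snag in the naive approach, namely that the word $s_2s_1^{-1}s_2\cdot s_1^{-1}$ cannot be reduced on the nose — a straightforward braid-relation rewrite just cycles back to itself — and you resolve it by the sequencing trick: settle $s_2s_1^{-1}s_2\cdot s_1\in M$ and the right $u_1$-module structure on $M$ first, then get $s_2s_1^{-1}s_2\cdot s_1^2\in M$ by rewriting using only already-established memberships, and finally recover the $s_1^{-1}$ case from $s_1^{-1}=c^{-1}(s_1^2-as_1-b)$. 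This is sound (I checked the key reduction $s_2^2 s_1 s_2^{-1} s_1$, which goes through via the identities $s_2 s_1 s_2^{-1}=s_1^{-1}s_2 s_1$ and $s_1^{-1}s_2^{-1}s_1=s_2 s_1^{-1}s_2^{-1}$), and it buys the reader something the paper does not offer, namely a self-contained verification instead of a pointer to another article.

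For item (3) the underlying computations are the same (braid relation $ts_2ts_2=s_2ts_2t$ together with $t^2=dt+e$), but the architecture differs. The paper's proof is organized around a left–right symmetry: it first shows $ts_2^{\alpha}ts_2^b\in U$ for $\alpha\in\{-1,1\}$, $b\in\{0,1,2\}$, deduces that the longest words satisfy $t\,s_2^{\alpha}ts_2^{\beta}t\in Ut$, and then disposes of $Ut\subset U$ by running the \emph{same} argument ``reading from the right.'' Your version checks $tM\subseteq M$ directly case by case, including for the length-five words, resolving $ts_2^{-1}ts_2^{\pm1}t$ by pushing a $t$ through the conjugation identity $s_2^{-1}ts_2=ts_2ts_2^{-1}t^{-1}$ and collapsing the resulting $t^2$; this works (e.g.\ $t\,s_2^{-1}t\,s_2\,t=t^2s_2ts_2^{-1}=(dt+e)s_2ts_2^{-1}$, and $ts_2ts_2^{-1}=s_2^{-1}ts_2t$). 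It is a direct substitute for, rather than a restatement of, the paper's symmetry argument, and it requires checking a slightly larger hand-list of words; the paper's route is the more economical of the two, yours the more explicit. Both are valid. Your alternative suggestion of invoking freeness of the Ariki--Koike algebra and re-expressing the monomial basis is also workable, but it is closer to a black-box appeal than what the paper is doing.

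One small caution: your cardinality remarks ($3+9+9+3=24$ and $3+9+3+3=18$) are only upper bounds on the number of spanning elements; the sums are not a priori direct, so they do not by themselves certify a basis. This does no harm since the proposition only asserts a spanning-set decomposition, but the parenthetical ``hence necessarily a basis once it is a spanning set'' should be backed by the independently known freeness of the $G_4$ and $G(3,1,2)$ Hecke algebras rather than treated as automatic.
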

\begin{proof} (1) and (2) are easy and proved in \cite{conjcub}.
We prove (2). The RHS clearly contains $1$ and is stable under left multiplication by $s_2$. It thus sufficient to prove
that it is stable under left multiplication by $t$. Let $U$ denote the RHS. Since $\langle s_2 \rangle$ is $R$-spanned
by $1,s_2,s_2^2$, we need to prove 
$t s_2^{\alpha} t s_2^b  \in U$ and $t s_2^{\alpha} t s_2^{\beta} t  \in U$ for all $b \in \{ 0,1,2 \}$
and $\alpha,\beta \in \{ -1,1 \}$. If $\alpha = 1$ we have
$t s_2^{\alpha} t s_2^b = ts_2 t s_2^b$. If $b = 1$ we get in addition $ts_2 t s_2^b = ts_2 t s_2 = s_2 t s_2 t \in U$;
If $b = 1$ we get $(ts_2 t s_2) s_2 = s_2 (t s_2t s_2) = s_2^2 t s_2 t \in U$; if $b=0$ we know $t s_2 t \in U$.
If $\alpha = -1$ we have $t s_2^{-1} t s_2^b \in R^{\times} t^{-1}s_2^{-1} t^{-1} s_2^b + \langle s_2 \rangle t \langle s_2 \rangle
\subset R^{\times} t^{-1}s_2^{-1} t^{-1} s_2^b + U$
and the proof of $t^{-1}s_2^{-1} t^{-1} s_2^b \in U$ is similar, taking this time $b \in \{-2,-1,0\}$ and using $t^{-1} s_2^{-1}t^{-1} s_2^{-1}
=  s_2^{-1}t^{-1} s_2^{-1}t^{-1} $ instead. This in particular implies that
$t (s_2^{\alpha} t s_2^{\beta} t) = (t s_2^{\alpha} t s_2^{\beta}) t \in U t $.
The same proof, reading from the right, proves that $s_2^{\alpha} t s_2^b t \in U$ for all $\alpha, b$,
which clearly implies $U t \subset U$, and this concludes the proof.

\end{proof}

\subsection{Bimodule decompositions of $\hat{A}^{(k)}$, $1 \leq k \leq 3$}
\begin{proposition}{\ } \label{lemG26L1} (bimodule decomposition of $\hat{A}^{(1)}$ and $\hat{A}^{(2)}$)
\begin{enumerate}
\item $\hat{A}^{(1)} = A_3 + A_3 t A_3$
\item $\hat{A}^{(2)} = \hat{A}^{(1)} + A_3 ts_2t A_3+A_3 ts_2^{-1}t A_3+ A_3 ts_2 s_1^{-1}s_2 t A_3$
\end{enumerate}
\end{proposition}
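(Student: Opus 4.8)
The plan is to obtain (1) directly from the recursive definition, and to reduce (2) in one step to the decomposition of $A_3$ supplied by Proposition \ref{lemG26L5}(1) together with the commutation $ts_1 = s_1 t$.

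First, (1) is essentially a restatement of the definition: since $t^2 = dt + e$ the subalgebra $v = \langle t \rangle$ is $R + Rt$, so $\hat{A}^{(1)} = A_3 v A_3 = A_3(R + Rt)A_3 = A_3 + A_3 t A_3$.

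For (2) I would first unwind one step of the recursion. Since $\hat{A}^{(1)}$ is visibly stable under right multiplication by $A_3$, we have $\hat{A}^{(2)} = \hat{A}^{(1)} v A_3 = \hat{A}^{(1)} + \hat{A}^{(1)} t A_3$, and substituting (1) together with $A_3 t A_3 \subseteq \hat{A}^{(1)}$ gives $\hat{A}^{(2)} = \hat{A}^{(1)} + A_3\, t\, A_3\, t\, A_3$. So the whole content of (2) is the inclusion
$$
A_3\, t\, A_3\, t\, A_3 \ \subseteq\ \hat{A}^{(1)} + A_3 t s_2 t A_3 + A_3 t s_2^{-1} t A_3 + A_3 t s_2 s_1^{-1} s_2 t A_3 .
$$
To prove it I would apply Proposition \ref{lemG26L5}(1), namely $A_3 = u_1 u_2 u_1 + u_1 s_2 s_1^{-1} s_2$, to the \emph{inner} $A_3$, the one flanked by the two $t$'s. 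Because $t$ commutes with $s_1$ it commutes with every element of $u_1$, hence $t\,(u_1 u_2 u_1)\, t = u_1\,(t\, u_2\, t)\, u_1$, and since $u_1 \subseteq A_3$ this summand contributes $A_3\, t\, u_2\, t\, A_3$; the other summand contributes $(A_3 t u_1)\, s_2 s_1^{-1} s_2\, t A_3 = A_3 t s_2 s_1^{-1} s_2 t A_3$, again using $A_3 t u_1 = A_3 u_1 t = A_3 t$. Finally $u_2 = R + Rs_2 + Rs_2^{-1}$ expands $A_3 t u_2 t A_3$ into $A_3 t^2 A_3 + A_3 t s_2 t A_3 + A_3 t s_2^{-1} t A_3$, and $t^2 = dt + e$ puts $A_3 t^2 A_3$ inside $A_3 t A_3 + A_3 \subseteq \hat{A}^{(1)}$. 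Collecting terms yields the displayed inclusion, hence (2).

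There is no serious obstacle in this proposition: it is pure bookkeeping once Proposition \ref{lemG26L5}(1) is available, the only point to watch being that the $A_3$-decomposition must be inserted at the interior factor so that the resulting $u_1$'s sit next to a $t$ and get absorbed into the neighbouring $A_3$ (inserting it at an outer factor produces nothing new). I expect the genuine difficulties to be deferred to the later reductions, where one must control $\AAA$ and $\hat{B}$ and prove the stabilization $\hat{A}^{(4)} = \hat{A}^{(5)} = \hat{A}$.
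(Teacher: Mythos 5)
Your proof is correct and follows essentially the same route as the paper: reduce (2) to controlling $tA_3t$, insert the decomposition $A_3 = u_1u_2u_1 + u_1 s_2 s_1^{-1} s_2$ from Proposition \ref{lemG26L5}(1) at the inner $A_3$, and use the commutation $ts_1=s_1t$ together with $t^2 \in Rt + R$ and $u_2 = R + Rs_2 + Rs_2^{-1}$ to collect the four summands. The paper's proof is just a condensed form of exactly this chain of inclusions.
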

\begin{proof}
(1) is clear, as $v=\langle t \rangle$ is $R$-generated by $1$ and $t$. For proving (2)
we note that $A_3 = u_1 s_2 s_1^{-1} s_2 + u_1 u_2 u_1$ hence
$tA_3 t \subset tu_1 s_2 s_1^{-1} s_2t + tu_1 u_2 u_1t
\subset u_1 t s_2 s_1^{-1} s_2t + u_1 tu_2 tu_1
\subset u_1 t s_2 s_1^{-1} s_2t + u_1 t^2 u_1 +  u_1 ts_2 tu_1+ u_1 ts_2^{-1} tu_1$. This proves (2).
\end{proof}

\begin{lemma} {\ } \label{lemG26L2}
\begin{enumerate}
\item For all $i,j \in \{1,2 \}$, $tu_i t u_j t \subset \hat{A}^{(2)}$
\item $tu_2 u_1 t u_2 t \subset \sum_{\alpha,\beta,\gamma} R t s_2^{\alpha} t s_1^{\beta} s_2^{\gamma} t$
\end{enumerate}
\end{lemma}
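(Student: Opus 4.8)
The statement to establish is Lemma \ref{lemG26L2}: that $t u_i t u_j t \subset \hat{A}^{(2)}$ for all $i,j \in \{1,2\}$, and the refinement $t u_2 u_1 t u_2 t \subset \sum_{\alpha,\beta,\gamma} R\, t s_2^\alpha t s_1^\beta s_2^\gamma t$. I would attack (1) first by a direct case analysis on the pair $(i,j)$. Since $u_i = R + R s_i + R s_i^{-1}$, expanding $t u_i t u_j t$ reduces everything to the nine elements $t s_i^\alpha t s_j^\beta t$ with $\alpha,\beta\in\{-1,0,1\}$ (the cases where $\alpha$ or $\beta$ is $0$ fall into $\hat A^{(1)}$ or $t A_3 t A_3$ trivially). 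The genuinely nontrivial cases are $i=j$: here one uses the quadratic/cubic relations for $t$ and $s_i$ together with the braid relations. For $i=j=2$: $t s_2 t s_2 t$ — rewrite using $t s_2 t s_2 = s_2 t s_2 t$ so $t s_2 t s_2 t = s_2 (t s_2 t)(t) \cdot(\text{adjust})$; more precisely $t s_2 t s_2 t = s_2 t s_2 t^2 \in s_2 t s_2 (Rt + R) \subset A_3 t s_2 t A_3 + A_3 t s_2 A_3 \subset \hat A^{(2)}$, and similarly for the $s_2^{-1}$ versions using $t^{-1} s_2^{-1} t^{-1} s_2^{-1} = s_2^{-1} t^{-1} s_2^{-1} t^{-1}$ after clearing inverses via $t^{-1} \in R t + R$, $s_2^{-1}\in R s_2^2 + R s_2 + R$. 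For $i=j=1$: here $t$ and $s_1$ \emph{commute}, so $t s_1^\alpha t s_1^\beta t = s_1^\alpha s_1^\beta t^3 = s_1^{\alpha+\beta}(d t^2 + e t)\in A_3 + A_3 t A_3 = \hat A^{(1)}$. The mixed cases $i\neq j$: $t s_1^\alpha t s_2^\beta t$, use that $t,s_1$ commute to get $s_1^\alpha t^2 s_2^\beta t \in A_3 t^2 A_3 t A_3$, and $t^2 \in Rt + R$, landing in $A_3 t s_2^\beta t A_3 + A_3 s_2^\beta t A_3 \subset \hat A^{(2)} + \hat A^{(1)}$; symmetrically $t s_2^\alpha t s_1^\beta t = t s_2^\alpha t s_1^\beta t = t s_2^\alpha t^2 s_1^\beta \cdot(\text{wrong})$ — actually here push $s_1^\beta$ left past $t$: $t s_2^\alpha t s_1^\beta t = t s_2^\alpha s_1^\beta t^2 \in A_3 t A_3 (Rt+R)$? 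No: $t$ and $s_1$ commute but $s_2^\alpha$ sits between, so one must first move $s_1^\beta$ through $t$ (fine) then recognize $t s_2^\alpha s_1^\beta t^2$? That is wrong since the second $t$ was to the right of $s_1^\beta$ only after commuting. Let me be careful: $t s_2^\alpha t s_1^\beta t$; the rightmost $t$ commutes with nothing to grab, but $s_1^\beta t = t s_1^\beta$, giving $t s_2^\alpha t^2 s_1^\beta$? No — $t s_1^\beta t = t^2 s_1^\beta$, so $t s_2^\alpha (t s_1^\beta t) = t s_2^\alpha t^2 s_1^\beta \in A_3 t s_2^\alpha (Rt+R) s_1^\beta \subset A_3 t s_2^\alpha t A_3 + A_3 t s_2^\alpha A_3 \subset \hat A^{(2)} + \hat A^{(1)}$. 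Good. So (1) is a finite, mechanical verification once the commutation $t s_1 = s_1 t$ and the mixed braid relation $t s_2 t s_2 = s_2 t s_2 t$ are exploited.

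For part (2), $t u_2 u_1 t u_2 t$, I would again expand over the $R$-basis: $u_2 u_1 = \sum_{\mu,\nu\in\{-1,0,1\}} R s_2^\mu s_1^\nu$ (times lower-order terms), $u_2 = R + Rs_2 + Rs_2^{-1}$, so the whole expression is spanned by $t s_2^\mu s_1^\nu t s_2^\gamma t$ with $\mu,\nu,\gamma\in\{-1,0,1\}$. The target $\sum R\, t s_2^\alpha t s_1^\beta s_2^\gamma t$ requires moving the $s_1^\nu$ that sits \emph{before} the middle $t$ to a position \emph{after} it. Since $s_1$ and $t$ commute, $t s_2^\mu s_1^\nu t s_2^\gamma t = t s_2^\mu t s_1^\nu s_2^\gamma t$, which is exactly of the form $t s_2^\alpha t s_1^\beta s_2^\gamma t$ with $\alpha=\mu$, $\beta=\nu$. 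One subtlety: $u_2 u_1$ is the $R$-span of $s_2^\mu s_1^\nu$ only up to the fact that $u_2$ and $u_1$ each contain a unit term, so there are also pure-$u_2$ and pure-$u_1$ and scalar contributions; these give summands $t s_2^\mu t s_2^\gamma t$ (with $\beta=0$), $t s_1^\nu t s_2^\gamma t = t^2 s_1^\nu s_2^\gamma t$ — wait, that last one has the middle-$t$ absorbed: $t s_1^\nu t = t^2 s_1^\nu$, so $t s_1^\nu t s_2^\gamma t = t^2 s_1^\nu s_2^\gamma t$, and $t^2 = dt+e$ gives $d\, t s_1^\nu s_2^\gamma t + e\, s_1^\nu s_2^\gamma t$, the first term being $t s_2^0 \cdot(\text{no middle }t)$ — hmm, this is of the form $t s_1^\beta s_2^\gamma t$, not $t s_2^\alpha t s_1^\beta s_2^\gamma t$. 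I would handle this by noting $t s_1^\beta s_2^\gamma t = t\cdot 1 \cdot s_1^\beta s_2^\gamma t$ and observing it equals $t s_2^0 \cdot s_1^\beta s_2^\gamma t$, which we can write as the $\alpha = 0$, "middle $t$ replaced by scalar" degenerate case — but the target sum as literally written has a middle $t$. The cleanest fix is to re-read the target: it is $\sum_{\alpha,\beta,\gamma} R t s_2^\alpha t s_1^\beta s_2^\gamma t$ with the understanding (as elsewhere in the paper, e.g. Proposition \ref{lemG26L5}) that these index sets include enough range that absorbed/degenerate terms still land inside; alternatively the degenerate terms $t s_1^\beta s_2^\gamma t$ themselves lie in $\hat A^{(1)}+\hat A^{(2)}$ and, since $t s_1^\beta = s_1^\beta t$, equal $s_1^\beta t s_2^\gamma t \in A_3 t s_2^\gamma t A_3 \subset \hat A^{(2)}$, which is compatible. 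So (2) follows from the single observation $s_1 t = t s_1$ applied to slide $s_1$-powers across the middle $t$.

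\textbf{Main obstacle.} There is no deep obstacle here; both parts are bookkeeping. The one place demanding care is the $i=j=2$ case of part (1): the expression $t s_2 t s_2 t$ (and its sign variants $t s_2^{\pm1} t s_2^{\pm1} t$) must be massaged using $t s_2 t s_2 = s_2 t s_2 t$ and then the quadratic relation $t^2 = dt + e$ to bring it into $A_3 t s_2 t A_3 + A_3 t s_2^{-1} t A_3 + \hat A^{(1)}$; for the variants involving $s_2^{-1}$ one should pass to the inverse relation $t^{-1} s_2^{-1} t^{-1} s_2^{-1} = s_2^{-1} t^{-1} s_2^{-1} t^{-1}$ and use $t^{-1}\in Rt+R$, $s_2^{-1}\in R + Rs_2 + Rs_2^2$ (equivalently, invoke the automorphism $\phi$, which swaps $s_i\leftrightarrow s_i^{-1}$, $t\leftrightarrow t^{-1}$ and preserves $\hat A^{(2)}$ by definition). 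I would present part (1) as four short paragraphs (one per unordered pair, with the $i=j=2$ one the longest) and part (2) as two lines invoking $s_1 t = t s_1$, then refer forward to where $\hat A^{(2\frac12)}$ absorbs whatever cannot be brought to the strict normal form.
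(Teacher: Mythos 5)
Your proof is correct and takes essentially the same approach as the paper: reduce to monomials $t s_i^{a} t s_j^{b} t$ over a spanning set of $u_i, u_j$, dispose of every case with an $s_1$ via $t s_1 = s_1 t$ together with $t^2 = dt + e$, and for $i=j=2$ use $ts_2ts_2 = s_2ts_2t$ together with $s_2^{-1}\in R + Rs_2 + Rs_2^2$ and $t^{-1}\in R^\times t + R$ to land in $\hat{A}^{(2)}$ (the paper works with the basis $1, s_2, s_2^2$ and converts $s_2^2 \leadsto s_2^{-1}$ in the one stubborn case $a=b=2$, which is the mirror of your conversion). The difficulty you raised in part (2) about a degenerate $\alpha=0$ term disappears if you refrain from expanding $t^2 = dt+e$: since $s_1$ commutes with $t$, one has $t s_1^\nu t s_2^\gamma t = t\cdot t s_1^\nu \cdot s_2^\gamma t = t s_2^0 t s_1^\nu s_2^\gamma t$, which is already of the required shape with $\alpha=0$, and more generally $t s_2^\mu s_1^\nu t s_2^\gamma t = t s_2^\mu t s_1^\nu s_2^\gamma t$ with no reduction of powers of $t$ needed at all.
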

\begin{proof}
We prove (1). If $i=1$ or $j=1$ this is clear by the commutation relations. One can thus
assume $i=j=2$, and consider $ts_2^a t s_2^b t$ with $a,b \in \{0,1,2 \}$ since $u_2$
is $R$-spanned by $1,s_2$ and $s_2^2$. If $a = 0$
or $b=0$ this is clear. If $a=b=1$ then this is $(t s_2 t s_2) t = s_2 t s_2t^2 \in \hat{A}^{(2)}$ ; 
if $a=1$ and $ b=2$, then this is $(t s_2t s_2) s_2 t =  s_2(t s_2t s_2) t=  s_2^2 t s_2t^2  \in \hat{A}^{(2)}$ ;
the case $a=2$ and $b = 1$ is similar. We thus only
need to consider the case $a=b=2$. Using $s_2^2 \in R s_2^{-1} + R s_2 + R$
we get from the preceding cases $t s_2^2t s_2^2t \in t s_2^{-1}t s_2^{-1}t + \hat{A}^{(2)}$ ;
moreover $t \in R^{\times}t^{-1} +R$ hence  $ t s_2^{-1}t s_2^{-1}t  \in R^{\times} t^{-1} s_2^{-1}t^{-1} s_2^{-1}t^{-1}
+ \hat{A}^{(2)}$. Now  $(t^{-1} s_2^{-1}t^{-1} s_2^{-1})t^{-1} =  s_2^{-1}t^{-1} s_2^{-1}t^{-2} \in \hat{A}^{(2)}$ and this
concludes the proof of (1). (2) obviously follows from (1), as $u_i$ is the $R$-linear span of $1,s_i$ and $s_i^{-1}$.
\end{proof}

\begin{lemma} {\ } \label{lemG26L3}
\begin{enumerate}
\item $t s_2 s_1^{-1} t s_2^{-1} t \in A_3^{\times} t s_2^{-1} s_1 t s_2^{-1} t A_3^{\times}  + \hat{A}^{(2)}$
\item $t s_2^{-1} s_1 t s_2^{-1} t \in A_3^{\times} t s_2^{-1} s_1^{-1} t s_2 t  A_3^{\times} + \hat{A}^{(2)}$
\item $ts_2 s_1 t s_2^{-1} t \in A_3^{\times} ts_2   s_1^{-1} t s_2t A_3^{\times}$
\item $ t s_2 s_1^{-1} t s_2 t \in A_3^{\times} t s_2^{-1} s_1 ts_2t A_3^{\times} $
\end{enumerate}
\end{lemma}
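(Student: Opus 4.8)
The plan is to establish (3) and (4) first — both hold already as exact identities in the braid group — and then to obtain (1) and (2) from (4) and (3) by applying the automorphism $\phi$.

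\noindent\emph{Parts (3) and (4).} Each is a short braid computation. For (4) one starts from $t s_2 s_1^{-1} t s_2 t$, rewrites $s_2 s_1^{-1} = s_1^{-1} s_2^{-1} s_1 s_2$ (a consequence of $s_1 s_2 s_1 = s_2 s_1 s_2$), pulls the freed $s_1^{-1}$ to the far left via $t s_1 = s_1 t$, and closes with $s_2 t s_2 t = t s_2 t s_2$; this gives the closed form $t s_2 s_1^{-1} t s_2 t = s_1^{-1}\,(t s_2^{-1} s_1 t s_2 t)\,s_2$, so (4) holds with $\lambda = s_1^{-1}$, $\mu = s_2 \in A_3^{\times}$. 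For (3) one treats $t s_2 s_1 t s_2^{-1} t$ the same way: commute $s_1$ past $t$, apply $s_1 s_2^{-1} = s_2^{-1} s_1^{-1} s_2 s_1$, and use the braid relation rewritten as $t s_2 t s_2^{-1} = s_2^{-1} t s_2 t$; this produces $t s_2 s_1 t s_2^{-1} t = s_2^{-1}\,(t s_2 s_1^{-1} t s_2 t)\,s_1$, which is (3). The only care needed is to orient the moves so that they actually reach the target word rather than return to the starting one: with the opposite sign pattern (for instance on $t s_2 s_1^{-1} t s_2^{-1} t$) the same steps merely re-derive a relation of the form $w = s_2 w s_1^{-1}$, giving no reduction — which is exactly why parts (1) and (2) cannot be handled by such pure braid rewriting and genuinely carry an error term.

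\noindent\emph{Parts (1) and (2).} Apply $\phi$ to the identities just found. Since $\phi$ is a ring automorphism of $\hat A$ fixing $A_3$ setwise, (4) transports to the exact relation $\phi(t s_2 s_1^{-1} t s_2 t) = s_1\,\phi(t s_2^{-1} s_1 t s_2 t)\,s_2^{-1}$, with $\phi(t s_2 s_1^{-1} t s_2 t) = t^{-1}s_2^{-1}s_1\,t^{-1}s_2^{-1}\,t^{-1}$ and $\phi(t s_2^{-1} s_1 t s_2 t) = t^{-1}s_2 s_1^{-1}\,t^{-1}s_2^{-1}\,t^{-1}$. Now substitute $t^{-1} = e^{-1}t - d e^{-1}$ everywhere and expand. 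The term in which all three factors contribute a $t$ is $e^{-3}\,t s_2^{-1} s_1 t s_2^{-1} t$ on the left and $e^{-3}\,t s_2 s_1^{-1} t s_2^{-1} t$ on the right, while every other term contains a factor of shape $t a t$, $a t b$, or $a$ with $a,b \in A_3$. Using the definition of $\hat A^{(2)}$ together with the decomposition $A_3 = u_1 u_2 u_1 + u_1 s_2 s_1^{-1} s_2$ (Proposition \ref{lemG26L5}) one gets $t A_3 t \subseteq \hat A^{(2)}$, and hence $A_3 t A_3 \subseteq \hat A^{(2)}$ and $A_3 \subseteq \hat A^{(2)}$, so all those leftover terms lie in $\hat A^{(2)}$. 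Reducing the transported relation modulo the $A_3$-bimodule $\hat A^{(2)}$ and cancelling the unit $e^{-3}$ yields exactly (1); the same procedure applied to (3) yields (2).

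The routine part is the braid bookkeeping in (3)--(4). The part to watch is the reduction in (1)--(2): one must check that every one of the correction terms coming from the three substitutions $t^{-1} = e^{-1}t - de^{-1}$ really lands in $\hat A^{(2)}$ — this is exactly where the precise shape of $\hat A^{(2)}$ and the decomposition of $A_3$ in Proposition \ref{lemG26L5} are used — and that the leading monomial is singled out correctly. (One may also avoid $\phi$ and rewrite (1) and (2) directly, inserting $t^{-1} = e^{-1}t - de^{-1}$ partway through so that the factors $t s_2^{\pm1} t^{-1}$ that inevitably appear — and that do not simplify inside the braid group — get absorbed into $\hat A^{(2)}$; this is longer and the error term is less visible.)
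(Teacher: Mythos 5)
Your proof is correct, and it takes a genuinely different route for parts (1) and (2). The paper derives (1) and (2) by direct in-line braid manipulations: it conjugates by $s_1$ or $s_1^{-1}$, applies braid relations, replaces $t$ by a combination of $t^{-1}$ and $1$ (via $t \in R^\times t^{-1}+R$), and absorbs the discrepancies into $\hat{A}^{(2)}$. Your observation is that (1) and (2) are exactly the $\phi$-images of (4) and (3): since $s_i \mapsto s_i^{-1}$, $t \mapsto t^{-1}$ is a group automorphism of $B$ (each defining relation is sent to its own inverse), the exact braid-group identities proved in (3) and (4) transport to exact identities with all generators inverted, and the single uniform substitution $t^{-1} = e^{-1}t - de^{-1}$ then yields (1), (2) modulo $\hat{A}^{(2)}$, because every cross-term has at most two $t$-blocks and hence lies in $A_3 + A_3 t A_3 + A_3 t A_3 t A_3 = \hat{A}^{(2)}$. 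This is tidier than the paper's argument: it isolates the one place where an $\hat{A}^{(2)}$ error can enter and makes clear why (3), (4) are exact while (1), (2) are not (the automorphism $\phi$ reverses the sign pattern that makes pure braid rewriting close up). One small remark: the inclusions $A_3 \subset \hat{A}^{(2)}$, $A_3 t A_3 \subset \hat{A}^{(2)}$, $t A_3 t \subset \hat{A}^{(2)}$ follow immediately from $\hat{A}^{(2)} = \hat{A}^{(1)} v A_3 = A_3 + A_3 t A_3 + A_3 t A_3 t A_3$; the decomposition of $A_3$ from Proposition \ref{lemG26L5} is not actually needed at this step, so that citation is harmless but superfluous.
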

\begin{proof}
We have $$
\begin{array}{lcl}
s_1(t s_2 s_1^{-1} t s_2^{-1} t) 
&=& t (s_1s_2 s_1^{-1}) t s_2^{-1} t \\
&=& t s_2^{-1} s_1 s_2 (t s_2^{-1} t s_2^{-1}) s_2\\
&\in& R^{\times} t s_2^{-1} s_1 s_2 (t^{-1} s_2^{-1} t^{-1} s_2^{-1}) s_2 + \hat{A}^{(2)}
\end{array}$$
and 
$t s_2^{-1} s_1 s_2 (t^{-1} s_2^{-1} t^{-1} s_2^{-1})
= t s_2^{-1} s_1 s_2  s_2^{-1} t^{-1} s_2^{-1}t^{-1}
= t s_2^{-1} s_1 t^{-1} s_2^{-1}t^{-1}
\in R^{\times} t s_2^{-1} s_1 t s_2^{-1}t + \hat{A}^{(2)}$, which
proves (1). Now 
$$\begin{array}{lcl}
(t s_2^{-1} s_1 t s_2^{-1}t) s_1^{-1} &=& 
t s_2^{-1}  t(s_1 s_2^{-1} s_1^{-1})t \\ &=& 
t s_2^{-1}  ts_2^{-1} s_1^{-1} s_2t \\
&\in& R^{\times} t^{-1} s_2^{-1}  t^{-1} s_2^{-1} s_1^{-1} s_2t  + \hat{A}^{(2)} \end{array} $$
and $ (t^{-1} s_2^{-1}  t^{-1} s_2^{-1}) s_1^{-1} s_2t
 =  s_2^{-1}  t^{-1} s_2^{-1} t^{-1} s_1^{-1} s_2t
 =  s_2^{-1}  t^{-1} s_2^{-1} s_1^{-1}t^{-1}  s_2t
 \in R^{\times} s_2^{-1}  t s_2^{-1} s_1^{-1}t  s_2t + \hat{A}^{(2)}$ and
 this proves (2). 
 
 We have
 $$
 \begin{array}{clclcl}
 & (ts_2 s_1 t s_2^{-1} t) s_1^{-1} 
& =& ts_2  t(s_1 s_2^{-1} s_1^{-1})t 
 &=& ts_2 ts_2^{-1} s_1^{-1} s_2t  \\
 =& s_2^{-1} (s_2  ts_2 t)s_2^{-1} s_1^{-1} s_2t 
& =&s_2^{-1} ts_2  ts_2 s_2^{-1} s_1^{-1} s_2t &
 =&s_2^{-1} ts_2   s_1^{-1} t s_2t
\end{array}
$$ 
and this proves (3). Now $s_1 t s_2 s_1^{-1} t s_2 t = 
 t (s_1 s_2 s_1^{-1}) t s_2 t = 
 t s_2^{-1} s_1 (s_2t s_2 t) = 
 t s_2^{-1} s_1 ts_2t s_2  = 
 (t s_2^{-1} s_1 ts_2t) s_2 $ and this proves (4).
\end{proof}
\begin{proposition} \label{propG26L4} (bimodule decomposition
of $\hat{A}^{(3)}$)
$$\hat{A}^{(3)} = \hat{A}^{(2)} + A_3 t s_2 s_1^{-1} s_2 ts_2 s_1^{-1} s_2 t A_3 + A_3 t s_2 s_1 t s_2 t A_3
+ A_3 t s_2 s_1 t s_2^{-1} t A_3$$ {} $$
+ A_3 t s_2 s_1^{-1} t s_2^{-1} t A_3
+ A_3 t s_2^{-1} s_1^{-1} t s_2^{-1} t A_3
$$
\end{proposition}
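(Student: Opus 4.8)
The plan starts from the very definition $\hat{A}^{(3)}=\hat{A}^{(2)}vA_3$. Since $v=R+Rt$ and, by Proposition~\ref{lemG26L1}, $\hat{A}^{(2)}$ is already an $A_3$-bimodule, this reads $\hat{A}^{(3)}=\hat{A}^{(2)}+\hat{A}^{(2)}tA_3$, so everything amounts to replacing the unwieldy factor $\hat{A}^{(2)}tA_3$ by the five explicit bimodules on the right-hand side. The inclusion $\supseteq$ is immediate: using only the commutation $ts_1=s_1t$ one rewrites $ts_2s_1ts_2t=(ts_2t)\,s_1s_2\,t$, $ts_2s_1ts_2^{-1}t=(ts_2t)\,s_1s_2^{-1}\,t$, $ts_2s_1^{-1}ts_2^{-1}t=(ts_2t)\,s_1^{-1}s_2^{-1}\,t$ and $ts_2^{-1}s_1^{-1}ts_2^{-1}t=(ts_2^{-1}t)\,s_1^{-1}s_2^{-1}\,t$, so that $A_3w_iA_3\subseteq[A_3(ts_2^{\pm1}t)A_3]\,t\,A_3\subseteq\hat{A}^{(2)}tA_3$ for $2\le i\le5$; for the first generator one uses instead $ts_2s_1^{-1}s_2t\in\hat{A}^{(2)}$, which gives $w_1=(ts_2s_1^{-1}s_2t)(s_2s_1^{-1}s_2)\,t\in\hat{A}^{(2)}t$.

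For the reverse inclusion I would substitute the decompositions $\hat{A}^{(2)}=\hat{A}^{(1)}+A_3ts_2tA_3+A_3ts_2^{-1}tA_3+A_3ts_2s_1^{-1}s_2tA_3$ and $\hat{A}^{(1)}=A_3+A_3tA_3$ of Proposition~\ref{lemG26L1}: the summand $A_3tA_3$ is absorbed in $\hat{A}^{(1)}$, and every other summand of $\hat{A}^{(2)}tA_3$ has the shape $A_3\,g\,A_3\,t\,A_3$ with $g\in\{t,\,ts_2t,\,ts_2^{-1}t,\,ts_2s_1^{-1}s_2t\}$, that is, it contains a full central copy of $A_3$ trapped between two occurrences of $t$. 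I would then expand that central $A_3$ by Proposition~\ref{lemG26L5}(2), $A_3=u_1+u_1s_2u_1+u_1s_2^{-1}u_1+u_1s_2s_1^{-1}s_2$, pull every $u_1$-factor to the side through $ts_1=s_1t$ (absorbing it into a flanking $A_3$), collapse any doubled $t$ so created by $t^2\in Rt+R$ and any $s_i^{\pm2}$ by $s_i^{\pm2}\in R+Rs_i+Rs_i^{-1}$. This brings $A_3gA_3tA_3$ into an $A_3$-bimodule combination of: words $t\,u_2\,t\,u_2\,t$ and $t\,u_2\,u_1\,t\,u_2\,t$; the long-element word $w_1$; and words of the shape $t\,s_2^{\pm1}\,t\,s_2s_1^{-1}s_2\,t$, arising from the interaction of a $ts_2^{\pm1}t$-generator with the long part of the central $A_3$. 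By Lemma~\ref{lemG26L2} the first two families lie respectively in $\hat{A}^{(2)}$ and in $\sum_{\alpha,\beta,\gamma}R\,ts_2^{\alpha}ts_1^{\beta}s_2^{\gamma}t$, and pushing $s_1^{\beta}$ left across the inner $t$ shows that the latter sum is spanned by the words $ts_2^{\alpha}s_1^{\beta}ts_2^{\gamma}t$ with $\alpha,\beta,\gamma\in\{-1,0,1\}$; the long-element words are brought back to this same shape by the braid relation $ts_2ts_2=s_2ts_2t$ together with $t^{\pm1}\in R^{\times}t^{\mp1}+R$ (for instance $ts_2ts_2s_1^{-1}s_2t=s_2\cdot ts_2s_1^{-1}ts_2t$).

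It then remains to place every word $ts_2^{\alpha}s_1^{\beta}ts_2^{\gamma}t$ with $\alpha,\beta,\gamma\in\{-1,0,1\}$, and $w_1$, inside $\hat{A}^{(2)}$ or one of the $A_3w_iA_3$. If $\beta=0$ the word is $ts_2^{\alpha}ts_2^{\gamma}t\in\hat{A}^{(2)}$ by Lemma~\ref{lemG26L2}(1); if $\alpha=0$ or $\gamma=0$ one creates a doubled $t$ and lands in $\hat{A}^{(2)}$ after $t^2\in Rt+R$. There remain the eight words with $\alpha,\beta,\gamma\in\{1,-1\}$: four of them, $ts_2s_1ts_2t$, $ts_2s_1ts_2^{-1}t$, $ts_2s_1^{-1}ts_2^{-1}t$, $ts_2^{-1}s_1^{-1}ts_2^{-1}t$, are exactly $w_2,w_3,w_4,w_5$, and the remaining four ($ts_2s_1^{-1}ts_2t$, $ts_2^{-1}s_1ts_2t$, $ts_2^{-1}s_1ts_2^{-1}t$, $ts_2^{-1}s_1^{-1}ts_2t$) are carried, modulo $\hat{A}^{(2)}$, onto $w_3$ and $w_4$ by the chain of rewritings of Lemma~\ref{lemG26L3} (items (3)--(4) for the first two, items (1)--(2) for the last two, after inverting the units of $A_3$ that occur there).

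The step I expect to be the real obstacle is precisely this last, long bookkeeping. Because $\hat{A}$ may have torsion one is not allowed to pass to $\mathrm{Frac}(R)$, so every application of a braid relation or of $t^2=dt+e$, $s_i^3=as_i^2+bs_i+c$ has to be checked to return an honestly simpler word plus a genuine element of $\hat{A}^{(2)}$; several of the intermediate words (notably those built from the long element $s_2s_1^{-1}s_2$, and the words with a leading $s_2^{-1}$) superficially reproduce a term of $\hat{A}^{(2)}t$, so the reduction must be organised along a carefully chosen length/complexity measure to guarantee that it terminates on the five advertised generators rather than cycling.
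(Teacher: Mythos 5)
Your overall strategy coincides with the paper's: reduce $\hat{A}^{(2)}tA_3$ by expanding a central $A_3$ via the decompositions of Proposition~\ref{lemG26L5}, then invoke Lemmas~\ref{lemG26L2} and~\ref{lemG26L3} to reach the five advertised bimodule generators. The final bookkeeping paragraph (the eight words $ts_2^{\alpha}s_1^{\beta}ts_2^{\gamma}t$ with $\alpha,\beta,\gamma\in\{\pm1\}$ being split into $w_2,\dots,w_5$ and four others reduced by items (1)--(4) of Lemma~\ref{lemG26L3}) is correct, and in fact is more explicit than what the paper writes for that step.

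However, the middle step contains a real gap that you yourself anticipate but misdiagnose. You expand the central $A_3$ by Proposition~\ref{lemG26L5}(2), namely $A_3=u_1+u_1s_2u_1+u_1s_2^{-1}u_1+u_1s_2s_1^{-1}s_2$, which puts the stray $u_1$ on the \emph{left} of the long element. After commuting through the adjacent $t$, that $u_1$ lands between the last $s_2^{\pm1}$ of $g$ and the middle $t$: for $g=ts_2t$ and the summand $u_1s_2s_1^{-1}s_2$ you get $ts_2\,u_1\,ts_2s_1^{-1}s_2\,t$, not $ts_2\,t\,s_2s_1^{-1}s_2\,t$, and the $u_1$ cannot be absorbed into a flanking $A_3$ since $s_1$ does not commute with $s_2$. (For the long generator $g=ts_2s_1^{-1}s_2t$ and the summand $u_1u_2u_1$, the same phenomenon produces $ts_2s_1^{-1}s_2u_1\,tu_2t$, of shape $tA_3tu_2t$, which is again absent from your enumerated list.) So your claimed inventory of residual word-shapes ($tu_2tu_2t$, $tu_2u_1tu_2t$, $w_1$, $ts_2^{\pm1}ts_2s_1^{-1}s_2t$) is incomplete, and the subsequent use of the braid relation $ts_2ts_2=s_2ts_2t$ does not directly apply because the stuck $u_1$ separates the letters you want to braid. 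The paper sidesteps this precisely by choosing the opposite-handed decomposition $A_3=s_2s_1^{-1}s_2u_1+u_1u_2u_1$ (and its variant $A_3=s_2^{-1}s_1s_2^{-1}u_1+u_1u_2u_1$ for the $ts_2^{-1}t$ case), so that the $u_1$ slides out through the \emph{outer} $t$ and the braid relation can then fire on the cleanly juxtaposed $ts_2^{\pm1}ts_2^{\pm1}$; the paper also treats the symmetric piece $tA_3tu_2t$, which your outline omits. So the fix is small (swap the side of the $u_1$ in the decomposition, and add the symmetric case), but as written the middle step does not go through.
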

\begin{proof}
Since $A_3 = u_1 s_2 s_1^{-1} s_2 + u_1 u_2 u_1 = s_2 s_1^{-1} s_2 u_1 + u_1 u_2 u_1$
we get 
$$tA_3 t A_3 t \subset A_3 ts_2 s_1^{-1} s_2ts_2 s_1^{-1} s_2t A_3 + A_3 t u_2  t A_3 t A_3 + A_3 t A_3  t u_2 t A_3.
$$
Now $t u_2  t A_3 t = R t^2 A_3 t + R t s_2 t A_3 t + R t s_2^{-1} t A_3 \subset \hat{A}^{(2)} + R t s_2 t A_3 t + R t s_2^{-1} t A_3$.
Using again $A_3 =  s_2 s_1^{-1} s_2 u_1 + u_1 u_2 u_1$ we get 
$t s_2 t A_3 t \subset (t s_2 ts_2) s_1^{-1} s_2 u_1t +t s_2 t u_1 u_2 u_1t
\subset  s_2 ts_2 ts_1^{-1} s_2 tu_1 +t s_2 t u_1 u_2 tu_1  \subset A_3 t u_2 u_1 t u_2 t A_3$ ;
using $A_3 =  s_2^{-1} s_1 s_2^{-1} u_1 + u_1 u_2 u_1$
we get similarly
$t s_2^{-1} t A_3 t 
\subset (t s_2^{-1} ts_2^{-1}) s_1 s_2^{-1} u_1t + t s_2^{-1} tu_1 u_2 u_1t
\subset  s_2^{-1} ts_2^{-1} ts_1 s_2^{-1} tu_1 + t s_2^{-1} tu_1 u_2 tu_1
\subset A_3 t u_2 u_1 t u_2 t A_3$.
This yields $t u_2  t A_3 t \subset \hat{A}^{(2)} + A_3 t u_2 u_1 t u_2 t A_3$. In a similar way, we leave to
the reader to check that
$t A_3 t u_2  t \subset \hat{A}^{(2)} + A_3 t u_2 u_1 t u_2 t A_3$.
This implies 
$$tA_3 t A_3 t \subset \hat{A}^{(2)} + A_3 t u_2 u_1 t u_2 t A_3 + A_3 ts_2 s_1^{-1} s_2ts_2 s_1^{-1} s_2t A_3 .
$$
The conclusion then follows from lemmas \ref{lemG26L2} and \ref{lemG26L3}.
\end{proof}

\subsection{The bimodule $\AAA$}

\begin{proposition} {\ } 
\begin{enumerate}
\item $C \in t s_2 s_1t s_2 t A_3^{\times}$ and $C^{-1} \in A_3^{\times} t s_2^{-1} s_1^{-1}t s_2^{-1} t + \hat{A}^{(2)}$
\item $\hat{A}^{(3)} = \hat{A}^{(2 \frac{1}{2})} + 
A_3 t s_2 s_1^{-1} s_2 ts_2 s_1^{-1} s_2 t A_3 $
\item $\hat{A}^{(2 \frac{1}{2})} = \hat{A}^{(2)} + A_3 t s_2 s_1 t s_2 t
+ A_3 t s_2 s_1 t s_2^{-1} t A_3
+ A_3 t s_2 s_1^{-1} t s_2^{-1} t A_3
+ A_3 t s_2^{-1} s_1^{-1} t s_2^{-1} t$
\end{enumerate}
\end{proposition}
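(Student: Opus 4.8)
\emph{Overview.} The three assertions have very unequal depth: essentially all the work lies in~(1), while (2) and (3) follow formally, using the centrality of $C$ (recorded just before the statement) and the fact that $\hat A^{(2)}$ is an $A_3$-sub-bimodule of $\hat A$, which is clear from its definition as a sum of terms $A_3\,w\,A_3$.

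\emph{Part (1).} The plan is first to prove, purely inside the braid group of $G_{26}$, the word identity $C=(ts_2s_1)^3=ts_2s_1\,ts_2t\,s_2s_1s_2$. This comes out of a short sequence of braid moves: from $ts_1=s_1t$ and $s_1s_2s_1=s_2s_1s_2$ one gets $(ts_2s_1)^2=ts_2ts_2s_1s_2$, and then multiplying on the left by $ts_2s_1$ and applying $ts_1=s_1t$ twice more rearranges the resulting length-$9$ word into $ts_2s_1\,ts_2t\,s_2s_1s_2$. Now each $s_i$ is a unit of $\hat A$, with $s_i^{-1}=c^{-1}(s_i^{2}-as_i-b)$ since $c\in R^{\times}$ (and likewise $t^{-1}=e^{-1}(t-d)$), so $s_2s_1s_2\in A_3^{\times}$ and the identity gives $C\in ts_2s_1ts_2t\,A_3^{\times}$. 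Inverting it and commuting $s_1^{-1}$ past $t^{-1}$ yields $C^{-1}=(s_2s_1s_2)^{-1}\,t^{-1}s_2^{-1}s_1^{-1}t^{-1}s_2^{-1}t^{-1}$; substituting $t^{-1}=e^{-1}(t-d)$ and expanding, the leading term is $e^{-3}(s_2s_1s_2)^{-1}\,ts_2^{-1}s_1^{-1}ts_2^{-1}t$, while each of the seven remaining terms involves at most two letters $t$ and, after commuting $s_1^{-1}$ past $t$ where needed, is of one of the shapes $t(s_2^{-1}s_1^{-1}s_2^{-1})t$, $(ts_2^{-1}t)\,w$, $w\,(ts_2^{-1}t)$ with $w$ a word in $s_1^{\pm1},s_2^{\pm1}$, or a word with at most one $t$. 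All of these lie in $\hat A^{(2)}$, because $tA_3t\subseteq\hat A^{(2)}$ (from the computation in the proof of Proposition~\ref{lemG26L1}(2)) and $\hat A^{(2)}$ is an $A_3$-sub-bimodule; hence $C^{-1}\in A_3^{\times}\,ts_2^{-1}s_1^{-1}ts_2^{-1}t+\hat A^{(2)}$. The braid identity, together with this last bookkeeping of which terms fall into $\hat A^{(2)}$, is the only genuine computation in the proposition; I expect the braid-word manipulation to be the main obstacle.

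\emph{Parts (2) and (3).} Part~(2) is immediate: comparing the decomposition of $\hat A^{(3)}$ given by Proposition~\ref{propG26L4} with the definition of $\AAA$, they agree term for term apart from the single extra summand $A_3\,ts_2s_1^{-1}s_2\,ts_2s_1^{-1}s_2\,t\,A_3$ occurring in the former, whence $\hat A^{(3)}=\AAA+A_3\,ts_2s_1^{-1}s_2\,ts_2s_1^{-1}s_2\,t\,A_3$. For~(3), the point is that the rightmost $A_3$ may be deleted from the first and from the last summand in the definition of $\AAA$ without changing the sum. For the first, part~(1) gives $ts_2s_1ts_2t=C(s_2s_1s_2)^{-1}$, so, $C$ being central and $(s_2s_1s_2)^{-1}\in A_3^{\times}$, one has $A_3\,ts_2s_1ts_2t\,A_3=CA_3=A_3\,ts_2s_1ts_2t$. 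For the last, set $X=ts_2^{-1}s_1^{-1}ts_2^{-1}t$; part~(1) gives $X\equiv g^{-1}C^{-1}\pmod{\hat A^{(2)}}$ for some $g\in A_3^{\times}$, and since $C^{-1}$ is central and $\hat A^{(2)}$ is an $A_3$-sub-bimodule it follows that $A_3XA_3\equiv A_3C^{-1}A_3=A_3C^{-1}\equiv A_3X\pmod{\hat A^{(2)}}$. As $\hat A^{(2)}\subseteq\AAA$, deleting those two rightmost $A_3$'s leaves the sum unchanged, which is precisely the decomposition of $\AAA$ claimed in~(3).
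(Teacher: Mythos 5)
Your proof is correct and follows the same route as the paper: a single application of $ts_1=s_1t$ gives the factorization $C=(ts_2s_1)^3=ts_2s_1\,ts_2t\cdot(s_1s_2s_1)$, inverting and commuting $s_1^{-1}$ past $t^{-1}$ gives $C^{-1}\in A_3^\times t^{-1}s_2^{-1}s_1^{-1}t^{-1}s_2^{-1}t^{-1}$, and substituting $t^{-1}\in R^\times t+R$ yields (1), from which (2) and (3) follow by comparison with Proposition~\ref{propG26L4} and centrality of $C$. Your write-up simply spells out the term-by-term bookkeeping that the paper leaves implicit (and the phrase ``applying $ts_1=s_1t$ twice more'' is superfluous, since $(ts_2s_1)\cdot ts_2ts_2s_1s_2$ is already in the desired form), but the argument is sound.
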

\begin{proof} 
We have $C = t s_2 s_1t s_2 s_1t s_2 s_1 = 
t s_2 s_1t s_2 t s_1s_2 s_1  \in t s_2 s_1t s_2 t A_3^{\times}$. 
One gets similarly $C^{-1} \in A_3^{\times} t^{-1} s_2^{-1} s_1^{-1}t^{-1} s_2^{-1} t^{-1}$.
Since $t^{-1} \in R^{\times} t +R$ this implies  $C^{-1} \in A_3^{\times} t s_2^{-1} s_1^{-1}t s_2^{-1} t + \hat{A}^{(2)}$. This proves (1).
(2) follows from proposition \ref{propG26L4}. Since $C$ is central,
(3) then follows from (1).
\end{proof}





We now compute the number of elements which are needed to generate $\AAA$ modulo $\hat{A}^{(2)}$
as a $A_3$-module. We need the following two lemmas.

\begin{lemma} \label{lemA3cmod} {\ }
\begin{enumerate}
\item For all $\alpha \in \{0,1, -1 \}$, $(t s_2 s_1) s_2^{\alpha} = s_1^{\alpha} (ts_2s_1)$
\item $ts_2 s_1 t s_2^{\pm 1} t u_2 \subset \hat{A}^{(2)} + A_2 t s_2 s_1 t s_2 t + A_2 t s_2 s_1 t s_2^{-1} t$  
\item $$t s_2 s_1 t s_2^{\pm 1} t A_3 \subset  \hat{A}^{(2)} + \sum_{a \in \{ -1,0,1 \}} \sum_{b,\eps \in \{ -1,1 \}} A_2 t s_2 s_1 t s_2^{\eps} t s_1^b s_2^a
 +\sum_{\eps \in \{ -1,1 \}} \left( A_2 t s_2 s_1 t s_2^{\eps} t s_1 s_2^{-1} s_1 + A_2 t s_2 s_1 t s_2^{\eps} t \right)$$
 \item $$
 ts_2 s_1 t s_2^{\pm 1} t A_3
 \subset \hat{A}^{(2)} + A_3 t s_2 s_1 t s_2 t + 
 \sum_{a \in \{ -1,0,1 \}} \sum_{b \in \{ -1,1 \}} A_2 t s_2 s_1 t s_2^{-1} t s_1^b s_2^a
$${} \\ $$
 + \left( A_3 t s_2 s_1 t s_2^{-1} t s_1 s_2^{-1} s_1 + A_2 t s_2 s_1 t s_2^{-1} t \right)$$
 \item $ \hat{A}^{(2)} + \sum_{\eps \in \{-1,1 \} } A_3 ts_2 s_1 t s_2^{\pm 1} t A_3$
 is spanned as a $A_3$-module by $\hat{A}^{(2)} $ and $9$ elements originating from the braid group. 
\end{enumerate}
\end{lemma}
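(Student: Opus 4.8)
The plan is to establish the five items in the stated order, each bootstrapped from its predecessors; recall that $A_2=\langle s_1\rangle$, $A_3=\langle s_1,s_2\rangle$, and that $\hat{A}^{(2)}$ is an $A_3$-sub-bimodule of $\hat{A}$ by proposition \ref{lemG26L1}. Item (1) is pure braid-group algebra: from $s_2s_1s_2=s_1s_2s_1$ and $ts_1=s_1t$ one gets $t(s_2s_1s_2)=t(s_1s_2s_1)=(ts_1)s_2s_1=s_1(ts_2s_1)$, that is $(ts_2s_1)s_2=s_1(ts_2s_1)$; multiplying this on the left by $s_1^{-1}$ and on the right by $s_2^{-1}$ gives the case $\alpha=-1$, and $\alpha=0$ is trivial. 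What (1) buys us is that a right factor $s_2^{\alpha}$ ($\alpha\in\{-1,0,1\}$) immediately following the block $ts_2s_1$ can be traded for a left factor $s_1^{\alpha}\in A_2$.

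For (2), I would write $ts_2s_1\,ts_2^{\eps}t\,u_2=ts_2s_1\cdot(ts_2^{\eps}tu_2)$ and observe that $ts_2^{\eps}tu_2\subset\langle s_2,t\rangle$. By proposition \ref{lemG26L5}(3) every element of $\langle s_2,t\rangle$ is an $R$-combination of the blocks $s_2^a$, $s_2^ats_2^b$, $s_2^ats_2t$, $s_2^ats_2^{-1}t$ with $a,b\in\{-1,0,1\}$. Left-multiplying each block by $ts_2s_1$ and pushing the prefix $s_2^a$ across $ts_2s_1$ by (1) turns it into $s_1^a$ times, respectively: an element of $A_3tA_3\subset\hat{A}^{(1)}$ (for $s_2^a$); an element of $\hat{A}^{(2)}$ (for $s_2^ats_2^b$, since $ts_2s_1ts_2^b=ts_2t\,s_1s_2^b$ with $ts_2t\in A_3ts_2tA_3\subset\hat{A}^{(2)}$); and an element of $ts_2s_1ts_2^{\pm1}t$ (for the two blocks of $t$-length two). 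Since $A_3\hat{A}^{(2)}=\hat{A}^{(2)}$ and $s_1^a\in A_2$, this yields (2).

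For (3), decompose $A_3$ on the right by proposition \ref{lemG26L5}(2), $A_3=u_1+u_1s_2u_1+u_1s_2^{-1}u_1+u_1s_2s_1^{-1}s_2$, so that $ts_2s_1ts_2^{\eps}tA_3$ splits into four pieces. In the first, $tu_1=u_1t$ and $s_1^b$ commutes past the final $t$, leaving $\sum_{b\in\{-1,0,1\}}R\,ts_2s_1ts_2^{\eps}ts_1^b$. In each of the other three one peels off the rightmost $u_1$, carries it across the last $t$, and is left with words $ts_2s_1ts_2^{\eps}t\cdot w$, $w$ of length $\le 3$ in $s_1^{\pm1},s_2^{\pm1}$; each such word is reduced, using (1), (2), the braid relation $s_1s_2s_1=s_2s_1s_2$, and $s_i^{\pm2}\in R+Rs_i+Rs_i^{-1}$, into $\hat{A}^{(2)}$ or into $A_2ts_2s_1ts_2^{\eps'}t$ times one of the normal suffixes $1$, $s_1^bs_2^a$ ($b\in\{-1,1\}$, $a\in\{-1,0,1\}$), or the exceptional suffix $s_1s_2^{-1}s_1$ (the $s_1\leftrightarrow s_2$ mirror of the element $s_2s_1^{-1}s_2$ of proposition \ref{lemG26L5}). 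Collecting these gives (3), and (4) is bookkeeping: by the identity $C\in ts_2s_1ts_2tA_3^{\times}$ proved above, with $C=(ts_2s_1)^3$ central and invertible, one has $A_3ts_2s_1ts_2tA_3=A_3C$, so every summand of (3) carrying the factor $ts_2s_1ts_2t$ — precisely the $\eps=1$ summands and their suffixes — is absorbed into the single left module $A_3ts_2s_1ts_2t$, while the $\eps=-1$ summands are kept (replacing $A_2$ by $A_3$ in front of the $s_1s_2^{-1}s_1$-term costs nothing). What remains — the module $A_3ts_2s_1ts_2t$, the six elements $ts_2s_1ts_2^{-1}ts_1^bs_2^a$ ($b\in\{-1,1\}$, $a\in\{-1,0,1\}$), the module $A_3ts_2s_1ts_2^{-1}ts_1s_2^{-1}s_1$, and $ts_2s_1ts_2^{-1}t$ — accounts for $1+6+1+1=9$ generators.

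Item (5) follows formally. Let $M$ be the left $A_3$-module generated by the nine braid-group elements just listed; (4) says $ts_2s_1ts_2^{\eps}tA_3\subset\hat{A}^{(2)}+M$ for $\eps\in\{-1,1\}$. Since $\hat{A}^{(2)}$ is an $A_3$-bimodule and $M$ a left $A_3$-module,
\[
\hat{A}^{(2)}+\sum_{\eps}A_3ts_2s_1ts_2^{\eps}tA_3=A_3\Bigl(\hat{A}^{(2)}+\sum_{\eps}ts_2s_1ts_2^{\eps}tA_3\Bigr)\subset A_3\bigl(\hat{A}^{(2)}+M\bigr)=\hat{A}^{(2)}+M ,
\]
and the reverse inclusion is clear because each of the nine generators lies in $\hat{A}^{(2)}+\sum_{\eps}A_3ts_2s_1ts_2^{\eps}tA_3$ (e.g. $ts_2s_1ts_2^{-1}ts_1^bs_2^a\in A_3ts_2s_1ts_2^{-1}tA_3$); hence this module is spanned over $A_3$ by $\hat{A}^{(2)}$ together with those nine elements. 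The main obstacle is the finite case analysis hidden in (3) (and, to a lesser extent, (2)): one must verify that, after peeling the rightmost $u_1$ and invoking (2), each of the finitely many words $ts_2s_1ts_2^{\eps}t\cdot w$ with $\lvert w\rvert\le 3$ really collapses into $\hat{A}^{(2)}$ or into one of the advertised suffixes. The trap to avoid is a reduction that loops back to its starting word (as happens if one expands an $s_2^{\pm2}$ sitting between two $t$'s); the safe route is always to route the $\langle s_2,t\rangle$-part through proposition \ref{lemG26L5}(3) rather than through ad hoc manipulations of $s_2^{\pm2}$.
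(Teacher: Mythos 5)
Your items (1), (2), and the bookkeeping in (4)--(5) agree with the paper's argument. In fact, your justification for the $\eps=1$ collapse in (4), namely $A_3\,ts_2s_1ts_2t\,A_3 = A_3CA_3 = A_3C = A_3\,ts_2s_1ts_2t$ via centrality of $C$ and $C\in ts_2s_1ts_2t\,A_3^{\times}$, is tidier than the paper's remark that $Y:=ts_2s_1ts_2t$ ``commutes with $A_3$'': taken literally this is false (one checks $s_1Y = Ys_2$ and $s_2Y = Ys_1$, so conjugation by $Y$ swaps $s_1$ and $s_2$), although the operative fact $YA_3 = A_3Y$ does hold and yields the same conclusion.

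The gap is in (3). You decompose $A_3 = u_1 + u_1 s_2 u_1 + u_1 s_2^{-1} u_1 + u_1 s_2 s_1^{-1} s_2$, i.e.\ proposition \ref{lemG26L5}(2) taken verbatim. The paper instead uses the $s_1\leftrightarrow s_2$ mirror, $A_3 = u_2 + u_2 s_1 u_2 + u_2 s_1^{-1} u_2 + u_2 s_1 s_2^{-1} s_1$, and that choice is what makes (3) an \emph{immediate} consequence of (2): with the mirror, the factor adjacent to the prefix $ts_2s_1ts_2^{\eps}t$ is a $u_2$, so $ts_2s_1ts_2^{\eps}t\,u_2$ is absorbed directly by (2), and the remaining right factor is one of $1$, $s_1^{\pm 1}u_2$, $s_1s_2^{-1}s_1$ --- precisely the suffixes appearing in the statement of (3). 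With your choice, the adjacent factor is $u_1$, which cannot be absorbed in the same way (commuting it past the final $t$ merely strands it between $s_2^{\eps}$ and $t$), and the resulting words --- e.g.\ $s_1^{-1}s_2s_1^{-1}$ from $u_1s_2u_1$ --- are not manifestly of the target shapes $s_1^b s_2^a$ or $s_1s_2^{-1}s_1$. Your assertion that ``each such word is reduced, using (1), (2), the braid relation, and $s_i^{\pm 2}\in R+Rs_i+Rs_i^{-1}$'' is exactly the nontrivial content of (3) under your decomposition, and it is left entirely unverified; the point of the mirror decomposition is that no such reduction is required. Replacing your decomposition by its mirror closes the gap and recovers the paper's one-line derivation of (3) from (2).
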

\begin{proof}
For (1), this is because $ts_2 s_1 s_2^{\alpha} = t(s_2 s_1 s_2^{\alpha} ) = ts_1^{\alpha} s_2 s_1 = s_1^{\alpha}  ts_2 s_1 $.
Since $t s_2^{\pm 1} t u_2 \subset \langle s_2, t \rangle$, proposition \ref{lemG26L5} implies
$$
\begin{array}{lcl}
ts_2 s_1 t s_2^{\pm 1} t u_2 & \subset & \hat{A}^{(2)} + \sum_{a \in \{ -1,0,1 \}} R t s_2 s_1 s_2^a t s_2 t + \sum_{a \in \{-1,0,1 \}} R t s_2 s_1 s_2^a t s_2^{-1} t \\
& & \hat{A}^{(2)} + \sum_{a \in \{ -1,0,1 \}} R s_1^at s_2 s_1  t s_2 t + \sum_{a \in \{-1,0,1 \}} R s_1^at s_2 s_1  t s_2^{-1} t \\
& & \hat{A}^{(2)} + A_2 t s_2 s_1  t s_2 t + A_2 t s_2 s_1  t s_2^{-1} t \\
\end{array}
$$
that is (2). Then (3) is an immediate consequence of (2) and of the decomposition of
$A_3$ as $\langle s_2 \rangle$-module given by proposition \ref{lemG26L5}
up to exchanging $s_1$ and $s_2$ (see also
\cite{conjcub}). (4) is readily
deduced because $t s_2 s_1 t s_2 t$ commutes with $A_3$, and then (5)
is clear.
\end{proof}

\begin{lemma} \label{lemA3cmodPhi}
The image under $\phi$ of $\hat{A}^{(2)} + A_3 t s_2^{-1}
s_1^{-1} t s_2^{-1} t + A_3 t s_2 s_1^{-1} t s_2^{-1} t A_3$
is $\hat{A}^{(2)} + A_3 t s_2 s_1 t s_2 t + A_3 t s_2 s_1 t s_2^{-1} t A_3$. Thus
$$
\AAA = \hat{A}^{(2)} + A_3 t s_2 s_1 t s_2 t + A_3 t s_2 s_1 t s_2^{-1} t A_3
+ A_3 \phi(t s_2 s_1 t s_2 t) + A_3 \phi(t s_2 s_1 t s_2^{-1} t) A_3
$$
\end{lemma}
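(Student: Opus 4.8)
The plan is to exploit that $\phi$ is an \emph{involutive} algebra automorphism stabilizing every submodule built out of $A_3$ and $v$, together with a ``$t$-degree reduction'' trick modulo $\hat A^{(2)}$.

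First I would record the preliminaries. Since $c$ and $e$ are invertible, $s_i^{-1}=c^{-1}(s_i^2-as_i-b)\in A_3$ and $t^{-1}=e^{-1}(t-d)\in v$; hence $\phi$ maps $A_3$ onto $A_3$ and $v$ onto $v$, and as every $\hat A^{(n)}$ is a sum of products of copies of $A_3$ and $v$ we get $\phi(\hat A^{(n)})=\hat A^{(n)}$, in particular $\phi(\hat A^{(2)})=\hat A^{(2)}$; a check on generators gives $\phi^2=\mathrm{id}$. Moreover, by the definitions and Proposition~\ref{lemG26L1}, $\hat A^{(2)}=A_3+A_3tA_3+A_3tA_3tA_3$ is an $A_3$-bimodule, and in particular it contains every word in $s_1^{\pm1},s_2^{\pm1},t^{\pm1}$ carrying at most two letters $t^{\pm1}$.

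The degree-reduction step is the following: for $w_0,w_1,w_2\in A_3$, substituting $t^{-1}=e^{-1}(t-d)$ in $w_0t^{-1}w_1t^{-1}w_2t^{-1}$ produces $e^{-3}w_0tw_1tw_2t$ plus a combination of words carrying at most two letters $t$, which all lie in $\hat A^{(2)}$; since $\hat A^{(2)}$ is an $A_3$-bimodule, this gives
$$A_3\,w_0t^{-1}w_1t^{-1}w_2t^{-1}\,A_3+\hat A^{(2)}=A_3\,w_0tw_1tw_2t\,A_3+\hat A^{(2)},$$
and likewise when $A_3$ acts on one side only. Since $\phi(ts_2^{-1}s_1^{-1}ts_2^{-1}t)=t^{-1}s_2s_1t^{-1}s_2t^{-1}$ and $\phi(ts_2s_1^{-1}ts_2^{-1}t)=t^{-1}s_2^{-1}s_1t^{-1}s_2t^{-1}$, I obtain $A_3\,\phi(ts_2^{-1}s_1^{-1}ts_2^{-1}t)+\hat A^{(2)}=A_3\,ts_2s_1ts_2t+\hat A^{(2)}$ and $A_3\,\phi(ts_2s_1^{-1}ts_2^{-1}t)\,A_3+\hat A^{(2)}=A_3\,ts_2^{-1}s_1ts_2t\,A_3+\hat A^{(2)}$. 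Finally Lemma~\ref{lemG26L3}(4) and (3) give the chain of bimodule equalities $A_3\,ts_2^{-1}s_1ts_2t\,A_3=A_3\,ts_2s_1^{-1}ts_2t\,A_3=A_3\,ts_2s_1ts_2^{-1}t\,A_3$, each $A_3^{\times}$-equivalence being reversible so that no term from $\hat A^{(2)}$ is even needed. Putting these identities together (and using $\phi(\hat A^{(2)})=\hat A^{(2)}$) yields $\phi\bigl(\hat A^{(2)}+A_3ts_2^{-1}s_1^{-1}ts_2^{-1}t+A_3ts_2s_1^{-1}ts_2^{-1}tA_3\bigr)=\hat A^{(2)}+A_3ts_2s_1ts_2t+A_3ts_2s_1ts_2^{-1}tA_3$, which is the first assertion (the reverse inclusion is automatic, either from these very equalities or by applying the involution $\phi$; if one prefers the latter route, one runs the mirror computation using Lemma~\ref{lemG26L3}(2) then (1) to identify $A_3ts_2^{-1}s_1^{-1}ts_2tA_3$ with $A_3ts_2s_1^{-1}ts_2^{-1}tA_3$ modulo $\hat A^{(2)}$).

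For the displayed identity I would feed this into the decomposition $\AAA=\hat A^{(2)}+A_3ts_2s_1ts_2t+A_3ts_2s_1ts_2^{-1}tA_3+A_3ts_2s_1^{-1}ts_2^{-1}tA_3+A_3ts_2^{-1}s_1^{-1}ts_2^{-1}t$ proved just above: with $M=\hat A^{(2)}+A_3ts_2s_1ts_2t+A_3ts_2s_1ts_2^{-1}tA_3$, the last three summands together with $\hat A^{(2)}$ are, by the first assertion, exactly $\phi(M)=\hat A^{(2)}+A_3\phi(ts_2s_1ts_2t)+A_3\phi(ts_2s_1ts_2^{-1}t)A_3$, whence $\AAA=M+\phi(M)$, which is the claim. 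The only genuinely delicate point is the bookkeeping in the degree-reduction step: one must verify that every monomial produced by expanding $t^{-1}=e^{-1}(t-d)$ while retaining at most two letters $t$ indeed lies in $\hat A^{(2)}$, which relies only on $\hat A^{(2)}\supseteq A_3tA_3tA_3$ and on $\hat A^{(2)}$ being two-sided over $A_3$; everything else is formal juggling with $\phi$ and the relations of Lemma~\ref{lemG26L3}.
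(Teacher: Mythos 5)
Your argument is correct and follows essentially the same route as the paper: compute $\phi$ on each summand, replace $t^{-1}$ by $t$ modulo $\hat A^{(2)}$, and then run the two-step bimodule identification to turn $A_3\,ts_2^{-1}s_1ts_2t\,A_3$ into $A_3\,ts_2s_1ts_2^{-1}t\,A_3$ before plugging into the decomposition of $\hat A^{(2\frac12)}$. The only cosmetic difference is that you invoke Lemma~\ref{lemG26L3}(4) for the first identification, whereas the paper does the same one-line braid computation inline; these are the same relation in two guises. Incidentally, your expression $\phi(ts_2^{-1}s_1^{-1}ts_2^{-1}t)=t^{-1}s_2s_1t^{-1}s_2t^{-1}$ is correct and fixes a transcription slip in the paper's first displayed line (which shows $s_2^{-1}$ in place of $s_2$ in the middle, and consequently writes $ts_2s_1ts_2^{-1}t$ instead of $ts_2s_1ts_2t$ after degree reduction, even though the lemma's statement and the rest of the paper use the correct $A_3\,ts_2s_1ts_2t$).
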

\begin{proof}
This image is clearly
$\hat{A}^{(2)} + A_3 t^{-1} s_2 s_1 t^{-1}s_2 ^{-1}t^{-1} + A_3 t^{-1} s_2^{-1} s_1
t^{-1}s_2t^{-1}A_3$,
that is
$\hat{A}^{(2)} + A_3 t s_2 s_1 ts_2 ^{-1} t+ A_3 t s_2^{-1} s_1
ts_2tA_3$ by $t^{-1} \in R^{\times} t + R$,
hence
$\hat{A}^{(2)} + A_3 t s_2 s_1 ts_2 ^{-1} t+ A_3 t s_2 s_1^{-1}
ts_2tA_3$ because
$s_1^{-1}(ts_2^{-1} s_1 t s_2 t) = 
t(s_1^{-1}s_2^{-1} s_1) t s_2 t) = 
ts_2s_1^{-1} s_2^{-1} (t s_2 t) = 
(ts_2s_1^{-1}  t s_2 t)s_2^{-1}$. Now $ t s_2 s_1^{-1}
ts_2t \in A_3^{\times} t s_2 s_1 t s_2^{-1} t A_3^{\times}$
by lemma \ref{lemG26L3} 
(3), and this concludes the proof of the lemma,
the last equality being an obvious consequence.

\end{proof}

These two lemmas imply the following proposition.

\begin{proposition} \label{propX}
As a $A_3$-module, $\AAA$ is generated by $\hat{A}^{(2)}$ together with $2 \times 9 = 18$ elements
originating from the braid group.
\end{proposition}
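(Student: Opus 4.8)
The plan is to combine the two preceding lemmas in the obvious way. Lemma \ref{lemA3cmodPhi} expresses $\AAA$ as the sum of $\hat{A}^{(2)}$ and four $A_3$-bimodules, namely $A_3 t s_2 s_1 t s_2 t$, $A_3 t s_2 s_1 t s_2^{-1} t A_3$, and the images of these two under the automorphism $\phi$. So it suffices to bound, for each of these four bimodules, the number of $A_3$-module generators needed modulo $\hat{A}^{(2)}$.

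First I would treat $A_3 t s_2 s_1 t s_2 t + A_3 t s_2 s_1 t s_2^{-1} t A_3$. This is exactly (up to rewriting $t s_2^{\pm 1} t$ as $t s_2 t$ and $t s_2^{-1} t$) the bimodule $\hat{A}^{(2)} + \sum_{\eps \in \{-1,1\}} A_3 t s_2 s_1 t s_2^{\eps} t A_3$ appearing in Lemma \ref{lemA3cmod}(5), which asserts precisely that this is spanned as a (left) $A_3$-module by $\hat{A}^{(2)}$ together with $9$ elements coming from the braid group. Here one should note that since $C = t s_2 s_1 t s_2 t A_3^{\times}$ up to a unit is central (Proposition of the previous subsection, part (1)), the element $t s_2 s_1 t s_2 t$ commutes with $A_3$, so the one-sided span $A_3 t s_2 s_1 t s_2 t$ already equals the bimodule $A_3 t s_2 s_1 t s_2 t A_3$; this is what makes the count of $9$ rather than something larger work out, and it is implicitly used in passing from (3) to (4) to (5) of Lemma \ref{lemA3cmod}.

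Next I would apply the automorphism $\phi$. Since $\phi$ is an $R$-algebra automorphism of $\hat{A}$ (it fixes no $R$ pointwise but is $\Z$-linear and permutes the defining relations appropriately), it sends $A_3$ to $A_3$ and sends $\hat{A}^{(2)}$ into itself — indeed $\hat{A}^{(2)}$ is defined in terms of $s_1^{\pm 1}, s_2^{\pm 1}, t^{\pm 1}$ in a way stable under $s_i \mapsto s_i^{-1}$, $t \mapsto t^{-1}$ (this is essentially the content of Lemma \ref{lemA3cmodPhi}, which computes $\phi$ of the relevant pieces of $\hat{A}^{(2)}$). Applying $\phi$ to the statement of Lemma \ref{lemA3cmod}(5) therefore yields that $\hat{A}^{(2)} + \phi\bigl(A_3 t s_2 s_1 t s_2 t + A_3 t s_2 s_1 t s_2^{-1} t A_3\bigr) = \hat{A}^{(2)} + A_3 \phi(t s_2 s_1 t s_2 t) + A_3 \phi(t s_2 s_1 t s_2^{-1} t) A_3$ is also spanned as an $A_3$-module by $\hat{A}^{(2)}$ together with $9$ elements originating from the braid group (namely the $\phi$-images of the previous $9$). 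Adding the two contributions gives $2 \times 9 = 18$ generators over $\hat{A}^{(2)}$, and by Lemma \ref{lemA3cmodPhi} their $A_3$-span together with $\hat{A}^{(2)}$ is all of $\AAA$, which is the claim.

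The only genuinely substantive point — and hence the main thing to be careful about — is the bookkeeping in Lemma \ref{lemA3cmod}(5): that the various rewriting steps in (2)–(4), which a priori produce generators indexed by $a \in \{-1,0,1\}$, $b,\eps \in \{-1,1\}$ plus a couple of extra terms, actually collapse to $9$ elements once one uses the centrality of $t s_2 s_1 t s_2 t$ and the $\langle s_2\rangle$-module decomposition of $A_3$ from Proposition \ref{lemG26L5} to absorb redundancies. Everything else is formal: the reduction to four bimodules is Lemma \ref{lemA3cmodPhi}, and the transport of the count across $\phi$ is immediate once one knows $\phi(A_3) = A_3$ and $\phi(\hat{A}^{(2)}) = \hat{A}^{(2)}$.
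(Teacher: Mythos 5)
Your proof is correct and follows the paper's intended argument exactly (the paper simply states ``These two lemmas imply the following proposition.''), namely combining Lemma \ref{lemA3cmod}(5) with Lemma \ref{lemA3cmodPhi} and transporting the count of $9$ across the automorphism $\phi$. One small imprecision: $t s_2 s_1 t s_2 t$ equals $Cu$ for some unit $u \in A_3^{\times}$, so it does not literally commute with $A_3$; nevertheless the conclusion you draw, $A_3\, t s_2 s_1 t s_2 t\, A_3 = A_3\, t s_2 s_1 t s_2 t$, still holds because $C$ is central and $u$ is invertible in $A_3$.
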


Additional properties of $\AAA$ include the following two results.

\begin{lemma} \label{lemtuuutuut} Whatever the choices of signs $\pm$,
\begin{enumerate} 
\item $t^{\pm} u_2 u_1 u_2 t^{\pm}  u_1 u_2 t^{\pm}  \subset \hat{A}^{(2 \frac{1}{2})}$
\item $t^{\pm} u_2 u_1  t^{\pm}  u_2 u_1 u_2 t^{\pm}  \subset \hat{A}^{(2 \frac{1}{2})}$
\end{enumerate}
\end{lemma}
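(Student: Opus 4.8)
\textbf{Proof plan for Lemma \ref{lemtuuutuut}.}
The plan is to reduce both statements to manipulations inside $\langle s_2, t\rangle$ and $A_3 = \langle s_1, s_2\rangle$, using the defining decomposition of $\AAA$ together with Proposition \ref{lemG26L5} and Lemmas \ref{lemG26L2} and \ref{lemG26L3}. Since $u_i = R + Rs_i + Rs_i^{-1}$ and $t^{\pm}$ ranges over $R + Rt$ (resp.\ $R + Rt^{-1}$, which by $t^{-1} \in R^\times t + R$ contributes nothing new modulo lower terms), it suffices to treat the ``leading'' words, namely those of the form $t\, s_2^{a} s_1^{b} s_2^{c}\, t\, s_1^{d} s_2^{e}\, t$ for part (1), and $t\, s_2^{a} s_1^{b}\, t\, s_2^{c} s_1^{d} s_2^{e}\, t$ for part (2), with all exponents in $\{-1,0,1\}$; everything else is either already in $\hat{A}^{(2)} \subset \AAA$ by Lemma \ref{lemG26L2}, or differs from such a leading word by terms in $\hat{A}^{(2)}$.

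For part (1): first absorb the outer $A_3$-factors. Writing $u_2 u_1 u_2 = A_3$-worth of material, I would use item (1) of Proposition \ref{lemG26L5}, $A_3 = u_1 u_2 u_1 + u_1 s_2 s_1^{-1} s_2$, to split $t A_3 t u_1 u_2 t$ into $t u_1 u_2 u_1 t u_1 u_2 t$ and $t u_1 s_2 s_1^{-1} s_2 t u_1 u_2 t$. In the first summand the $u_1$ factors adjacent to the $t$'s commute past $t$ (since $ts_1 = s_1 t$), collapsing it to $A_1 \cdot t u_2 t u_2 t \cdot A_1 \subset A_3 \cdot (tu_2tu_2t) \cdot A_3$, which lies in $\hat{A}^{(2)}$ by Lemma \ref{lemG26L2}(1). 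In the second summand, after commuting the outer $u_1$'s across the $t$'s we are reduced to controlling $t s_2 s_1^{-1} s_2 t u_2 t$; rewriting $s_2 t$ and using the braid relation $ts_2ts_2 = s_2ts_2t$ as in the proof of Proposition \ref{propG26L4}, this matches (up to $\hat{A}^{(2)}$) words of the form $t s_2 s_1^{\pm1} t s_2^{\pm 1} t$, all of which are among the generators of $\AAA$ listed in the statement after $\AAA$ is introduced, hence lie in $\AAA = \AAA$. A symmetric argument, reading words from the right and using item (1) of Proposition \ref{lemG26L5} in the form $A_3 = s_2 s_1^{-1} s_2 u_1 + u_1 u_2 u_1$, handles the contributions where the non-trivial $u_1 s_2 s_1^{-1} s_2$ piece sits on the other side. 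Part (2) is entirely analogous, now with the ``middle'' $A_3$-block $u_2 u_1 u_2$ expanded via Proposition \ref{lemG26L5}(1): the term $u_1 u_2 u_1$ lets the flanking $u_1$'s commute past the neighbouring $t$'s and reduces to $t u_2 t u_2 t$-type words in $\langle s_2,t\rangle$, while the $s_2 s_1^{-1} s_2$ term, after the same braid-relation rewriting, produces words $t s_2^{\pm1} s_1^{\pm1} t s_2 s_1^{-1} s_2 t$ which one recognizes (using Lemma \ref{lemG26L3} to swap signs) as lying in $\hat{A}^{(3)}$ and in fact in $\AAA$ by part (2) of the preceding proposition together with Lemma \ref{lemA3cmodPhi}.

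I expect the main obstacle to be bookkeeping of the many sign patterns: unlike the cubic-braid case of \cite{conjcub}, here the $s_2^{\pm2}$ reductions force an excursion through $t^{-1}s_2^{-1}t^{-1}s_2^{-1} = s_2^{-1}t^{-1}s_2^{-1}t^{-1}$ (the inverse braid relation), and one must check that each such excursion lands back inside the explicit list of four words defining $\AAA$ rather than generating a genuinely new bimodule generator. Concretely, the delicate case is when \emph{all three} of the inner exponents and the flanking exponents are non-trivial and of ``bad'' sign (e.g.\ $t s_2 s_1^{-1} s_2 t s_1^{-1} s_2^{-1} t$), where one cannot immediately commute an $s_1^{\pm1}$ across a $t$; there one must first apply a braid relation of the form $s_1 s_2 s_1^{-1} = s_2^{-1} s_1 s_2$ (as used repeatedly in Lemma \ref{lemG26L3}) to move the obstruction, then reduce the resulting $s_2$-power modulo $Rs_2^{-1}+Rs_2+R$, and only then conclude. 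Once this worst case is dispatched, all remaining cases follow the same template and reduce either to $\hat{A}^{(2)}$ via Lemma \ref{lemG26L2} or to one of the four generators of $\AAA$ via Lemma \ref{lemG26L3}.
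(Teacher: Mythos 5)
Your reduction of the first summand is wrong. After the split
$A_3 = u_1u_2u_1 + u_1s_2s_1^{-1}s_2$ the first summand is
$t\,u_1u_2u_1\,t\,u_1u_2\,t$, and commuting all $u_1$'s as far as
they will go (using $ts_1 = s_1t$) yields
$u_1\,t\,u_2\,u_1\,t\,u_2\,t$: the middle $u_1$ is trapped between
$u_2$ and $t\,u_2\,t$ and cannot be pushed to either end. This
expression does \emph{not} collapse to
$u_1\cdot(tu_2tu_2t)\cdot u_1$ and does \emph{not} lie in
$\hat{A}^{(2)}$ in general --- for instance $s_1\,t\,s_2\,s_1\,t\,s_2\,t
= t\,s_2\,s_1\,t\,s_2\,t$ is (up to $A_3^{\times}$) the central
element $C$, which sits in $\hat{A}^{(3)}$ but not in
$\hat{A}^{(2)}$. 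So Lemma~\ref{lemG26L2}(1) does not apply; what you
actually need here is Lemma~\ref{lemG26L2}(2) together with
Lemma~\ref{lemG26L3}, exactly as for your other summand. Similarly,
in the second summand you quietly drop a $u_1$: after commuting the
outer $u_1$'s you are left with $t\,s_2s_1^{-1}s_2\,u_1\,t\,u_2\,t$
(the $u_1$ is stuck next to $s_2$), not $t\,s_2s_1^{-1}s_2\,t\,u_2\,t$,
and that $u_1$ does not disappear for free.

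Beyond these concrete errors, the structural move you make is the
opposite of what the paper does, and this is why you run into these
obstructions. You inflate $u_2u_1u_2$ to all of $A_3$ and then split
$A_3$ via Proposition~\ref{lemG26L5}(1), which multiplies the number
of cases. The paper instead observes that the $u_1$ sitting
\emph{after} the middle $t$ commutes to the left of that $t$, where
it completes $u_2u_1u_2$ to $u_2u_1u_2u_1 = A_3 = u_1u_2u_1u_2$;
the leading $u_1$ of this rewriting then commutes past the first
$t$ out the front, leaving $u_1\,t\,u_2u_1u_2\,t\,u_2\,t$. Now the
tail $u_2tu_2t$ is handled entirely inside $\langle s_2,t\rangle$
by Proposition~\ref{lemG26L5}(3), producing only
$\hat{A}^{(2)}$-terms plus $t\,u_2u_1\,t\,s_2^{\pm1}\,t\,u_2$, and
these fall to Lemma~\ref{lemG26L2}(2) plus Lemma~\ref{lemG26L3}.
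No splitting of $A_3$ and no leftover $u_1$'s ever appear. Part
(2) is then obtained in one line by the order-reversing
skew-automorphism $\psi$ ($s_i \mapsto s_i^{-1}$, $t\mapsto t^{-1}$),
which sends the set in (1) to the set in (2), rather than by a
second bare-hands computation. Your proposal, as written, neither
reaches the stated conclusion nor identifies this key
$A_3 = u_1u_2u_1u_2$ absorption, so it has a genuine gap and would
need to be substantially reworked.
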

\begin{proof}
(1). Since $t^{-1} \in R t + R$ and $\hat{A}^{(2)} \subset \hat{A}^{(2 \frac{1}{2})}$, it suffices to show
$tu_2 u_1 u_2 t u_1 u_2 t \subset \hat{A}^{(2 \frac{1}{2})}$.
Now $tu_2 u_1 u_2 t u_1 u_2 t = tu_2 u_1 u_2 u_1t  u_2 t$ and
$u_2 u_1 u_2 u_1 = A_3 = u_1 u_2 u_1 u_2$, thus
$tu_2 u_1 u_2 u_1t  u_2 t = tu_1u_2 u_1 u_2 t  u_2 t
=u_1 tu_2 u_1 u_2 t  u_2 t$. Now $u_2 t  u_2 t \subset \langle s_2 ,t \rangle$ hence,
by proposition \ref{lemG26L5} 
(and applying the skew-automorphism induced by $s_2 \mapsto s_2^{-1}, t \mapsto t^{-1}$)
we have $u_2 t  u_2 t \subset u_2 + u_2 t u_2 + t s_2 t u_2 + t s_2^{-1} t u_2$
whence $tu_2 u_1 u_2 t  u_2 t \subset \hat{A}^{(2)} +  t  u_2 u_1 t s_2 t u_2 +t  u_2 u_1  t s_2^{-1} t u_2
\subset  \hat{A}^{(2 \frac{1}{2})}$ by lemmas \ref{lemG26L2} 
(2) and lemma \ref{lemG26L3}. 
This proves (1), and (2) follows from (1) under application of the skew-automorphism already mentionned.

\end{proof}

\begin{proposition} \label{lemstust} {\ }
\begin{enumerate}
\item $\AAA = \hat{A}^{(2)} +A_3  \langle t \rangle u_2 u_1 \langle t \rangle  u_2 \langle t \rangle A_3
 = \hat{A}^{(2)} +A_3 \langle t \rangle u_2  \langle t \rangle u_1 u_2 \langle t \rangle A_3$
\item $\AAA$ is stable under $\phi$ and $\psi$.
\item  $ \langle s_2 , t \rangle u_1 \langle s_2, t \rangle \subset \AAA$
\item $\langle t \rangle A_3 \langle s_2 ,t \rangle  \subset \AAA$
\item $\langle s_2 ,t \rangle  A_3 \langle t \rangle \subset \AAA$
\end{enumerate}
\end{proposition}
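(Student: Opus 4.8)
The plan is to prove (1) first, deduce (2) from it, obtain (3) and (4) by expanding the two rank-$2$ parabolic subalgebras, and finally deduce (5) from (4). For (1) I would argue by double inclusion. The inclusion of $\AAA$ in each right-hand side is immediate: by definition, $\AAA$ is the sum of $\hat A^{(2)}$ and the $A_3$-bimodule spans of $ts_2s_1ts_2t$, $ts_2s_1ts_2^{-1}t$, $ts_2s_1^{-1}ts_2^{-1}t$, $ts_2^{-1}s_1^{-1}ts_2^{-1}t$, and each of these four lies in $t\,u_2u_1\,t\,u_2\,t$ and — after moving the central $s_1^{\pm1}$ past a $t$ with $ts_1=s_1t$ — also in $t\,u_2\,t\,u_1u_2\,t$. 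Conversely, a summand $A_3\,t^{a}u_2u_1\,t^{b}u_2\,t^{c}A_3$ with $0\in\{a,b,c\}$ collapses, using $u_2,u_1\subset A_3$ and $t^{2}\in Rt+R$, into $A_3tA_3tA_3\subset\hat A^{(2)}$ (a routine consequence of Proposition~\ref{lemG26L1} and $ts_1=s_1t$), while the remaining summand $A_3\,tu_2u_1\,tu_2\,tA_3$ is contained in $A_3\,tu_2u_1\,tu_2u_1u_2\,tA_3\subset\AAA$ by Lemma~\ref{lemtuuutuut}(2), $\AAA$ being an $A_3$-bimodule; the second equality follows the same way from Lemma~\ref{lemtuuutuut}(1).

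For (2) I would use that $\phi$ and $\psi$ are semilinear over the involution $\phi|_{R}$ of $R$, with $\phi^{2}=\psi^{2}=\mathrm{id}$, that both fix $A_3$, each $u_i$ and $\langle t\rangle$ setwise (since $s_i^{\pm1}\in u_i$ and $t^{-1}\in R^{\times}t+R$), and that both preserve $\hat A^{(2)}$; the last point reduces to the fact that $s_i\mapsto s_i^{-1}$ is an automorphism of $A_3$, so applying it to $A_3=u_1u_2u_1+u_1s_2s_1^{-1}s_2$ gives $s_2^{-1}s_1s_2^{-1}\in A_3$ and hence $A_3\,ts_2^{-1}s_1s_2^{-1}t\,A_3\subset\hat A^{(2)}$ as in Proposition~\ref{lemG26L1}. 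Then $\phi$ carries the first decomposition of $\AAA$ in (1) onto itself factor by factor, whence $\phi(\AAA)=\AAA$; and $\psi$, reversing the order of factors, carries the first decomposition onto the second, whence $\psi(\AAA)=\AAA$ by (1) again.

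For (3) and (4) I would expand $\langle s_2,t\rangle=u_2+u_2tu_2+u_2ts_2t+u_2ts_2^{-1}t$ (and its right-handed mirror) via Proposition~\ref{lemG26L5}(3) and $A_3=u_1u_2u_1+u_1s_2s_1^{-1}s_2$ via Proposition~\ref{lemG26L5}(1), multiply out $\langle s_2,t\rangle u_1\langle s_2,t\rangle$, resp.\ $\langle t\rangle A_3\langle s_2,t\rangle$, and reduce each monomial using only $ts_1=s_1t$, $t^{2}\in Rt+R$, $s_2^{2}\in u_2$, and the braid identities $s_1^{-1}s_2s_1=s_2s_1s_2^{-1}$ and $ts_2ts_2=s_2ts_2t$. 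Each surviving monomial then lies in $\hat A^{(2)}$, in $A_3\,tu_2u_1\,tu_2\,tA_3\subset\AAA$ (part (1)), in $tu_2tA_3t$ or $tA_3tu_2t$ — both $\subset\hat A^{(2)}+A_3\,tu_2u_1\,tu_2\,tA_3\subset\AAA$ by the proof of Proposition~\ref{propG26L4} — or in an $A_3$-bimodule translate of one of the four generators of $\AAA$ via Lemma~\ref{lemG26L3}; e.g.\ $ts_2s_1^{-1}s_2ts_2t=(ts_2s_1^{-1}ts_2t)s_2$ by $ts_2ts_2=s_2ts_2t$, and $ts_2s_1^{-1}ts_2t$ is an $A_3^{\times}$-translate of the generator $ts_2s_1ts_2^{-1}t$ by Lemma~\ref{lemG26L3}(3). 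Finally (5) follows from (4): $\psi$ is an anti-automorphism fixing $\langle t\rangle$, $A_3$ and $\langle s_2,t\rangle$ setwise, so $\psi(\langle t\rangle A_3\langle s_2,t\rangle)=\langle s_2,t\rangle A_3\langle t\rangle$, while $\psi(\AAA)=\AAA$ by (2).

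The main obstacle is the bookkeeping in (3) and (4): the bimodule $\hat A^{(3)}$ strictly contains $\AAA$, the excess being the $A_3$-bimodule generated by the doubled block $t\,s_2s_1^{-1}s_2\,t\,s_2s_1^{-1}s_2\,t$ (Proposition~\ref{propG26L4} together with the definition of $\AAA$), and one must make sure that no reduction ever produces this element. This is exactly what the hypotheses afford: $\langle t\rangle$ and $\langle s_2,t\rangle$ contain no conjugate of $s_1$, so every monomial of $\langle s_2,t\rangle u_1\langle s_2,t\rangle$ carries a single $s_1^{\pm1}$, and every monomial of $\langle t\rangle A_3\langle s_2,t\rangle$ has all its $s_1$-letters inside the one $A_3$-factor; after the decomposition $A_3=u_1u_2u_1+u_1s_2s_1^{-1}s_2$ at most one $t$-delimited block $s_2s_1^{-1}s_2$ can appear, never two, so the forbidden element never arises.
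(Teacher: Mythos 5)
Your plan — prove (1) by double inclusion using Lemma~\ref{lemtuuutuut}, deduce (2), prove (3) by expanding $\langle s_2,t\rangle$ via Proposition~\ref{lemG26L5}(3) and reducing case by case, deduce (5) from (4) via $\psi$ — is essentially the paper's route. The one real deviation is in (4): you propose another direct expansion, whereas the paper is shorter there: it writes $A_3 = u_1 u_2 u_1 u_2$ and uses $\langle t\rangle u_1 = u_1\langle t\rangle$ to get $\langle t\rangle A_3 \langle s_2,t\rangle \subset u_1\langle t\rangle u_2 u_1 u_2 \langle s_2,t\rangle \subset u_1\langle s_2,t\rangle u_1\langle s_2,t\rangle \subset u_1\,\AAA = \AAA$ by (3) alone, so no further case analysis is needed for (4).

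Two cautions on (3). You describe but do not carry out the monomial reduction; the paper does the full case check (via its decompositions (A) and (B) of $\langle s_2,t\rangle$, reducing to the two branches $t u_2 s_1^{\alpha}\langle s_2,t\rangle$ and $t s_2^{\varepsilon} t s_1^{\alpha}\langle s_2,t\rangle$), and this really has to be verified since $\AAA$ is strictly smaller than $\hat{A}^{(3)}$. More seriously, your closing heuristic — that the doubled block $t s_2 s_1^{-1}s_2 t s_2 s_1^{-1}s_2 t$ cannot arise because every monomial of $\langle s_2,t\rangle u_1 \langle s_2,t\rangle$ carries at most one $s_1^{\pm 1}$ — is not a valid invariant: the very braid relation $s_1^{-1}s_2 s_1 = s_2 s_1 s_2^{-1}$ that you list among your reduction tools changes the number of $s_1$-letters (two on the left, one on the right), and in $B^{\mathrm{ab}}$ the letters $s_1$ and $s_2$ are identified, so no ``$s_1$-count'' alone is preserved by the rewriting. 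The reason the doubled block does not appear is not a counting argument; it is that each of the finitely many surviving terms, after commuting $s_1^{\alpha}$ past $t$ and using $t^2\in Rt+R$, lands explicitly in $\hat{A}^{(2)}$ or in one of the shapes covered by Lemma~\ref{lemtuuutuut}, which is exactly what the paper's case-by-case computation checks.
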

\begin{proof}
(1) is an immediate consequence of the above, and (2) is a direct
consequence of (1).
Recall that (A) $\langle s_2, t \rangle = u_2 + u_2 t u_2 + u_2 ts_2 t + u_2 t s_2^{-1} t$ hence,
applying $\phi \circ \psi$, we have
(B) $\langle s_2, t \rangle = u_2 + u_2 t u_2 +  ts_2 tu_2 +  t s_2^{-1} tu_2$.
In particular, $\langle s_2,t \rangle \subset \hat{A}^{(2)} \subset \AAA$, thus it is sufficient
to show $\langle s_2,t \rangle s_1^{\alpha} \langle s_2,t \rangle \subset \AAA$ for $\alpha \in \{-1,1 \}$.
Since $\AAA$ is a $u_2$-bimodule, because of (A) this amounts to proving (a) $tu_2 s_1^{\alpha} \langle s_2,t \rangle
\subset \AAA$ and (b) $t s_2^{\eps} t s_1^{\alpha} \langle s_2,t \rangle \subset \AAA$ for all $\eps \in \{ -1, 1 \}$.
We start with (a). By (B), 
$$
t u_2 s_1^{\alpha} \langle s_2,t \rangle \subset \hat{A}^{(2)} + t u_2 s_1^{\alpha} t s_2 t u_2 + t u_2 s_1^{\alpha} t s_2^{-1} t u_2 \subset \AAA
$$
by lemma \ref{lemtuuutuut}, and this proves (a). We turn to (b). By (B),
$$
t s_2^{\eps} t s_1^{\alpha} \langle s_2,t \rangle \subset \hat{A}^{(2)} + t s_2^{\eps} t s_1^{\alpha} u_2 t u_2 + t s_2^{\eps} t s_1^{\alpha} t s_2 t u_2
+ t s_2^{\eps} t s_1^{\alpha} t s_2^{-1} t u_2.
$$
Now $t s_2^{\eps} t s_1^{\alpha} t s_2^{\pm 1} t u_2
= t s_2^{\eps}  s_1^{\alpha} t^2 s_2^{\pm 1} t u_2 \subset \AAA$
and $t s_2^{\eps} t s_1^{\alpha} u_2 t \subset \AAA$, and this concludes the proof of (3).
For proving (4), we use that $A_3 = u_1 u_2 u_1 u_2$, hence
$\langle t \rangle A_3 \langle s_2 ,t \rangle  \subset u_1 \langle t \rangle u_2 u_1 u_2 \langle s_2 ,t \rangle  \subset \AAA$
because of (3). Now (5) is a consequence of (4) by applying $\psi$.
\end{proof}

\subsection{Computation of $C^2$ modulo $\AAA$}

\begin{lemma} \label{lemC2modA2}
$C^2 \in A_3^{\times} t s_2 t s_1  s_2^{-1} s_1t s_2 t A_3^{\times} + \AAA.$
\end{lemma}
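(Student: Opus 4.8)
The plan is to combine centrality of $C$ with the explicit factorisation $C=(t s_2 s_1\, t s_2 t)(s_1 s_2 s_1)$ that comes out of the computation showing $C\in t s_2 s_1 t s_2 t\, A_3^{\times}$; write $g=s_1 s_2 s_1\in A_3^{\times}$. Since $C$ is central,
$$C^2=(t s_2 s_1 t s_2 t)\,C\,g=(t s_2 s_1 t s_2 t)(t s_2 s_1 t s_2 t)\,g^2=\bigl(t s_2 s_1 t s_2\,(t^2)\,s_2 s_1 t s_2 t\bigr)\,g^2 ,$$
and from here I would reduce using four moves only: the commutation $s_1 t=t s_1$, the braid relation $s_1 s_2 s_1=s_2 s_1 s_2$, the collapse $t^2=d t+e$, and the relation $t s_2 t s_2=s_2 t s_2 t$ inside $\langle s_2,t\rangle$ (together with $s_2^2=a s_2+b+c\,s_2^{-1}\in u_2$, a consequence of the relation $s_2^3=a s_2^2+b s_2+c$). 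A useful preliminary identity, established by exactly these moves, is that the four-$t$ word $t s_2 s_1 t s_2 s_1 t s_2 t$ (that is, $Ct$) already lies in $\AAA$: it rewrites as $s_2 t s_2 s_1 s_2 t s_2\,t^2=d\,s_2 t s_2 s_1 s_2 t s_2 t+e\,s_2 t s_2 s_1 s_2 t s_2$, whose summands belong to $A_3(t s_2 s_1 t s_2 t)A_3\subseteq\AAA$ and to $\langle t\rangle A_3\langle s_2,t\rangle\subseteq\AAA$ respectively (Proposition \ref{lemstust}).

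Running the reduction on $C^2$, one application of $t^2=d t+e$ produces $d\,t s_2 s_1(t s_2 t s_2)s_1 t s_2 t\,g^2+e\,t s_2 s_1 t\,s_2^2\,s_1 t s_2 t\,g^2$. In the first summand I would use $t s_2 t s_2=s_2 t s_2 t$ and then a couple more applications of $t^2=d t+e$ (together with $s_1 t=t s_1$ and $t s_2 s_1 s_2=s_1 t s_2 s_1$): every term thus produced lands in $\AAA$, either by collapsing to a multiple of $s_1(t s_2 s_1 t s_2 s_1 t s_2 t)$ — which is in $\AAA$ by the preliminary identity — or by landing in the closed sub-bimodules $A_3\,t u_2 u_1 t u_2 t\,A_3$ or $A_3\,t u_2 t u_1 u_2 t\,A_3$ coming from the two presentations of $\AAA$ in Proposition \ref{lemstust}(1). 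In the second summand I would substitute $s_2^2=a s_2+b+c\,s_2^{-1}$: the $a$-part is $a\,(t s_2 s_1 t s_2 s_1 t s_2 t)\,g^2\in\AAA$; the $b$-part reduces, via $s_1 t s_1 t=t^2 s_1^2$ and $t^2=d t+e$, to terms in $\langle t\rangle A_3\langle t\rangle$ and in $t u_2 t u_1 u_2 t$, hence in $\AAA$; and the $c$-part is $c\,t s_2 s_1 t s_2^{-1} s_1 t s_2 t\,g^2$, which after transporting the two inner $s_1$'s across the adjacent $t$'s equals $c\,(t s_2 t\,s_1 s_2^{-1} s_1\,t s_2 t)\,g^2$. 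As $ec\in R^{\times}$ and $g^2\in A_3^{\times}$, absorbing the scalar into the right-hand $A_3^{\times}$ factor gives $C^2\in A_3^{\times}\,t s_2 t s_1 s_2^{-1} s_1 t s_2 t\,A_3^{\times}+\AAA$, which is the claim.

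The only real difficulty is the bookkeeping: one must check that each $t$-collision created by $t^2=d t+e$ either telescopes into one of the standard $\AAA$-generators $t s_2 s_1 t s_2^{\pm 1} t$ or $t s_2 s_1 t s_2 s_1 t s_2 t$, or else lands in one of the pieces $t u_2 u_1 t u_2 t$, $t u_2 t u_1 u_2 t$, $\langle t\rangle A_3\langle s_2,t\rangle$, $\langle s_2,t\rangle A_3\langle t\rangle$, $\langle s_2,t\rangle u_1\langle s_2,t\rangle$ already known to sit in $\AAA$ (Proposition \ref{lemstust}, Lemma \ref{lemtuuutuut}); and one must confirm that the unique surviving term carries a unit coefficient (namely $ec$), so that the conclusion can be stated with $A_3^{\times}$ and not merely with $R$ on the two sides. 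Tracking which of the two atoms $t s_2 t$, $t s_2^{-1} t$ appears — the inverse $s_2^{-1}$ entering solely through the $c\,s_2^{-1}$ term of $s_2^2$ — and invoking $t^{-1}\in R^{\times} t+R$ consistently wherever an inverse of $t$ shows up, is where attention is required.
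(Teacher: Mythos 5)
Your proposal is correct and takes essentially the same approach as the paper: in both, one factors $C^2$ as $(t s_2 s_1 t s_2 t)(t s_2 s_1)^3\cdot(\text{unit})$ so as to create a single interior $t^2$, applies $t^2=dt+e$, then applies $s_2^2\in R^\times s_2^{-1}+Rs_2+R$ on the $e$-summand, and isolates the unique $s_2^{-1}$-term (with unit coefficient $ec$) as the one surviving modulo $\AAA$, while showing that every other term falls into $\AAA$. Where the paper grinds through a long explicit chain of equalities to sort the remaining terms, you route them through the preliminary identity $Ct\in\AAA$ and the closed sub-bimodules $\langle t\rangle A_3\langle s_2,t\rangle$, $t u_2 u_1 t u_2 t$, $t u_2 t u_1 u_2 t$ of Proposition \ref{lemstust}, which is a mild structural shortcut but not a different route; the sub-claims you leave to bookkeeping do all check out (in particular the $d$-summand collapses after two applications of $t^2=dt+e$ to $d^2Ctg^2+de\,s_1 t s_2 s_1^2 t s_2 t s_1 g^2\in\AAA$, and the $a$- and $b$-pieces of the $e$-summand land in $\AAA$ as you indicate).
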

%
\begin{proof} We actually prove $C^2 \in \hat{A}^{(2)} + A_3^{\times} t s_2 t s_1  s_2^{-1} s_1t s_2 t(s_1 s_2 s_1)
+ A_3 t s_2 s_1^{-1}  t   s_2 t (s_1^2 s_2 s_1) + A_3 t s_2 s_1 t s_2 t$.
We have $C = (ts_2s_1)^3 \in A_3^{\times} t s_2 s_1 t s_2 t$, hence
$C^2 \in A_3^{\times} t s_2 s_1 t s_2 t t s_2 s_1t s_2 s_1t s_2 s_1$
and
$$t s_2 s_1 t s_2 t^2 s_2 s_1t s_2 s_1t s_2 s_1 \in R^{\times}
t s_2 s_1 t s_2^2 s_1t s_2 s_1t s_2 s_1
+
Rt s_2 s_1 t s_2 t s_2 s_1t s_2 s_1t s_2 s_1.
$$

$\bullet$ We have 
$$t s_2 s_1 t s_2^2 s_1t s_2 s_1t s_2 s_1 \in 
R^{\times}
t s_2 s_1 t s_2^{-1} s_1t s_2 s_1t s_2 s_1
+ R
t s_2 s_1 t s_2 s_1t s_2 s_1t s_2 s_1
+ R 
t s_2 s_1 t  s_1t s_2 s_1t s_2 s_1.
$$
We also have $t s_2 s_1 t s_2^{-1} s_1t s_2 s_1t s_2 s_1 = 
t s_2 t s_1  s_2^{-1} s_1t s_2 ts_1 s_2 s_1$,
and
$$
\begin{array}{lclcl}
t s_2 s_1 t s_2 s_1t s_2 s_1t s_2 s_1
&=& t s_2  t s_1 s_2 s_1t s_2 s_1t s_2 s_1
&=& t s_2  t s_2 s_1 s_2 t s_2 s_1t s_2 s_1 \\
&=& s_2 t s_2  t s_1 t s_2 t s_2 s_1 s_2 s_1
&=& s_2 t s_2   s_1 t^2 s_2 t s_2 s_1 s_2 s_1 \\
&\in & s_2 t s_2   s_1 t s_2 t s_2 s_1 s_2 s_1 &+& \hat{A}^{(2)} \\
&\subset & A_3 t s_2   s_1 t s_2 t A_3 & + & \hat{A}^{(2)} \\
& \subset &  A_3 t s_2   s_1 t s_2 t    &+ &\hat{A}^{(2)}. \\
\end{array}
$$
Finally, $t s_2 s_1 t  s_1t s_2 s_1t s_2 s_1
= t s_2 s_1^2  t^2 s_2 s_1t s_2 s_1
\in \hat{A}^{(2)} + R t s_2 s_1^2  t s_2 s_1t s_2 s_1$.
Moreover, $$t s_2 s_1^2  t s_2 s_1t s_2 s_1
\in R t s_2 s_1^{-1}  t s_2 s_1t s_2 s_1
+ R t s_2 s_1  t s_2 s_1t s_2 s_1
+ R t s_2  t s_2 s_1t s_2 s_1.$$
But since $t s_2  t s_2 s_1t s_2 s_1 = 
s_2 t s_2  t s_1t s_2 s_1
=
s_2 t s_2  s_1t^2  s_2 s_1 \in \hat{A}^{(2)}$
and $t s_2 s_1  t s_2 s_1t s_2 s_1
=
t s_2 s_1  t s_2 t s_1 s_2 s_1
\in t s_2 s_1  t s_2 t A_3 \subset
A_3 ts_2s_1ts_2t$, we get
$$
t s_2 s_1 t s_2^2 s_1t s_2 s_1t s_2 s_1  \in 
t s_2 t s_1  s_2^{-1} s_1t s_2 t(s_1 s_2 s_1)
+ A_3 t s_2   s_1 t s_2 t
+A_3 t s_2 s_1^{-1}  t s_2 t(s_1 s_2 s_1)
+
\hat{A}^{(2)}
$$

$\bullet$ We have
$$
\begin{array}{lclcl}
t s_2 s_1 t s_2 t s_2 s_1t s_2 s_1t s_2 s_1
&=& 
t s_2 s_1 s_2 t s_2 t   s_1t s_2 s_1t s_2 s_1
&=& 
t s_2 s_1 s_2 t s_2 t^2 s_1 s_2 s_1t s_2 s_1 \\
&=& 
t s_1 s_2 s_1 t s_2 t^2 s_1 s_2 s_1t s_2 s_1
&=& 
s_1t  s_2 s_1 t s_2 t^2 s_1 s_2 s_1t s_2 s_1
\end{array}
$$
and
$t  s_2 s_1 t s_2 t^2 s_1 s_2 s_1t s_2 s_1
\in 
R t s_2 s_1  t s_2 t s_1 s_2 s_1t s_2 s_1
+ 
R t s_2 s_1  t s_2  s_1 s_2 s_1t s_2 s_1$.
Moreover 
$$
\begin{array}{lclclcl}
t s_2 s_1  t s_2  s_1 s_2 s_1t s_2 s_1 &=& 
t s_2 s_1  t s_1  s_2 s_1 s_1t s_2 s_1 
&=&
t s_2 s_1^2  t   s_2 t s_1^2 s_2 s_1  \\
&\in&
R t s_2 s_1^{-1}  t   s_2 t (s_1^2 s_2 s_1) 
&+&
Rt s_2 s_1  t   s_2 t s_1^2 s_2 s_1 
&+&
t s_2   t   s_2 t s_1^2 s_2 s_1.
\end{array}
 $$
We have
$t s_2 s_1  t   s_2 t s_1^2 s_2 s_1  \in 
t s_2 s_1  t   s_2 t A_3
\subset A_3  t s_2 s_1  t   s_2 t + \hat{A}^{(2)}$,
$t s_2   t   s_2 t s_1^2 s_2 s_1= 
 s_2   t   s_2 t^2 s_1^2 s_2 s_1 \in \hat{A}^{(2)}$.

On the other hand,
$ t s_2 s_1  t s_2 t s_1 s_2 s_1t s_2 s_1
=  t s_2 s_1  t s_2 t s_2 s_1 s_2t s_2 s_1
=  t s_2 s_1   s_2 t s_2 t s_1 s_2t s_2 s_1
=  t s_1 s_2   s_1 t s_2 t s_1 s_2t s_2 s_1
=  s_1 t  s_2   s_1 t s_2 t s_1 s_2t s_2 s_1
=  s_1 t  s_2   s_1 t s_2 s_1 t  s_2t s_2 s_1
=  s_1 t  s_2   s_1 t s_2 s_1   s_2t s_2 t s_1
=  s_1 t  s_2   s_1 t s_1 s_2   s_1t s_2 t s_1
=  s_1 t  s_2   t s_1^2  s_2   s_1t s_2 t s_1
\in A_3t  s_2   t s_1^2  s_2   s_1t s_2 t A_3 $.
Moreover,
$t  s_2   t s_1^2  s_2   s_1t s_2 t
\in R t  s_2   t s_1^{-1}  s_2   s_1ts_2 t + 
R t  s_2   t s_1  s_2   s_1t s_2 t+ R t  s_2   t  s_2   s_1ts_2 t$
and $t  s_2   t  s_2   s_1ts_2 t =   s_2   t  s_2 t  s_1ts_2 t
=   s_2   t  s_2   s_1t^2s_2 t \in A_3 t s_2 s_1 t s_2 t + \hat{A}^{(2)}$,
$t  s_2   t s_1  s_2   s_1ts_2 t
=t  s_2   t s_2  s_1   s_2ts_2 t
=  s_2   t s_2 t s_1   s_2ts_2 t
=  s_2   t s_2 s_1  t^2   s_2ts_2 
\in A_3 t s_2 s_1  t   s_2t + \hat{A}^{(2)}$,
$t  s_2   t s_1^{-1}  s_2   s_1t s_2 t
=t  s_2   t s_2  s_1   s_2^{-1} ts_2 t
=  s_2   t s_2 t s_1    ts_2 t s_2^{-1}
=  s_2   t s_2  s_1    t^2 s_2 t s_2^{-1}
\in  A_3   t s_2  s_1    t s_2 t A_3
+ \hat{A}^{(2)}
\subset 
A_3   t s_2  s_1    t s_2 t 
+ \hat{A}^{(2)}
$.
This concludes the proof.
\end{proof}


For subsequent use, we also need the following related computation.

\begin{lemma} \ \label{lemSimpl0C3}
\begin{enumerate}
\item $ t s_2 s_1^{-1} t s_2 t s_1 C \in \AAA$
\item $ t s_2 s_1^{-1} t s_2 t s_1 t s_2 s_1 t s_2 t \in \AAA$
\end{enumerate}
\end{lemma}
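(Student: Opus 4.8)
The plan is to deduce (1) and (2) from one another and then to prove one of them --- say (1) --- by a word computation of the same nature as Lemma~\ref{lemC2modA2}. Recall that $C=ts_2s_1ts_2s_1ts_2s_1=ts_2s_1ts_2t\,s_1s_2s_1$, so that $C=(ts_2s_1ts_2t)(s_1s_2s_1)$. Hence the element appearing in (1) equals
\[
ts_2s_1^{-1}ts_2t\,s_1\,C=\bigl(ts_2s_1^{-1}ts_2t\,s_1\,ts_2s_1ts_2t\bigr)(s_1s_2s_1),
\]
which is the element appearing in (2) multiplied on the right by $s_1s_2s_1\in A_3$; conversely, (2) is (1) multiplied by $(s_1s_2s_1)^{-1}$. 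As $\AAA$ is an $A_3$-bimodule (immediate from its definition), (1) and (2) are equivalent, so it suffices to treat (1).

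For (1) I would first apply Lemma~\ref{lemG26L3}(3), whose proof records the identity $(ts_2s_1ts_2^{-1}t)s_1^{-1}=s_2^{-1}(ts_2s_1^{-1}ts_2t)$, i.e.\ $ts_2s_1^{-1}ts_2t=s_2(ts_2s_1ts_2^{-1}t)s_1^{-1}$. Substituting this, the $s_1^{-1}$ cancels the trailing $s_1$ and we obtain $ts_2s_1^{-1}ts_2t\,s_1\,C=s_2(ts_2s_1ts_2^{-1}t)C$, so, $\AAA$ being a left $A_3$-module, it suffices to prove $ts_2s_1ts_2^{-1}t\cdot C\in\AAA$. From here I would substitute $C=(ts_2s_1ts_2t)(s_1s_2s_1)$ (equivalently expand $C=(ts_2s_1)^3$) and carry out a word reduction, using the braid relations $ts_2ts_2=s_2ts_2t$, $s_1s_2s_1=s_2s_1s_2$, $ts_1=s_1t$ and the Hecke relations $t^2\in R^\times+Rt$, $t^{-1}\in R^\times t+R$, $s_i^{\pm2}\in R^\times s_i^{\mp1}+Rs_i^{\pm1}+R$ to bring down the number of occurrences of $t$, distributing the resulting terms among the subsets of $\AAA$ already available: the bimodule $\hat{A}^{(2)}$, which contains $A_3tA_3tA_3$ --- the proof of Proposition~\ref{lemG26L1}(2) shows $tA_3t\subseteq\hat{A}^{(2)}$ and $\hat{A}^{(2)}$ is an $A_3$-bimodule --- hence every word involving at most two letters $t$; the terms of the form $A_3\,ts_2^{\pm1}s_1^{\pm1}ts_2^{\pm1}t\,A_3$ already known to lie in $\AAA$ from its definition and from Lemma~\ref{lemG26L3}; and the closure facts $\langle s_2,t\rangle u_1\langle s_2,t\rangle\subseteq\AAA$, $\langle t\rangle A_3\langle s_2,t\rangle\subseteq\AAA$, $\langle s_2,t\rangle A_3\langle t\rangle\subseteq\AAA$ of Proposition~\ref{lemstust} together with the patterns of Lemma~\ref{lemtuuutuut}. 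Centrality of $C$ is exploited freely: both to rewrite it cyclically, $(ts_2s_1)^3=(s_2s_1t)^3=(s_1ts_2)^3$, and --- applying $Cg=gC$ with $g=s_1s_2s_1$ to $C=(ts_2s_1ts_2t)(s_1s_2s_1)$ --- to commute $ts_2s_1ts_2t$ past $s_1s_2s_1$; this is what lets one resolve the long words that reappear and guarantees termination.

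The hard part is the bookkeeping rather than any single idea: each use of a Hecke relation trebles the number of terms, words with five or six occurrences of $t$ genuinely appear, and for each one must choose a sequence of braid moves bringing it into one of the recognized shapes with at most three $t$'s while absorbing the rest into $A_3$ on the two sides --- without running into the obvious loops $s_i^{\mp1}\leftrightarrow s_i^{\pm2}$ and $t\leftrightarrow t^{-1}$. I expect the whole computation to take a page or two, in the same spirit as (and relying on) Lemma~\ref{lemC2modA2}.
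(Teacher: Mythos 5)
Your preliminary reductions are all correct: $C=(ts_2s_1ts_2t)(s_1s_2s_1)$ makes (1) the product of the word in (2) by the unit $s_1s_2s_1\in A_3^{\times}$, and since $\AAA$ is manifestly an $A_3$-bimodule the two statements are indeed interchangeable. The identity $ts_2s_1^{-1}ts_2t=s_2(ts_2s_1ts_2^{-1}t)s_1^{-1}$, read off from the displayed chain in the proof of Lemma~\ref{lemG26L3}(3), is recalled accurately, and the resulting reduction of (1) to the claim $(ts_2s_1ts_2^{-1}t)\,C\in\AAA$ is valid. Note also that this reduction buys essentially nothing: the element $(ts_2s_1ts_2^{-1}t)C$ still has six occurrences of $t$ after one $t^2$-reduction, exactly like the word in (2) after the first step $ts_1t=t^2s_1$ that the paper uses, so your variant and the paper's direct attack on (2) start from words of the same complexity.

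The genuine gap is that you stop precisely where the content of the lemma begins. Everything you have written is preprocessing; the lemma \emph{is} the word reduction, and you have replaced it with a description of what such a reduction should look like (``bring down the number of occurrences of $t$, distribute the resulting terms among the subsets of $\AAA$ already available, \dots, I expect the whole computation to take a page or two''). Terminating such a reduction is not automatic: at each stage one must make a specific choice of braid move that actually decreases a well-chosen complexity, avoid the loops you yourself flag, and then verify that every residual term lands in one of the recognised shapes (via Lemma~\ref{lemtuuutuut} or Proposition~\ref{lemstust}). The paper does exactly this for (2): roughly a dozen rewrites culminating in two terms, one handled by a further chain of braid moves landing in $\AAA$ directly, the other split again by $t^2\in Rt+R$ into two subcases, each finished off by Lemma~\ref{lemtuuutuut} or Proposition~\ref{lemstust}. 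Until you exhibit such a chain, the proposal is a plausible plan rather than a proof, and the reduction to $(ts_2s_1ts_2^{-1}t)C\in\AAA$ should not be counted as progress since it does not simplify the remaining computation.
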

\begin{proof}
(1) is a clear consequence of (2), so we focus on (2).
We have
$$
\begin{array}{clclcl}
& t s_2 s_1^{-1} t s_2 t s_1 t s_2 s_1 t s_2 t &=&
t s_2 s_1^{-1} t s_2 t^2 s_1  s_2 s_1 t s_2 t
&=&
t s_2  ts_1^{-1}  s_2s_1 t^2   s_2 s_1 t s_2 t \\
=&t s_2  ts_2  s_1s_2^{-1} t^2   s_2 s_1 t s_2 t &=& s_2  ts_2  t s_1s_2^{-1} t^2   s_2 s_1 t s_2 t s_2^{-1} s_2
&=& s_2  ts_2  t s_1s_2^{-1} t^2   s_2 s_1 s_2^{-1} t s_2 t  s_2 \\
= &s_2  ts_2  t s_1s_2^{-1} t^2   s_1^{-1} s_2 s_1 t s_2 t  s_2
&=& s_2  ts_2  t s_1s_2^{-1}  s_1^{-1}t^2   s_2 s_1 t s_2 t  s_2 
&=& s_2  ts_2  t s_2^{-1} s_1^{-1}  s_2t^2   s_2 s_1 t s_2 t  s_2 \\
= &s_2 s_2^{-1} ts_2  t  s_1^{-1}  s_2t^2   s_2 s_1 t s_2 t  s_2
&=&  ts_2  t  s_1^{-1}  s_2t^2   s_2 s_1 t s_2 t  s_2 \\
\in & R ts_2  t  s_1^{-1}  s_2t   s_2 s_1 t s_2 t  s_2
&+& R ts_2  t  s_1^{-1}  s_2   s_2 s_1 t s_2 t  s_2.
\end{array}
$$
Now, on the one hand
$ts_2  t  s_1^{-1}  s_2   s_2 s_1 t s_2 t 
=
ts_2  t  s_1^{-1}  s_2^2 s_1 t s_2 t 
=
ts_2  t  s_2  s_1^2 s_2^{-1} t s_2 t 
=
s_2  t  s_2 t s_1^2  t s_2 ts_2^{-1}
=
s_2  t  s_2 s_1^2t^2    s_2 ts_2^{-1}
 \in \AAA$.
 On the other hand,
 $ts_2  t  s_1^{-1}  s_2t   s_2 s_1 t s_2 t 
 =
 ts_2  s_1^{-1} t    s_2t   s_2 s_1 t s_2 t 
 =
 ts_2  s_1^{-1}     s_2t   s_2 ts_1  ts_2 t 
 =
 ts_2  s_1^{-1}     s_2t   s_2 t^2s_1   s_2 t
 \in R  ts_2  s_1^{-1}     s_2t   s_2 t s_1   s_2 t
 +
 R  ts_2  s_1^{-1}     s_2t   s_2 s_1   s_2 t$.
 Then, 
$ts_2  s_1^{-1}     s_2t   s_2 s_1   s_2 t=
ts_2  s_1^{-1}     s_2t   s_1 s_2   s_1 t
=ts_2  s_1^{-1}     s_2 t s_1   s_2   t s_1 \in \AAA$
by lemma \ref{lemtuuutuut}, and
$ts_2  s_1^{-1}     s_2t   s_2 t s_1   s_2 t = 
ts_2  s_1^{-1}    t s_2t   s_2  s_1   s_2 t
=
ts_2  s_1^{-1}    t s_2t   s_1  s_2   s_1 t
=
ts_2  s_1^{-1}    t s_2t   s_1  s_2    ts_1
=
ts_2      ts_1^{-1} s_2 s_1t    s_2    ts_1
=
ts_2      ts_2 s_1 s_2^{-1}t    s_2    ts_1
=
s_2      ts_2 ts_1 s_2^{-1}t    s_2    ts_1
=
s_2      ts_2 ts_1 t    s_2    ts_2^{-1}s_1
=
s_2      ts_2 s_1 t^2    s_2    ts_2^{-1}s_1
\in \AAA$.
\end{proof}

\subsection{Properties of the bimodule $\hat{B}$}

\begin{proposition} \label{lemstAst}{\ } 
\begin{enumerate}
\item $\hat{B}$ is stable under $\phi$ and $\psi$.
\item $\hat{B} = \AAA +A_3 t s_2 t s_1 s_2^{-1} s_1 t s_2 t  + A_3 t^{-1} s_2^{-1} t^{-1} s_1^{-1} s_2 s_1^{-1} t^{-1} s_2^{-1} t^{-1}$
\item $\hat{B} = A_3 \langle s_2, t \rangle A_3 \langle s_2, t \rangle A_3$
\end{enumerate}

\end{proposition}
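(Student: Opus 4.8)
Throughout I will write $w=ts_2ts_1s_2^{-1}s_1ts_2t$ for the first new generator appearing in (2) and record the factorisation $w=(ts_2t)(s_1s_2^{-1}s_1)(ts_2t)$, so that $w\in\langle s_2,t\rangle A_3\langle s_2,t\rangle$; the second generator in (2) is $\phi(w)=(t^{-1}s_2^{-1}t^{-1})(s_1^{-1}s_2s_1^{-1})(t^{-1}s_2^{-1}t^{-1})$, with the analogous factorisation. I abbreviate $M=\AAA+A_3w+A_3\phi(w)$ and $N=A_3\langle s_2,t\rangle A_3\langle s_2,t\rangle A_3$, and note $\phi(M)=M$, $\phi(N)=N$, while $\AAA$, $A_3$, $\hat B$ are $A_3$-bimodules. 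For (1): the assignment $s_i\mapsto s_i^{-1}$, $t\mapsto t^{-1}$ underlying $\phi$ (resp.\ $\psi$) respects the braid relations, hence induces a group automorphism (resp.\ anti-automorphism, namely inversion) of $B$; being an (anti)automorphism it preserves $Z(B)=\langle C\rangle\cong\Z$, and since it acts by $-\mathrm{id}$ on $B^{\mathrm{ab}}\cong\Z^2$, into which $\langle C\rangle$ injects, it must send $C$ to $C^{-1}$. Granting $\phi(\AAA)=\psi(\AAA)=\AAA$ (Proposition~\ref{lemstust}(2)) and $\phi(A_3)=\psi(A_3)=A_3$, part (1) is then formal: $\phi(\hat B)=\AAA+A_3\phi(C)^2+A_3\phi(C)^{-2}=\AAA+A_3C^{-2}+A_3C^2=\hat B$, and $\psi(\hat B)=\AAA+\psi(C^2)\psi(A_3)+\psi(C^{-2})\psi(A_3)=\AAA+C^{-2}A_3+C^2A_3=\hat B$, using centrality of $C$ to move $C^{\pm2}$ past $A_3$.

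For (2): Lemma~\ref{lemC2modA2} gives $C^2=awb+z$ with $a,b\in A_3^\times$, $z\in\AAA$, whence $w=C^2a^{-1}b^{-1}-a^{-1}zb^{-1}\in A_3C^2+\AAA\subseteq\hat B$ (using that $C^2$ is central); so $A_3w\subseteq\hat B$ and, applying $\phi$, $A_3\phi(w)\subseteq\hat B$, giving $M\subseteq\hat B$. Conversely, centrality of $C$ together with $a,b\in A_3^\times$ gives $A_3C^2=A_3C^2A_3=A_3(awb+z)A_3=A_3wA_3+\AAA$, and by $\phi$ also $A_3C^{-2}=A_3\phi(w)A_3+\AAA$, so $\hat B=\AAA+A_3wA_3+A_3\phi(w)A_3$ and it is enough to prove $A_3wA_3\subseteq M$ (the statement for $\phi(w)$ then follows on applying $\phi$). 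Since $\AAA\subseteq M$ and $M$ is a left $A_3$-module, a short induction on the word length in $s_1,s_2$ of elements of $A_3$ reduces this to the four braid-group relations $ws_i^{\pm1}\in M$ ($i=1,2$): if $wv'=z'+a'w+b'\phi(w)\in M$ and $s\in\{s_1,s_2\}$, then $wv's=z's+a'(ws)+b'(\phi(w)s)$, and $ws\in M$ together with $\phi(w)s=\phi(ws^{-1})\in\phi(M)=M$ force $wv's\in M$. Each of the four relations is then obtained by pushing $s_i^{\pm1}$ leftward through $w$ with $ts_1=s_1t$ and $ts_2ts_2=s_2ts_2t$, lowering any $t^{\pm2}$ or $s_2^{\pm2}$ produced through $t^2=dt+e$ and $s_2^3=as_2^2+bs_2+c$, and recognising the shorter terms either in $\hat A^{(2)}\subseteq\AAA$, or in $\langle s_2,t\rangle A_3\langle s_2,t\rangle$-type words handled by Proposition~\ref{lemstust}(3)--(5) and Lemma~\ref{lemtuuutuut}, or back in $A_3w+A_3\phi(w)$; whenever a subword equal (modulo $A_3^\times$) to $C$ appears, Lemmas~\ref{lemG26L3} and~\ref{lemSimpl0C3} close the reduction.

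For (3): the inclusion $\hat B\subseteq N$ uses the identity $\hat B=\AAA+A_3wA_3+A_3\phi(w)A_3$ just obtained — the factorisations of $w$ and $\phi(w)$ place $A_3wA_3$ and $A_3\phi(w)A_3$ inside $N$, and $\AAA\subseteq N$ follows from $\AAA=\hat A^{(2)}+A_3\langle t\rangle u_2\langle t\rangle u_1u_2\langle t\rangle A_3$ (Proposition~\ref{lemstust}(1)), because $\langle t\rangle u_2\langle t\rangle,u_2\langle t\rangle\subseteq\langle s_2,t\rangle$ and each summand $A_3$, $A_3tA_3$, $A_3ts_2^{\pm1}tA_3$, $A_3ts_2s_1^{-1}s_2tA_3$ of $\hat A^{(2)}$ visibly lies in $N$ (for the last, $ts_2s_1^{-1}s_2t=(ts_2)s_1^{-1}(s_2t)$). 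For $N\subseteq\hat B$, I would expand $\langle s_2,t\rangle=u_2+u_2tu_2+u_2ts_2t+u_2ts_2^{-1}t$ (Proposition~\ref{lemG26L5}(3)) and absorb the $u_2$'s into the neighbouring $A_3$'s, reducing $N$ to the bimodules $A_3PA_3QA_3$ with $P,Q\in\{1,t,ts_2t,ts_2^{-1}t\}$: for $P,Q\in\{1,t\}$ this lies in $\hat A^{(2)}$; for exactly one of $P,Q$ in $\{ts_2t,ts_2^{-1}t\}$ one uses $ts_2^{\pm1}tA_3t\subseteq A_3tu_2u_1tu_2tA_3\subseteq\AAA$ (from the proof of Proposition~\ref{propG26L4} together with Lemma~\ref{lemtuuutuut}) and its mirror image. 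The remaining, genuinely new, case $A_3ts_2^{\epsilon}tA_3ts_2^{\epsilon'}tA_3\subseteq\hat B$ ($\epsilon,\epsilon'\in\{\pm1\}$) is the step I expect to be the main obstacle: one would expand the middle $A_3$ via Proposition~\ref{lemG26L5}(2), discard the summands containing only a $u_1$ (and no extra $s_2^{\pm1}$) using Proposition~\ref{lemstust}(3)--(5) and Lemma~\ref{lemtuuutuut}, and push the surviving summands — which, after the braid relations, carry a subword conjugate to $C^{\pm1}$ — into $A_3C^{\pm2}A_3+\AAA=\hat B$ with the help of Lemmas~\ref{lemG26L3} and~\ref{lemSimpl0C3}. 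As in (2), these are long but essentially mechanical manipulations in $B$, the only non-routine choices being where to invoke the quadratic relations and at which point a central subword $C^{\pm1}$ has emerged.
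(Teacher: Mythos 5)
Your argument for (1) is a valid alternative to the paper's. Rather than computing $C^{-1}$ explicitly (the paper shows $C^{-1}\in A_3^\times\phi(A_3^\times C)$, hence $C^{-2}\in A_3^\times\phi(C^2)$, and treats $\psi$ through $\phi\circ\psi(C)\in CA_3^\times$), you observe that $\phi$ and $\psi$ descend to $B$ as an automorphism and inversion respectively, act by $-\mathrm{id}$ on $B^{\mathrm{ab}}$, and so must send $C\mapsto C^{-1}$. That is a clean shortcut and correct.

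For (2), the direction $M\subseteq\hat B$ is fine, and your identity $\hat B=\AAA+A_3wA_3+A_3\phi(w)A_3$ is correct (up to the slip that $A_3C^2=A_3wA_3+\AAA$ should read $A_3C^2+\AAA=A_3wA_3+\AAA$). But you then set up an induction on word length whose base cases $ws_i^{\pm1}\in M$ you never actually verify — you only assert that braid relations, the quadratic relations, and a list of earlier lemmas "close the reduction". As written, this is a gap. The gap is also avoidable: from $C^2=awb+z$ with $a,b\in A_3^\times$, $z\in\AAA$, centrality of $C^2$ gives $A_3wA_3\subseteq A_3C^2A_3+\AAA=A_3C^2+\AAA=A_3C^2a^{-1}b^{-1}+\AAA=A_3w+\AAA\subseteq M$ directly, with no induction at all. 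This is what the paper implicitly does when it drops the right-hand $A_3$.

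For (3), $\hat B\subseteq N$ is correct and mirrors the paper's reduction of (3) to $\langle s_2,t\rangle A_3\langle s_2,t\rangle\subseteq\hat B$. The converse $N\subseteq\hat B$, however, and in particular the crucial case $A_3ts_2^{\eps}tA_3ts_2^{\eta}tA_3$, is only sketched, and the sketch does not match what is actually needed. The paper does not hunt for a subword conjugate to $C^{\pm1}$ and does not invoke Lemmas~\ref{lemG26L3} or \ref{lemSimpl0C3} here. The mechanism is to write the middle $A_3$ as $u_2s_1^{\alpha}s_2^{-\alpha}s_1^{\alpha}+u_2u_1u_2$ for a sign $\alpha$ chosen according to $(\eps,\eta)$; the $u_2u_1u_2$ piece is absorbed into $\AAA$ via Proposition~\ref{lemstust}, and the surviving term is \emph{literally} $w$ or $\phi(w)$ when $\eps=\eta$ (so part~(2) finishes), while for $\eta=-\eps$ a short direct braid computation puts it in $\AAA$. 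This is the real content of the proposition, and your proposal leaves it undone; you have identified where the obstacle lies, but have not overcome it.
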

\begin{proof}
Recall that
$
\hat{B} = \AAA + A_3 C^2 + A_3 C^{-2}
$. Since $C \in A_3^{\times} t s_2 s_1 t s_2 t$
and $C = (t s_2 s_1)^3$, we have
$C^{-1} = s_1^{-1} s_2^{-1} t^{-1} s_1^{-1} s_2^{-1} t^{-1} s_1^{-1} s_2^{-1} t^{-1}
= s_1^{-1} s_2^{-1}s_1^{-1} t^{-1}  s_2^{-1} s_1^{-1} t^{-1}  s_2^{-1} t^{-1}
= s_1^{-1} s_2^{-1}s_1^{-1} \phi(t  s_2 s_1 t  s_2 t)
\in A_3^{\times} \phi(A_3^{\times} C)$ hence
$C^{-1} \in A_3^{\times} \phi(C)$, and this implies
$C^{-2} \in A_3^{\times} \phi(C^2)$. From this we deduce
that $\hat{B}$ is $\phi$-stable. Moreover,
$\phi \circ \psi(C) \in t s_2 t s_1 s_2 t A_3^{\times} = t s_2  s_1 ts_2 t A_3^{\times}  \in C A_3^{\times}$
hence $\hat{B}$ is $\phi \circ \psi$-stable, and thus also $\psi$-stable,
that is (1).
An immediate consequence of lemma \ref{lemC2modA2} and of
$C^{-2} \in A_3^{\times} \phi(C^2)$ is then that
$\hat{B} =  \AAA +  A_3 t s_2 t s_1 s_2^{-1} s_1 t s_2 t A_3 + A_3 t^{-1} s_2^{-1} t^{-1} s_1^{-1} s_2 s_1^{-1} t^{-1} s_2^{-1} t^{-1} A_3 
= \AAA +  A_3 t s_2 t s_1 s_2^{-1} s_1 t s_2 t  + A_3 t^{-1} s_2^{-1} t^{-1} s_1^{-1} s_2 s_1^{-1} t^{-1} s_2^{-1} t^{-1}$.
Since $t^{-1} \in R^{\times} t +R$, from
lemma \ref{lemtuuutuut} one gets
$\hat{B} =  \AAA +  A_3 t s_2 t s_1 s_2^{-1} s_1 t s_2 t A_3 + A_3 t s_2^{-1} t s_1^{-1} s_2 s_1^{-1} t s_2^{-1} t A_3  
= \AAA +  A_3 t s_2 t s_1 s_2^{-1} s_1 t s_2 t  + A_3 t s_2^{-1} t s_1^{-1} s_2 s_1^{-1} t s_2^{-1} t$
and (2).
From this, and because of proposition \ref{lemstust}, (3) is equivalent
to $\langle s_2,t \rangle A_3 \langle s_2, t \rangle \subset \hat{B}$,
and we prove this now.

We have $\langle s_2,t \rangle \subset u_2 + u_2 t u_2 + \sum_{\eps \in \{ -1, 1 \}} u_2 t s_2^{\eps} t$
hence $\langle s_2,t \rangle A_3 \langle s_2,t \rangle \subset \hat{A}^{(2)} + u_2 t A_3 \langle s_2,t \rangle
+ \sum_{\eps \in \{-1, 1 \}} u_2 t s_2^{\eps} t A_3 \langle s_2,t \rangle$.
Since $\langle s_2,t \rangle \subset u_2 + u_2 t u_2 + \sum_{\eps \in \{ -1,1 \}} t s_2^{\eps} t u_2$,
$u_2 t A_3 \langle s_2,t \rangle \subset \hat{A}^{(2)} + \sum_{\eps \in \{-1,1 \}} u_2 t A_3 t s_2^{\eps}  t u_2$
and $u_2 t A_3 t s_2^{\eps} t u_2 \subset u_2 t u_1 u_2 u_1 u_2 t s_2^{\eps} t u_1 \subset A_3 t u_2 u_1 u_2 t s_2^{\eps} t u_1 \subset \AAA \subset \hat{B}$.
Moreover, $ts_2^{\eps} t A_3 \langle s_2,t \rangle \subset \hat{A}^{(2)} + t s_2^{\eps} t A_3 t u_2 + \sum_{\eta \in \{-1,1 \}}
t s_2^{\eps} t A_3 t s_2^{\eta} t u_2$. We have $t s_2^{\eps} t A_3 t  \in \AAA$ by proposition \ref{lemstust} ;
$t s_2^{\eps} t A_3 t s_2^{\eta} t \subset t s_2^{\eps} t^{\eps} A_3 t s_2^{\eta} t + \AAA \subset
t^{\eps} s_2^{\eps} t^{\eps} A_3 t s_2^{\eta} t + \AAA$ by proposition \ref{lemstust}, applying $t \in R t^{\eps} + R$ two times.
Similarly, one gets $t^{\eps} s_2^{\eps} t^{\eps} A_3 t s_2^{\eta} t \in
t^{\eps} s_2^{\eps} t^{\eps} A_3 t^{\eta} s_2^{\eta} t^{\eta} + \AAA$.
Whatever the choice of $\alpha \in \{ -1,1 \}$, one has $A_3 = u_2 s_1^{\alpha} s_2^{-\alpha} s_1^{\alpha} + u_2 u_1 u_2$
and $$
\begin{array}{lcl} t^{\eps} s_2^{\eps} t^{\eps} A_3 t^{\eta} s_2^{\eta} t^{\eta} & \subset&
t^{\eps} s_2^{\eps} t^{\eps} u_2 s_1^{\alpha} s_2^{-\alpha} s_1^{\alpha} t^{\eta} s_2^{\eta} t^{\eta}
+t^{\eps} s_2^{\eps} t^{\eps} u_2 u_1 u_2 t^{\eta} s_2^{\eta} t^{\eta} \\
& \subset&
u_2 t^{\eps} s_2^{\eps} t^{\eps}  s_1^{\alpha} s_2^{-\alpha} s_1^{\alpha} t^{\eta} s_2^{\eta} t^{\eta}
+u_2t^{\eps} s_2^{\eps} t^{\eps}  u_1  t^{\eta} s_2^{\eta} t^{\eta} u_2 \\
& \subset&
u_2 t^{\eps} s_2^{\eps} t^{\eps}  s_1^{\alpha} s_2^{-\alpha} s_1^{\alpha} t^{\eta} s_2^{\eta} t^{\eta}+
\AAA \\
\end{array}$$
by proposition \ref{lemstust}, so we need to prove $t^{\eps} s_2^{\eps} t^{\eps}  s_1^{\alpha} s_2^{-\alpha} s_1^{\alpha} t^{\eta} s_2^{\eta} t^{\eta} \in \hat{B}$
for a suitable choice of $\alpha \in \{-1,1 \}$.

If $\eta = - \eps$ we take $\alpha = \eps$. Then 
$t^{\eps} s_2^{\eps} t^{\eps}  s_1^{\alpha} s_2^{-\alpha} s_1^{\alpha} t^{\eta} s_2^{\eta} t^{\eta}
=
t^{\eps} s_2^{\eps} t^{\eps}  s_1^{\eps} s_2^{-\eps} s_1^{\eps} t^{-\eps} s_2^{-\eps} t^{\eps}$
and, up to applying $\phi$, we can assume $\eps = 1$. Then 
$t^{\eps} s_2^{\eps} t^{\eps}  s_1^{\eps} s_2^{-\eps} s_1^{\eps} t^{-\eps} s_2^{-\eps} t^{\eps}
=t s_2 t  s_1 s_2^{-1} s_1 t^{-1} s_2^{-1} t
=t s_2   s_1 t s_2^{-1}t^{-1} s_1  s_2^{-1} t$.
From $t s_2 t s_2 = t s_2 t s_2$ one gets
$t s_2^{-1} t^{-1} = s_2^{-1} t^{-1} s_2^{-1} t s_2$,
hence 
$$
\begin{array}{lccccc}
& t s_2   s_1 t s_2^{-1}t^{-1} s_1  s_2^{-1} t &
=&t (s_2   s_1 s_2^{-1}) t^{-1} s_2^{-1} t s_2 s_1  s_2^{-1} t
&=&t s_1^{-1}   s_2 s_1 t^{-1} s_2^{-1} t s_2 s_1  s_2^{-1} t \\
=&s_1^{-1} t    s_2 s_1 t^{-1} s_2^{-1} t (s_2 s_1  s_2^{-1}) t
&=&s_1^{-1} t    s_2 s_1 t^{-1} s_2^{-1} t s_1^{-1} s_2  s_1 t
&=&s_1^{-1} t    s_2  t^{-1}(s_1 s_2^{-1} s_1^{-1})t  s_2   ts_1 \\
=&s_1^{-1} t    s_2  t^{-1}s_2^{-1} s_1^{-1} s_2t  s_2   ts_1 
&\in& A_3 \langle s_2,t \rangle u_1 \langle s_2, t \rangle A_3 &\subset& \AAA\\
\end{array}
$$
by proposition \ref{lemstust}.

Otherwise, we have either $\eps = \eta = 1$, in which case we take $\alpha = 1$ 
and get
$t^{\eps} s_2^{\eps} t^{\eps}  s_1^{\alpha} s_2^{-\alpha} s_1^{\alpha} t^{\eta} s_2^{\eta} t^{\eta}
=
t s_2  t  s_1  s_2^{-1} s_1 t  s_2  t \in \hat{B}$,
or we have $\eps = \eta = -1$, in which case we take $\alpha = -1$
and get $\phi(t s_2  t  s_1  s_2^{-1} s_1 t  s_2  t) \in \hat{B}$. This concludes the proof.
\end{proof}

\subsection{Computation of $C^3$ modulo $\hat{B}$}


We first need to prove a few preliminary lemmas.
 \begin{lemma} {\ } \label{lemC3AC}
\begin{enumerate}
\item $t s_2 s_1^{-1} s_2 t s_2 t^2 s_1^{-1} s_2 t \in R^{\times} t s_2 s_1^{-1} s_2 t s_2 s_1^{-1} s_2 t + \hat{B}$
\item $t s_2 s_1^{-1} s_2 t s_2 t s_1^{-1} s_2 t \in  \hat{B}$
\item $t s_2 t s_1^{-1} s_2 s_1^3 t s_2 s_1^{-1} t s_2 t \in R^{\times} t s_2 t s_1^{-1} s_2   t s_2 s_1^{-1} t s_2 t + \hat{B}$
\item $t s_2 t s_1^{-1} s_2 s_1^{-1} t s_2 s_1^{-1} t s_2 t \in -  c^{-1}b  t s_2 t s_1^{-1}s_2t s_2 s_1^{-1} t s_2 t+ \hat{B}$
\item $t s_2 t s_1^{-1} s_2 s_1 t s_2 s_1^{-1} t s_2 t \in  \AAA \subset \hat{B}$
\end{enumerate}
\end{lemma}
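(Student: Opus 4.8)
The plan is to establish (1)--(5) by the rewriting method used throughout this section: one transforms a given word in $B$ by means of the braid relations $ts_2ts_2=s_2ts_2t$, $s_1s_2s_1=s_2s_1s_2$, $ts_1=s_1t$, then uses $t^2=dt+e$ and $s_1^3=as_1^2+bs_1+c$ (equivalently $t^{-1}\in R^\times t+R$ and $s_1^{\pm2}\in R^\times s_1^{\mp1}+Rs_1^{\pm1}+R$) to exchange high powers for low ones, discarding at each stage the terms that visibly lie in $\AAA$ or in $\hat{B}$. The membership of the surviving reduced words is then read off from the structural results already available, chiefly Proposition~\ref{lemstust}, Lemma~\ref{lemtuuutuut}, Lemma~\ref{lemSimpl0C3} and the identity $\hat{B}=A_3\langle s_2,t\rangle A_3\langle s_2,t\rangle A_3$ of Proposition~\ref{lemstAst}(3). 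I would organize the five parts so that only (2), (4) and (5) require a genuine braid computation, (1) and (3) being formal consequences; it is convenient to set $W_e:=ts_2ts_1^{-1}s_2s_1^{e}ts_2s_1^{-1}ts_2t$ for $e\in\Z$, so that (3) concerns $W_3$, (4) concerns $W_{-1}$, (5) concerns $W_1$, and the target word $N$ of (3)--(4) is $W_0$.

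For (5), the word $W_1=ts_2ts_1^{-1}s_2s_1ts_2s_1^{-1}ts_2t$ is first straightened using $s_1^{-1}s_2s_1=s_2s_1s_2^{-1}$ and the commutation $ts_1^{\pm1}=s_1^{\pm1}t$, and then $ts_2ts_2=s_2ts_2t$ is applied repeatedly; after a few such moves the word takes one of the shapes treated in Lemma~\ref{lemSimpl0C3}, or directly one of the shapes $\langle s_2,t\rangle u_1\langle s_2,t\rangle$, $\langle t\rangle A_3\langle s_2,t\rangle$ of Proposition~\ref{lemstust}, whence it lies in $\AAA$. For (2), the pure braid word $ts_2s_1^{-1}s_2ts_2ts_1^{-1}s_2t$ is likewise brought, by braid relations alone, into $A_3\langle s_2,t\rangle A_3\langle s_2,t\rangle A_3$, hence into $\hat{B}$; this is the longest of the three computations. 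Part (1) then follows at once: substituting $t^2=dt+e$ in $ts_2s_1^{-1}s_2ts_2t^2s_1^{-1}s_2t$ yields $d$ times the word of (2) --- which lies in $\hat{B}$ --- plus $e$ times the target word $ts_2s_1^{-1}s_2ts_2s_1^{-1}s_2t$, and $e\in R^\times$.

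For (4), the word $W_{-1}$ is reduced in the same spirit; the only new feature is that straightening the interior factor $s_1^{-1}s_2s_1^{-1}$ produces a power $s_1^{\pm2}$, to which the cubic relation must be applied. The contributions of the $s_1^{\pm1}$-terms of that relation, together with the pieces created by the rearrangement, feed into words already shown to lie in $\hat{B}$, and only the constant term $-c^{-1}b$ survives, in front of $N=W_0$; this is the origin of the coefficient in (4). Finally (3) is purely formal: writing $s_1^3=as_1^2+bs_1+c$ gives $W_3=aW_2+bW_1+cW_0$, where $W_1\in\AAA$ by (5); and expanding $s_1^2=as_1+b+cs_1^{-1}$ gives $W_2=aW_1+bW_0+cW_{-1}\in a\AAA+bN+c(-c^{-1}bN+\hat{B})=\hat{B}$, the two $bN$ terms cancelling and $\AAA\subset\hat{B}$. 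Hence $W_3\in a\hat{B}+b\AAA+cN=cN+\hat{B}$ with $c\in R^\times$, which is (3).

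The main obstacle is the braid-group rewriting underlying (2), and to a lesser extent (4) and (5): there is no \emph{a priori} bound on how many braid moves are needed, one must land precisely on one of the configurations covered by the earlier lemmas, and for every discarded term one must verify that it genuinely belongs to $\AAA$ or $\hat{B}$ rather than merely looking shorter. Everything else is bookkeeping with $t^2=dt+e$ and $s_1^3=as_1^2+bs_1+c$.
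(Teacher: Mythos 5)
Your plan reproduces the paper's proof architecture exactly: (1) is reduced to (2) via $t^2=dt+e$ with $e\in R^\times$, (3) is reduced to (4) and (5) by expanding $s_1^3$ over $1,s_1,s_1^{-1}$ (your two-step pass through $W_2=aW_1+bW_0+cW_{-1}$ is arithmetically the same as the paper's direct formula $s_1^3=(a^2+b)s_1+acs_1^{-1}+(ab+c)$, with the $-ab$ terms cancelling either way), and the genuine braid rewriting is confined to (2), (4), (5), where the reduced words are recognized via Propositions~\ref{lemstust}, \ref{lemstAst}(3) and Lemma~\ref{lemtuuutuut}. The only caveat, which you flag yourself, is that the rewriting for (2), (4), (5) is left as a sketch: in the paper (2) hinges on the split $s_2s_1^{-1}s_2\in s_2^{-1}s_1s_2^{-1}u_1+u_1u_2u_1$, sending one piece to $\AAA$ and one to $\hat{B}$, and (4) hinges on tracking the three terms of $s_1^{-2}=c^{-1}s_1-c^{-1}a-c^{-1}bs_1^{-1}$ and verifying that only the last survives modulo $\hat{B}$ to give the coefficient $-c^{-1}b$ in front of $W_0$ — those verifications, while elementary, are the actual content of the lemma.
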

\begin{proof} 
Clearly (1) is a consequence of (2), since $t^2 \in R^{\times} + R t$, so we only need
to prove (2). Now
$t s_2 s_1^{-1} (s_2 t s_2 t) s_1^{-1} s_2 t 
= t s_2 s_1^{-1} t s_2 t (s_2  s_1^{-1} s_2) t$,
and $s_2 s_1^{-1} s_2 \in s_2^{-1} s_1 s_2^{-1} u_1 + u_1 u_2 u_1$,
hence
$t s_2 s_1^{-1} t s_2 t (s_2  s_1^{-1} s_2) t
\in t s_2 s_1^{-1} t s_2 t s_2^{-1}  s_1 s_2^{-1} tu_1 + t s_2 s_1^{-1} t s_2 t u_1 u_2 t u_1$.
We have $t s_2 s_1^{-1} (t s_2 t) s_2^{-1}  s_1 s_2^{-1} t 
= t (s_2 s_1^{-1} s_2^{-1})t s_2 t   s_1 s_2^{-1} t
= t s_1^{-1} s_2^{-1} s_1 t s_2 t   s_1 s_2^{-1} t
= s_1^{-1} t  s_2^{-1}  t (s_1 s_2  s_1)t   s_2^{-1} t
= s_1^{-1} t  s_2^{-1}  t s_2 s_1  s_2 t   s_2^{-1} t \in \AAA$ by proposition \ref{lemstust},
and $t s_2 s_1^{-1} t s_2 t u_1 u_2 t = t s_2  t s_1^{-1} s_2 u_1 t  u_2 t \subset  t s_2  t A_3 t  u_2 t \subset \hat{B}$
by proposition \ref{lemstAst}.

Since $s_1^3  = a s_1^2 + b s_1 + c$, we have
$s_1^2 = a s_1 + b + c s_1^{-1}$ hence
$s_1^3 = a(a s_1 + b + c s_1^{-1}) + b s_1 + c
= (a^2+b) s_1 + ac s_1^{-1} + (ab+c)$. Thus, since $c = (ab+c) + ac(-c^{-1} b) \in R^{\times}$,
(4) and (5) imply (3).

We first
prove (5). We have $t s_2 t s_1^{-1} s_2 s_1 t s_2 s_1^{-1} t s_2 t 
= t s_2 t (s_1^{-1} s_2 s_1) t s_2 s_1^{-1} t s_2 t 
= (t s_2 t s_2) s_1 s_2^{-1} t s_2 s_1^{-1} t s_2 t 
= s_2 t s_2 t  s_1 s_2^{-1} (t s_2 t)s_1^{-1}  s_2 t 
= s_2 t s_2 t  s_1  (t s_2 t)s_2^{-1}s_1^{-1}  s_2 t 
= s_2 t s_2  t s_1  t s_2 ts_2^{-1}s_1^{-1}  s_2 t 
= s_2 t s_2   s_1  t^2 s_2 t(s_2^{-1}s_1^{-1}  s_2 )t 
= s_2 t s_2   s_1  t^2 s_2 ts_1s_2^{-1}  s_1^{-1} t 
= s_2 t s_2     t^2 s_1 s_2 s_1ts_2^{-1}  t s_1^{-1}
= s_2 t s_2     t^2 s_2 s_1 s_2ts_2^{-1}  t s_1^{-1}
 \in \AAA \subset \hat{B}$ by proposition \ref{lemstust}.


We prove (4). 
From the study of $A_3$, we have that
$s_2^{-1} (s_1^{-1} s_2 s_1^{-1})
= (s_2^{-1} s_1^{-1} s_2) s_1^{-1}
= (s_1 s_2^{-1} s_1^{-1}) s_1^{-1}
= s_1 s_2^{-1} s_1^{-2}$.
Moreover,
$s_1^{-2} = c^{-1}s_1 - c^{-1}a -  c^{-1}b s_1^{-1}$
hence 
$s_2^{-1} (s_1^{-1} s_2 s_1^{-1}) = 
c^{-1}s_1 s_2^{-1}s_1 - c^{-1}as_1 s_2^{-1} -  c^{-1}b s_1 s_2^{-1}s_1^{-1}$.
It follows that
$t s_2 t s_1^{-1} s_2 s_1^{-1} t s_2 s_1^{-1} t s_2 t $ is equal to

\noindent $ s_2 s_2^{-1} (t s_2 t )s_1^{-1} s_2 s_1^{-1} t s_2 s_1^{-1} t s_2 t$ 
and thus to
$$
\begin{array}{cl}
 & s_2 t s_2 t (s_2^{-1}s_1^{-1} s_2 s_1^{-1}) t s_2 s_1^{-1} t s_2 t \\
= & s_2 t s_2 t( c^{-1}s_1 s_2^{-1}s_1 - c^{-1}as_1 s_2^{-1} -  c^{-1}b s_1 s_2^{-1}s_1^{-1})t s_2 s_1^{-1} t s_2 t\\
= &c^{-1} s_2 t s_2 t s_1 s_2^{-1}s_1t s_2 s_1^{-1} t s_2 t
-c^{-1}a s_2 t s_2 ts_1 s_2^{-1}t s_2 s_1^{-1} t s_2 t
-  c^{-1}b s_2 t s_2 ts_1 s_2^{-1}s_1^{-1}t s_2 s_1^{-1} t s_2 t\\
\end{array}
$$

We deal separately with each of these three terms. We prove
that the first one belongs to $\hat{B}$.
We have 
$$
\begin{array}{lclcl}
t s_2 t s_1 s_2^{-1}s_1t s_2 s_1^{-1} t s_2 t 
&=& t s_2 t s_1 s_2^{-1}t (s_1s_2 s_1^{-1}) t s_2 t 
&=& t s_2 t s_1 s_2^{-1}t s_2^{-1}s_1 (s_2 t s_2 t ) \\
&=& t s_2 t s_1 s_2^{-1}t s_2^{-1}s_1  t s_2 t s_2
&=& t s_2  s_1( t) s_2^{-1}t s_2^{-1}s_1  t s_2 t s_2 \\
&\in & R t s_2  s_1 t^{-1} s_2^{-1}t s_2^{-1}s_1  t s_2 t s_2
&+& R t s_2  s_1 s_2^{-1}t s_2^{-1}s_1  t s_2 t s_2
\end{array}
$$
Now $t (s_2  s_1 s_2^{-1})t s_2^{-1}s_1  t s_2 t
= t s_1^{-1}  s_2 s_1 t s_2^{-1}s_1  t s_2 t
=  s_1^{-1} t  s_2  ts_1 s_2^{-1}s_1  t s_2 t \in A_3 \langle s_2, t \rangle A_3 \langle s_2, t \rangle \subset \hat{B}$,
and similarly
$t s_2  s_1 t^{-1} s_2^{-1}(t) s_2^{-1}s_1  t s_2 t s_2
\in R t s_2  s_1 t^{-1} s_2^{-1}t^{-1} s_2^{-1}s_1  t s_2 t s_2  
+ R t s_2  s_1 t^{-1} s_2^{-1}s_2^{-1}s_1  t s_2 t s_2$
with $t s_2  s_1 t^{-1} s_2^{-1}s_2^{-1}s_1  t s_2 t  
= t s_2   t^{-1} s_1s_2^{-2}s_1  t s_2 t   \in \hat{B}$.
Finally,
$$
\begin{array}{lclcl}
 t s_2  s_1 (t^{-1} s_2^{-1}t^{-1} s_2^{-1})s_1  t s_2 t &
=& t s_2  s_1  s_2^{-1}t^{-1} s_2^{-1}t^{-1}s_1  t s_2 t
&=& t (s_2  s_1  s_2^{-1})t^{-1} s_2^{-1}s_1   s_2 t \\
&=&  t s_1^{-1}  s_2  s_1 t^{-1} s_2^{-1}s_1   s_2 t
&=&  s_1^{-1} t  s_2  s_1 t^{-1} s_2^{-1}s_1   s_2 t \in \AAA.
\end{array}
$$

We now turn to the second one. We have
$t s_2 ts_1 s_2^{-1}t s_2 s_1^{-1} t s_2 t 
= t s_2 ts_1 s_2^{-1}(t s_2 t)s_1^{-1}  s_2 t
= t s_2 ts_1 t s_2 ts_2^{-1}s_1^{-1}  s_2 t
= t s_2 ts_1 t s_2 t(s_2^{-1}s_1^{-1}  s_2) t
= t s_2 ts_1 t s_2 ts_1s_2^{-1}  s_1^{-1} t
= t s_2 ts_1 t s_2 ts_1s_2^{-1}   ts_1^{-1} 
\in \AAA$. We finally turn to the third one. We
have $t s_2 ts_1 s_2^{-1}s_1^{-1}t s_2 s_1^{-1} t s_2 t
= t s_2 t(s_1 s_2^{-1}s_1^{-1})t s_2 s_1^{-1} t s_2 t
= (t s_2 t)s_2^{-1} s_1^{-1}s_2t s_2 s_1^{-1} t s_2 t
= s_2^{-1} t s_2 t s_1^{-1}s_2t s_2 s_1^{-1} t s_2 t
= s_2^{-1} t s_2 t s_1^{-1}s_2t s_2 s_1^{-1} t s_2 t
$. Altogether this proves $t s_2 t s_1^{-1} s_2 s_1^{-1} t s_2 s_1^{-1} t s_2 t 
\in \hat{B} -  c^{-1}b s_2  s_2^{-1} t s_2 t s_1^{-1}s_2t s_2 s_1^{-1} t s_2 t
= \hat{B} -  c^{-1}b  t s_2 t s_1^{-1}s_2t s_2 s_1^{-1} t s_2 t$, hence (4).

\end{proof}

\begin{lemma} \ \label{lemC3D}
\begin{enumerate}
\item $t s_2 t s_1 s_2^{-1} s_1 t s_2^2 s_1 t s_2 t \in R^{\times} t s_2 t s_1 s_2^{-1} s_1 t s_2^{-1} s_1 t s_2 t + \hat{B}$
\item $t s_2 t s_1 s_2^{-1} s_1 t s_2 s_1 t s_2 t \in \AAA \subset \hat{B}$
\item $t s_2 t s_1 s_2^{-1} s_1t s_1  t s_2 t \in \hat{B}$
\end{enumerate}

\end{lemma}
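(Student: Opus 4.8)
The plan is to prove the two membership assertions (2) and (3) directly, by rewriting inside the braid group, and then to read off (1) from them at no extra cost. For the last point, expand $s_2^2 = c\,s_2^{-1} + a\,s_2 + b$ in the interior of the word of (1); since $c\in R^{\times}$ this gives
\[
t s_2 t s_1 s_2^{-1} s_1 t s_2^2 s_1 t s_2 t \in R^{\times} t s_2 t s_1 s_2^{-1} s_1 t s_2^{-1} s_1 t s_2 t + R\, t s_2 t s_1 s_2^{-1} s_1 t s_2 s_1 t s_2 t + R\, t s_2 t s_1 s_2^{-1} s_1 t s_1 t s_2 t ,
\]
and the last two words lie in $\hat{B}$ by (2) and (3) respectively, which is precisely (1).

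For (3), the first step is to collect the two interior occurrences of $t$ via $s_1 t = t s_1$, rewriting the word as $t s_2 t s_1 s_2^{-1} s_1^2\, t^2 s_2 t$, and then to expand $s_1^2 \in R^{\times} s_1^{-1} + R s_1 + R$ and $t^2 \in R t + R$; this produces six words $t s_2 t s_1 s_2^{-1} s_1^{\varepsilon} t^{j} s_2 t$ with $\varepsilon\in\{-1,0,1\}$ and $j\in\{0,1\}$. One of them, $t s_2 t s_1 s_2^{-1} s_1 t s_2 t$, lies in $\hat{B}$ by Proposition~\ref{lemstAst}(2) (it is one of the two $A_3$-generators of $\hat{B}$ modulo $\AAA$ exhibited there), while each of the others is pushed into $\AAA$ by the same devices used in Lemmas~\ref{lemG26L3}, \ref{lemtuuutuut} and \ref{lemC3AC}: the braid relations, the identities $s_1 s_2^{-1} s_1^{-1} = s_2^{-1} s_1^{-1} s_2$ and $s_2^{-1} s_1 s_2 = s_1 s_2 s_1^{-1}$, the decomposition of $A_3$ as a $\langle s_2\rangle$-module of Proposition~\ref{lemG26L5}, and the fact that $\AAA$ is an $A_3$-bimodule stable under left and right multiplication by $u_2$, until the word is visibly of the form $t^{\pm} u_2 u_1 u_2 t^{\pm} u_1 u_2 t^{\pm}$ covered by Lemma~\ref{lemtuuutuut} or lies inside $\langle s_2, t\rangle u_1 \langle s_2, t\rangle\subset\AAA$ (Proposition~\ref{lemstust}(3)). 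Statement (2) is handled in the same spirit: using $s_1 t = t s_1$, the braid relation $s_1 s_2 s_1 = s_2 s_1 s_2$ and $t s_2 t s_2 = s_2 t s_2 t$, one strips a right factor $s_2^{\pm 1}$ off $t s_2 t s_1 s_2^{-1} s_1 t s_2 s_1 t s_2 t$ (legitimate since $\AAA$ is a $u_2$-bimodule) and iterates, each round collapsing an adjacent pair $t^{\pm}t^{\pm}$ through $t^2\in Rt+R$ (or $t^{-1}\in R^{\times}t+R$) and thus lowering the $t$-degree, until the resulting words fall within the range of Lemma~\ref{lemtuuutuut} or of $\langle t\rangle A_3\langle s_2,t\rangle\subset\AAA$ (Proposition~\ref{lemstust}(4)), exactly as in Lemmas~\ref{lemC2modA2} and \ref{lemSimpl0C3}.

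The hard part will be the bookkeeping in (2). The relations $s_1 t = t s_1$ and $t s_2 t s_2 = s_2 t s_2 t$ leave the number of $t$-letters of a word unchanged, so every genuine reduction must proceed by first manoeuvring two $t$'s into adjacent position and then collapsing them with the quadratic relation for $t$; the difficulty is to select, at each stage, a braid-rewriting path that achieves this without simultaneously creating an $s_i^{\pm 2}$ in a position that would obstruct the final membership in the explicitly described sub-bimodules of $\AAA$. It is essential that (2) reach $\AAA$ and not merely $\hat{B}$ — the coarser observation $A_3\langle s_2,t\rangle A_3\langle s_2,t\rangle A_3 = \hat{B}$ would only place the word in $\hat{B}$ — so the choice of reduction genuinely matters. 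Once a correct path is found the individual verifications are routine, of exactly the kind already performed repeatedly above.
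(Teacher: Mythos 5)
Your reduction of (1) to (2) and (3) via $s_2^2\in R^{\times}s_2^{-1}+Rs_2+R$ is exactly the paper's, and is fine. But the proposal as a whole is not a proof of (2) and (3) — it is a description of a strategy, and you say so yourself: ``the hard part will be the bookkeeping in (2)\dots{} once a correct path is found the individual verifications are routine.'' For a lemma of this type, \emph{finding} the braid-rewriting path is the entire mathematical content; asserting that one exists because similar ones exist elsewhere in the paper is not a proof. Concretely, the paper carries out a fifteen-step chain for (2), culminating in
$s_2^{-1}\,t s_2 s_1^{-1} t^2 s_2 t s_1 s_2 t\, s_1 s_2^2$, after which expanding $t^2\in Rt+R$ yields one word killed by Lemma~\ref{lemtuuutuut} and one reduced by the braid relation to $t s_2 t s_2 s_1 s_2^{-1} t s_2 t\in\AAA$ via Proposition~\ref{lemstust}. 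Your sketch omits this chain entirely, and the danger you flag (creating an $s_i^{\pm 2}$ in the wrong place) is precisely why the claim cannot be taken on faith.

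Your treatment of (3) is also off the mark, in the opposite direction: you over-complicate it. After commuting to $t s_2 t\cdot s_1 s_2^{-1} s_1^2\cdot t^2 s_2 t$, there is no need to expand $s_1^2$ or $t^2$ at all — this word is visibly in $\langle s_2,t\rangle\,A_3\,\langle s_2,t\rangle$, which equals $\hat{B}$ by Proposition~\ref{lemstAst}(3). Your expansion into six words and the claim that the remaining five ``are pushed into $\AAA$ by the same devices'' is again unverified handwaving, and at least one of those five (take $\varepsilon=-1$, $j=1$, giving $t s_2 t s_1 s_2^{-1}s_1^{-1} t s_2 t$) does not lie in $\AAA$ in any obvious way; proving it would require another non-trivial computation, for no gain. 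So: (1) is right, (3) should use $\hat{B}=A_3\langle s_2,t\rangle A_3\langle s_2,t\rangle A_3$ directly, and (2) needs the actual rewriting chain written out.
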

\begin{proof} Since $s_2^2 \in R^{\times} s_2^{-1} +R s_2 + R$, (1) is a consequence of (2) and (3). We first prove (2).
We have 
$$
\begin{array}{clclcl}
& t s_2 t s_1 s_2^{-1} s_1 t s_2 s_1 t s_2 t
&=& t s_2 t s_1 s_2^{-1}  t (s_1s_2 s_1) t s_2 t
&=& t s_2 t s_1 s_2^{-1}  t s_2s_1 (s_2 t s_2 t) \\
= & t s_2 t s_1 s_2^{-1}  t s_2s_1 ts_2 t s_2 
&=& t s_2 t s_1 s_2^{-1}  t s_2s_1 (ts_2 t) s_2^{-1}s_2^2 
&=& t s_2 t s_1 s_2^{-1}  t (s_2s_1 s_2^{-1})ts_2 t s_2^2  \\
=&  t s_2 t s_1 s_2^{-1}  t s_1^{-1}s_2 s_1ts_2 t s_2^2 
&=& t s_2 t (s_1 s_2^{-1}  s_1^{-1})t s_2 s_1ts_2 t s_2^2 
&=& (t s_2 t) s_2^{-1} s_1^{-1}  s_2 t s_2 s_1ts_2 t s_2^2  \\
=& s_2^{-1} t s_2 t  s_1^{-1}  s_2 t s_2 s_1ts_2 t s_2^2 
&=& s_2^{-1} t s_2 t  s_1^{-1}  (s_2 t s_2 t )s_1s_2 t s_2^2 
&=& s_2^{-1} t s_2 t  s_1^{-1}  ts_2 t s_2  s_1s_2 t s_2^2  \\
= &s_2^{-1} t s_2   s_1^{-1}  t^2 s_2 t (s_2  s_1s_2) t s_2^2 
&=& s_2^{-1} t s_2   s_1^{-1}  t^2 s_2 t s_1  s_2s_1 t s_2^2 
&=& s_2^{-1} t s_2   s_1^{-1}  t^2 s_2 t s_1  s_2t s_1 s_2^2
\end{array}
 $$
and, since $t^2 \in R t + R$,
$t s_2   s_1^{-1}  t^2 s_2 t  s_1s_2 t \in
R t s_2   s_1^{-1}  t s_2 t    s_1s_2 t 
+ R t s_2   s_1^{-1}   s_2 t    s_1s_2 t $ ;
we have 
$t s_2   s_1^{-1}   s_2 t   s_1s_2 t 
\in \AAA$ by lemma \ref{lemtuuutuut}, 
and 
$t s_2   s_1^{-1}  t s_2 t   s_1s_2 t
= t s_2     t (s_1^{-1} s_2  s_1)t  s_2 t
= t s_2     t s_2 s_1  s_2^{-1} t  s_2 t \in \AAA$
by proposition \ref{lemstust}. This proves (2).
Then
(3) follows from $t s_2 t s_1 s_2^{-1} s_1t s_1  t s_2 t
= t s_2 t s_1 s_2^{-1} s_1^2t^2 s_2 t \in \langle s_2, t \rangle A_3 \langle s_2, t \rangle \subset \hat{B}$.
\end{proof}

\begin{lemma} \label{lemC3E} \ 
\begin{enumerate}
\item $t s_2 t s_1 s_2^{-1} s_1 t s_2 t^2 s_2 s_1 t s_2 t \in R^{\times} 
t s_2 t s_1 s_2^{-1} s_1 t s_2^2 s_1 t s_2 t +
\hat{B}$
\item $t s_2 t s_1 s_2^{-1} s_1 t s_2 t s_2 s_1 t s_2 t \in \hat{B}$.
\item $t s_2  s_1^2 t s_2 t s_2  s_1  t \in \AAA$
\item $t s_2  s_1^2 t s_2 t s_2  s_1 s_2^{-1} t s_2 t \in \hat{B}$.
\end{enumerate}

\end{lemma}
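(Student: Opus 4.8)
The plan is to reduce part (1) to part (2), and to establish (2), (3), (4) by the same braid-word bookkeeping as in Lemmas~\ref{lemC3AC} and~\ref{lemC3D}. The tools available are the three braid relations $ts_2ts_2=s_2ts_2t$, $s_1s_2s_1=s_2s_1s_2$, $ts_1=s_1t$; the quadratic relation in the forms $t^2\in R^\times+Rt$ and $t^{-1}\in R^\times t+R$; and the cubic relation in the forms $s_i^2\in R^\times s_i^{-1}+Rs_i+R$. The aim for a given word is to massage it, using these, into visible membership of $\hat B=A_3\langle s_2,t\rangle A_3\langle s_2,t\rangle A_3$ (Proposition~\ref{lemstAst}(3)) or, for (3), of $\AAA$ via the inclusions $\langle s_2,t\rangle u_1\langle s_2,t\rangle\subset\AAA$, $\langle t\rangle A_3\langle s_2,t\rangle\subset\AAA$, $\langle s_2,t\rangle A_3\langle t\rangle\subset\AAA$ from Proposition~\ref{lemstust}, using that $\AAA$ and $\hat B$ are stable under left and right multiplication by $A_3$.

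Part (1) is immediate: since $t^2=e+dt$ with $e\in R^\times$, expanding $t^2$ in $ts_2ts_1s_2^{-1}s_1ts_2t^2s_2s_1ts_2t$ gives $e\,ts_2ts_1s_2^{-1}s_1ts_2^2s_1ts_2t$ — the asserted leading term, with the invertible coefficient $e$ — plus $d\,ts_2ts_1s_2^{-1}s_1ts_2ts_2s_1ts_2t$, which lies in $\hat B$ by (2). So (1) follows from (2).

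For part (2), the word $ts_2ts_1s_2^{-1}s_1ts_2ts_2s_1ts_2t$ carries three $\langle s_2,t\rangle$-syllable blocks separated by powers of $s_1$, one more than $\hat B$ allows, and the work is to merge two consecutive blocks. I would slide the last $s_1$ through the trailing $ts_2t$ via $ts_1=s_1t$, rotate the central block $ts_2ts_2$ into $s_2ts_2t$, and carry the freed $A_3$-letter past, flattening each newly produced square $s_2^{\pm2}$, $s_1^{\pm2}$ or $t^{\pm2}$ by the quadratic/cubic relations; the resulting branches either collapse a block (landing in $\hat B$) or drop into $\hat A^{(2)}\subset\hat B$ or into words already covered by Lemmas~\ref{lemstust} and~\ref{lemstAst}. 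Part (3) runs the same way but must land in the smaller $\AAA$: one first rewrites $ts_2ts_2s_1t=s_2ts_2t^2s_1$, so $ts_2s_1^2ts_2ts_2s_1t=ts_2s_1^2s_2ts_2t^2s_1$, then expands $t^2$ and $s_1^2$ and recognizes each piece as lying in $\langle t\rangle A_3\langle s_2,t\rangle\cdot A_3$ or in $A_3\cdot\langle s_2,t\rangle u_1\langle s_2,t\rangle$, hence in $\AAA$. For part (4) I would pull the two isolated copies of $s_1$ together using $s_2s_1s_2^{-1}=s_1^{-1}s_2s_1$ and $ts_1=s_1t$, expand $s_1^2$, and again cut down to two-block words inside $\hat B$ (the cases producing $s_2^{\pm2}$ using $ts_2^2\cdots t\subset\langle s_2,t\rangle A_3\langle s_2,t\rangle\subset\hat B$).

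The main obstacle is precisely this block-count reduction in (2) and (4). Unlike $s_1$, the generator $s_2$ does not commute with $t$, so the only levers for merging two $\langle s_2,t\rangle$-blocks are the single braid relation $ts_2ts_2=s_2ts_2t$ together with the degree reductions; one therefore has to run through all the sign patterns of the exponents to be sure every branch terminates inside $\hat B$ — and, in (3), inside the strictly smaller $\AAA$ — rather than regenerating a three-block word. As in the preceding lemmas of this subsection, this is a finite but somewhat delicate case check.
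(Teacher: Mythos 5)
Your reduction of (1) to (2) via $t^2=e+dt$, $e\in R^\times$, is exactly the paper's step, and your handling of (3) is sound (indeed $ts_2s_1^2ts_2ts_2s_1t=ts_2s_1^2s_2ts_2t^2s_1\in\langle t\rangle A_3\langle s_2,t\rangle A_3\subset\AAA$, though you do not even need to expand $t^2$ or $s_1^2$ to see this; the paper just moves $s_1$ past the last $t$ and applies the same inclusion). The difficulty is with (2) and (4), where what you offer is a plan rather than a proof, and the plan as stated misses the structural point that makes the lemma hang together.

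The paper does not establish (2) independently: it massages the word $ts_2ts_1s_2^{-1}s_1ts_2ts_2s_1ts_2t$ by the braid relations into $ts_2s_1^2ts_2ts_2s_1s_2^{-1}t^2s_2t$, and then expanding $t^2\in Rt+R$ produces exactly the words of (3) and (4). So (2) is a \emph{consequence} of (3) and (4), not a sibling to be handled in parallel. Your sketch for (2) (``rotate the central block, slide the last $s_1$, flatten squares, each branch lands somewhere manageable'') is plausible but unverified, and nothing in it explains why (3) and (4) appear in the statement at all. For (4) your proposed lever, $s_2s_1s_2^{-1}=s_1^{-1}s_2s_1$ to ``pull the $s_1$'s together,'' leads (after commuting $t$ past $s_1^{\pm1}$) to $ts_2s_1^2ts_2s_1^{-1}ts_2s_1ts_2t$, where $s_1^2$ and $s_1^{-1}$ are still separated by $ts_2$ and do not merge; this is not the paper's route and it is not evident that it terminates inside $\hat B$. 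The paper instead applies $ts_2ts_2=s_2ts_2t$ to the block after $s_1^2$, commutes $s_1$ out of the tail, collects a $t^2$, and then checks the two resulting branches land in $\langle s_2,t\rangle A_3\langle s_2,t\rangle A_3\subset\hat B$. In short: the framework you describe is the right one, but in this lemma the content \emph{is} the explicit rewriting; you need to exhibit the computation that sends (2) to (3) and (4), and supply a working reduction for (4), before the argument closes.
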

\begin{proof}
Since $t^2 = d t + e \in R t + R^{\times}$, (1) is an immediate consequence of (2).
We prove (2). We have $t s_2 t s_1 s_2^{-1} s_1 t s_2 t s_2 s_1 t s_2 t
= t s_2 t s_1 s_2^{-1} s_1 (t s_2 t s_2) s_1 t s_2 t
= t s_2 t s_1 s_2^{-1} s_1  s_2 t s_2t  s_1 t s_2 t
= t s_2 t s_1 (s_2^{-1} s_1  s_2) t s_2 s_1t^2 s_2 t
= t s_2 t s_1 s_1 s_2  s_1^{-1} t s_2 s_1t^2 s_2 t
= t s_2  s_1^2 t s_2 t( s_1^{-1}  s_2 s_1)t^2 s_2 t
= t s_2  s_1^2 t s_2 t s_2  s_1 s_2^{-1} t^2 s_2 t$.
Using $t^2 \in R t + R$, we see that (2) is a consequence of (3) and (4).
We have $t s_2  s_1^2 t s_2 t s_2  s_1  t  
= t s_2  s_1^2 t s_2 t s_2    ts_1 \in \AAA$ by proposition \ref{lemstust} and this
proves (3). We turn to (4). We have 
$t s_2  s_1^2 (t s_2 t s_2)  s_1 s_2^{-1} t s_2 t
= t s_2  s_1^2  s_2 t s_2 t   s_1 s_2^{-1} (t s_2 t)
= t s_2  s_1^2  s_2 t s_2 t   s_1  t s_2 ts_2^{-1}
= t s_2  s_1^2  s_2 t s_2    s_1  t^2 s_2 ts_2^{-1}$.
Using $t^2 \in R t + R$,
we only need to prove
$t s_2  s_1^2  s_2 t s_2    s_1  t s_2 t  \in \hat{B}$,
since $t s_2  s_1^2  s_2 t s_2    s_1   s_2 t =
t s_2  s_1^2  s_2 t (s_2    s_1   s_2) t
= t s_2  s_1^2  s_2 t s_1    s_2   s_1 t
= t s_2  s_1^2  s_2 s_1 t     s_2    ts_1
\in \langle s_2,t \rangle A_3 \langle s_2, t \rangle A_3 \subset \hat{B}$.
But $t s_2  s_1^2  s_2 t s_2    s_1  t s_2 t
= t s_2  s_1^2  (s_2 t s_2  t)  s_1   s_2 t
= t s_2  s_1^2  ts_2 t s_2    s_1   s_2 t
= t s_2t  s_1^2  s_2 t (s_2    s_1   s_2) t
= t s_2t  s_1^2  s_2 t s_1    s_2   s_1 t
= t s_2t  s_1^2  s_2 s_1t     s_2    ts_1
\in \langle s_2,t \rangle A_3 \langle s_2,t \rangle A_3 \subset \hat{B}$,
and this concludes the proof of (4), and thus  of the lemma.
\end{proof}

\begin{proposition} \label{propC3X}
$
C^3 \in A_3^{\times} t s_2 s_1^{-1} s_2 t s_2 s_1^{-1} s_2 t  + \hat{B}
$
\end{proposition}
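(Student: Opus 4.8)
The plan is to compute $C^3 = C \cdot C^2$ by substituting, for one copy of $C$, the normal form $C \in A_3^{\times} t s_2 s_1 t s_2 t$, and for the other the representative of $C^2$ obtained in Lemma \ref{lemC2modA2}, namely $C^2 \in A_3^{\times} t s_2 t s_1 s_2^{-1} s_1 t s_2 t A_3^{\times} + \AAA$. Since $C$ is central and $\AAA \subset \hat B$ is a two-sided $A_3$-bimodule (proposition \ref{lemstAst} (1), (3)), multiplying the $\AAA$-part of $C^2$ by $C \in A_3^{\times} t s_2 s_1 t s_2 t \subset \hat A^{(2)} \cdot(\dots)$ produces terms that, after the bimodule manipulations below, will be absorbed into $\hat B$; so the heart of the matter is to reduce the single product $t s_2 s_1 t s_2 t \cdot t s_2 t s_1 s_2^{-1} s_1 t s_2 t$, up to left/right multiplication by $A_3^{\times}$ and modulo $\hat B$, to a scalar multiple of $t s_2 s_1^{-1} s_2 t s_2 s_1^{-1} s_2 t$.

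The key steps, in order, would be: (i) collapse the central $t s_2 t t s_2 t = t s_2 t^2 s_2 t$ using $t^2 \in R^{\times} + Rt$, trading the product for an $R$-combination of shorter words $t s_2 \{1,t\} s_2 t \cdot(\text{rest})$, where the $t s_2 s_2 t$-type term is handled via $s_2^2 \in R^{\times}s_2^{-1}+Rs_2+R$ and the braid/commutation moves; (ii) repeatedly apply the braid relations $s_2 t s_2 t = t s_2 t s_2$, $s_1 s_2 s_1 = s_2 s_1 s_2$, $t s_1 = s_1 t$ to migrate $t^2$'s to the center of subwords and peel off $t^2 \in Rt + R$, exactly in the style of Lemmas \ref{lemC3AC}, \ref{lemC3D}, \ref{lemC3E}; whenever a subword of the form $\langle s_2,t\rangle A_3 \langle s_2,t\rangle$, $\langle t\rangle A_3 \langle s_2,t\rangle$, $\langle s_2,t\rangle A_3\langle t\rangle$, or $t^{\pm}u_2u_1u_2t^{\pm}u_1u_2t^{\pm}$ (and its mirror) appears, discard it into $\hat B$ using propositions \ref{lemstust}, \ref{lemstAst} and lemma \ref{lemtuuutuut}; (iii) the obstinate terms — those genuinely of ``length $\hat A^{(3)}$'' that do not collapse — should be precisely the ones isolated in parts (1)--(5) of Lemma \ref{lemC3AC}, parts (1)--(3) of Lemma \ref{lemC3D}, and parts (1)--(4) of Lemma \ref{lemC3E}; feeding these in, each such term is rewritten as an $R^{\times}$-multiple of $t s_2 s_1^{-1} s_2 t s_2 s_1^{-1} s_2 t$ (or of $t s_2 t s_1^{-1} s_2 t s_2 s_1^{-1} t s_2 t$, which by the same lemmas is itself congruent to such a multiple, using $c = (ab+c) + ac(-c^{-1}b) \in R^{\times}$ to invert the coefficient bookkeeping) modulo $\hat B$; (iv) collect all surviving contributions and check that the total coefficient of $t s_2 s_1^{-1} s_2 t s_2 s_1^{-1} s_2 t$ is a unit of $R$, so that $C^3 \in A_3^{\times} t s_2 s_1^{-1} s_2 t s_2 s_1^{-1} s_2 t + \hat B$.

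I expect the main obstacle to be the bookkeeping of step (iii)--(iv): many branches of the rewriting will spawn the ``exceptional'' length-$3$ monomials, and one must track their scalar coefficients carefully (the $-c^{-1}b$ and $ac$-type corrections coming from $s_1^2 = as_1+b+cs_1^{-1}$ and $s_1^3 = (a^2+b)s_1+acs_1^{-1}+(ab+c)$, as in Lemma \ref{lemC3AC}) to be sure the net coefficient does not vanish identically and is in fact invertible. A secondary subtlety is making sure each discarded subword really matches one of the patterns known to lie in $\hat B$ — in particular that the central element $C$ itself, once it appears as $ts_2s_1ts_2t\cdot(\text{stuff})$, can always be routed so that $C\cdot\AAA\subset\hat B$ is used rather than an illegitimate $C\cdot(\text{outside }\hat B)$; this is legitimate because $C^2\in\hat B$ already and $\hat B$ is an $A_3$-bimodule closed under the relevant moves, but it needs to be invoked at the right moment. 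Everything else is the now-standard ``braid gymnastics'' of the preceding lemmas.
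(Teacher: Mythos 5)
Your outline correctly identifies the coarse strategy — combine $C \in A_3^{\times} t s_2 s_1 t s_2 t$ with the decomposition of $C^2$ from Lemma \ref{lemC2modA2} and then reduce using Lemmas \ref{lemC3AC}, \ref{lemC3D}, \ref{lemC3E} — but there is a genuine gap at the very first step, and the rest is not a proof but a plan.

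The gap: you assert that ``multiplying the $\AAA$-part of $C^2$ by $C$ produces terms that, after the bimodule manipulations below, will be absorbed into $\hat B$.'' This is not automatic, and the facts you invoke do not give it. The fact that $\AAA$ and $\hat B$ are $A_3$-bimodules is irrelevant here, because $C \notin A_3$: the word $C$ contains three $t$'s, so $\AAA\, C$ a priori lands in words of $t$-length up to six, well outside $\hat B = \AAA + A_3C^2 + A_3C^{-2}$ unless a nontrivial identification is proved. In the paper, this problem is sidestepped by not using the bare statement of Lemma \ref{lemC2modA2} at all: one reuses the finer intermediate decomposition established inside its proof, namely
$C^2 \in \hat{A}^{(2)} + A_3^{\times} t s_2 t s_1 s_2^{-1} s_1 t s_2 t(s_1 s_2 s_1) + A_3 t s_2 s_1^{-1} t s_2 t (s_1^2 s_2 s_1) + A_3 t s_2 s_1 t s_2 t$,
and then, after multiplying by $C$, the term $A_3 t s_2 s_1^{-1} t s_2 t s_1 C$ is dispatched precisely by Lemma \ref{lemSimpl0C3}, which shows $t s_2 s_1^{-1} t s_2 t s_1 C \in \AAA$. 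You never mention Lemma \ref{lemSimpl0C3}, and without it (or something playing the same role) your argument does not close.

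Two secondary remarks. First, steps (i)--(iii) of your plan describe the general flavour of braid gymnastics but do not actually perform the chain of rewrites; the content of this proposition lies entirely in that chain (in the paper it is a single linear reduction $t s_2 t s_1 s_2^{-1} s_1 t s_2 t^2 s_2 s_1 t s_2 t \leadsto \cdots \leadsto s_2^{-1} t s_2 s_1^{-1} s_2 t s_2 s_1^{-1} s_2 t s_2^{-1}$ modulo $\hat B$, hitting Lemma \ref{lemC3E}, then \ref{lemC3D}, then \ref{lemC3AC}(3), then \ref{lemC3AC}(1) in that order). Second, the worry you raise in step (iv) about summing several contributions to the coefficient of $X = t s_2 s_1^{-1} s_2 t s_2 s_1^{-1} s_2 t$ and verifying the net coefficient is a unit does not actually arise: the paper's reduction is organized so that at each stage there is a single surviving word, multiplied by an element of $R^{\times}$ (this is exactly why Lemma \ref{lemC3AC}(3) is phrased via $s_1^3 = (a^2+b)s_1 + ac\, s_1^{-1} + (ab+c)$ together with the observation $c = (ab+c) + ac(-c^{-1}b) \in R^{\times}$, and why parts (4) and (5) of that lemma already absorb the spurious branches into $\hat B$). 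So the delicate bookkeeping you anticipate is pushed into those lemmas rather than into the final collection step, and there is never a sum whose coefficient might cancel.
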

\begin{proof}
Since $C^3$ is central and $\hat{B}$ is a $A_3$-bimodule, it is sufficient to prove
$C^3 \in A_3^{\times} t s_2 s_1^{-1} s_2 t s_2 s_1^{-1} s_2 t A_3^{\times} + \hat{B}$.
We have already proved
$
(s_1 s_2 s_1)^{-1} C^3 \in A_3^{\times} t s_2 t s_1 s_2^{-1} s_1 t s_2 t^2 s_2 s_1 t s_2 t (s_1 s_2 s_1)  + A_3 t s_2 s_1^{-1} t s_2 t s_1 C + A_3 C^2  + \hat{A}^{(2)} \subset A_3^{\times} t s_2 t s_1 s_2^{-1} s_1 t s_2 t^2 s_2 s_1 t s_2 t A_3^{\times} + \hat{B}
$
by lemma \ref{lemSimpl0C3}, so we only need to prove 
$t s_2 t s_1 s_2^{-1} s_1 t s_2 t^2 s_2 s_1 t s_2 t \in A_3^{\times} t s_2 s_1^{-1} s_2 t s_2 s_1^{-1} s_2 t A_3^{\times} + \hat{B}$.
Now 
$$
\begin{array}{lll}
& t s_2 t s_1 s_2^{-1} s_1 t s_2 t^2 s_2 s_1 t s_2 t \\
\in & 
t s_2 t s_1 s_2^{-1} s_1 t s_2^2 s_1 t s_2 t  + \hat{B} & \mbox{(by lemma \ref{lemC3E})}\\
\subset  & s_2^{-1} (s_2 t s_2 t) s_1 s_2^{-1} s_1 t s_2^{-1} s_1 t s_2 t  + \hat{B} & \mbox{(by lemma \ref{lemC3D})}\\
=  & s_2^{-1} ts_2 t (s_2  s_1 s_2^{-1}) s_1 t s_2^{-1} s_1 t s_2 t + \hat{B} \\
= &s_2^{-1} ts_2 t s_1^{-1}  s_2 s_1 s_1 t s_2^{-1} s_1 t s_2 t s_2 s_2^{-1} + \hat{B}\\
= & s_2^{-1} ts_2 t s_1^{-1}  s_2 s_1 s_1 t (s_2^{-1} s_1 s_2) t s_2 t  s_2^{-1} + \hat{B}\\
= & s_2^{-1} ts_2 t s_1^{-1}  s_2 s_1 s_1 t(s_1 s_2 s_1^{-1} t s_2 t  s_2^{-1} + \hat{B}\\
= & s_2^{-1} ts_2 t s_1^{-1}  s_2 s_1^3 t s_2 s_1^{-1} t s_2 t  s_2^{-1} + \hat{B}\\
\subset & 
 s_2^{-1} ts_2 t s_1^{-1}  (s_2  t s_2  t) s_1^{-1} s_2 t  s_2^{-1}  + \hat{B} & \mbox{(by lemma \ref{lemC3AC} (3))}\\
= &  s_2^{-1} ts_2 t s_1^{-1}  ts_2  t s_2   s_1^{-1} s_2 t  s_2^{-1} + \hat{B} \\
= & s_2^{-1} ts_2 s_1^{-1} t^2   s_2  t s_2   s_1^{-1} s_2 t  s_2^{-1} + \hat{B} \\
= & s_2^{-1} ts_2 s_1^{-1}    s_2  t s_2 t^2  s_1^{-1} s_2 t  s_2^{-1} + \hat{B} \\
\subset &
  s_2^{-1} ts_2 s_1^{-1}    s_2  t s_2   s_1^{-1} s_2 t  s_2^{-1}   + \hat{B} & \mbox{(by lemma \ref{lemC3AC} (1))}\\
\end{array}
$$
and this concludes the proof of the proposition.
\end{proof}

\subsection{Conclusion of the proof}

\begin{proposition} {\ }
\begin{enumerate}
\item $\hat{A}^{(3)} t \subset \AAA t + \hat{A}^{(3)}$
\item $\AAA t \subset \hat{A}^{(3)} + \hat{B} $
\item $\hat{B} t \subset \hat{A}^{(3)} + \hat{B}$
\item $\hat{A}^{(4)} = \hat{A}^{(3)} + \hat{B}$
\item $\hat{A}^{(4)} = \hat{A}^{(5)} = \hat{A} = \hat{A}^{(3)} + A_3 C^2 + A_3 C^{-2}$
\item $\hat{A}^{(4)} = \AAA + A_3 C^2 + A_3 C^{-2} + A_3 C^3$
\end{enumerate}
\end{proposition}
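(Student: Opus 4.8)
The plan is to treat everything as statements about $A_3$-sub-bimodules of $\hat A$ and to run the recursion $\hat A^{(n+1)}=\hat A^{(n)}vA_3=\hat A^{(n)}+\hat A^{(n)}tA_3$, where $v=R+Rt$. First I would record the facts used throughout. Each of $\hat A^{(k)}$, $\AAA$, $\hat B$ is an $A_3$-bimodule, stable under $\phi$ and $\psi$ (these fix $A_3$ and $v$, the latter being stable under $x\mapsto x^{-1}$ since $t^2=dt+e$ with $e\in R^\times$). A word $a_0ta_1t\cdots ta_k$ with $a_i\in A_3$ and $k$ letters $t$ lies in $\hat A^{(k)}$, by induction from the recursion; and since $s_i^{-1}\in u_i\subseteq A_3$ and $t^{-1},t^2\in v$, every element of $\hat A$ is an $R$-combination of such words, so $\hat A=\bigcup_k\hat A^{(k)}$ and the chain is increasing. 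I would also use the identity $\hat A^{(3)}=\AAA+A_3DA_3$ established above, with $D=ts_2s_1^{-1}s_2ts_2s_1^{-1}s_2t$, together with $C\in A_3^\times ts_2s_1ts_2t$ and the centrality of $C$, $C^2$, $C^3$.

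For (1), since $\hat A^{(3)}$ is a left $A_3$-module and $\hat A^{(3)}=\AAA+A_3DA_3$, it suffices to prove $DA_3t\subseteq\AAA t+\hat A^{(3)}$. I would decompose $A_3=u_1+u_1s_2u_1+u_1s_2^{-1}u_1+u_1s_2s_1^{-1}s_2$ (Proposition \ref{lemG26L5}): the part from $u_1$ commutes with $t$ and is handled by $Dt\in RD+A_3tA_3tA_3\subseteq\hat A^{(3)}$ (using $t^2\in Rt+R$), while the remaining words, which involve at most four letters $t$, are pushed back into $\AAA t+\hat A^{(3)}$ by the reduction Lemmas \ref{lemG26L2}, \ref{lemG26L3} and \ref{lemSimpl0C3}, once more absorbing superfluous $t$'s via $t^2\in Rt+R$. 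For (2) I would expand $\AAA$ through its five bimodule generators: $\hat A^{(2)}t\subseteq\hat A^{(3)}$ is immediate; the generators $ts_2s_1ts_2t$ and $ts_2^{-1}s_1^{-1}ts_2^{-1}t$ are, modulo $\AAA$, scalar multiples of $C$ and $C^{-1}$, so multiplying them by $t$ only yields elements of $RG+A_3tA_3tA_3\subseteq\hat A^{(3)}$; and for the two generators $ts_2s_1ts_2^{\pm1}t$, multiplying on the right by $A_3$ and then by $t$ produces four-$t$ words that, by the braid relations and Lemmas \ref{lemG26L2}, \ref{lemtuuutuut}, \ref{lemstust}, reduce either into $\hat A^{(3)}$ or, modulo $\AAA$, to a scalar multiple of the word $ts_2ts_1s_2^{-1}s_1ts_2t$; by Lemma \ref{lemC2modA2} this word lies in $A_3^\times C^2A_3^\times+\AAA\subseteq\hat B$, and applying $\phi$ covers the mirror words governed by $C^{-2}$, giving $\AAA t\subseteq\hat A^{(3)}+\hat B$.

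Part (3) is of the same kind: writing $\hat B=\AAA+A_3C^2+A_3C^{-2}$, using centrality to write $C^{\pm2}t=tC^{\pm2}$, the expression for $C^2$ from Lemma \ref{lemC2modA2}, and the closed form $\hat B=A_3\langle s_2,t\rangle A_3\langle s_2,t\rangle A_3$ of Proposition \ref{lemstAst}, one checks $A_3C^{\pm2}t\subseteq\hat A^{(3)}+\hat B$, hence $\hat Bt\subseteq\hat A^{(3)}+\hat B$. I expect the main obstacle to be (2): this is precisely where genuinely new (four-$t$) words must be shown to collapse onto the central element $C^2$ and its $\phi$-image — which is the very reason $\hat B$ is built by adjoining $A_3C^{\pm2}$ to $\AAA$ — and although Lemma \ref{lemC2modA2} performs the decisive computation, distributing it over all sign cases and $A_3$-coefficients is where the bookkeeping concentrates; by contrast (1) is mostly a matter of checking that things do not get worse, and (3) reuses (2).

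Parts (4)--(6) should then follow formally. By (1) and (2), $\hat A^{(3)}t\subseteq\AAA t+\hat A^{(3)}\subseteq\hat A^{(3)}+\hat B$, hence $\hat A^{(4)}=\hat A^{(3)}+\hat A^{(3)}tA_3\subseteq\hat A^{(3)}+\hat B$; conversely $\hat B\subseteq\hat A^{(4)}$ because $C^2\in A_3^\times ts_2ts_1s_2^{-1}s_1ts_2tA_3^\times+\AAA$ with $ts_2ts_1s_2^{-1}s_1ts_2t$ a four-$t$ word lying in $\hat A^{(3)}tA_3\subseteq\hat A^{(4)}$, while $C^{-2}\in A_3^\times\phi(C^2)\subseteq\hat A^{(4)}$ by $\phi$-stability — this is (4), and then $\hat A^{(4)}=\hat A^{(3)}+A_3C^2+A_3C^{-2}$ since $\AAA\subseteq\hat A^{(3)}$. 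For (5), (1)--(3) give $\hat A^{(4)}t=\hat A^{(3)}t+\hat Bt\subseteq\hat A^{(3)}+\hat B=\hat A^{(4)}$, so $\hat A^{(5)}=\hat A^{(4)}$ and, by induction, $\hat A^{(k)}=\hat A^{(4)}$ for all $k\geq4$, whence $\hat A=\bigcup_k\hat A^{(k)}=\hat A^{(4)}$. Finally, for (6), Proposition \ref{propC3X} gives $D\in A_3^\times C^3+\hat B$, so $A_3DA_3\subseteq A_3C^3+\hat B$ by centrality of $C^3$, hence $\hat A^{(3)}=\AAA+A_3DA_3\subseteq\AAA+A_3C^2+A_3C^{-2}+A_3C^3$; combining with $A_3C^3\subseteq\hat A=\hat A^{(4)}$ and with (5) yields $\hat A^{(4)}=\AAA+A_3C^2+A_3C^{-2}+A_3C^3$. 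Since $\AAA$ is finitely generated over $A_3$ (Proposition \ref{propX}) and $A_3=\langle s_1,s_2\rangle$ is finitely generated over $R$ (Proposition \ref{lemG26L5}), this last identity exhibits $\hat A$ as a finitely generated $R$-module, which is the remaining input toward Theorem \ref{theoG26}.
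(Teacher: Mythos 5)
Your overall architecture matches the paper's: everything is an $A_3$-bimodule, the recursion $\hat A^{(n+1)}=\hat A^{(n)}+\hat A^{(n)}tA_3$ drives the argument, and the formal deductions of (4), (5), (6) from (1)--(3) are essentially identical to the paper (you even spell out the reverse inclusion $\hat B\subset\hat A^{(4)}$ more carefully than the paper does). The paper proves (2) exactly as you describe, via Lemma \ref{lemA3cmod}, Proposition \ref{lemstAst} and $\phi$-stability, with Lemma \ref{lemA3cmodPhi} handling the mirror generator.

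However, you are missing the single observation that makes (1) and (3) nearly free, and without it your sketches for those two parts are likely to bog down or fail. From Proposition \ref{propC3X}, $X:=ts_2s_1^{-1}s_2ts_2s_1^{-1}s_2t\in A_3^\times C^3+\hat B$, and since $C^3$, $C^{\pm2}$ are central and $\hat B=\AAA+A_3C^2+A_3C^{-2}$, one gets $mX-Xm\in\AAA$ for every $m\in A_3$, hence $A_3XA_3+\AAA=A_3X+\AAA$ and so $\hat A^{(3)}=\AAA+A_3X$ as a \emph{left} $A_3$-module. Then (1) is immediate: $\hat A^{(3)}t\subset\AAA t+A_3Xt$ and $Xt=ts_2s_1^{-1}s_2ts_2s_1^{-1}s_2t^2\in\hat A^{(3)}$ using $t^2\in Rt+R$. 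Your plan, by contrast, runs $DA_3t$ through the decomposition of $A_3$ and various reduction lemmas; but terms like $Du_1s_2tu_1$ genuinely have four $t$'s and the lemmas you cite (\ref{lemG26L2}, \ref{lemG26L3}, \ref{lemSimpl0C3}) are not set up to absorb a right $A_3$-factor before the extra $t$, so this route is at best much longer and at worst stalls. The analogous remark applies to (3): the paper uses Proposition \ref{lemstAst} (2), the one-sided form $\hat B=\AAA+A_3Y_++A_3Y_-$ with $Y_+=ts_2ts_1s_2^{-1}s_1ts_2t$ and $Y_-=\phi(Y_+)$ (again a consequence of centrality of $C^{\pm2}$, via Lemma \ref{lemC2modA2}); then $Y_+t$ has a $t^2$ at the end and $Y_-t$ ends in $t^{-1}t=1$, so $\hat Bt\subset\hat A^{(3)}+\hat B$ instantly. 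Your plan instead runs $A_3C^{\pm2}t$ through centrality and the \emph{symmetric} closed form $A_3\langle s_2,t\rangle A_3\langle s_2,t\rangle A_3$, but $A_3tC^2$ then lands in $A_3\langle s_2,t\rangle\hat B$, which has three $\langle s_2,t\rangle$-factors and is not obviously inside $\hat B$: you would need yet another reduction. So do adopt the one-sided descriptions $\hat A^{(3)}=\AAA+A_3X$ and $\hat B=\AAA+A_3Y_++A_3Y_-$; they are the whole point of having arranged for $X\equiv C^3$ and $Y_\pm\equiv C^{\pm2}$ modulo lower-order terms.

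Two small slips for the record: the generators you must control in (2) beyond $C^{\pm1}$ are $ts_2s_1ts_2^{-1}t$ and $ts_2s_1^{-1}ts_2^{-1}t$ (they differ in the $s_1$-exponent), not ``$ts_2s_1ts_2^{\pm1}t$''; and ``$RG$'' in your treatment of (2) is a typo.
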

\begin{proof}
In order to simplify notations, we let 
$X = t s_2 s_1^{-1} s_2 t s_2 s_1^{-1} s_2 t$, 
$Y_+ = t s_2 t s_1  s_2^{-1} s_1t s_2 t $,
$Y_- = t s_2^{-1} t s_1^{-1} s_2 s_1^{-1} t s_2^{-1} t$.
By proposition  \ref{propC3X}, we have $C^3 \in A_3^{\times} X + \hat{B}$,
hence $X \in A_3^{\times} C^3 + \hat{B} = A_3^{\times} C^3 + A_3 C^2 + A_3 C^{-2} + \AAA$.
From this we deduce that, for all $m \in A_3$, $m X - X m \in \AAA$,
hence $A_3 X A_3 +\AAA= A_3 X + \AAA$,
and $\hat{A}^{(3)} = \AAA + A_3 X A_3 = \AAA + A_3 X$.
It follows that $\hat{A}^{(3)} t \subset \AAA t + A_3 X t$,
and clearly  $X t \in \hat{A}^{(3)}$, whence
$\hat{A}^{(3)} t \subset \AAA t + \hat{A}^{(3)}$ and (1).
On the other hand,
$$
\hat{A}^{(2 \frac{1}{2})} = \hat{A}^{(2)} + A_3 t s_2 s_1 t s_2 t A_3
+ A_3 t s_2 s_1 t s_2^{-1} t A_3
+ A_3 t s_2 s_1^{-1} t s_2^{-1} t A_3
+ A_3 t s_2^{-1} s_1^{-1} t s_2^{-1} t A_3
$$
hence
$$
\begin{array}{lcl}
\hat{A}^{(2 \frac{1}{2})} t &\subset& \hat{A}^{(2)}t + A_3 t s_2 s_1 t s_2 t A_3t
+ A_3 t s_2 s_1 t s_2^{-1} t A_3t 
+ A_3 t s_2 s_1^{-1} t s_2^{-1} t A_3t
+ A_3 t s_2^{-1} s_1^{-1} t s_2^{-1} t A_3t \\
\hat{A}^{(2 \frac{1}{2})} t &\subset& \hat{A}^{(3)} + A_3 t s_2 s_1 t s_2 t A_3t
+ A_3 t s_2 s_1 t s_2^{-1} t A_3t
+ A_3 t s_2 s_1^{-1} t s_2^{-1} t A_3t
+ A_3 t s_2^{-1} s_1^{-1} t s_2^{-1} t A_3t
\end{array}
$$

We have $A_3 t s_2 s_1 t s_2 t A_3t =
A_3 C A_3t  = A_3 C t = A_3 t s_2 s_1 t s_2 t^2 \subset \hat{A}^{(3)}$,
and similarly $ t s_2^{-1} s_1^{-1} t s_2^{-1} t \in A_3^{\times} C^{-1}+
\hat{A}^{(2)}$ implies $A_3t s_2^{-1} s_1^{-1} t s_2^{-1} tA_3 t
\subset A_3 t s_2^{-1} s_1^{-1} t s_2^{-1} t^2 + \hat{A}^{(2)} t
\subset \hat{A}^{(3)}$.

Moreover, by lemma \ref{lemA3cmod},
$$
t s_2 s_1 t s_2^{-1} t A_3t \subset 
\hat{A}^{(2)} t
+ \sum_{a \in \{ -1,0,1 \}} \sum_{b,\eps \in \{ -1,1 \}} A_2 t s_2 s_1 t s_2^{\eps} t s_1^b s_2^a t
 +\sum_{\eps \in \{ -1,1 \}} \left( A_2 t s_2 s_1 t s_2^{\eps} t s_1 s_2^{-1} s_1 t+ A_2 t s_2 s_1  t s_2^{\eps} t^2 \right)$$
hence

$$
\begin{array}{lcl}
t s_2 s_1 t s_2^{-1} t A_3t &\subset& 
\hat{A}^{(3)} 
+ \sum_{a \in \{ -1,0,1 \}} \sum_{b,\eps \in \{ -1,1 \}} A_2 t s_2 s_1 t s_2^{\eps} t s_1^b s_2^a t
 +\sum_{\eps \in \{ -1,1 \}} \left( A_2 t s_2 s_1 t s_2^{\eps} t s_1 s_2^{-1} s_1 t \right) \\
& \subset & 
\hat{A}^{(3)} 
+ \sum_{a \in \{ -1,0,1 \}} \sum_{b,\eps \in \{ -1,1 \}} A_2 t s_2  t s_1s_2^{\eps} s_1^bt  s_2^a t
 +\sum_{\eps \in \{ -1,1 \}} \left( A_2 t s_2  ts_1 s_2^{\eps} s_1 t  s_2^{-1}  ts_1 \right)
\end{array}
$$
and, by proposition \ref{lemstAst}, $t s_2 s_1 t s_2^{-1} t A_3t \subset \hat{A}^{(3)} + \hat{B}
=\hat{A}^{(3)}+ \AAA + A_3 C^2 + A_3 C^{-2}
=\hat{A}^{(3)}+  A_3 C^2 + A_3 C^{-2}$. 
This implies $\phi( t s_2 s_1^{-1} t s_2^{-1} t A_3t) \subset \phi(t s_2 s_1^{-1} t s_2^{-1} t) A_3 t^{-1}
\subset \hat{A}^{(3)}+  A_3 C^2 + A_3 C^{-2}$ by lemma \ref{lemA3cmodPhi} and the above, whence (4).

We have $\hat{B} \subset
\AAA +A_3 t s_2 t s_1 s_2^{-1} s_1 t s_2 t  + A_3 t^{-1} s_2^{-1} t^{-1} s_1^{-1} s_2 s_1^{-1} t^{-1} s_2^{-1} t^{-1}$
by proposition \ref{lemstAst}, and we already know
$\AAA t \subset \hat{A}^{(3)} + \hat{B}$. Since
$t^{-1} s_2^{-1} t^{-1} s_1^{-1} s_2 s_1^{-1} t^{-1} s_2^{-1} t^{-1}.t
\in \hat{A}^{(3)}$ and
$t s_2 t s_1 s_2^{-1} s_1 t s_2 t^2 \in
R t s_2 t s_1 s_2^{-1} s_1 t s_2 t + \hat{A}^{(3)}
\subset \hat{A}^{(3)} + \hat{B}$ because of $t^2 \in R t + R$,
this proves (3). From (3) and (4) we get
$\hat{A}^{(4)} t \subset \hat{A}^{(3)} t + \hat{B} t 
\subset \hat{A}^{(4)} + \hat{B} t \subset \hat{A}^{(4)} + 
\hat{A}^{(3)} + \hat{B} = \hat{A}^{(4)}$. This
implies $\hat{A}^{(5)} \subset \hat{A}^{(4)}$ hence $\hat{A}^{(4)} = \hat{A}^{(5)}$
and (5). 
As already noticed we have $\hat{A}^{(3)} = \AAA + A_3 X A_3$, and
$X \in A_3^{\times} C^3 + \hat{B}$ with $C^3$ central implies
$\hat{A}^{(4)} = \hat{A}^{(3)} + \hat{B} = \AAA + \hat{B}
+ A_3 C^3 = \AAA + A_3 C^2 + A_3 C^{-2} + A_3 C^3$ and (6).
\end{proof} 

\begin{corollary} As a $A_3$-module, $\hat{A}$ is generated by 54 elements.
\end{corollary}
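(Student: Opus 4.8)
The plan is to read the bound off the module decompositions obtained above. By part~(6) of the preceding proposition,
$$
\hat{A}=\hat{A}^{(4)}=\AAA + A_3 C^2 + A_3 C^{-2}+A_3 C^3 .
$$
Since $C$ is central in $\hat{A}$, so are $C^2$, $C^{-2}$ and $C^3$; hence each of $A_3 C^2$, $A_3 C^{-2}$, $A_3 C^3$ is a cyclic left $A_3$-module, and together they contribute $3$ generators. By Proposition~\ref{propX}, $\AAA$ is generated as a left $A_3$-module by $\hat{A}^{(2)}$ together with $18$ further elements (images of elements of $B$). So the corollary reduces to showing that $\hat{A}^{(2)}$ is generated by $33$ elements as a left $A_3$-module, for then $33+18+3=54$.

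To count generators of $\hat{A}^{(2)}$, I would start from Proposition~\ref{lemG26L1}, which on expanding $\hat{A}^{(1)}$ gives
$$
\hat{A}^{(2)}=A_3 + A_3 tA_3 + A_3 ts_2tA_3 + A_3 ts_2^{-1}tA_3 + A_3 ts_2 s_1^{-1}s_2 tA_3 ,
$$
and then reduce each of these $A_3$-bimodules to a left $A_3$-module with few generators. The summand $A_3$ is cyclic. For $A_3 tA_3$, expand the right-hand factor by Proposition~\ref{lemG26L5}(2) and commute its $u_1$-part through $t$ (using $ts_1=s_1t$): one lands in $A_3 t + A_3 ts_2 u_1 + A_3 ts_2^{-1}u_1 + A_3 ts_2 s_1^{-1}s_2$, that is $1+3+3+1=8$ generators. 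For $A_3 ts_2tA_3$, use instead that $ts_2t$ commutes with $s_2$ --- this is precisely the braid relation $ts_2ts_2=s_2ts_2t$ --- hence with $u_2$, expand the right-hand factor by the $s_1\leftrightarrow s_2$ analogue of Proposition~\ref{lemG26L5}(2), and commute its $u_2$-part through $ts_2t$; this again gives $8$ generators, and likewise $8$ for $A_3 ts_2^{-1}tA_3$ (apply the skew-automorphism $s_2\mapsto s_2^{-1}$, $t\mapsto t^{-1}$). Finally $A_3 ts_2 s_1^{-1}s_2 tA_3$ is treated by the same bookkeeping, now invoking Lemmas~\ref{lemG26L2} and~\ref{lemG26L3} to absorb the terms with longer words in $s_1,s_2$ back into $\hat{A}^{(1)}$ and the earlier summands; this yields another $8$ generators. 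Altogether $\hat{A}^{(2)}$ is generated by $1+8+8+8+8=33$ elements, as required.

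The main obstacle is this last reduction: the middle factor $s_2 s_1^{-1}s_2$ of $ts_2 s_1^{-1}s_2 t$ commutes with nothing convenient, so collapsing $A_3 ts_2 s_1^{-1}s_2 tA_3$ to a left module on only $8$ generators needs the same patient rewriting (via the braid relations and Lemmas~\ref{lemG26L2}--\ref{lemG26L3}) as the analysis of the parabolic $A_3=\langle s_1,s_2\rangle$ itself carried out in \cite{conjcub}; one must also keep track that all $54$ resulting generators are genuinely images of elements of $B$. Granting this, the corollary together with the fact (the $G_4$ case, see \cite{conjcub}) that $A_3$ is a free $R$-module of rank $|G_4|=24$ shows that $\hat{A}$ is generated over $R$ by $54\cdot 24=1296=|G_{26}|$ elements, whence by the criterion recalled above (a generic Hecke algebra generated over $R$ by $|W|$ elements is free of rank $|W|$) one obtains Theorem~\ref{theoG26}.
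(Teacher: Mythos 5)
Your proposal is correct and follows essentially the same route as the paper: you peel $\hat{A}$ down to $\AAA$ (3 generators from the central powers of $C$), $\AAA$ down to $\hat{A}^{(2)}$ (18 generators by Proposition~\ref{propX}), and then count generators of $\hat{A}^{(2)}$ using the decomposition of Proposition~\ref{lemG26L1} and the commutation of $t$, $ts_2t$, $t^{-1}s_2^{-1}t^{-1}$ with $u_1$ or $u_2$; your $1+8+8+8+8=33$ is the paper's $9+24$, and the total $33+18+3=54$ matches. The one place where you misidentify the tool is the last summand $A_3\,ts_2s_1^{-1}s_2t\,A_3$: the paper does not invoke Lemmas~\ref{lemG26L2}--\ref{lemG26L3} there, but rather the single identity $(ts_2s_1^{-1}s_2t)s_1=t s_2(s_1^{-1}s_2s_1)t=ts_2^2s_1s_2^{-1}t$, combined with $A_3=u_1s_2s_1^{-1}s_2+u_1u_2u_1$, which places $(ts_2s_1^{-1}s_2t)u_1$ inside $u_1\,ts_2s_1^{-1}s_2t$ modulo $\hat{A}^{(1)}+A_3ts_2tA_3+A_3ts_2^{-1}tA_3$; that quasi-commutation with $u_1$ is what yields the remaining $8$ generators. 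Likewise your appeal to ``the skew-automorphism $s_2\mapsto s_2^{-1},t\mapsto t^{-1}$'' for $A_3ts_2^{-1}tA_3$ should really be the observation that $t\in R^\times t^{-1}+R$ replaces $ts_2^{-1}t$ by $t^{-1}s_2^{-1}t^{-1}$ modulo $\hat{A}^{(1)}$, and the latter does commute with $u_2$ (the paper's $\psi$ also inverts $s_1$, so it does not directly prove what you want). These are small repairs; the count and the structure of the argument are right, and your observation that the $54\cdot 24=|G_{26}|$ generators lie in (the image of) $B$ agrees with the remark following the corollary.
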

\begin{proof}
By the above property, $\hat{A}$ is generated by $\AAA$ and 3 elements. By
proposition \ref{propX}, $\AAA$ is generated by $\hat{A}^{(2)}$ and $2 \times 9 = 18$
elements.
By proposition \ref{lemG26L1},
$
\hat{A}^{(2)} = \hat{A}^{(1)} + A_3 t s_2 t A_3 + A_3 t s_2^{-1} t A_3 + A_3 t s_2 s_1^{-1} s_2 t A_3.
$
Since $ts_2t $ commutes with $u_2$, and because $A_3$ is a (free) $u_2$-module
of rank (at most) $8$,
we know that $A_3 t s_2 t A_3$ is generated as
a $A_3$-module by $8$ éléments.
Since $t \in R^{\times} t^{-1} + R$, we have
$A_3 t s_2^{-1} t A_3 + \hat{A}^{(1)} = 
A_3 t^{-1} s_2^{-1} t^{-1} A_3 + \hat{A}^{(1)}$ and, because
$t^{-1} s_2^{-1} t^{-1}$ commutes with $u_2$, this $A_3$-module
is generated by  $\hat{A}^{(1)}$ together with $8$ éléments. 
Finally, because $A_3 = u_1 s_2 s_1^{-1} s_2 + u_1 u_2 u_1$, we have
 $(t s_2 s_1^{-1} s_2 t) s_1 = 
t s_2 (s_1^{-1} s_2  s_1) t= 
t s_2^2 s_1  s_2^{-1} t \in t u_1 s_2 s_1^{-1} s_2 t + t u_1 u_2 u_1 t
\subset u_1 t  s_2 s_1^{-1} s_2 t +  u_1 t u_2  tu_1
\subset u_1 t  s_2 s_1^{-1} s_2 t +  A_3 t s_2  tA_3+ A_3 t s_2^{-1}  tA_3 + \hat{A}^{(1)}$.
Since $A_3$ is also a (free) $u_1$-module of rank $8$,
we deduced from this that $\hat{A}^{(2)}$ is generated by
$\hat{A}^{(1)} + A_3 t s_2  tA_3+ A_3 t s_2^{-1}  tA_3$ together with $8$ elements,
and it follows that $\hat{A}^{(2)}$ is generated by
$\hat{A}^{(1)}$ together with $2 \times 8 + 8 = 24$ elements.
Finally, $\hat{A}^{(1)} = A_3+A_3 t A_3$ is  $A_3$-generated par $1+8=9$ elements,
since $t$ commutes with $u_1$ and $A_3$ is generated by $8$ elements as a
$u_1$-module.  This proves that
$\hat{A}$ is generated
as a $A_3$-module by $21 + 24 + 9 = 54$ elements. 
\end{proof}

Since $A_3$ is a $R$-module of rank $24$, this has for immediate
consequence the following, which proves theorem \ref{theoG26}.
\begin{corollary} As a $R$-module, $\hat{A}$ is spanned by $1296$ elements.
\end{corollary}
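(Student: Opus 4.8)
The plan is to compose the two spanning statements already in hand. By the previous corollary, $\hat{A}$ is generated as a left $A_3$-module by $54$ elements. The subalgebra $A_3 = \langle s_1, s_2 \rangle$ is, by definition, a quotient of $RB$ carrying the defining relations of the generic Hecke algebra of $G_4$; since the BMR conjecture is known for $G_4$ (it has rank $2$, and was treated in \cite{BM}, \cite{FUNAR}, \cite{conjcub}), $A_3$ is a free $R$-module of rank $|G_4| = 24$. Fixing an $R$-basis $b_1,\dots,b_{24}$ of $A_3$ and generators $g_1,\dots,g_{54}$ of $\hat{A}$ over $A_3$, the $1296 = 54 \cdot 24$ products $b_j g_i$ span $\hat{A}$ as an $R$-module, which is the assertion.

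To conclude the proof of Theorem~\ref{theoG26}, note that $|G_{26}| = 1296$, so $\hat{A}$ is generated as an $R$-module by $|G_{26}|$ elements. By part (1) of the proposition on Tits' deformation theorem, this forces $\hat{A}$ to be a free $R$-module of rank $|G_{26}|$, which is exactly the BMR conjecture for $G_{26}$.

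There is no real obstacle at this final step: it is a bookkeeping consequence of the bimodule decompositions established in the preceding subsections (culminating in the identities $\hat{A}^{(4)} = \hat{A}^{(5)} = \hat{A}$ and $\hat{A}^{(4)} = \AAA + A_3 C^2 + A_3 C^{-2} + A_3 C^3$) together with the known freeness of the parabolic subalgebra $A_3$. The genuine work lies upstream, in propagating the computations of $C^2$ modulo $\AAA$ and of $C^3$ modulo $\hat{B}$ through the braid relations so as to bound the number of $A_3$-module generators of each $\hat{A}^{(k)}$; once the count $54$ is secured, the passage to an $R$-spanning set of size $1296$ and the appeal to the rigidity of surjections between free modules of equal finite rank are automatic.
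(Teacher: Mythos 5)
Your proof is correct and follows the paper's own one-line argument: the $54$ left $A_3$-module generators from the previous corollary, combined with a set of $24$ $R$-module generators of $A_3$, give $54\cdot 24 = 1296 = |G_{26}|$ $R$-module generators of $\hat{A}$, and part (1) of the Tits-deformation proposition then upgrades this to freeness of rank $|G_{26}|$, proving Theorem~\ref{theoG26}. One small imprecision worth noting: at this point in the paper $A_3$ is only known to be a \emph{quotient} of the generic Hecke algebra of $G_4$ (the natural map from that algebra into $\hat{A}$ has not been shown injective), so what you actually get from the known case of $G_4$ is that $A_3$ is spanned by $24$ elements over $R$, not that it is free of rank $24$ with a genuine $R$-basis $b_1,\dots,b_{24}$; this does not affect the argument, since a spanning set is all the multiplication count requires.
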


\begin{remark} The $R$-basis provided by this corollary is actually made
out of elements of $B$, and contains $1$.
\end{remark}

{\bf Acknowledgements.} I thank Marc Cabanes, Christophe Cornut, Jean Michel, Rapha\"el Rouquier and Olivier Schiffmann for useful discussions. I also thank Gunter Malle and Jean Michel for a careful reading of the first half of the paper.


\end{document}